\title{Mutation of signed valued quivers and\\presentations of simple complex Lie algebras} 
\author[$\dagger$]{Joseph Grant}
\author[$\star$]{Davide Morigi}
\affil[$\dagger$]{School of Mathematics, University of East Anglia, Norwich NR4 7TJ, UK}
\affil[$\star$]{Department of Mathematics, Aarhus University, Ny Munkegade 118, DK-8000 Aarhus C, Denmark}
\date{\vspace{-5ex}}
\setlist[itemize]{%noitemsep, 
 topsep=0pt}
\setlist[enumerate]{%noitemsep, 
 topsep=0pt}
\tikzset{
    expand bubble/.style={
        preaction={draw,line width=10.4pt},
        white,fill,draw,line width=10pt,
    },
}
\tikzset{->-/.style={decoration={
  markings,
  mark=at position .5 with {\arrow{>}}},postaction={decorate}}}
\newcommand{\arr}[1]{\stackrel{#1}{\to}}
\newcommand{\rra}[1]{\stackrel{#1}{\leftarrow}}
\newcommand{\arrr}[1]{\stackrel{#1}{\longrightarrow}}
\newcommand{\Z}{\mathbb{Z}}
\newcommand{\R}{\mathbb{R}}
\newcommand{\st}{\;\left|\right.\;}
\newcommand{\abs}[1]{\left|#1\right|}
\newcommand{\GL}{\operatorname{GL}\nolimits}
\newcommand{\e}{\varepsilon}
\newcommand{\sgn}{\operatorname{sgn}}
\newcommand{\Dih}{\operatorname{Dih}}
\newcommand{\ad}{\operatorname{ad}}
\newcommand{\g}{\mathfrak{g}}
\newcommand{\h}{\mathfrak{h}}
\newcommand{\diag}{\operatorname{diag}\nolimits}
\newcommand\scalemath[2]{\scalebox{#1}{\mbox{\ensuremath{\displaystyle #2}}}}
\newtheorem{theorem}{Theorem}[section]
\newtheorem{corollary}[theorem]{Corollary}
\newtheorem{lemma}[theorem]{Lemma}
\newtheorem{proposition}[theorem]{Proposition}
\newtheorem*{thma}{Theorem A}
\newtheorem*{thmb}{Theorem B}
\theoremstyle{definition}
\newtheorem{definition}[theorem]{Definition}
\newtheorem{remark}[theorem]{Remark}
\newtheorem{example}[theorem]{Example}
\begin{document}

\maketitle

\begin{abstract}
We introduce a signed variant of (valued) quivers and a mutation rule that generalizes the classical Fomin-Zelevinsky mutation of quivers.  To any signed valued quiver we associate a matrix that is a signed analogue of the Cartan counterpart appearing in the theory of cluster algebras.  From this matrix, we construct a Lie algebra via a ``Serre-like'' presentation.

In the mutation Dynkin case, we define root systems using the signed Cartan counterpart and show compatibility with mutation of roots as defined by Parsons.  Using results from Barot-Rivera and Perez-Rivera, we show that mutation equivalent signed quivers yield isomorphic Lie algebras, giving presentations of simple complex Lie algebras.

\emph{Keywords:} quiver mutation, presentation, Lie algebra, root system

\emph{Mathematics Subject Classification (2020):}  Primary 17B20, Secondary 17B22, 13F60
\end{abstract}

\tableofcontents

\setlength{\parindent}{0pt} 
\setlength{\parskip}{1em plus 0.5ex minus 0.2ex}

%=-=-=-=-==-=-=-=-=-=
%=-=-=-=-==-=-=-=-=-=
%=-=-=-=-==-=-=-=-=-=
%=-=-=-=-==-=-=-=-=-=
%=-=-=-=-==-=-=-=-=-=

% -------------------------- Intro --------------------------
%\newpage
\section{Introduction}
The theory of cluster algebras has been one of the mathematical success stories of the 21st century.  From early in the theory's development, it was clear that mutation gave new ways to understand finite type phenomena in Lie theory when the Dynkin classification arose in classifying mutation classes of finite type cluster algebras \cite{fz2}.  This led to generalizations of Cartan matrices \cite{bgz} and simple roots \cite{par}, and then to new presentations of finite crystallographic reflection groups \cite{bm} and braid groups \cite{gm}.  As we get new presentations of objects in Lie theory, it is natural to ask whether we get new presentations of the original object of the Dynkin classification: simple complex Lie algebras.

Independently of the theory of cluster algebras, presentations of Lie algebras associated to various unit forms were studied \cite{bkl} and it was realised that as well as the relations in the classical Serre presentation, one should add further relations according to cycles appearing in a graphical representation of generalized Cartan matrices.  This theory was refined, and extended to the setting of quasi-Cartan matrices appearing in the cluster algebras literature \cite{bgz}, in both the simply-laced \cite{br} and skew-symmetrizable \cite{pr} cases.  In these studies, graphs with two kinds of edges play a central role.  They have been denoted by solid and broken (dotted) edges, but we will represent them by edges decorated with $-$ and $+$ signs, respectively, to match the signs appearing in the generalized Cartan matrix.

Cluster algebras start with a skew-symmetrizable matrix, and the mutation rule is a case-by-case operation which changes this matrix.  In the simpler skew-symmetric case, this can be represented graphically by a quiver where orientation corresponds to sign, and the mutation rule can be described by a three step process.  The graphical description of the general skew-symmetrizable matrices has long been known: one should use valued quivers, which have two positive integers attached to each arrow.

In this article we introduce signed variants of quivers and valued quivers, where each arrow has a sign, together with a mutation rule of these objects.  As in \cite{fz1}, our theory starts with matrices, but the signs are already used to encode orientation.  So, in order to keep track of signs, we introduce a new variable $t$ which squares to $1$.  We give a mutation rule for these generalized skew-symmetric (gss) matrices and show that everything can be encoded graphically via signed valued quivers in the mutation Dynkin case.  We use earlier results on mutation \cite{bgz}, and recover various classical phenomena on specialization to the classical case.

Given a signed valued quiver $(Q,v)$ we define a Cartan counterpart $C(Q,v)$.  As with traditional Cartan matrices, this has diagonal entries $2$.  It is modelled on a definition from cluster algebras \cite{fz1,fz2}, where the off-diagonal entries are always non-positive, but in our definition we get both positive and negative off-diagonal entries.  We define a Lie algebra $\g(Q,v)=\g(C(Q,v))$ depending on this Cartan counterpart, and using previous work of Barot-Rivera and P\'erez-Rivera \cite{br,pr} we show the following result.
\begin{thma}[{Theorem \ref{thm:isoLie}}]
If $(Q,v)$ is a signed valued quiver of mutation Dynkin type and $(Q',v')=\mu_k(Q,v)$ then we have an isomorphism of Lie algebras
\[\varphi_k:\g(Q',v')\arr\sim \g(Q,v).\]
\end{thma}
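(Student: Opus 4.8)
The plan is to transport the isomorphism from the quasi-Cartan setting of \cite{br,pr} along an explicit dictionary between our signed mutation and the mutation of admissible quasi-Cartan companions. The starting point is to compute how the Cartan counterpart transforms under $\mu_k$. Since the signed mutation rule is defined on the underlying gss matrix, I would first push it through the construction $C(-)$ and show that $C(Q',v')$ is obtained from $C(Q,v)$ by precisely the entrywise mutation rule for quasi-Cartan companions used in \cite{bgz}. Concretely, I would compare the off-diagonal entries at and around the vertex $k$ before and after mutation, tracking the sign contributions recorded by the auxiliary variable $t$, and verify that the mixed-sign pattern produced by $C(-)$ matches an admissible companion on both sides. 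This entry-by-entry comparison, together with the mutation-Dynkin hypothesis and \cite{bgz}, shows that $C(Q,v)$ and $C(Q',v')$ are admissible quasi-Cartan companions of skew-symmetrizable matrices lying in a common mutation class (that of a fixed Dynkin matrix).

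With this dictionary established, I would define $\varphi_k$ directly on the Serre-like generators. Writing $e_i,f_i,h_i$ for the generators of $\g(Q',v')$, I would set $\varphi_k$ to fix $e_i,f_i,h_i$ for vertices $i$ not adjacent to $k$, to reflect the generators at $k$ (negating $h_k$ and interchanging or rescaling $e_k,f_k$ according to the sign change at $k$), and to send each generator at a neighbour $j$ of $k$ to an appropriate bracket with $e_k$ or $f_k$, following the explicit formulas of \cite{br,pr}. Well-definedness amounts to checking that the images satisfy the defining relations of $\g(Q,v)$: the Serre-type relations transform correctly because the relevant entries of $C(Q,v)$ and $C(Q',v')$ are related as in the first step, and the cycle relations are handled by the corresponding verifications in \cite{br} (simply-laced) and \cite{pr} (skew-symmetrizable), now read through our signed conventions.

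For invertibility I would exploit that $\mu_k$ is an involution, so $\mu_k(Q',v')=(Q,v)$. Applying the same recipe to this second mutation yields a homomorphism $\psi_k:\g(Q,v)\to\g(Q',v')$, and it suffices to check that $\varphi_k\circ\psi_k$ and $\psi_k\circ\varphi_k$ act as the identity on the generators, which reduces to a short computation with the reflection formulas at $k$ and its neighbours. This gives the desired isomorphism $\varphi_k:\g(Q',v')\arr{\sim}\g(Q,v)$.

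I expect the main obstacle to be the first step: the presence of the extra variable $t$ and the fact that our Cartan counterpart carries both positive and negative off-diagonal entries---unlike the non-positive companions of the classical cluster-algebra setup---means the correspondence with the mutation of \cite{bgz} is not literal, and I would need to verify carefully that signed mutation preserves admissibility and matches their entrywise rule exactly. Once this bookkeeping is in place, the existence and well-definedness of $\varphi_k$ follow from \cite{br,pr} with only notational translation, and invertibility is formal.
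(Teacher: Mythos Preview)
Your proposal has the right overall shape---reduce to \cite{br,pr} by tracking how $C(-)$ changes under $\mu_k$---but two concrete errors would make the argument fail as written.

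First, and most seriously, signed mutation is \emph{not} an involution: by Lemma~\ref{lem:mutate_twice} and Proposition~\ref{prop_order4}, $\mu_k$ has order~$4$, and $\mu_k^2$ multiplies the entries in row and column $k$ by $t$, which flips the signs of the corresponding off-diagonal entries of the Cartan counterpart. So ``applying the same recipe'' to $(Q',v')$ does not produce a map $\g(Q,v)\to\g(Q',v')$ but rather a map $\g(\mu_k^2(Q,v))\to\g(Q',v')$, and your invertibility argument collapses. In the paper the inverse $\psi_k$ is written down explicitly (see the statement of Theorem~\ref{thm:isoLie}) and the composites $\varphi_k\circ\psi_k$ and $\psi_k\circ\varphi_k$ are checked directly on generators using the bracket identity of Lemma~\ref{lem_semisimpl_kk}; no symmetry of the mutation is invoked.

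Second, your description of $\varphi_k$ is wrong in detail. The map does \emph{not} reflect the generators at $k$: it fixes $h_k$ and $e_{\pm k}$. The only generators that move are those at vertices $i$ with an arrow $i\to k$ in $Q$ (not all neighbours of $k$, and not $k$ itself); for these one has $h'_i\mapsto h_i-c_{ik}h_k$ and $e'_{\varepsilon i}\mapsto \tfrac{(-\varepsilon)^{c_{ki}}}{|c_{ki}|!}\ad^{|c_{ki}|}(e_{-\delta\varepsilon k})(e_{\varepsilon i})$. The paper obtains this not by matching a single ``mutation of companions'' from \cite{bgz}, but by factoring the change in Cartan counterpart as a commuting product $\prod_{i\to k}U_{ki}$ of elementary transformations (Lemma~\ref{lem:Cartan-agree}) and then composing the corresponding isomorphisms from \cite{br,pr} one factor at a time (Proposition~\ref{prop:pr-U}). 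This factorisation is precisely what allows one to cite \cite{br,pr} without re-verifying the cycle relations from scratch.
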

As a corollary (Corollary \ref{cor:presDynk}) we obtain that each signed valued quiver $(Q,v)$ of mutation Dynkin type $\Delta$ gives a presentation of the simple complex Lie algebra $\g(\Delta)$.

One of the key tools in the study of semisimple Lie algebras is the theory of root systems.  These traditionally make use of simple roots, which form a basis of the vector space spanned by all the roots.  An alternative basis known as a companion basis, which is adapted to a particular quiver in the mutation class of our Dynkin diagram, was proposed by Parsons in the simply-laced case \cite{par}, and later extended to the skew-symmetrizable case \cite{bm}.  We introduce a signed variant of companion bases, and show that our refined variant is preserved by Parsons's mutation rule.

Via iterated mutation of roots, the formal simple basis of our root system is mapped to a signed companion basis in the original Dynkin root system: see Theorem \ref{thm:signedCB}.  Therefore, our theory of signed quiver mutation can be seen as a combinatorial precursor of both of the above theories: presentations of Lie algebras, and companion bases.
\[
\xymatrix{
&\text{signed valued quiver $(Q,v)$} \ar[dl]\ar[dr] & \\
\text{Lie algebra $\g(Q,v)$} && \text{root system $\Phi_{(Q,v)}$}
}
\]
If we write $\g=\g(Q,v)$, $\g'=\g(Q',v')$, $\Phi=\Phi_{(Q,v)}$, and $\Phi'=\Phi_{(Q',v')}$, then we have mutations of all three of the above objects, as illustrated in the following diagram:
\[
\xymatrix{
&(Q,v) \ar[ld]\ar[dr] \ar@{~>}[rrrr]^{\mu_k} &&&& (Q',v')\ar[ld]\ar[dr]  &\\
\g&& \Phi &&\g'\ar@/^2pc/[llll]^{\varphi_k}_\sim && \Phi'\ar@/^2pc/[llll]^{\rho_k}_\sim\\
&
}
\]
This diagram suggests there might be some compatibility between mutation of the Lie algebras and the root systems.  We give a compatibility result for root space decompositions.
Let $\h,\h'$ denote the Cartan subalgebras of $\g,\g'$, respectively, and let $\g^\alpha$ denote the root space of $\alpha\in\Phi$, where all $h\in\h$ act by $\alpha(h)$.
\begin{thmb}[{Theorem \ref{thm:rootspacedecomp}}]
$\varphi_k(\g'^{\beta'})=\g^{\rho_k(\beta')}.$
\end{thmb}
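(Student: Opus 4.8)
The plan is to reduce Theorem~B to a purely linear comparison on the Cartan subalgebras. Since $\varphi_k$ is an isomorphism of Lie algebras, it automatically carries the simultaneous eigenspace decomposition of $\g'$ for $\ad\h'$ to the decomposition of $\g$ for $\ad\varphi_k(\h')$; the whole content is therefore (i) that $\varphi_k$ sends the distinguished Cartan $\h'$ onto the distinguished Cartan $\h$, and (ii) that the resulting identification of weights agrees with Parsons's root mutation $\rho_k$.

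For (i), I would read off the explicit action of $\varphi_k$ on the Chevalley-type generators $e_i',f_i',h_i'$ of $\g'$ from the proof of Theorem~\ref{thm:isoLie}. Each $h_i'$ is sent into $\h$, so $\varphi_k(\h')\subseteq\h$; as $\varphi_k$ is an isomorphism and $\dim\h'=\dim\h$, the restriction $\psi:=\varphi_k|_{\h'}\colon\h'\arr{\sim}\h$ is a linear isomorphism and $\varphi_k(\h')=\h$.

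For the bulk of the argument, I take $\beta'\in\Phi'$ and $x'\in\g'^{\beta'}$. For any $h\in\h$, writing $h'=\psi^{-1}(h)$ and using that $\varphi_k$ is a morphism of Lie algebras,
\[
[h,\varphi_k(x')]=[\varphi_k(h'),\varphi_k(x')]=\varphi_k([h',x'])=\beta'(h')\,\varphi_k(x')=\bigl(\beta'\circ\psi^{-1}\bigr)(h)\,\varphi_k(x').
\]
Hence $\varphi_k(x')$ is a simultaneous $\ad\h$-eigenvector of weight $\beta'\circ\psi^{-1}=(\psi^\ast)^{-1}(\beta')\in\h^\ast$, so that $\varphi_k(\g'^{\beta'})\subseteq\g^{(\psi^\ast)^{-1}(\beta')}$; equality follows symmetrically by applying the same computation to $\varphi_k^{-1}$, whose restriction to $\h$ is $\psi^{-1}$. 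It therefore remains to establish the identity $(\psi^\ast)^{-1}=\rho_k$ of maps between the weight lattices, equivalently $\rho_k^{-1}(\lambda)(h')=\lambda(\varphi_k(h'))$ for all $\lambda\in\h^\ast$ and $h'\in\h'$.

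This final identity I would verify on the simple roots $\alpha_i'$, which form a basis of the ambient space of $\Phi'$: both Parsons's mutation $\rho_k$ and the transpose of $\psi$ are given by explicit formulas in the entries of the signed Cartan counterpart $C(Q,v)$, so the check is a finite computation. I expect the main obstacle to lie exactly here, in reconciling the signs appearing in the Lie-algebra isomorphism $\varphi_k$ (inherited from Barot--Rivera and P\'erez--Rivera) with those in the signed root-mutation rule. This is precisely where the signed bookkeeping of our framework is needed: once the $k$-th coordinate, where a sign flip occurs, and the off-diagonal coordinates are matched against the mutation of $C(Q,v)$, the theorem follows.
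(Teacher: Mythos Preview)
Your approach is essentially the paper's. Both arguments restrict $\varphi_k$ to an isomorphism $\psi:\h'\to\h$, perform the eigenvalue computation $[h,\varphi_k(x')]=\beta'(\psi^{-1}(h))\varphi_k(x')$, and reduce to the bilinear identity $\beta'(h')=\rho_k(\beta')(\varphi_k(h'))$, checked on the bases $\{\alpha_j'\}$ and $\{h_i'\}$. The paper carries this last step out explicitly: using the formulas for $\varphi_k(h_i')$ and $\rho_k(\alpha_j')$ one computes $\rho_k(\alpha_j')(\varphi_k(h_i'))$ in the four cases according to whether $i\to k$ and $j\to k$, and the result matches $c'_{ij}=\alpha_j'(h_i')$ by Lemma~\ref{lem:mutateCartan}. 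Your expectation that the crux is a delicate sign reconciliation is therefore slightly off; the computation is a clean case analysis closed by the mutation formula for Cartan counterparts, with no residual sign ambiguity. One small difference: for the reverse inclusion you invoke the symmetry via $\varphi_k^{-1}$, which is self-contained, whereas the paper appeals to one-dimensionality of root spaces (available via Corollary~\ref{cor:presDynk}).
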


\textbf{Notation:} 
Fix $n\geq1$.  All matrices are $n\times n$, and $i,j,k$ are integers between $1$ and $n$.  We mainly work in our signed setting, but use a tilde to identify objects coming from the classical setting.  So we use $\tilde B$ to denote a skew-symmetric (ss) matrix, and $B$ to denote a generalized skew-symmetric (gss) matrix.

\textbf{Acknowledgements:} 
Thanks to Bethany Marsh for pointers to the literature and to the anonymous referee for a careful reading of the paper.

This article was completed while J. G. visited Aarhus University.  J. G. thanks The Danish National Research Foundation (grant no. DNRF156) and The Independent Research Fund Denmark (grant no. 1026-00050B) for their support, and thanks Peter J\o rgensen and the Aarhus Homological Algebra Group for their hospitality.
  
D. M. was supported by a research grant (VIL42076) from VILLUM FONDEN.

%
%% ================================================
%% ================================================
%% ================================================
%
%
%
%

% ================================================
% ================================================
% ================================================

\section{Mutation of signed valued quivers} \label{s:quivers}

% ================================================

\subsection{Generalized skew-symmetrizable matrices}

An $n\times n$ matrix $\tilde B=(\tilde b_{ij})$ with integer entries is called skew-symmetrizable if there exist positive integers $d_i$ such that $d_i \tilde b_{ij}=-d_j \tilde b_{ji}$.  The diagonal matrix $D=\diag(d_1,\ldots,d_n)$ is called the skew-symmetrizing matrix, and $D\tilde B=-(D\tilde B)^T$.

\begin{definition}\label{defn_spec_matr}
A \textbf{generalized skew-symmetrizable (gss) matrix} is an $n\times n$ matrix $B=(b_{ij})$ with entries $b_{ij}\in \Z[t]/(t^2-1)$ which is skew-symmetrizable, i.e., there exist $d_1,\ldots, d_n\in \Z_{>0}$ such that
$$
d_i b_{ij}=-d_j b_{ji}
$$
for all $i,j=1,\ldots,n$.
If $B$ is a gss matrix, its \textbf{specialization} $B(1)$ is given by evaluating $t=1$. \end{definition}

\begin{remark}
If $B$ is a gss
matrix then its specialization $B(1)$ is also skew-symmetrizable, with the same skew-symmetrizing matrix $D=\diag(d_1,\ldots,d_n)$ as $B$.
\end{remark}

Recall that for a nonzero real number $a$, its \emph{sign} is $\sgn(a)=a/\abs{a}$, and $\sgn(0)=0$.  We have $\sgn(ab)=\sgn(a)\sgn(b)$, and for a skew-symmetrizable matrix $\tilde B$ we have $\sgn(\tilde b_{ij})=-\sgn(\tilde b_{ji})$.
For every $k\in\{1,\ldots, n\}$, Fomin and Zelevinsky's mutation rule for integer matrices $\tilde B'=\mu_k^{FZ}(\tilde B)=(\tilde{b}_{ij}')$ \cite[Definition 4.2]{fz1} can be written as \cite[(1.1)]{bgz}:
\[
\tilde b_{ij}'=\begin{cases}
    -\tilde b_{ij} & \text{if } i=k \text{ or } j=k; \\
    \tilde b_{ij}+\sgn(\tilde b_{ik})\tilde b_{ik}\tilde b_{kj} & \text{if } \sgn(\tilde b_{ik}\tilde b_{kj})=1;\\
    \tilde b_{ij} & \text{otherwise}.\\
\end{cases} 
\]
We give a mutation rule for generalized matrices.
\begin{definition}\label{defn_sign}
For $a+bt\in \Z[t]/(t^2-1)$, define its \textbf{sign} as
\[ \sgn(a+bt)=\sgn(a+b).\]
\end{definition}
The important property of this definition is that if $x=a\in \Z$ then $\sgn(x)=\sgn(a)$, and if $x=tb\in t\Z$ then $\sgn(x)=\sgn(b)$.  It is also useful to note that, for $x,y\in \Z[t]/(t^2-1)$, we have $\sgn(tx)=\sgn(x)$ and $\sgn(x)\sgn(y)=\sgn(xy)$.
\begin{definition}\label{defn_signed_mut}
Let $B=(b_{ij})$ be a gss matrix and let $k\in\{1,\ldots, n\}$. 
Define the \textbf{signed mutation} $B'=\mu_k(B)=(b_{ij}')$ of $B$  as follows:
\[
b_{ij}'=\begin{cases}
    -b_{ij} & \text{if } i=k \text{ and }\sgn(b_{ij})=1, \text{ or } j=k \text{ and } \sgn(b_{ij})=-1; \\
    -tb_{ij} & \text{if } i=k \text{ and }\sgn(b_{ij})=-1,  \text{ or }  j=k \text{ and } \sgn(b_{ij})=1; \\
    t(b_{ij}+\sgn(b_{ik})b_{ik}b_{kj}) & \text{if }\sgn (b_{ik}b_{kj})=1; \\
    b_{ij} & \text{otherwise}.
\end{cases} 
\]
\end{definition}

\begin{example}\label{eg_mut_spec_matrix}
Consider the gss matrix:
$$
B=\left(\begin{matrix}
0 & -t & 0 \\
t & 0 & -2t \\
0 & t & 0
\end{matrix}\right),
$$
with skew-symmetrizing matrix diag$(1,1,2)$. Then its signed mutation at vertex $k=2$ is the following:
$$
\mu_2(B)=\left(\begin{matrix}
0 & t & -2t \\
-t & 0 & 2 \\
t & -1 & 0
\end{matrix}\right).
$$
\end{example}

The next result says that our mutation rule specializes to the classical Fomin-Zelevinsky mutation of skew-symmetrizable matrices.
\begin{proposition}\label{prop_mutat_agree}
Let $k\in\{1,\ldots, n\}$. Then $\mu_k^{\text{FZ}}(B(1))=(\mu_kB)(1)$, i.e., the following diagram commutes:
$$
\xymatrix @C=40pt {B \ar[r]^{\mu_k}\ar[d] 
 & \mu_k(B)\ar[d] \\
B(1)\ar[r]_{\mu_k^{\text{FZ}}} & (\mu_kB)(1)}
$$
\end{proposition}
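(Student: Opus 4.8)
The plan is to verify the identity entry by entry, exploiting the fact that specialization at $t=1$ is a ring homomorphism $\Z[t]/(t^2-1)\to\Z$. Writing $\tilde b_{ij}=b_{ij}(1)$ for the entries of $B(1)$, the first thing I would record is the compatibility of signs with specialization: by Definition \ref{defn_sign} we have $\sgn(a+bt)=\sgn(a+b)=\sgn\big((a+bt)(1)\big)$, so $\sgn(b_{ij})=\sgn(\tilde b_{ij})$ for every entry. This is the crucial point, since it guarantees that the case distinctions governing $\mu_k$ and $\mu_k^{\text{FZ}}$ are triggered under exactly the same conditions. I would also note that, since $B$ is skew-symmetrizable, its diagonal vanishes (the relation with $i=j$ forces $b_{ii}=0$), so $b_{ik}=0$ whenever $i=k$ and $b_{kj}=0$ whenever $j=k$.

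With these preliminaries, I would compare the two rules case by case, applying specialization to each branch of Definition \ref{defn_signed_mut} and checking it against the corresponding branch of the FZ rule. When $i=k$ or $j=k$, the first two branches of $\mu_k$ produce either $-b_{ij}$ or $-tb_{ij}$, and both specialize to $-\tilde b_{ij}$ because $t\mapsto 1$; this matches the single FZ branch $-\tilde b_{ij}$, with the two sign-subcases of $\mu_k$ collapsing into one. For the third branch, I would use that $\sgn(b_{ik}b_{kj})=\sgn(b_{ik})\sgn(b_{kj})=\sgn(\tilde b_{ik}\tilde b_{kj})$, so the hypothesis $\sgn(b_{ik}b_{kj})=1$ is equivalent to $\sgn(\tilde b_{ik}\tilde b_{kj})=1$; then, since specialization is multiplicative and $t\mapsto 1$,
\[
\big(t(b_{ij}+\sgn(b_{ik})b_{ik}b_{kj})\big)(1)=\tilde b_{ij}+\sgn(\tilde b_{ik})\tilde b_{ik}\tilde b_{kj},
\]
which is exactly the FZ output. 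The final ``otherwise'' branch is preserved verbatim.

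The only points requiring care are bookkeeping rather than genuine obstacles. I would check that the branches of $\mu_k$ are mutually exclusive in the right way: when $i=k$ or $j=k$ the product $b_{ik}b_{kj}$ vanishes by the diagonal observation, so the third branch is never reached there, and when an entry is zero the sign conditions in the first two branches both fail, so we correctly fall through to the identity branch (matching $-\tilde b_{ij}=0$). Assembling the four cases shows that specializing $\mu_k(B)$ agrees with $\mu_k^{\text{FZ}}(B(1))$ in every entry, which is precisely the commutativity of the diagram. The essential content is simply that the auxiliary variable $t$, whose only role is to record orientation, disappears on setting $t=1$, while Definition \ref{defn_sign} ensures the sign-based case splits of the two rules coincide.
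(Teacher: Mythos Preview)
Your proof is correct and follows exactly the same approach as the paper, which simply notes that $\sgn(b_{ij}(1))=\sgn(b_{ij})$ and asks the reader to compare the two mutation rules. You have merely written out in full the entry-by-entry comparison that the paper leaves implicit.
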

\begin{proof}
Note that $\sgn(b_{ij}(1))=\sgn(b_{ij})$ and compare 
the definitions of $\mu_k$ and $\mu_k^{\text{FZ}}$.
\end{proof}

\begin{lemma}\label{lem_sign}
Let $B=(b_{ij})$ be a gss matrix.  Then:
\begin{enumerate}[label=(\roman*)]
    \item $\sgn(b_{ij})=-\sgn(b_{ji})$,
    \item $\sgn(b_{ij}')=-\sgn(b_{ij})$ if $k=i,j$,
    \item $\sgn(b_{ij}b_{\ell m})=\sgn(b_{ij})\sgn(b_{\ell m})$,
    \item\label{lem_sign_bw} $\sgn(b_{ik}b_{kj})=\sgn(b_{jk}b_{ki})$.
\end{enumerate}
\end{lemma}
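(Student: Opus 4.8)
The plan is to treat the four parts separately, deriving them from the two basic properties of the sign recorded after Definition \ref{defn_sign} --- multiplicativity $\sgn(xy)=\sgn(x)\sgn(y)$ and the invariance $\sgn(tx)=\sgn(x)$ --- together with the defining relation $d_i b_{ij}=-d_j b_{ji}$ of a gss matrix. Parts (i), (iii) and (iv) will be purely formal consequences of these facts, whereas (ii) will need a case analysis over the branches of the signed mutation rule.

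I would open with (iii), which is nothing more than the multiplicativity property applied to $x=b_{ij}$ and $y=b_{\ell m}$; equivalently, one can note that $\sgn(x)=\sgn(x(1))$ and that specialization $x\mapsto x(1)$ is a ring homomorphism $\Z[t]/(t^2-1)\to\Z$, so the sign factors through the ordinary sign on $\Z$. For (i), I would apply $\sgn$ to both sides of $d_i b_{ij}=-d_j b_{ji}$: since the $d_i$ are positive integers, $\sgn(d_i b_{ij})=\sgn(d_i)\sgn(b_{ij})=\sgn(b_{ij})$ and similarly $\sgn(-d_j b_{ji})=-\sgn(b_{ji})$, so $\sgn(b_{ij})=-\sgn(b_{ji})$. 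Part (iv) then falls out formally: by (iii) its two sides are $\sgn(b_{ik})\sgn(b_{kj})$ and $\sgn(b_{jk})\sgn(b_{ki})$, and by (i) each factor on the left is the negative of the matching factor on the right, so the two minus signs cancel and the products agree.

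The remaining part (ii) is where I expect the only real care to be needed. Under the hypothesis $k\in\{i,j\}$ I would run through the branches of Definition \ref{defn_signed_mut}. When $b_{ij}'=-b_{ij}$ the claim is immediate, and when $b_{ij}'=-tb_{ij}$ it follows from $\sgn(-tb_{ij})=-\sgn(tb_{ij})=-\sgn(b_{ij})$ using $t$-invariance. The delicate point is that the first two branches are guarded by $\sgn(b_{ij})=\pm1$, so the degenerate case $\sgn(b_{ij})=0$, that is $b_{ij}=0$, must be checked separately. Here I would use that the diagonal entries vanish --- $d_k b_{kk}=-d_k b_{kk}$ forces $b_{kk}=0$ --- so that $k=i$ gives $b_{ik}=0$ and $k=j$ gives $b_{kj}=0$; in either case $\sgn(b_{ik}b_{kj})=0\neq1$, so the third branch cannot fire and $b_{ij}$ lands in the ``otherwise'' branch unchanged, whence $\sgn(b_{ij}')=0=-\sgn(b_{ij})$. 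Confirming that these branches are exhaustive and non-overlapping when $k\in\{i,j\}$ is the only spot where a reader might stumble, so I would write the case split out explicitly rather than leave it to inspection.
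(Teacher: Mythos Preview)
Your proposal is correct and follows essentially the same approach as the paper, which records only the one-line proof ``All properties follow from the skew-symmetrizable property and $\sgn(x)\sgn(y)=\sgn(xy)$.'' You have simply unpacked this: (i), (iii), (iv) are exactly the formal consequences the paper alludes to, and your explicit case analysis for (ii), including the $b_{ij}=0$ subcase via $b_{kk}=0$, is a careful expansion of what the paper leaves implicit.
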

\begin{proof}
All properties follow from the skew-symmetrizable property and $\sgn(x)\sgn(y)=\sgn(xy)$.
\end{proof}

Fomin-Zelevinsky mutation preserves the property of being skew-symmetrizable; in fact, the skew-symmetrizing matrix remains constant \cite[Proposition 4.5]{fz1}.  We have a similar result in our setting.
\begin{lemma}
If $B$ is a gss matrix with skew-symmetrizing matrix $D=\diag(d_1,\ldots,d_n)$ 
and $B'=\mu_k(B)$, then $B'$ is also a gss matrix with skew-symmetrizing matrix $D$.
\end{lemma}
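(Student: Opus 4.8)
The plan is to verify directly that $B'=\mu_k(B)$ satisfies the skew-symmetrizing identity $d_i b_{ij}' = -d_j b_{ji}'$ for all $i,j$, since the entries of $B'$ manifestly remain in $\Z[t]/(t^2-1)$ and $D$ is the only candidate skew-symmetrizing matrix. Because this identity is symmetric under exchanging $i$ and $j$, and because Lemma \ref{lem_sign}(iv) guarantees $\sgn(b_{ik}b_{kj})=\sgn(b_{jk}b_{ki})$, for each unordered pair the entries $b_{ij}'$ and $b_{ji}'$ are computed from corresponding branches of Definition \ref{defn_signed_mut} (up to the $i=k$/$j=k$ swap). I would therefore split into three regimes: (a) $i,j\neq k$; (b) exactly one of $i,j$ equals $k$; and (c) $i=j=k$. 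Along the way the diagonal entries are seen to remain zero, so the identity for $i=j$ holds automatically.

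First I would dispose of the easy regimes. In regime (a) with $\sgn(b_{ik}b_{kj})\neq 1$, both $b_{ij}'=b_{ij}$ and, by Lemma \ref{lem_sign}(iv), $b_{ji}'=b_{ji}$, so the identity is inherited from $B$; the diagonal ($i=j\neq k$) sits here since $\sgn(b_{ik}b_{ki})=-\sgn(b_{ik})^2\leq 0$. In regime (b), say $i=k$, the entries $b_{kj}$ and $b_{jk}$ satisfy $\sgn(b_{jk})=-\sgn(b_{kj})$ by Lemma \ref{lem_sign}(i), so they fall into matching branches: either $b_{kj}'=-b_{kj}$ and $b_{jk}'=-b_{jk}$ (when $\sgn(b_{kj})=1$), or $b_{kj}'=-tb_{kj}$ and $b_{jk}'=-tb_{jk}$ (when $\sgn(b_{kj})=-1$); in both cases the relation $d_k b_{kj}=-d_j b_{jk}$ immediately gives $d_k b_{kj}'=-d_j b_{jk}'$. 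The subcase $\sgn(b_{kj})=0$ forces $b_{kj}=b_{jk}=0$, which is trivial, and regime (c) is trivial since $b_{kk}'=b_{kk}=0$.

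The substantive case is regime (a) with $\sgn(b_{ik}b_{kj})=1$, where $b_{ij}'=t(b_{ij}+\sgn(b_{ik})b_{ik}b_{kj})$ and, using Lemma \ref{lem_sign}(iv) again, $b_{ji}'=t(b_{ji}+\sgn(b_{jk})b_{jk}b_{ki})$. Here I would multiply out $d_i b_{ij}'=t\big(d_i b_{ij}+\sgn(b_{ik})\,d_i b_{ik}\,b_{kj}\big)$ and rewrite the cross term via $d_i b_{ik}=-d_k b_{ki}$ to get $t\big(d_i b_{ij}-\sgn(b_{ik})\,d_k b_{ki}b_{kj}\big)$. A parallel computation, using $d_j b_{ji}=-d_i b_{ij}$ and $d_j b_{jk}=-d_k b_{kj}$, gives $-d_j b_{ji}'=t\big(d_i b_{ij}+\sgn(b_{jk})\,d_k b_{kj}b_{ki}\big)$. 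Since $b_{ki}b_{kj}=b_{kj}b_{ki}$, matching the two reduces to the sign identity $-\sgn(b_{ik})=\sgn(b_{jk})$; and indeed $\sgn(b_{jk})=-\sgn(b_{kj})$ by Lemma \ref{lem_sign}(i), while $\sgn(b_{ik}b_{kj})=1$ forces $\sgn(b_{ik})=\sgn(b_{kj})$, so the two sides agree.

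The main obstacle is precisely this last matching of cross terms: one must track the interaction between the symmetrizing relations $d_i b_{ik}=-d_k b_{ki}$ (which introduce a sign) and the prefactors $\sgn(b_{ik}),\sgn(b_{jk})$ coming from the mutation rule, and observe that they cancel exactly because this branch was entered under the hypothesis $\sgn(b_{ik}b_{kj})=1$. Once this bookkeeping is organized through Lemma \ref{lem_sign}, everything is a short computation; in particular the extra factor of $t$ appearing in several branches plays no role, since it multiplies both sides of each identity equally.
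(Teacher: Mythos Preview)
Your proposal is correct and follows essentially the same approach as the paper's proof: both perform a case analysis on the branches of Definition~\ref{defn_signed_mut}, with the only substantive computation occurring in the case $\sgn(b_{ik}b_{kj})=1$, where the identity $d_ib_{ik}b_{kj}=-d_kb_{ki}b_{kj}=d_jb_{jk}b_{ki}$ together with $\sgn(b_{ik})=\sgn(b_{kj})=-\sgn(b_{jk})$ does all the work. Your organization by ``regimes'' (a)--(c) and explicit handling of the diagonal is slightly more verbose than the paper's treatment, but the content is identical.
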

\begin{proof}
We need to check all four cases of Definition \ref{defn_signed_mut}. 
First, if $k=i$ and $\sgn(b_{ij})=1$, then $\sgn(b_{ji})=-1$, so 
$d_ib_{ij}'=-d_ib_{ij}=d_jb_{ji}=-d_jb_{ji}'$. 
The other possibilities in the first two cases are similar.
Next suppose $\sgn(b_{ik}b_{kj})=1$.  We have 
$d_ib_{ik}b_{kj}=-d_kb_{ki}b_{kj}=d_jb_{jk}b_{ki}$ and $\sgn (b_{ik})=\sgn(b_{kj})=-\sgn(b_{jk})$, so $d_ib_{ij}'=td_i(b_{ij}+\sgn(b_{ik})b_{ik}b_{kj})=td_j(-b_{ji}-\sgn(b_{jk})b_{jk}b_{ki})=-d_jb_{ji}'$.
Finally, in the fourth case, $\sgn(b_{ik}b_{kj})\neq1$, so $\sgn(b_{jk}b_{ki})\neq1$, so $d_ib_{ij}'=d_ib_{ij}=-d_jb_{ji}=-d_jb_{ji}'$.
\end{proof}

Famously, Fomin-Zelevinsky mutation is an involution.  The same does not hold in our situation, but we do have the following:
\begin{lemma}\label{lem:mutate_twice}
Let $B$ be a gss matrix and let $B''=\mu_k^2(B)$.  Then:
\[ b''_{ij}=\begin{cases}
tb_{ij} & \text{if $i=k$ or $j=k$}; \\  
b_{ij} & \text{otherwise}.
\end{cases} \]
\end{lemma}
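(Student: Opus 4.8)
The plan is to verify the formula by a direct case analysis following the four cases of Definition \ref{defn_signed_mut}, splitting according to whether $i$ or $j$ equals $k$. Throughout I write $b'_{ij}$ for the entries of $B'=\mu_k(B)$ and $b''_{ij}$ for those of $B''=\mu_k(B')$, and I repeatedly use that the diagonal entries of a gss matrix vanish (since $d_i b_{ii}=-d_i b_{ii}$ forces $b_{ii}=0$) together with the identities $t^2=1$, $\sgn(tx)=\sgn(x)$, and $\sgn(x)\sgn(y)=\sgn(xy)$ recorded after Definition \ref{defn_sign}.

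First I would treat the entries in row or column $k$. Suppose $i=k$ and $j\neq k$. If $\sgn(b_{kj})=1$ then the first case gives $b'_{kj}=-b_{kj}$, whence $\sgn(b'_{kj})=-1$, so the second case governs the next mutation and $b''_{kj}=-tb'_{kj}=tb_{kj}$; if instead $\sgn(b_{kj})=-1$ the two cases swap roles but the outcome is again $b''_{kj}=tb_{kj}$, and if $b_{kj}=0$ both mutations fix the entry so the formula holds trivially (this also disposes of the diagonal entry $i=j=k$). The column case $j=k$, $i\neq k$ is entirely symmetric. The upshot of this step is the auxiliary fact that $\sgn(b'_{ik})=-\sgn(b_{ik})$ and $\sgn(b'_{kj})=-\sgn(b_{kj})$ for all $i,j$, which drives the generic case.

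Next I would handle the generic entries, $i\neq k$ and $j\neq k$. The auxiliary fact immediately gives $\sgn(b'_{ik}b'_{kj})=\sgn(b'_{ik})\sgn(b'_{kj})=\sgn(b_{ik}b_{kj})$, so exactly the same case (third or fourth) of Definition \ref{defn_signed_mut} governs both mutations. When $\sgn(b_{ik}b_{kj})\neq1$ the fourth case applies twice and $b''_{ij}=b'_{ij}=b_{ij}$ with nothing to compute. When $\sgn(b_{ik}b_{kj})=1$ the third case applies twice; here I would record the two sub-possibilities $\sgn(b_{ik})=\sgn(b_{kj})=\pm1$ and check in each that $b'_{ik}b'_{kj}=t\,b_{ik}b_{kj}$, so that $\sgn(b'_{ik})b'_{ik}b'_{kj}=-t\,\sgn(b_{ik})b_{ik}b_{kj}$. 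Substituting $b'_{ij}=t(b_{ij}+\sgn(b_{ik})b_{ik}b_{kj})$ into the third-case formula $b''_{ij}=t(b'_{ij}+\sgn(b'_{ik})b'_{ik}b'_{kj})$ then makes the two correction terms cancel, leaving $t^2b_{ij}=b_{ij}$.

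The only real bookkeeping is in this last step: one must confirm that the factor of $t$ introduced by the first mutation into $b'_{ij}$, and the factor of $t$ carried by the product $b'_{ik}b'_{kj}$, conspire so that the correction terms have opposite signs and cancel. Once the identity $b'_{ik}b'_{kj}=t\,b_{ik}b_{kj}$ is in hand, everything collapses using $t^2=1$, so I expect no genuine difficulty beyond careful sign tracking.
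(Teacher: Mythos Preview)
Your proposal is correct and follows essentially the same route as the paper: first handle the row/column through $k$ by direct sign analysis, then use the resulting sign-flip $\sgn(b'_{ik})=-\sgn(b_{ik})$ (the paper cites Lemma~\ref{lem_sign}(ii) for this) to see that the same case of the mutation rule governs both steps when $i,j\neq k$, and finally verify the cancellation in the third case via $b'_{ik}b'_{kj}=t\,b_{ik}b_{kj}$. You are, if anything, slightly more explicit than the paper about why that last identity holds in both sign sub-cases.
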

\begin{proof}
Write $B^{(\ell)}=\mu_k^{\ell}(B)$.
If $i=k$ or $j=k$, Definition \ref{defn_signed_mut} gives $b_{ij}^{(2)}=tb_{ij}^{(0)}$.  
 
Now let $i,j$ be such that $\sgn (b_{ik}^{(0)}b_{kj}^{(0)})=1$.  By Lemma \ref{lem_sign} we have 
\[ \sgn(b_{ik}^{(1)}b_{kj}^{(1)}) = \sgn(b_{ik}^{(1)})\sgn(b_{kj}^{(1)})=(-1)^2\sgn(b_{ik}^{(0)})\sgn(b_{kj}^{(0)})= \sgn(b_{ik}^{(0)}b_{kj}^{(0)}) =1, \]
so Definition \ref{defn_signed_mut} and Lemma \ref{lem_sign} give:
     \begin{align*}
         b_{ij}^{(2)}& = t(b_{ij}^{(1)}+\sgn(b_{ik}^{(1)})b_{ik}^{(1)}b_{kj}^{(1)}) \\
         & = t(t(b_{ij}^{(0)}+\sgn(b_{ik}^{(0)})b_{ik}^{(0)}b_{kj}^{(0)})+\sgn(b_{ik}^{(1)})b_{ik}^{(1)}b_{kj}^{(1)}) \\
         & = t(tb_{ij}^{(0)} +t\sgn(b_{ik}^{(0)})b_{ik}^{(0)}b_{kj}^{(0)} +t\sgn(b_{ik}^{(1)})b_{ik}^{(0)}b_{kj}^{(0)}) \\
         & = t(tb_{ij}^{(0)} +t\sgn(b_{ik}^{(0)})b_{ik}^{(0)}b_{kj}^{(0)} -t\sgn(b_{ik}^{(0)})b_{ik}^{(0)}b_{kj}^{(0)}) \\
         & = t^2 b_{ij}^{(0)}=b_{ij}^{(0)}.
     \end{align*}
In all other cases we have $b_{ij}^{(2)}=b_{ij}^{(1)}=b_{ij}^{(0)}$, concluding the proof. 
\end{proof}

As an immediate consequence, we get:
\begin{proposition}\label{prop_order4}
$\mu_k^4(B)=B$.
\end{proposition}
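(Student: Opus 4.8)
The plan is to iterate Lemma \ref{lem:mutate_twice}, which already does almost all the work: it tells us that squaring $\mu_k$ only multiplies the entries in row $k$ and column $k$ by $t$, leaving everything else fixed. Since we are working in $\Z[t]/(t^2-1)$, applying this effect twice should restore the original matrix, because the scalar $t$ accumulates to $t^2=1$ precisely on those entries that get touched.

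Concretely, first I would set $B''=\mu_k^2(B)$ and record, directly from Lemma \ref{lem:mutate_twice}, that $b''_{ij}=tb_{ij}$ when $i=k$ or $j=k$, and $b''_{ij}=b_{ij}$ otherwise. Since $B''$ is again a gss matrix (mutation preserves the gss property by the preceding lemma), Lemma \ref{lem:mutate_twice} applies equally to $B''$, so I can write $\mu_k^4(B)=\mu_k^2(B'')$ and compute its entries from the same formula applied to $B''$. For an entry with $i=k$ or $j=k$ this gives $t\cdot b''_{ij}=t\cdot tb_{ij}=t^2b_{ij}=b_{ij}$, using $t^2=1$; for every other entry it gives $b''_{ij}=b_{ij}$ unchanged. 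In all cases the resulting entry equals $b_{ij}$, so $\mu_k^4(B)=B$.

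I expect there to be no real obstacle here: the content has been front-loaded into Lemma \ref{lem:mutate_twice}, and the proposition is an immediate corollary, as the surrounding text indicates. The only point requiring a moment's care is that the second application of the lemma is to $B''$ rather than to $B$, which is legitimate because $B''$ is gss; and that the factor $t$ appears on exactly the same set of entries (row $k$ and column $k$) in both applications, so the two factors combine to $t^2=1$ on that set and to nothing elsewhere. Everything then collapses using the defining relation $t^2=1$ of the coefficient ring.
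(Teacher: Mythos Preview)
Your proposal is correct and matches the paper's own treatment: the paper states the proposition as an immediate consequence of Lemma \ref{lem:mutate_twice}, and your argument is precisely the unwinding of that immediate consequence---apply the lemma twice and use $t^2=1$.
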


% ================================================

\subsection{Signed valued quivers}

The classical theory of mutation has a graphical interpretation using valued quivers.  To develop a corresponding interpretation for mutation of gss matrices, we need the following definition.
\begin{definition}
An $n\times n$ gss matrix $B$ is called \textbf{pure} if, for all $i,j\in\{1,\ldots,n\}$, either $b_{ij}\in\mathbb{Z}$ or $b_{ij}\in t\mathbb{Z}$.
\end{definition}

A quiver is called \emph{simple} if its underlying graph is simple, so it has no loops and has at most one arrow between any two vertices.  A quiver is called \emph{cluster} if it satisfies the usual assumptions in cluster theory: it has no loops or 2-cycles.  So the Kronecker quiver is cluster, but is not simple.

We recall some standard notions which can be found in, for example, the introduction of \cite{dr76}.
A \emph{valued graph} $(\Gamma, a)$ is a set $\Gamma$ of vertices
together with non-negative integers $a_{ij}$ for all $i,j\in \Gamma$ such that $a_{ii}=0$ and such that there exist positive integers $d_i$ such that $d_ia_{ij}=d_ja_{ji}$.  An \emph{orientation} $\Omega$ of  $(\Gamma, a)$ is a choice of order $i\to j$ or $j\to i$ for each unordered pair $i,j\in\Gamma$ with $d_{ij}\neq0$.  We can combine these data structures using a \emph{valued quiver}: this is a simple quiver $Q$ together with an ordered pair $\tilde{v}(\alpha)=(a_{ij},a_{ji})$ of positive integers for each arrow $\alpha:i\to j$, satisfying the $d_ia_{ij}=d_ja_{ji}$ condition.

We will need a signed variant of the previous definition.
\begin{definition}
A \textbf{signed valued quiver} is a pair $(Q,v)$ where $Q$ is a simple quiver and 
\[v=(v_1,v_2):Q_1\to \mathbb{Z}\times \mathbb{Z}\] 
is a function on the arrows of $Q$ such that, for all $\alpha\in Q_1$,  $\sgn(v_1(\alpha))=\sgn(v_2(\alpha))\neq0$.  
Moreover, there should exist positive integers $d_i$, for $i\in Q_0$, such that $d_iv_1(\alpha)=d_jv_2(\alpha)$ for all arrows $\alpha:i\to j$.
The ordered pair $v(\alpha)$ is called the \textbf{value} of $\alpha$
and the product $v_1(\alpha)v_2(\alpha)$ is called the \textbf{weight} of $\alpha$. 
We say that an arrow $\alpha$ is $\textbf{positive}$ and has $\textbf{sign +1}$ if $v_1(\alpha)>0$, and is \textbf{negative} and has $\textbf{sign -1}$ if $v_1(\alpha)<0$.  If $(Q,v)$ is a signed valued quiver, its \textbf{specialization} $(Q,v^+)$ is the valued quiver with $v^+(\alpha)=(\abs{v_1(\alpha)},\abs{v_2(\alpha)})$ for each $\alpha\in Q_1$.
\end{definition}
For an arrow $\alpha:i\to j$ with $v(\alpha)=(a,b)$,
we often write $\alpha:i\arr{(a,b)} j$, or sometimes $\alpha:j\rra{(a,b)} i$.

It is occasionally convenient to write $\alpha:i\arr{(0,0)} j$ when there is no arrow between $i$ and $j$, and $\alpha:i\arr{\pm}j$ when there is an arrow of value $(\pm1,\pm1)$. 

If all the arrows in $(Q,v)$ have value $(\pm 1,\pm 1)$, so a sign on the arrows is enough to determine the values, we will simply talk about a \textbf{signed quiver}.

We now explain how to associate a signed valued quiver to a pure gss matrix.  The rough idea is that $+,-$ signs of matrix entries correspond to orientation of arrows, as in the classical setting, and $1,t$ coefficients correspond to positive/negative sign of arrows.  
\begin{definition}\label{def:associated}
Let $B=(b_{ij})$ be a pure gss matrix. Define its \textbf{associated signed valued quiver} to be the pair $(Q,v)=(Q_B,v_B)$ where:
\begin{itemize}
    \item $Q$ has vertex set $\{1,\ldots, n\}$;
    \item there is an arrow $i\to j$ if and only if $\sgn (b_{ij})=1$;
    \item $v:Q_1\to \mathbb{Z}\times \mathbb{Z}$ is given by $v(\alpha)=(b_{ij}(-1),-b_{ji}(-1))$.
\end{itemize}

Given a signed valued quiver $(Q,v)$, we define its  \textbf{associated gss matrix} to be the pure gss matrix $B=B(Q,v)=(b_{ij})$ where
\[ b_{ij} =
\begin{cases}
v_1(\alpha) &\text{if }\exists\text{ positive }\alpha:i\to j;\\
-v_2(\alpha) &\text{if }\exists\text{ positive }\alpha:j\to i;\\
-tv_1(\alpha) &\text{if }\exists\text{ negative }\alpha:i\to j;\\
tv_2(\alpha) &\text{if }\exists\text{ negative }\alpha:j\to i;\\
0 &\text{if there is no arrow between $i$ and $j$.}
\end{cases}
\]
\end{definition}

\begin{example}\label{eg:mutate-quivers}
The signed valued quivers associated to the matrices $B$ and $\mu_2(B)$ in Example \ref{eg_mut_spec_matrix} are the following.
\[
%    \centering
\begin{tikzpicture}[xscale=2,baseline=(bb.base),
  quivarrow/.style={black, -latex}] 
\path (0,0) node (bb) {}; % baseline

\node (v1) at (0,0) {1};
\node (v2) at (0.5,0) {2};
\node (v3) at (1,0) {3,};

\draw [quivarrow, shorten <=-1pt, shorten >=-1pt] (v2) -- (v1) node[midway, above]{\tiny $(-1,-1)$};
\draw [quivarrow, shorten <=-1pt, shorten >=-1pt] (v3) -- (v2) node[midway, above]{\tiny $(-1,-2)$};

\begin{scope}[shift={(2.4,0)}]

\node (v1) at (0,-0.4) {1};

\node (v3) at (0.6,-0.4) {3};
\node (v2) at (0.3,0.4) {2};

\draw [quivarrow, shorten <=-1pt, shorten >=-1pt] (v1) -- (v2) node[midway, above left]{\tiny $(-1,-1)$};
\draw [quivarrow, shorten <=-1pt, shorten >=-1pt] (v2) -- (v3) node[midway, above right]{\tiny $(2,1)$};
\draw [quivarrow, shorten <=-1pt, shorten >=-1pt] (v3) -- (v1) node[midway, below]{\tiny $(-1,-2)$};
    
\end{scope}
\end{tikzpicture}
\]
\end{example}

\begin{proposition}
The two constructions in Definition \ref{def:associated} are mutually inverse.
\end{proposition}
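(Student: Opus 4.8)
The plan is to verify the two round-trips separately: starting from a pure gss matrix $B$ and checking that $B(Q_B,v_B)=B$, and starting from a signed valued quiver $(Q,v)$ and checking that $(Q_{B(Q,v)},v_{B(Q,v)})=(Q,v)$. Both are arrowwise (equivalently, entrywise) verifications, so I would fix a single pair of vertices $i\neq j$ — the absence of loops and the diagonal entries being immediate — and run through the cases. The single observation that organizes everything is that the formal variable $t$ is used in two independent ways: the sign $\sgn$ is computed via the substitution $t\mapsto 1$, whereas the value $v_B$ is computed via $t\mapsto -1$. I would state this cleanly at the outset, since it is what lets the two specializations jointly detect both the ordinary sign of an entry and whether it carries a factor of $t$.

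For the first round-trip, the starting point is Lemma~\ref{lem_sign}(i), which gives $\sgn(b_{ij})=-\sgn(b_{ji})$; hence exactly one of the two signs equals $+1$, producing a well-defined orientation with no $2$-cycles. Say $\sgn(b_{ij})=1$, so $Q_B$ has an arrow $\alpha\colon i\to j$. Purity forces $b_{ij}$ to lie in $\Z$ or in $t\Z$, and evaluation at $t=-1$ preserves the sign of a $\Z$-entry but reverses the sign of a $t\Z$-entry. Therefore $v_1(\alpha)=b_{ij}(-1)$ is positive precisely when $b_{ij}\in\Z$ and negative precisely when $b_{ij}\in t\Z$, so the sign of the arrow records exactly whether $b_{ij}$ carries a factor of $t$. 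Matching this against the four cases of the associated-matrix formula then returns the original entry $b_{ij}$ in one line in each case; the companion entry is recovered identically using $v_2(\alpha)=-b_{ji}(-1)$ together with the skew-symmetrizability relation, which guarantees $b_{ji}$ has the same $\Z$-versus-$t\Z$ type as $b_{ij}$.

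For the second round-trip, I would fix an arrow $\alpha\colon i\to j$ with value $(v_1,v_2)$ and split into the positive case ($v_1,v_2>0$) and the negative case ($v_1,v_2<0$). In each case the associated-matrix formula yields explicit entries — for instance $b_{ij}=-tv_1$ and $b_{ji}=tv_2$ in the negative case — and one checks both that $\sgn(b_{ij})=1$, so the reconstructed quiver again has an arrow $i\to j$, and that $(b_{ij}(-1),\,-b_{ji}(-1))=(v_1,v_2)$, recovering the value. Along the way the quiver condition $d_iv_1=d_jv_2$ matches the gss condition $d_ib_{ij}=-d_jb_{ji}$ under these substitutions, confirming in passing that $B(Q,v)$ is genuinely a pure gss matrix.

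The computations are short once the framework is set up, so the only real obstacle is bookkeeping: keeping straight the two roles of $t$ and resisting the temptation to conflate $\sgn(b_{ij})$ (an evaluation at $+1$) with the sign of $b_{ij}(-1)$ (an evaluation at $-1$). The entire proposition hinges on the fact that these two specializations are independent enough to encode orientation and arrow-sign simultaneously, and I expect that organizing the argument around that single remark will reduce each of the four matrix cases to a one-line check.
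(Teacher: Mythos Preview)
Your proposal is correct and follows essentially the same approach as the paper's proof: both run the two round-trips separately and verify them by a case-by-case check on a fixed pair $i\neq j$, splitting according to purity ($b_{ij}\in\Z$ versus $b_{ij}\in t\Z$) in one direction and according to the sign of the arrow in the other. Your organizing remark about the two specializations $t\mapsto 1$ and $t\mapsto -1$ is a nice bit of framing that the paper leaves implicit, but the actual computations are the same; the only small point to tidy is that ``exactly one of the two signs equals $+1$'' tacitly assumes $b_{ij}\neq 0$, so you should dispose of the zero case explicitly (as the paper does in one line) before invoking Lemma~\ref{lem_sign}(i).
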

\begin{proof}
Start with a pure gss matrix $B=(b_{ij})$.  Let $(Q,v)$ be its associated signed valued quiver, and let $A=(a_{ij})$ be the associated gss matrix of $(Q,v)$.  If $b_{ij}=0$ then also $b_{ji}=0$, so there is no arrow between $i$ and $j$, so $a_{ij}=0$.  If $b_{ij}\neq 0$ and $\sgn (b_{ij})=1$, we get an arrow $\alpha:i\to j$ in $Q$ with $v(\alpha)=(b_{ij}(-1),-b_{ji}(-1))$.  If $b_{ij}\in\Z$ then $v_1(\alpha)=b_{ij}$.  As $\sgn (b_{ij})=1$ we have $b_{ij}>0$, so $\alpha$ is positive, so $a_{ij}=b_{ij}$.  If $b_{ij}=bt\in t\Z$ then $v_1(\alpha)=-b<0$ 
because $\sgn (b_{ij})=1$.  So $\alpha$ is negative, so $a_{ij}=(-t)(-b)=bt=b_{ij}$.     If $b_{ij}\neq 0$ and $\sgn (b_{ij})=-1$, just swap $i$ and $j$.

Now start with a signed valued quiver $(Q,v)$.  Construct the associated gss matrix $B=(b_{ij})$ and its associated signed valued quiver $(R,w)$.  If $Q$ has no arrow between $i$ and $j$ then $b_{ij}=0$, so neither does $R$.  If $\alpha:i\to j$ in $Q$ is positive then $b_{ij}=v_1(\alpha)$ is a positive integer, and $b_{ji}=-v_2(\alpha)\in\Z$.  So we get an arrow $\alpha:i\to j$ in $R$ and $w(\alpha)=(v_1(\alpha),v_2(\alpha))$.  If instead $\alpha:i\to j$ in $Q$ is negative then $b_{ij}=-tv_1(\alpha)$ is a positive multiple of $t$, and $b_{ji}=tv_2(\alpha)\in\Z$.  So we get an arrow $\alpha:i\to j$ in $R$ and again $w(\alpha)=(v_1(\alpha),v_2(\alpha))$.
\end{proof}

We would like to define mutation of signed valued quivers by first constructing the associated (pure) gss matrix, then applying Definition \ref{defn_signed_mut}, and then taking the associated signed valued quiver.  However, as the following example shows, the mutation of a pure gss matrix may not be pure.
\begin{example}
Let $Q$ be an oriented $3$-cycle and give all arrows value $(-1,-1)$.  Then the associated gss matrix $B$ and the signed mutation $B'=\mu_2(B)$ are:
\[B=\begin{pmatrix}
0 &t &-t\\
-t &0 &t\\
t &-t& 0
\end{pmatrix}
\;\; \text{ and } \;\;
B'=\begin{pmatrix}
0 &-1 &-1+t\\
1 &0 &-t\\
1-t &t& 0
\end{pmatrix}.\]
\end{example}
\begin{definition}
We say that a pure gss matrix $B$ satisfies the \textbf{positive 3-cycle condition at $k$} if, for all $i,j\neq k$ 
with $\sgn (b_{ik}b_{kj})=1$, we have
\[ b_{ij}b_{ik}b_{kj}\in \mathbb{Z}. \]
\end{definition}
\begin{remark}
In terms of the associated signed valued quiver of $B$, the positive 3-cycle condition at $k$ says that, whenever we have arrows $i\to k\to j$ and an arrow between $i$ and $j$ in either direction, the product of the signs of the three arrows is positive.  We will apply the condition to signed valued quivers below.
\end{remark}
\begin{lemma}\label{lem_mut_specializ}
Let $B=(b_{ij})$ be a pure gss matrix. Then $B'=\mu_k(B)=(b_{ij}')$ is pure if and only if $B$ satisfies the positive 3-cycle condition at $k$.
\end{lemma}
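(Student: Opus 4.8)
The statement is a biconditional relating purity of $B'=\mu_k(B)$ to the positive 3-cycle condition at $k$. My plan is to examine each of the four cases of Definition \ref{defn_signed_mut} and check when the resulting entry $b_{ij}'$ lies in $\Z$ or $t\Z$. The first three cases behave uniformly regardless of the condition, so the content is entirely in the fourth-type interaction, which is really hidden inside the third case where entries get combined.

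First I would dispose of the easy cases. If $i=k$ or $j=k$, then $b_{ij}'$ is either $-b_{ij}$ or $-tb_{ij}$; since $B$ is pure, $b_{ij}\in\Z$ or $b_{ij}\in t\Z$, and multiplying by $-1$ or $-t$ keeps us in $\Z\cup t\Z$ (using $t^2=1$). So these entries are always pure. Similarly, the ``otherwise'' case leaves $b_{ij}'=b_{ij}$ unchanged, hence pure. Thus purity of $B'$ is decided entirely by the third case, where $\sgn(b_{ik}b_{kj})=1$ and
\[ b_{ij}'=t\bigl(b_{ij}+\sgn(b_{ik})b_{ik}b_{kj}\bigr). \]

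The key step is to analyze when this expression is pure. The point is that $b_{ij}'$ is pure if and only if the two summands $b_{ij}$ and $\sgn(b_{ik})b_{ik}b_{kj}$ lie in the \emph{same} part of the decomposition $\Z\oplus t\Z$ — i.e., both in $\Z$ or both in $t\Z$ — since otherwise their sum has a nonzero component in each part and fails to be pure (and multiplying by $t$ preserves this dichotomy). So I would show that ``both summands in the same part'' is exactly equivalent to the divisibility condition $b_{ij}b_{ik}b_{kj}\in\Z$. To see this, note $b_{ik}b_{kj}\in\Z\cup t\Z$ by purity, and $\sgn(b_{ik})$ is a unit scalar, so the second summand lies in the same part as $b_{ik}b_{kj}$. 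If there is no arrow between $i$ and $j$, then $b_{ij}=0$, which lies in both parts, so the sum is automatically pure and $b_{ij}b_{ik}b_{kj}=0\in\Z$ trivially — consistent with the condition. When $b_{ij}\neq0$, I would track the $t$-parity: writing each of the three factors as either integer or $t$-multiple, the product $b_{ij}b_{ik}b_{kj}$ lands in $\Z$ precisely when an even number of the factors are $t$-multiples, and one checks this matches the requirement that $b_{ij}$ and $b_{ik}b_{kj}$ share the same parity.

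The main obstacle, and the crux of the argument, is the bookkeeping of $t$-parities and making the equivalence watertight in the $b_{ij}\neq 0$ subcase: I must verify that ``$b_{ij}$ and $b_{ik}b_{kj}$ lie in the same part'' is genuinely equivalent to ``$b_{ij}b_{ik}b_{kj}\in\Z$'' rather than merely implied in one direction. This comes down to observing that for nonzero pure elements $x,y$, the product $xy$ is in $\Z$ iff $x,y$ are in the same part (both in $\Z$ or both in $t\Z$), and then applying this with $x=b_{ij}$ and $y=b_{ik}b_{kj}$. I should also confirm that the quantifier in the positive 3-cycle condition (over all $i,j\neq k$ with $\sgn(b_{ik}b_{kj})=1$) matches exactly the set of index pairs falling into the third case, so that purity of \emph{every} entry of $B'$ is equivalent to the condition holding for \emph{all} such pairs, completing the biconditional.
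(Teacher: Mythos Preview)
Your proposal is correct and follows essentially the same approach as the paper: both dispose of the first, second, and fourth cases of the mutation rule trivially, then reduce purity of the remaining entries to the parity condition $b_{ij}b_{ik}b_{kj}\in\Z$. The only difference is cosmetic---the paper writes out the four possible distributions of $b_{ij},b_{ik},b_{kj}$ among $\Z$ and $t\Z$ explicitly, whereas you phrase it via the cleaner observation that for nonzero pure $x,y$ one has $xy\in\Z$ iff $x$ and $y$ lie in the same part---but the content is identical.
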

\begin{proof}
Since $B$ is pure, we have $b_{ij}'\in\mathbb{Z}\cup t\mathbb{Z}$ if $k=i,j$ by definition of $\mu_k$. This is also true if $\sgn(b_{ik}b_{kj})\neq 1$, and if $\sgn (b_{ik}b_{kj})=1$ and $b_{ij}= 0$.
So assume $\sgn (b_{ik}b_{kj})=1$ and $b_{ij}\neq0$.  Then $B'$ is pure if and only if
\[ b_{ij}'=t(b_{ij}+\sgn (b_{ik})b_{ik}b_{kj})\in\mathbb{Z}\cup t\mathbb{Z}
\]
for all $i,j$ such that $\sgn(b_{ik}b_{kj})=1$.  Consider the $\Z$ and $t\Z$ situations separately:
\begin{itemize}
    \item $b_{ij}'\in\mathbb{Z}$ if and only if ($b_{ij},b_{ik}\in t \mathbb{Z}$ and $b_{kj}\in\mathbb{Z}$) or ($b_{ij},b_{kj}\in t\mathbb{Z}$ and $b_{ik}\in\mathbb{Z}$);
    \item $b_{ij}'\in t\mathbb{Z}$ if and only if ($b_{ij},b_{ik},b_{kj}\in \mathbb{Z}$) or ($b_{ij}\in \mathbb{Z}$ and $b_{ik},b_{kj}\in t \mathbb{Z}$).
\end{itemize}
The cases described above reduce to $b_{ij}b_{ik}b_{kj}\in \mathbb{Z}$.
\end{proof}

\begin{remark}
Under the assumptions of Lemma \ref{lem_mut_specializ}, the pure matrix $B'=\mu_k(B)$ also satisfies the positive 3-cycle condition at $k$. This is a straightforward consequence of the proof above. For example, if $b_{ij}'\in \mathbb{Z}$ and $b_{ij},b_{ik}\in t \mathbb{Z}$ and $b_{kj}\in\mathbb{Z}$, Definition \ref{defn_signed_mut} yields $b_{ik}'b_{kj}'\in \mathbb{Z}$, and so 
$b_{ij}'b_{ik}'b_{kj}'\in\mathbb{Z}$.
\end{remark}

Before describing mutation of signed valued quivers, it helps to introduce some terminology.
\begin{definition}
For arrows $\alpha=i\arr{(a,b)} j$ and $\beta=j\arr{(c,d)} h$, we define the following operations:
\begin{itemize}
    \item \textbf{reverse} $\alpha$ to get $\alpha^*:j\arr{(b,a)} i$;
    \item \textbf{negate} $\alpha$ to get $-\alpha:i\arr{\scalemath{.7}{(-a,-b)}} j$;
    \item \textbf{compose} $\alpha$ and $\beta$ to get
    $[\alpha\beta]:i\arr{\scalemath{.7}{(ac,bd)}} h$.
\end{itemize}
\end{definition}
\begin{definition}\label{lem:trichot}
For arrows $\alpha:i\arr{(a,b)} j$ and $\beta:j\arr{(c,d)} i$, 
we say that $\alpha$ is:
\begin{itemize}
\item \textbf{smaller} than $\beta$ if $\abs{a}<\abs d$ and $\abs b<\abs c$;
\item \textbf{bigger} than $\beta$ if $\abs{a}>\abs d$ and $\abs b>\abs c$;
\item \textbf{the same size} as $\beta$ if $a=d$ and $b=c$. 
\end{itemize}
We say that $\alpha$ is \textbf{comparable} to $\beta$ if one of the three conditions above hold.
\end{definition}

For the rest of this section, let $(Q,v)$ denote a signed valued quiver.
\begin{lemma}\label{lem_cond_signed_val_3_cycle}
For an oriented 3-cycle $1\arr\alpha 2\arr\beta 3\arr\gamma1$ in $(Q,v)$, where $[\alpha\beta\gamma]$ is positive, 
the composed arrow $[\alpha\beta]$
is comparable to $\gamma$.
\end{lemma}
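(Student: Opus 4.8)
The plan is to write out the three values explicitly and reduce the trichotomy of Definition~\ref{lem:trichot} to a single comparison of positive reals. Write $\alpha:1\arr{(a_1,a_2)}2$, $\beta:2\arr{(b_1,b_2)}3$ and $\gamma:3\arr{(c_1,c_2)}1$, and let $d_1,d_2,d_3$ be the positive integers with $d_1a_1=d_2a_2$, $d_2b_1=d_3b_2$ and $d_3c_1=d_1c_2$. By the composition rule, $[\alpha\beta]:1\arr{(A,B)}3$ with $A=a_1b_1$ and $B=a_2b_2$, and this arrow and $\gamma$ point in opposite directions between vertices $1$ and $3$, so Definition~\ref{lem:trichot} applies with $[\alpha\beta]$ in the role of the first arrow and $\gamma$ in the role of the second. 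I must therefore show that one of the following holds: $\abs A<\abs{c_2}$ and $\abs B<\abs{c_1}$; or $\abs A>\abs{c_2}$ and $\abs B>\abs{c_1}$; or $A=c_2$ and $B=c_1$.

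First I would eliminate the second coordinates using the symmetrizers. Substituting $a_2=(d_1/d_2)a_1$ and $b_2=(d_2/d_3)b_1$ gives $B=(d_1/d_3)A$, while $d_3c_1=d_1c_2$ gives $c_1=(d_1/d_3)c_2$. Hence $\abs B-\abs{c_1}=(d_1/d_3)(\abs A-\abs{c_2})$ with $d_1/d_3>0$, so $\abs A$ compares to $\abs{c_2}$ in exactly the same way as $\abs B$ compares to $\abs{c_1}$. This settles the strict cases at once: if $\abs A<\abs{c_2}$ then also $\abs B<\abs{c_1}$, so $[\alpha\beta]$ is smaller than $\gamma$, and if $\abs A>\abs{c_2}$ then $[\alpha\beta]$ is bigger than $\gamma$.

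The crux is the equality case $\abs A=\abs{c_2}$ (equivalently $\abs B=\abs{c_1}$), where I need the signs to match so that $A=c_2$ and $B=c_1$ rather than merely $\abs A=\abs{c_2}$; without a sign constraint the two arrows could fail to be comparable, and this is exactly where the hypothesis that $[\alpha\beta\gamma]$ is positive enters. Since $\sgn(v_1)=\sgn(v_2)$ on every arrow, we have $\sgn A=\sgn(a_1b_1)=\sgn(a_2b_2)=\sgn B$ and $\sgn c_1=\sgn c_2$. The composite $[\alpha\beta\gamma]$ has first coordinate $Ac_1$, so its positivity means precisely $\sgn A=\sgn c_1$. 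Combining, $\sgn A=\sgn c_1=\sgn c_2$ and $\sgn B=\sgn A=\sgn c_1$, whence $A=c_2$ and $B=c_1$, so $[\alpha\beta]$ is the same size as $\gamma$. The main obstacle is thus not the computation but recognising that the positivity hypothesis is used solely, and exactly, to resolve the equal-magnitude case into ``same size''.
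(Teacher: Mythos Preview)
Your proof is correct and follows essentially the same approach as the paper: use the symmetrizer relations to reduce the two absolute-value comparisons to a single one, then invoke the positivity of $[\alpha\beta\gamma]$ precisely in the equal-magnitude case to force the signs to agree. The only differences are notational.
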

\begin{proof}
Suppose our arrows have the following values:
\[ \xymatrix{
 &2 \ar[dr]^{(c,d)} &\\
1\ar[ur]^{(a,b)} && 3\ar[ll]^{(e,f)}
}\]
Let $D=\diag (d_1,d_2,d_3)$ be the skew-symmetrizing matrix of $Q$, so that: 
\begin{align*}
  d_1a &= bd_2 \\
  d_2c &= dd_3 \\
  d_3e &= fd_1
\end{align*}
with $d_1,d_2,d_3>0$. We have:
\[ \abs{ac}-\abs{f}= \abs{bd}d_3/d_1-\abs{e}d_3/d_1=\frac{d_3}{d_1}(\abs{bd}-\abs{e}),\]
so $\abs{ac}>\abs{f} \iff \abs{bd}>\abs{e}$
and $\abs{ac}<\abs{f} \iff \abs{bd}<\abs{e}$.  
Moreover, if $\abs{ac}=\abs{f}$ and $\abs{bd}=\abs{e}$ then, because $[\alpha\beta\gamma]$ is positive, $[\alpha\beta]$ and $\gamma$ must have the same sign, so ${ac}={f}$ and ${bd}={e}$.
Therefore we are in one of the cases of Definition \ref{lem:trichot}.
\end{proof}

We now give the mutation procedure for signed valued quivers.  Lemma \ref{lem_cond_signed_val_3_cycle} ensures that the second part of Step 3 always makes sense.
\begin{definition}\label{def:mut_signed_val_quivs}
Let $(Q,v)$ be a signed valued quiver which satisfies the positive 3-cycle condition at $k\in Q_0$. 
Define the \textbf{signed quiver mutation} $\mu_k(Q,v)$ as follows:
\begin{enumerate}[align=left]
    \item[Step 1:] If there are arrows $i\arr{\alpha} k\arr{\beta} j$, add a new arrow $-[\alpha\beta]:i\to j$ and 
    replace any arrow 
    between $i$ and $j$
    with its negation.
    \item[Step 2:] Reverse any arrow going out of $k$, and reverse and negate any arrow going into $k$. 
    \item[Step 3:] If we have two arrows $\gamma,\delta$ between $i$ and $j$ as a result of Step 1, then:
    \begin{itemize}
        \item if $\gamma,\delta:i\to j$, replace them with a single arrow having value $v(\gamma)+v(\delta)$;
        \item if $\gamma:j\to i$ and $\delta:i\to j$, reverse and negate the smaller (or same size) arrow, and then combine them as above.
    \end{itemize}
\end{enumerate}
\end{definition}

The following diagram exhibits the mutation explicitly (we assume $a,b,c,d\neq 0$):
\[    \centering
\begin{tikzpicture}[xscale=2,baseline=(bb.base),
  quivarrow/.style={black, -latex}] 
\path (0,0) node (bb) {}; % baseline

\node (i1) at (0,0) {$\circ$};
\node [left] at (i1) {\small {$i$}};

\node (j1) at (1,0) {$\circ$};
\node [right] at (j1) {\small {$j$}};

\node (k1) at (0.5,0.9) {$\circ$};
\node [above] at (k1) {\small {$k$}};

\draw [quivarrow, shorten <=-1pt, shorten >=-1pt] (i1) -- (k1) node[midway, above left]{\tiny $(a,b)$};
\draw [quivarrow, shorten <=-1pt, shorten >=-1pt] (k1) -- (j1) node[midway, above right]{\tiny $(c,d)$};
\draw [quivarrow, shorten <=-1pt, shorten >=-1pt] (j1) -- (i1) node[midway, below]{\tiny $(e,f)$};

\draw [->] (1.2,0.4) to node[above] {$\mu_k$} (1.7,0.4);
\draw [->] (1.2,-1.3) to node[above] {$\mu_k$} (1.7,-1.3);

\begin{scope}[shift={(2.1,0)}]
\node (i1) at (0,0) {$\circ$};
\node [left] at (i1) {\small {$i$}};

\node (j1) at (1,0) {$\circ$};
\node [right] at (j1) {\small {$j$}};

\node (k1) at (0.5,0.9) {$\circ$};
\node [above] at (k1) {\small {$k$}};

\draw [quivarrow, shorten <=-1pt, shorten >=-1pt] (k1) -- (i1) node[midway, above left]{\tiny $(-b,-a)$};
\draw [quivarrow, shorten <=-1pt, shorten >=-1pt] (j1) -- (k1) node[midway, above right]{\tiny $(d,c)$};
\draw [quivarrow, shorten <=-1pt, shorten >=-1pt] (j1) -- (i1) node[midway, below]{\tiny $(-e+bd,-f+ac)$};

\node (text1) at (1.8,0.4) {\text{if }$|f|\geq |ac|$};
\end{scope}

\begin{scope}[shift={(2.1,-1.7)}]
\node (i1) at (0,0) {$\circ$};
\node [left] at (i1) {\small {$i$}};

\node (j1) at (1,0) {$\circ$};
\node [right] at (j1) {\small {$j$}};

\node (k1) at (0.5,0.9) {$\circ$};
\node [above] at (k1) {\small {$k$}};

\draw [quivarrow, shorten <=-1pt, shorten >=-1pt] (k1) -- (i1) node[midway, above left]{\tiny $(-b,-a)$};
\draw [quivarrow, shorten <=-1pt, shorten >=-1pt] (j1) -- (k1) node[midway, above right]{\tiny $(d,c)$};
\draw [quivarrow, shorten <=-1pt, shorten >=-1pt] (i1) -- (j1) node[midway, below]{\tiny $(f-ac,e-bd)$};

\node (text1) at (1.8,0.4) {\text{if }$|f|<|ac|$};
    
\end{scope}

\begin{scope}[shift={(0,-3.6)}]
\node (i1) at (0,0) {$\circ$};
\node [left] at (i1) {\small {$i$}};

\node (j1) at (1,0) {$\circ$};
\node [right] at (j1) {\small {$j$}};

\node (k1) at (0.5,0.9) {$\circ$};
\node [above] at (k1) {\small {$k$}};

\draw [quivarrow, shorten <=-1pt, shorten >=-1pt] (i1) -- (k1) node[midway, above left]{\tiny $(a,b)$};
\draw [quivarrow, shorten <=-1pt, shorten >=-1pt] (k1) -- (j1) node[midway, above right]{\tiny $(c,d)$};
\draw [quivarrow, shorten <=-1pt, shorten >=-1pt] (i1) -- (j1) node[midway, below]{\tiny $(e,f)$};

\draw [->] (1.2,0.4) to node[above] {$\mu_k$} (1.7,0.4);

\begin{scope}[shift={(2.1,0)}]
\node (i1) at (0,0) {$\circ$};
\node [left] at (i1) {\small {$i$}};

\node (j1) at (1,0) {$\circ$};
\node [right] at (j1) {\small {$j$}};

\node (k1) at (0.5,0.9) {$\circ$};
\node [above] at (k1) {\small {$k$}};

\draw [quivarrow, shorten <=-1pt, shorten >=-1pt] (k1) -- (i1) node[midway, above left]{\tiny $(-b,-a)$};
\draw [quivarrow, shorten <=-1pt, shorten >=-1pt] (j1) -- (k1) node[midway, above right]{\tiny $(d,c)$};
\draw [quivarrow, shorten <=-1pt, shorten >=-1pt] (i1) -- (j1) node[midway, below]{\tiny $(-(e+ac),-(f+bd))$};

\end{scope}
\end{scope}

\end{tikzpicture}
\]

Recall that $B(Q,v)$ denotes the associated gss matrix of a signed valued quiver $(Q,v)$.
\begin{proposition}\label{prop_mut_signed_val_quivs}
If $(Q,v)$ satisfies the positive 3-cycle condition at $k$ then $\mu_k B(Q,v)=B\mu_k(Q,v)$.
\end{proposition}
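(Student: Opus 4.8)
The plan is to prove the matrix identity entrywise, using that the two constructions of Definition~\ref{def:associated} are mutually inverse, so I only need to check that each entry of $\mu_k B(Q,v)$ agrees with the corresponding entry of $B\mu_k(Q,v)$. I will split the entries into those lying in row or column $k$ and those $b'_{ij}$ with $i,j\neq k$. The dictionary I rely on throughout records how the arrow operations of Definition~\ref{def:mut_signed_val_quivs} act on the matrix entry an arrow contributes: reversing an arrow multiplies its contribution by $-1$, negating it multiplies the contribution by $t$, and (when $\sgn(b_{ik})=1$) the composite $[\alpha\beta]$ of $\alpha:i\arr{}k$ and $\beta:k\arr{}j$ is exactly the arrow of value $(ac,bd)$ contributing the entry $b_{ik}b_{kj}$. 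Each of these three facts is a direct check against the five cases of the quiver-to-matrix formula, using $\sgn(v_1)=\sgn(v_2)$ and $t^2=1$. A second observation is that, for $i,j\neq k$, the arrow between $i$ and $j$ is modified only by Steps~1 and~3 (Step~2 touches only arrows incident to $k$), while $b'_{ik}$ and $b'_{kj}$ are modified only by Step~2; this lets me treat the two groups of entries independently.

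For an entry in row or column $k$ I first note that $\sgn(b_{kj})=1$ exactly when there is an arrow out of $k$ and $\sgn(b_{kj})=-1$ exactly when there is an arrow into $k$ (and symmetrically for $b_{ik}$). Step~2 reverses the out-arrows and reverses-and-negates the in-arrows, so by the dictionary the contribution is multiplied by $-1$ in the first situation and by $(-1)t=-t$ in the second. Comparing with the first two cases of Definition~\ref{defn_signed_mut} gives exactly $b'_{kj}=-b_{kj}$ when $\sgn(b_{kj})=1$ and $b'_{kj}=-tb_{kj}$ when $\sgn(b_{kj})=-1$, and likewise for $b'_{ik}$; this disposes of row and column $k$.

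For $i,j\neq k$ the condition $\sgn(b_{ik}b_{kj})=1$ is equivalent to the existence of a directed path through $k$ between $i$ and $j$. If it fails, Step~1 creates no arrow between $i$ and $j$ and leaves the existing one untouched, so $b'_{ij}=b_{ij}$, matching the last case of Definition~\ref{defn_signed_mut}. If it holds, I read both $b'_{ij}$ and $b'_{ji}$ off the single arrow produced between $i$ and $j$, so it is enough to name the endpoints so that the path is $i\arr{\alpha}k\arr{\beta}j$; then $\sgn(b_{ik})=1$ and Definition~\ref{defn_signed_mut} gives $b'_{ij}=t(b_{ij}+b_{ik}b_{kj})$. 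Step~1 produces $-[\alpha\beta]:i\arr{}j$, of contribution $t\,b_{ik}b_{kj}$, and negates any existing $i$--$j$ arrow, of contribution $t\,b_{ij}$. When the original arrow runs $i\arr{}j$ (or is absent), the positive 3-cycle condition forces $\sgn[\alpha\beta]$ to equal the sign of that arrow, so after negation the two parallel arrows share a sign; Step~3 then adds their values, which for equal signs is the same as adding their contributions, yielding $t(b_{ij}+b_{ik}b_{kj})$. When the original arrow runs $j\arr{}i$ we are in an oriented $3$-cycle; Lemma~\ref{lem_cond_signed_val_3_cycle} guarantees that $[\alpha\beta]$ and this arrow are comparable, so the second bullet of Step~3 applies and the merged value is the one recorded in the diagram after Definition~\ref{def:mut_signed_val_quivs}.

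The main obstacle is precisely this oriented $3$-cycle case: when the two arrows to be merged have opposite signs their values can cancel (for instance to $(0,0)$), so adding values no longer equals adding matrix contributions, and I must compute the merged value honestly and translate it back. I would carry this out by writing the third arrow as $\epsilon:j\arr{(e,f)}i$ and the composite as $[\alpha\beta]:i\arr{(ac,bd)}j$, and setting $s=\sgn(\epsilon)=\sgn[\alpha\beta]$, the equality being exactly the positive 3-cycle condition, so that $e,f,ac,bd$ all have sign $s$. The merged arrow is then $j\arr{}i$ with value $(-e+bd,-f+ac)$ when $\abs f\ge\abs{ac}$ and $i\arr{}j$ with value $(f-ac,e-bd)$ when $\abs f<\abs{ac}$; in each subcase Lemma~\ref{lem_cond_signed_val_3_cycle} pins down the sign of the merged arrow, and translating its first component back via the dictionary yields $t(ac-f)$ when $s=+1$ and $f-ac$ when $s=-1$. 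This agrees with $t(b_{ij}+b_{ik}b_{kj})$, since there $b_{ij}$ equals $-f$ or $tf$ and $b_{ik}b_{kj}$ equals $ac$ or $-tac$ according as $s=+1$ or $s=-1$. Translating the second component returns $b'_{ji}$ in the same way, so the single merged arrow validates both entries of the pair and completes the verification.
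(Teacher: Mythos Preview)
Your proof is correct and follows essentially the same case-by-case strategy as the paper's own proof: handle entries in row/column $k$ via Step~2, then for $i,j\neq k$ split according to whether there is a directed path through $k$, and in the affirmative case split again by the orientation of any existing $i$--$j$ arrow, invoking the positive 3-cycle condition and Lemma~\ref{lem_cond_signed_val_3_cycle} exactly where the paper does. Your presentation is somewhat more streamlined in that you set up the ``dictionary'' (reverse $\leftrightarrow$ multiply by $-1$, negate $\leftrightarrow$ multiply by $t$, composite $\leftrightarrow$ product of entries) once at the outset rather than recomputing it in each case, but the logical content is the same. One small wording slip: in the subcase $\lvert f\rvert\ge\lvert ac\rvert$ the merged arrow points $j\to i$, so it is its \emph{second} component $-f+ac$ (not the first) that encodes $b'_{ij}$; your stated value $t(ac-f)$ or $f-ac$ is nonetheless correct.
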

\begin{proof}
This is a case-by-case check using Definitions \ref{defn_signed_mut}, \ref{def:mut_signed_val_quivs}, and \ref{def:associated}.  We do not write out all the details; instead we write out some checks in full and then list what other checks need to be done.

If $Q$ has a positive arrow $\alpha:i\to k$ with $v(\alpha)=(a,b)$, then $b_{ik}=v_1(\alpha)=a>0$ and $b_{ki}=-v_2(\alpha)=-b<0$.  So $b_{ik}'=-tb_{ik}=-ta$ and $b_{ki}'=-tb_{ki}=tb$.  As $\sgn(b_{ki}')=+1$ we get an arrow $k\to i$ with value $(b_{ki}'(-1),\,-b_{ik}'(-1))=(-b,-a)$.  So $\alpha$ has been negated and reversed, as instructed by Step 2.  One checks that the same thing happens if $\alpha$ is negative.  Then one can check that arrows $\beta:k\to j$, of positive or negative sign, are reversed.  Next one can check that if $Q$ has an arrow $\gamma:i\to j$ 
and there do not exist paths $i\to k\to j$ or $j\to k\to i$ in $Q$, then $\gamma$ is unchanged.

Now suppose that $Q$ has arrows $\alpha:i\to k$, $\beta:k\to j$, and $\gamma:i\to j$, all positive, with $v(\alpha)=(a,b)$, $v(\beta)=(c,d)$, and $v(\gamma)=(e,f)$.  
Then
\begin{align*}
b_{ij}' &= t(b_{ij}+b_{ik}b_{kj})=t(e+ac); \\
b_{ji}' &= t(b_{ji}-b_{jk}b_{ki})=-t(f+bd).
\end{align*}
As $\sgn(b_{ij}')=+1$ we get an arrow $i\to j$ with value $(-e-ac,\,-f-bd)$.  This matches the first part of Step 3: we have replaced the arrows $-\gamma$ and $-[\alpha\beta]$ from Step 1 with a single arrow, summing their values. One can do similar checks where $\alpha,\beta,\gamma$ have positive or negative sign, subject to the positive 3-cycle condition, and get the same result in all cases.  This argument also works if there are no arrows between $i$ and $j$.

Finally suppose that $Q$ has arrows $\alpha:i\to k$, $\beta:k\to j$, and $\gamma:j\to i$.  If $\gamma$ is positive then, in the mutation of the associated gss matrix, $b_{ij}'=t(ac-f)$ and $b_{ji}'=t(e-bd)$.  If instead $\gamma$ is negative then $b_{ij}'=f-ac$ and $b_{ji}'=bd-e$.  The associated signed valued quiver depends on whether $\abs{ac}>\abs{f}$ or $\abs{ac}\leq\abs{f}$: in the former case we get an arrow $i\to j$ with value $(f-ac,e-bd)$, irrespective of the sign of $\gamma$, and in the latter case we get $j\to i$ with value $(bd-e,ac-f)$.  This matches the second part of Step 3.
\end{proof}

\begin{example}
Mutating the first signed valued quiver in Example \ref{eg:mutate-quivers} at vertex 2 gives the second.
\end{example}

\begin{example}
Consider the following signed valued quiver: 
\[
\xymatrix{ & 2\ar[dr]^{(1,1)}\ar[rr]^{(-1,-1)} & & 4\ar[dl]^{(-1,-1)} \\
1\ar[ur]^{(-1,-1)} & & 3\ar[ll]^{(-1,-1)} &}
\]
Note that the positive 3-cycle condition is satisfied at all vertices.  We mutate at vertex $4$ to get the following signed valued quiver: 
\[
\xymatrix{ & 2\ar[dr]^{(-2,-2)} & & 4\ar[ll]_{(1,1)} \\
1\ar[ur]^{(-1,-1)} & & 3\ar[ll]^{(-1,-1)}\ar[ur]_{(-1,-1)} &}.
\]
Now the 3-cycle $1\to 2\to 3\to 1$ has negative product of signs, so the above signed valued quiver does not satisfy the positive 3-cycle condition at vertices $1$, $2$, or $3$.
\end{example}

% ================================================

\subsection{Mutation Dynkin type}

\begin{definition}
We say that two gss matrices $B$ and $B'$ are \textbf{mutation equivalent} if there exists a sequence of indices $i_1,\ldots,i_r$ such that $B'=\mu_{i_r}\ldots\mu_{i_1}B$.  The \textbf{mutation class} of $B$ is the set of all gss matrices which are mutation equivalent to $B$.
\end{definition}
If $(Q,v)$ is a signed valued quiver, we can apply the previous definition by taking the associated gss matrix $B=B(Q,v)$.  But, in general, the mutation class of a signed valued quiver may include gss matrices which are not pure, and so have no associated signed valued quiver.  We will soon see that this problem does not occur for the appropriate notion of Dynkin signed valued quivers.

\begin{proposition}\label{prop_tree_assign_values}
Let $(Q,v)$ be a signed valued quiver whose underlying graph is a tree. 
Let $(Q',v')$ be obtained by reversing and/or negating some arrows of $(Q,v)$.  Then $(Q,v)$ and $(Q',v')$ are mutation equivalent.
\end{proposition}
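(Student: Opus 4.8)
The plan is to realize the passage from $(Q,v)$ to $(Q',v')$ in two independent phases: first correct the orientations of the arrows, and then correct their signs. Throughout I would stay within signed valued quivers whose underlying graph is the given tree $T$, so that, having no cycles at all, the positive 3-cycle condition is vacuous at every vertex and every mutation invoked below is automatically defined. Two elementary moves drive the argument. First, if $k$ is a source or a sink of a tree then Step 1 of Definition \ref{def:mut_signed_val_quivs} never fires, since there is no path $i\to k\to j$ through $k$; hence $\mu_k$ simply reverses every arrow at $k$ (negating those that pointed into $k$), creates no new arrows, and returns the tree $T$ with a reversed star at $k$. Second, by Lemma \ref{lem:mutate_twice}, read through the associated gss matrix, the double mutation $\mu_k^2$ multiplies every entry $b_{ij}$ with $i=k$ or $j=k$ by $t$ and fixes all other entries; since $\sgn(tb_{ij})=\sgn(b_{ij})$, this preserves all orientations and the underlying tree and changes only signs, negating exactly the arrows incident to $k$. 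The remark following Lemma \ref{lem_mut_specializ} guarantees that the intermediate quiver still satisfies the positive 3-cycle condition at $k$, so $\mu_k^2$ is legitimate. Thus I would have, for every vertex $k$, an orientation-preserving move that negates precisely the star of $k$.

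\textbf{Phase 1 (orientations).} I would first establish that reflections at sources and sinks act transitively on the orientations of a tree. The natural proof is by induction on the number of vertices: delete a leaf $\ell$ with incident edge $e$, apply the inductive hypothesis to match the target orientation on $T\setminus\ell$, and lift the resulting reflection sequence to $T$, inserting a reflection at the leaf $\ell$ before any reflection at the neighbour of $\ell$ whenever $e$ prevents that neighbour from being a source or sink; a final reflection at $\ell$, if needed, orients $e$ correctly. Applying the corresponding moves of the first kind brings $(Q,v)$ to a signed valued quiver on $T$ whose orientation agrees with that of $(Q',v')$. Because reversing an arrow only swaps the two entries of its value, matching the orientation automatically forces the absolute values of all entries to agree with those of $(Q',v')$; only the signs of the arrows may still be wrong.

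\textbf{Phase 2 (signs).} It remains to flip the signs of a prescribed subset $S$ of edges while preserving the now-correct orientation, and the move $\mu_k^2$ negates exactly the star of $k$ without disturbing orientations or the tree. Viewing subsets of edges as vectors in $\mathbb{F}_2^{E(T)}$, the stars of the vertices span the cut space, whose dimension equals the rank of the incidence matrix over $\mathbb{F}_2$, namely $n-1$; for a tree this equals $\lvert E(T)\rvert$, so the stars span all of $\mathbb{F}_2^{E(T)}$. Hence $S$ is a sum of stars, and performing the corresponding double mutations in any order negates precisely the edges of $S$, producing $(Q',v')$ and proving mutation equivalence.

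The main obstacle is Phase 1: one must verify carefully that single mutations at sources and sinks of a tree create no new arrows, and then prove the transitivity of such reflections on tree orientations while checking that every intermediate mutation remains well-defined (which here reduces to the vacuity of the positive 3-cycle condition on trees). Once these two moves are isolated, Phase 2 is a short linear-algebra count over $\mathbb{F}_2$.
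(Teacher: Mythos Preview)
Your proposal is correct and follows essentially the same two-phase strategy as the paper: fix orientations using source (or source/sink) mutations on the tree, then fix signs using the fact that $\mu_k^2$ negates exactly the star at $k$. The only cosmetic differences are that the paper cites a standard reference for transitivity of source mutations on tree orientations rather than reproving it, and it negates a single edge $\alpha:i\to j$ via the explicit recipe $\prod_{k\in P_-(\alpha)}\mu_k^2$ (all vertices on the $i$-side of $\alpha$) instead of invoking the $\mathbb{F}_2$ spanning argument---these are the same computation, since summing the stars over one side of a cut-edge yields exactly that edge.
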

\begin{proof}
It is enough to show we can reverse or negate a single arrow.  We start with negation.  As the underlying graph of $Q$ is a tree, there is a unique (undirected) path between any two vertices.  Given an arrow $\alpha:i\to j$ in $Q$, let
\[P_-(\alpha)=\{k\in Q_0\st \text{the unique path from $k$ to $i$ does not pass through  $j$}\}\]
and write this as the disjoint union
\[ P_-(\alpha)=P_-^0(\alpha)\cup P_-^1(\alpha) \]
where $P_-^0(\alpha)$ and $P_-^1(\alpha)$ denote the subsets where the path has even and odd length, respectively.
By Lemma \ref{lem:mutate_twice}, $\mu_k^2$ just changes the sign of all arrows incident to $k$.  Note that $\mu_h^2\mu_k^2=\mu_k^2\mu_h^2$ for any $h,k\in Q_0$.
So
\[\left(\prod_{k\in {P}_-(\alpha)}\mu_k^2\right)(Q,v)=\left(\prod_{k\in {P}^1_-(\alpha)}\mu_k^2\right)\left(\prod_{k\in {P}^0_-(\alpha)}\mu_k^2\right)(Q,v)
\]
is $(Q,v)$ with $\alpha$ negated,
because $\prod_{k\in {P}^0_-(\alpha)}\mu_k^2$ changes the sign of all arrows incident with $P_-(\alpha)$, and $\prod_{k\in {P}^1_-(\alpha)}\mu_k^2$ changes the sign of all arrows incident with $P_-(\alpha)-\{i\}$.

Now we consider reversing arrows.  If $Q'$ is another orientation of $Q$, we know from classical results that the (unsigned) quivers $Q$ and $Q'$ are mutation equivalent via mutations only at 
sources (see, e.g., \cite[Proposition 2.2.8]{lampe} for a proof).
From Definition \ref{def:mut_signed_val_quivs}, source mutations just reverse arrows, 
so this sequence of signed mutations gives the result.
\end{proof} 

We want a data structure that includes the classical Dynkin diagrams.  We 
will use graphs with a choice of ordering $i<j$, $i=j$, or $i>j$ whenever $i$ and $j$ have at least 2 edges between them.
\begin{definition}\label{def:unsigned-diagram}
The \textbf{unsigned diagram} $\tilde\Gamma=\tilde\Gamma(Q,v)$ of a signed valued quiver $(Q,v)$ has the same vertex set as $Q$.  If $Q$ has an arrow $\alpha:i\to j$ then $\tilde\Gamma$ has $v_1(\alpha)v_2(\alpha)$ edges between $i$ and $j$, with ordering $i>j$ iff $\abs{v_1(\alpha)}>\abs{v_2(\alpha)}$ and $i<j$ iff $\abs{v_1(\alpha)}<\abs{v_2(\alpha)}$.
\end{definition}
Recall the Dynkin diagrams of types $A_n$, $B_n$, $C_n$, $D_n$, $E_n$, $F_4$, and $G_2$: see, e.g., \cite[Section V.15]{serre}.
\begin{definition}
A gss matrix is called \textbf{Dynkin} if it is pure and the unsigned diagram of its associated signed valued quiver is a Dynkin diagram.  A gss matrix is called \textbf{mutation Dynkin} if it is mutation equivalent to a Dynkin gss matrix.
\end{definition}
Proposition \ref{prop_tree_assign_values} tells us that any two gss matrices which are Dynkin and of the same type are mutation equivalent.  The mutation classes of different Dynkin types are distinct: this follows from the classical result (see \cite[Theorem 1.7]{fz2}) together with Proposition \ref{prop_mutat_agree}.

\begin{example}
The signed valued quiver
\[\xymatrix{
1 \ar[r]^{(1,1)} &2  & 3\ar[l]_{(-1,-1)} & 4\ar[l]_{(1,2)}
}
\]
has unsigned diagram
\[\xymatrix {
1 \ar@{-}[r] &2 \ar@{-}[r] & 3\ar@{=}[r] |{\SelectTips{cm}{}\object@{>}} & 4
}
\]
and so is Dynkin of type $B_4$.  The signed valued quiver
\[\xymatrix{
1 \ar[r]^{(2,2)} &2
}
\]
is not mutation Dynkin.
\end{example}

\begin{definition}
Let $B$ be a gss matrix. A $p$\textbf{-cycle} in $B$ is a $p$-tuple $(i_1,\ldots, i_p)$ with $i_j\in\{1,\ldots, n\}$ distinct such that $b_{i_j,i_{j+1}}\neq 0$ for all $j\in\{1,\ldots, p\}$ (we interpret all subscripts modulo $p$). 

The cycle is called:
\begin{itemize}
\item \textbf{oriented} if $\sgn(b_{i_j,i_{j+1}})=1$ for all $j\in\{1,\ldots, p\}$;
\item \textbf{chordless} if for all $j\in\{1,\ldots, p\}$ and all $k\in\{1,\ldots, p\}\setminus \{j+1,j-1\}$ we have $b_{i_ji_k}=0$.
\end{itemize} 
\end{definition}
Note that if $B$ is pure then a chordless cycle in $B$ corresponds to a chordless cycle in $Q_B$, as defined in the introduction of \cite{bgz}.

The following definition is reminiscent of the condition in \cite[Proposition 1.4]{bgz}.
\begin{definition}
A $p$-cycle $(i_1,\ldots, i_p)$ in a pure gss matrix $B$ is called \textbf{dangerous} if it is chordless and 
\[\prod_{j=1}^p b_{i_j,i_{j+1}}\in t^{p}\mathbb{Z}.\]
\end{definition}

\begin{remark}\label{rmk_dang_pos_3_cyc}
Note that a dangerous $3$-cycle must contain a vertex which does not satisfy the positive $3$-cycle condition, and therefore has a mutation which is not pure.  We will prove that a similar phenomenon holds for all dangerous $p$-cycles, but first we illustrate this graphically.
\end{remark}

\begin{example}
The first quiver below has a dangerous oriented $5$-cycle $(1,2,3,4,5)$.  We construct a dangerous oriented $3$-cycle $(3,4,5)$ by mutating twice:
\[
\xymatrix@C=0.1em@R=0.7em{ & & 2\ar[drr]^{\color{red}{+}} & & \\
1\ar[urr]^{\color{red}{+}} & & & & 3\ar[dl]^{\color{red}{-}} \\
& 5\ar[ul]^{\color{red}{-}} & & 4\ar[ll]^{\color{red}{-}} & }\xrightarrow{\mu_1}
\xymatrix@C=0.1em@R=0.7em{ & & 2\ar[drr]^{\color{red}{+}}\ar@[gray][dll]_{\color{red}{+}} & & \\
\color{gray}{1}\ar@[gray][dr]_{\color{red}{+}} & & & & 3\ar[dl]^{\color{red}{-}} \\
& 5\ar[uur]^{\color{red}{+}} & & 4\ar[ll]^{\color{red}{-}} & }\xrightarrow{\mu_2}\xymatrix@C=0.1em@R=0.7em{ & & \color{gray}{2}\ar@[gray][ddl]_{\color{red}{-}} & & \\
\color{gray}{1}\ar@[gray][urr]^{\color{red}{+}} & & & & 3\ar[dl]^{\color{red}{-}}\ar@[gray][ull]_{\color{red}{+}} \\
& 5\ar[urrr]^{\color{red}{-}} & & 4\ar[ll]^{\color{red}{-}} & }
\]
\end{example}

\begin{lemma}\label{lem_non_spec_cycle}
Let $B$ be a pure gss matrix with a dangerous oriented $p$-cycle $(i_1,\ldots, i_p)$.  Then
\[
\mu_{i_{p-2}}\mu_{i_{p-3}}\ldots \mu_{i_1}(B)
\]
is not pure.
\end{lemma}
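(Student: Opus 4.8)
The plan is to induct on the cycle length $p$, but first I would remove all interference from outside the cycle. Observe that when we mutate at a cycle vertex $i_\ell$, the updated entry $b'_{ij}$ for $i,j\in\{i_1,\ldots,i_p\}$ depends (via Definition \ref{defn_signed_mut}) only on $b_{ij}$, $b_{i,i_\ell}$ and $b_{i_\ell,j}$, all of which again lie in the principal submatrix $\hat B$ indexed by the cycle vertices. Hence $\hat B$ is closed under the mutations $\mu_{i_1},\ldots,\mu_{i_{p-2}}$ and the submatrix of $\mu_{i_{p-2}}\cdots\mu_{i_1}(B)$ on $\{i_1,\ldots,i_p\}$ equals $\mu_{i_{p-2}}\cdots\mu_{i_1}(\hat B)$. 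Since non-purity of a submatrix forces non-purity of the whole matrix, and since chordlessness makes $\hat B$ exactly the matrix of the $p$-cycle, it suffices to prove the statement when $n=p$ and $B$ is the cycle matrix. Throughout I will use that $b_{ij}$ and $b_{ji}$ are of the same type (both in $\Z$ or both in $t\Z$), which follows from $d_ib_{ij}=-d_jb_{ji}$ with $d_i,d_j>0$, so that the type of a product of entries is unchanged under reversing any of its factors.

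For the base case $p=3$ the composite is just $\mu_{i_1}$. The only relevant directed path through $i_1$ is $i_3\to i_1\to i_2$, with $\sgn(b_{i_3i_1}b_{i_1i_2})=1$, so the positive 3-cycle condition at $i_1$ asks whether $b_{i_3i_2}b_{i_3i_1}b_{i_1i_2}\in\Z$. By the type remark this product has the same type as the dangerous cyclic product $b_{i_1i_2}b_{i_2i_3}b_{i_3i_1}\in t^3\Z=t\Z$, so it lies in $t\Z$ and the condition fails. Lemma \ref{lem_mut_specializ} then gives that $\mu_{i_1}(B)$ is not pure.

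For the inductive step $p\geq 4$, I would first check that $\mu_{i_1}(B)$ is pure: the only directed paths through $i_1$ are $i_p\to i_1\to i_2$ and $i_2\to i_1\to i_p$, and chordlessness gives $b_{i_pi_2}=b_{i_2i_p}=0$, so the positive 3-cycle condition at $i_1$ holds and Lemma \ref{lem_mut_specializ} applies. Next I would verify that $\mu_{i_1}(B)$ carries a dangerous oriented $(p-1)$-cycle $(i_2,\ldots,i_p)$. The forward entries $b_{i_2i_3},\ldots,b_{i_{p-1}i_p}$ survive unchanged, since for $p\geq 4$ none of these edges lies on a path through $i_1$; Step 1 of the mutation produces the closing edge via $b'_{i_pi_2}=t\,b_{i_pi_1}b_{i_1i_2}$, which has $\sgn=+1$ (so the cycle stays oriented) and is the only new entry among the cycle vertices (so the cycle stays chordless). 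Finally, the new cyclic product is exactly $t$ times the old one, hence lies in $t\cdot t^p\Z=t^{p+1}\Z=t^{p-1}\Z$, which is precisely the dangerous condition one length down. Applying the inductive hypothesis to $\mu_{i_1}(B)$ with the cycle $(i_2,\ldots,i_p)$ shows that $\mu_{i_{p-2}}\cdots\mu_{i_2}\bigl(\mu_{i_1}(B)\bigr)=\mu_{i_{p-2}}\cdots\mu_{i_1}(B)$ is not pure.

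The main obstacle is the bookkeeping in the inductive step: confirming simultaneously that the reduced cycle remains oriented, remains chordless, and — most importantly — remains dangerous, which rests on the clean $t$-parity computation giving a factor of $t$ at each reduction. The reduction in the first paragraph is what makes this induction legitimate: without restricting to $\hat B$, an early mutation at $i_1$ could destroy purity for reasons having nothing to do with the cycle (e.g. through a bad triangle involving an outside vertex), breaking the chain of pure intermediate matrices that the argument relies on.
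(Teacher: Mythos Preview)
Your proof is correct and follows essentially the same route as the paper: mutate at the first cycle vertex, show the remaining $(p-1)$ vertices again carry a dangerous oriented chordless cycle (the closing edge picks up exactly one factor of $t$), and iterate down to a dangerous $3$-cycle whose mutation is not pure. The only real difference is packaging: you pass to the principal submatrix $\hat B$ on the cycle vertices and run a formal induction on $p$, whereas the paper works in the ambient matrix and records the iterative claim ``$B^{(h)}$ has a dangerous oriented $(p-h)$-cycle $(i_{h+1},\ldots,i_p)$'' directly. Your restriction to $\hat B$ is a clean way to guarantee that the intermediate matrices are pure so that the inductive hypothesis applies; the paper's phrasing sidesteps this by only tracking the specific cycle entries (which it shows remain in $\Z\cup t\Z$) and exhibiting one non-pure entry at the end, so it never needs global purity of $B^{(h)}$. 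Your observation in the final paragraph is therefore accurate for your inductive setup but not an obstruction to the paper's argument.
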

\begin{proof}
Let $B^{(h)}=(b_{ij}^{(h)})=\mu_{i_h}\ldots\mu_{i_1}(B)$ for $h=0,\ldots, p-3$. We want to show $B^{(p-2)}$ is not pure. 

We claim that $B^{(h)}$ has a dangerous oriented $(p-h)$-cycle $(i_{h+1},\ldots,i_{p-1},i_p)$ 
for all $h=0,\ldots, p-3$.  The base case for $B^{(0)}$ is true by assumption, so assume it holds for $B^{(h)}$, and we need to show it holds for $B^{(h+1)}=\mu_{h+1}B^{(h)}$.

First, as the cycle $(i_{h+1},\ldots,i_{p-1},i_p)$ in $B^{(h)}$ is chordless, we have
$b^{(h+1)}_{i_ji_{j+1}}=b^{(h)}_{i_ji_{j+1}}$ for $j=h+1,\ldots,p-1$.  
as we are always in the fourth case of Definition \ref{defn_signed_mut}.  Then using the third case we get
\[ b^{(h+1)}_{i_pi_{h+2}}=t(0+1(b_{i_pi_{h+1}}^{(h)}b_{i_{h+1}i_{h+2}}^{(h)}))=tb^{(h)}_{i_pi_{h+1}}b^{(h)}_{i_{h+1}i_{h+2}}\]
so we have shown that $(i_{h+2},\ldots,i_{p-1},i_p)$ is a chordless oriented $(p-h-1)$-cycle, and it follows directly that
\[\left(\prod_{j=h+2}^{p-1} b^{(h+1)}_{i_j,i_{j+1}}\right)b^{(h+1)}_{i_pi_{h+2}}=t\left(\prod_{j=h+1}^{p-1} b^{(h)}_{i_j,i_{j+1}}\right)b^{(h)}_{i_pi_{h+1}}
\in t\left(t^{p-h}\mathbb{Z}\right)=t^{p-h-1}\mathbb{Z}\]
showing that our cycle is dangerous.

Therefore $B^{(p-3)}$ has a dangerous $3$-cycle so, by Lemma \ref{lem_mut_specializ}, $B^{(p-2)}$ is not pure.
\end{proof}

\begin{remark}\label{rmk:dangerous}
One can show that any pure gss matrix with a dangerous $p$-cycle is mutation equivalent to a gss matrix which is not pure, whether the cycle is oriented or not.  First, by mutating if necessary, we get some $1\leq j\leq p$ with $\sgn(b_{i_{j-1}i_j})=\sgn(b_{i_ji_{j+1}})=1$.  Then, as in the proof of Lemma \ref{lem_non_spec_cycle}, we mutate at $j$ to create a dangerous $(p-1)$-cycle, and continue this process.
\end{remark}
\begin{example}
It is not necessary to have a dangerous cycle in order to be mutation equivalent to a gss matrix which is not pure.  The following valued quiver is acyclic, but mutating at the vertex $5$ creates a dangerous $4$-cycle $(1,2,3,4)$.  Then, following the procedure described in Remark \ref{rmk:dangerous}, we mutate at $4$ to give us composable arrows passing through $1$, then mutate at $1$ to get our dangerous (unoriented) $3$-cycle $(2,3,4)$.  Then the vertex $3$ does not satisfy the positive 3-cycle condition, so mutating at $3$ would give a gss matrix which is not pure.
\[
\xymatrix@R=0.7em{ 
1\ar[dr]^{\color{red}{-}} &&2 \\
 &5\ar[ur]^{\color{red}{-}}\ar[dl]^{\color{red}{-}} & \\
4 && 3\ar[ul]^{\color{red}{-}}}
\xrightarrow{\mu_5}
\xymatrix@R=0.7em{ 
1\ar[rr]^{\color{red}{-}}\ar[dd]_{\color{red}{-}} &&2\ar[dl]^{\color{red}{-}} \\
 &5\ar[ul]^{\color{red}{+}}\ar[dr]^{\color{red}{+}} & \\
4\ar[ur]^{\color{red}{-}} && 3\ar[uu]_{\color{red}{-}}\ar[ll]^{\color{red}{-}}}
\xrightarrow{\mu_4}
\xymatrix@R=0.7em{ 
1\ar[rr]^{\color{red}{-}} &&2\ar[dl]^(0.6){\color{red}{-}} \\
 &5\ar[dl]^(0.3){\color{red}{-}} & \\
4\ar[uu]^{\color{red}{+}}\ar[rr]_{\color{red}{+}} && 3\ar[uu]_{\color{red}{-}}
}
\xrightarrow{\mu_1}
\xymatrix@R=0.7em{ 
%.\ar[]!<1ex,-2ex>;[dr]!<-3ex,1ex>
1\ar[dd]_{\color{red}{-}}\ar[dd]_{\color{red}{-}} &&2\ar[dl]^(0.6){\color{red}{-}}\ar[ll]_{\color{red}{-}} \\
 &5\ar[dl]^(0.3){\color{red}{-}} & \\
4\ar@/^/[]!<1ex,1.5ex>;[uurr]!<-3ex,-0.8ex>^{\color{red}{+}}\ar[rr]_{\color{red}{+}} && 3\ar[uu]_{\color{red}{-}}
}
\]
\end{example}

We now record some useful results about cycles in mutation Dynkin valued quivers.
\begin{lemma}\label{lem_oriented_cycles_values_3_cycles} 
Let $(Q,v)$ be a mutation Dynkin (unsigned) valued quiver. 
Then:
\begin{enumerate}
    \item Every chordless cycle in $Q$ is oriented.
    \item For any 3-cycle $C=(i,j,k)$ in $Q$, the values of the arrows in $C$ are either $\{(1,1),(1,1),(1,1)\}$ or $\{(1,1),(1,2),(2,1)\}$, ordered clockwise or anticlockwise.
\end{enumerate}
\end{lemma}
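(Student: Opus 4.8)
The plan is to reduce the statement to the classical finite-type theory and then to a short local computation. Since $(Q,v)$ is mutation Dynkin, its associated skew-symmetrizable matrix $\tilde B$ is of finite type, so by \cite{fz2} it lies in the mutation class of a Dynkin valued quiver, and by \cite{bgz} it admits a positive quasi-Cartan companion $A$: a positive definite symmetrizable matrix with $a_{ii}=2$ and $\abs{a_{ij}}=\abs{\tilde b_{ij}}$ off the diagonal. The first key observation is that this property restricts. For any subset $S$ of vertices the principal submatrix $A_S$ is again positive definite and is a quasi-Cartan companion of the full subquiver on $S$; hence every full subquiver of $(Q,v)$ is again of finite type.

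For part (1) I would apply this to a chordless cycle. Being chordless, such a cycle is itself a full subquiver, so it is of finite type, and I would then invoke the characterization of finite-type diagrams in \cite{bgz}, which forces every chordless cycle to be cyclically oriented. (Alternatively, one checks directly that a cycle which is not cyclically oriented cannot carry a positive definite companion.)

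For part (2), let $C=(i,j,k)$ be a $3$-cycle. By part (1) it is oriented, say $i\to j\to k\to i$, and by the restriction principle its full subquiver is a rank-$3$ finite-type quiver, hence lies in the mutation class of $A_3$ or of $B_3=C_3$; in particular every edge of $C$ has weight at most $2$. The finishing step is skew-symmetrizability. Writing the value of $i\to j$ as $(a_{ij},a_{ji})$ and using $d_ia_{ij}=d_ja_{ji}$ around the cycle gives
\[ \frac{a_{ij}}{a_{ji}}\cdot\frac{a_{jk}}{a_{kj}}\cdot\frac{a_{ki}}{a_{ik}}=\frac{d_j}{d_i}\cdot\frac{d_k}{d_j}\cdot\frac{d_i}{d_k}=1. \]
Since each weight is $1$ or $2$, each of the three factors lies in $\{1,\tfrac12,2\}$: value $(1,1)$ contributes $1$, value $(2,1)$ contributes $2$, and value $(1,2)$ contributes $\tfrac12$. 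The only factor-multisets with product $1$ are $\{1,1,1\}$ and $\{1,2,\tfrac12\}$, which yield precisely the value multisets $\{(1,1),(1,1),(1,1)\}$ and $\{(1,1),(1,2),(2,1)\}$; the two cyclic orderings of the latter account for the clockwise/anticlockwise possibilities in the statement.

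I expect the main obstacle to be the bridge between the global hypothesis ``mutation Dynkin'' and these local statements about a single cycle, that is, justifying that finiteness is inherited by full subquivers. This rests entirely on positivity of the quasi-Cartan companion passing to principal submatrices. Once that reduction is in place, part (1) is essentially a citation to \cite{bgz} and part (2) is the short symmetrizability computation above, so no further delicate estimates are needed.
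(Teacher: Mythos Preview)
Your proposal is correct and close in spirit to the paper's argument. For part (1) the paper simply cites \cite[Theorem~1.2]{bgz} for the full quiver, so your restriction-to-subquiver step, while valid, is unnecessary. For part (2) the paper takes a shorter route: it cites \cite[Lemma~7.6]{fz2} to obtain directly that the edge-weight multiset of a 3-cycle in a finite-type diagram is $\{1,1,1\}$ or $\{1,2,2\}$, and then observes that the value assignment $\{(1,1),(1,2),(1,2)\}$ is not skew-symmetrizable. Your version replaces that citation by the rank-3 finite-type classification to bound weights by $2$, and then your telescoping product $\prod a_{ij}/a_{ji}=1$ is exactly the skew-symmetrizability check the paper does in one line; so the two arguments differ only in whether the weight bound is cited or re-derived. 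One small wrinkle: $B_3$ and $C_3$ are distinct mutation classes (different symmetrizers), and ``weight at most $2$ in these classes'' is true but itself rests on the $2$-finiteness of \cite{fz2}, so your detour does not really avoid that reference.
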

\begin{proof}
For 1., by \cite[Theorem 1.2]{bgz} we know that every chordless cycle in a skew-symmetrizable matrix of mutation Dynkin type is oriented.
As for 2., by \cite[Lemma 7.6]{fz2} we know that the edge weights for a 3-cycle are $\{1,1,1\}$ or $\{1,2,2\}$.  But the values $\{(1,1),(1,2),(1,2)\}$ are not skew-symmetrizable.
\end{proof}

\begin{proposition}\label{prop:mutat-eq-Dynkin-no-dang}
If the specialization $B(1)$ of $B$ is mutation Dynkin, and $B$ contains no dangerous cycles, then every gss matrix mutation equivalent to $B$ is pure.
\end{proposition}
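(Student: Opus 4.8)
The plan is to argue by contradiction using a minimal mutation sequence, thereby reducing the whole statement to a single preservation property for dangerous cycles. Suppose some matrix in the mutation class of $B$ is not pure, and choose indices $k_1,\ldots,k_N$ with $N$ minimal so that $B_N=\mu_{k_N}\cdots\mu_{k_1}(B)$ is not pure, writing $B_r=\mu_{k_r}\cdots\mu_{k_1}(B)$. Minimality forces $B_0,\ldots,B_{N-1}$ to be pure. By Proposition \ref{prop_mutat_agree} each specialization $B_r(1)=\mu_{k_r}^{\text{FZ}}\cdots\mu_{k_1}^{\text{FZ}}(B(1))$ is again mutation Dynkin, so Lemma \ref{lem_oriented_cycles_values_3_cycles} applies to every $B_r$.

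Next I would record the purity criterion in this setting. For a pure, mutation Dynkin matrix $A=(a_{ij})$, failure of the positive $3$-cycle condition at a vertex $k$ means there are $i,j$ with $\sgn(a_{ik}a_{kj})=1$, $a_{ij}\neq0$, and $a_{ij}a_{ik}a_{kj}\notin\mathbb{Z}$. The three nonzero entries then form a $3$-cycle, which is automatically chordless, hence oriented by Lemma \ref{lem_oriented_cycles_values_3_cycles}. Since $d_ia_{ij}=-d_ja_{ji}$ forces $a_{ij}$ and $a_{ji}$ to lie in $\mathbb{Z}$, respectively $t\mathbb{Z}$, simultaneously, the product $a_{ij}a_{ik}a_{kj}$ has the same $t$-parity as the cyclic product around the triangle; thus $a_{ij}a_{ik}a_{kj}\notin\mathbb{Z}$ exactly when the triangle is dangerous. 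This is precisely Remark \ref{rmk_dang_pos_3_cyc}, and together with Lemma \ref{lem_mut_specializ} it shows that a pure mutation Dynkin matrix with no dangerous $3$-cycle satisfies the positive $3$-cycle condition at every vertex, so all of its single mutations are pure. Applying this to $B_{N-1}$: as $B_N=\mu_{k_N}(B_{N-1})$ is not pure, $B_{N-1}$ fails the positive $3$-cycle condition at $k_N$, and so contains a dangerous (indeed $3$-) cycle. If $N=1$ this contradicts the hypothesis on $B=B_0$. For $N>1$ it remains to prove the crux:
\[ \text{if } A \text{ and } \mu_k(A) \text{ are pure and mutation Dynkin, and } \mu_k(A) \text{ has a dangerous cycle, then so does } A. \]
Granting this, backward induction along $B_{N-1},B_{N-2},\ldots,B_0$ yields a dangerous cycle in $B$, the desired contradiction.

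For the crux I would track how chordless cycles transform under a single $\mu_k$, using the explicit quiver rule of Definition \ref{def:mut_signed_val_quivs} together with $t$-parity bookkeeping. A chordless cycle $C'$ of $\mu_k(A)$ either avoids $k$ and uses no mutation-created edge, in which case it is already a chordless cycle of $A$ with the same edge signs and the same dangerousness; or it meets $k$, or uses an edge $i\to j$ produced from a path $i\to k\to j$. In the remaining cases one reroutes $C'$ through $k$, replacing each mutation-created edge by the corresponding length-two path and undoing the reversal and negation at $k$ from Step 2, obtaining a closed walk in $A$ from which a chordless cycle is extracted. The extra factor of $t$ in $a_{ij}'=t(a_{ij}+\sgn(a_{ik})a_{ik}a_{kj})$ exactly compensates the change of cycle length by one, so the parity defining ``dangerous'' is preserved along the rerouting, using the no-dangerous-$3$-cycle hypothesis on $A$ to pin down the $t$-parity of any chord met during extraction. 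The main obstacle is precisely this combinatorial bookkeeping: guaranteeing that the rerouted walk produces a genuine \emph{chordless} cycle, and that the sign and parity counts match uniformly across the cases of Definition \ref{defn_signed_mut}. The mutation Dynkin hypothesis is what makes this tractable, since Lemma \ref{lem_oriented_cycles_values_3_cycles} forces all chordless cycles to be oriented and restricts the possible values in any $3$-cycle.

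Conceptually, the crux is the signed shadow of a classical statement. A pure gss matrix $A$ with $A(1)$ skew-symmetrizable is the same data as $A(1)$ together with a choice of sign (positive/negative, i.e.\ $\mathbb{Z}$ versus $t\mathbb{Z}$) on each edge, that is, a quasi-Cartan companion of $A(1)$; under this dictionary ``no dangerous cycle'' is exactly admissibility of the companion in the sense of \cite{bgz}, and signed mutation restricts to the companion mutation used in \cite{br,pr}. Since admissibility of quasi-Cartan companions is preserved under mutation in the finite (mutation Dynkin) case, this provides an alternative, citation-based route to the crux, and hence to the proposition.
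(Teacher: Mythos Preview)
Your overall framework matches the paper's: argue by contradiction, take a minimal mutation sequence to a non-pure matrix, observe that $B_{N-1}$ must contain a dangerous (oriented) $3$-cycle, and then propagate a dangerous cycle backward to $B_0=B$. The divergence is in how the backward step is carried out, and this is where your proposal has a gap.

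You isolate a standalone ``crux'': if $A$ and $\mu_k(A)$ are pure and mutation Dynkin and $\mu_k(A)$ has a dangerous cycle, then so does $A$. The paper does \emph{not} prove this statement. Instead, its downward induction keeps the minimality hypothesis in play throughout and uses it repeatedly inside the step. Concretely: (i) to show the mutation vertex $k_{m+1}$ is not on the dangerous cycle $C\subset B^{(m+1)}$ (otherwise one builds a mutation sequence of length $\ell-2$ from $B$ to a non-pure matrix); (ii) to rule out the case where at most one vertex of $C$ is adjacent to $k_{m+1}$ (otherwise $C$ persists unchanged and Lemma~\ref{lem_non_spec_cycle} gives a shorter bad sequence); and (iii) to force the auxiliary $3$-cycle $(i_j,k,i_{j+1})$ not to be dangerous. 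Each of these is a genuine case that your standalone crux must handle without access to minimality, and your sketch does not handle them: the phrase ``obtaining a closed walk in $A$ from which a chordless cycle is extracted'' hides exactly the difficulty, since extracting a chordless subcycle can destroy the dangerous parity unless you control the $t$-parity of every chord you cut along. The paper sidesteps all of this by tracking a single dangerous cycle whose length increases by exactly one at each backward step, with minimality eliminating the awkward configurations.

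Your citation-based alternative is also not a proof as written. The identification of ``no dangerous cycle'' with an admissibility condition on quasi-Cartan companions is correct in spirit, but you would need a precise statement from \cite{bgz} (or \cite{br,pr}) that admissibility is preserved under the specific mutation rule corresponding to $\mu_k$ on the companion, and that this preservation goes in the direction you need. No such result is invoked elsewhere in this paper, and you have not pinned one down. If you want to rescue your modular approach, the cleanest fix is to reinstate minimality inside the backward step rather than trying to prove the crux in isolation.
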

\begin{proof}
Suppose, for contradiction, that there is a mutation sequence $(k_1,\ldots, k_{\ell})$ such that 
$$
B^{(\ell)}:=\mu_{k_{\ell}}\ldots\mu_{k_1}B=(b_{ij}^{(\ell)})
$$ 
is not pure. Let $\ell$ be minimal, so any mutation sequence of length smaller than $\ell$ yields a pure gss matrix.
We will prove by downward induction on $m=\ell-1,\ldots, 0$ that 
$B^{(m)}$ has a dangerous oriented $(\ell -m+2)$-cycle.  The case $m=0$ will give our contradiction.

For the base case, take $m=\ell -1$. As $B^{(\ell )}=\mu_{k_\ell}B^{(\ell -1)}$ is not pure, then $B^{(\ell -1)}$ does not satisfy the positive 3-cycle condition at $k_{\ell}$, and so it contains a dangerous 3-cycle by Remark \ref{rmk_dang_pos_3_cyc}. Such a 3-cycle must be oriented by Lemma \ref{lem_oriented_cycles_values_3_cycles}.

Now suppose that $m<\ell -1$ and assume that:
\[ \text{$B^{(m+1)}$ contains a dangerous oriented $(\ell -m+1)$-cycle $C=(i_1,\ldots, i_{\ell -m+1})$.} \] 
Our proof has two steps: first we will show that the next vertex we mutate at does not belong to $C$, i.e., that $k_{m+1}\notin\{i_1,\ldots, i_{\ell -m+1}\}$.  
Then we show that $B^{(m)}$ has a dangerous oriented $(\ell -m+2)$-cycle on the vertices of $C$ together with $k_{m+1}$.

\textbf{Step 1:} 
Suppose for contradiction that $k_{m+1}\in\{i_1,\ldots, i_{\ell -m+1}\}$ and,  without loss of generality,  $k_{m+1}=i_1$. By Lemma \ref{lem_oriented_cycles_values_3_cycles}(i), $C$ is oriented, and we can represent the specializations $C(1)$ and $\left(\mu_{i_1}(C)\right)(1)$ as in the following diagram:
\[\begin{tikzpicture}[xscale=2,baseline=(bb.base),
  quivarrow/.style={black, -latex}] 
\path (0,0) node (bb) {}; % baseline

\node (i21) at (0,0) {$\circ$};
\node [left] at (i21) {\small {$i_2$}}; 

\node (ilm11) at (1,0) {$\circ$};
\node [right] at (ilm11) {\small {$i_{\ell-m+1}$}};

\node (i11) at (0.5,1) {$\bullet$};
\node[above] at (i11) {\small {$i_1$}};

\node (i31) at (0,-1) {$\circ$};
\node [left] at (i31) {\small {$i_3$}};

\node (i41) at (0.33,-1.6) {$\circ$};

\node (ilm1) at (1,-1) {$\circ$};
\node [right] at (ilm1) {\small {$i_{\ell -m}$}};

\node (ilmm11) at (0.67,-1.6) {$\circ$};

\node (C) at (0.5,-0.3) {\small $C$};

\draw [quivarrow, shorten <=-1pt, shorten >=-1pt] (i11) -- (i21);
\draw [quivarrow, shorten <=-1pt, shorten >=-1pt] (i21) -- (i31);
\draw [quivarrow, shorten <=-1pt, shorten >=-1pt] (i31) -- (i41);
\draw [quivarrow, shorten <=-1pt, shorten >=-1pt] (ilmm11) -- (ilm1);
\draw [quivarrow, shorten <=-1pt, shorten >=-1pt] (ilm1) -- (ilm11);
\draw [quivarrow, shorten <=-1pt, shorten >=-1pt] (ilm11) -- (i11);
\draw [dashed, shorten <=-1pt, shorten >=-1pt] (i41) -- (ilmm11);

\draw [->] (1.5,0.5) to node[above] {$\mu_{i_1}$} (1.8,0.5);

\begin{scope}[shift={(2.3,0)}]

\node (i22) at (0,0) {$\circ$};
\node [left] at (i22) {\small {$i_2$}}; 

\node (ilm12) at (1,0) {$\circ$};
\node [right] at (ilm12) {\small {$i_{\ell-m+1}$}};

\node (i12) at (0.5,1) {$\bullet$};
\node[above] at (i12) {\small {$i_1$}};

\node (i32) at (0,-1) {$\circ$};
\node [left] at (i32) {\small {$i_3$}};

\node (i42) at (0.33,-1.6) {$\circ$};

\node (ilm2) at (1,-1) {$\circ$};
\node [right] at (ilm2) {\small {$i_{\ell -m}$}};

\node (ilmm12) at (0.67,-1.6) {$\circ$};

%\node (C) at (0.5,-0.7) {\small $C$};

\draw [quivarrow, shorten <=-1pt, shorten >=-1pt] (i22) -- (i12);
\draw [quivarrow, shorten <=-1pt, shorten >=-1pt] (i22) -- (i32);
\draw [quivarrow, shorten <=-1pt, shorten >=-1pt] (i32) -- (i42);
\draw [quivarrow, shorten <=-1pt, shorten >=-1pt] (ilmm12) -- (ilm2);
\draw [quivarrow, shorten <=-1pt, shorten >=-1pt] (ilm2) -- (ilm12);
\draw [quivarrow, shorten <=-1pt, shorten >=-1pt] (i12) -- (ilm12);
\draw [quivarrow, shorten <=-1pt, shorten >=-1pt] (ilm12) -- (i22);
\draw [dashed, shorten <=-1pt, shorten >=-1pt] (i42) -- (ilmm12);

\end{scope}
\end{tikzpicture}
\]
By Lemma \ref{lem_non_spec_cycle}, we have that
    $\mu_{i_{\ell -m -1}}\ldots \mu_{i_1}(B^{(m+1)})$
is not pure.  Since $B^{(m+1)}=\mu_{k_{m+1}}B^{(m)}$ and $k_{m+1}=i_1$, we have 
    $$
    \mu_{i_{\ell -m -1}}\ldots \mu_{i_1}(B^{(m+1)})=\mu_{i_{\ell -m -1}}\ldots \mu_{i_2}(\mu_{i_1})^2(B^{(m)}).
    $$
But since $\mu_{i_1}^2$ just changes the sign of arrows incident to $i_1$ by 
Lemma \ref{lem:mutate_twice}, we also have that
$\mu_{i_{\ell -m -1}}\ldots \mu_{i_2}(B^{(m)})$
is not pure. 
But now   
\[    \mu_{i_{\ell -m -1}}\ldots \mu_{i_2}(B^{(m)})=\mu_{i_{\ell -m -1}}\ldots \mu_{i_2}\mu_{k_{m}}\ldots \mu_{k_1}(B)
\]
exhibits a mutation sequence of length $\ell -2$ from $B$ to a non-pure matrix, 
contradicting the minimality of $\ell$.

\textbf{Step 2:}    
Next we want to show that $B^{(m)}$ has a dangerous oriented $(\ell -m+2)$-cycle.  Notice that this is equivalent to showing that such a cycle exists for $\mu_{k_{m+1}}(B^{(m+1)})=\mu_{k_{m+1}}^2(B^{(m)})$, because applying $\mu_{k_{m+1}}$ twice has the only effect of changing the sign of the arrows incident with $k_{m+1}$ by Lemma \ref{lem:mutate_twice}.

To simplify notation, let 
\[ k=k_{m+1}, \;\;\;\; B^{(m+1)}=(b_{ij}), \;\;\;\;   \mu_{k}(B^{(m+1)})=(b_{ij}'), \;\;\text{ and }\;\;
a_h:=b_{i_h,k}
\]
for $1\leq h\leq \ell -m+1$.
We will show that exactly two $a_h$'s are nonzero, and they are consecutive, of the form $a_{j}$ and $a_{j+1}$.  We do this by ruling out all other possibilities.
Then we will show that
 \[ (i_1,\ldots,i_j,k_{m+1},i_{j+1},\ldots i_{\ell -m+1})\] 
is a dangerous oriented $(\ell -m+2)$-cycle in $B^{(m)}$.

\begin{itemize}
\item
   First suppose for contradiction that at most one of the $a_h$'s is nonzero.  Then, for $h\neq j$, we always have $b_{i_h,i_{k}}b_{i_k,i_j}=-a_ha_j=0$.
Consider the values of $b_{i_h,i_{h+1}}'$.  We showed in Step 1 that $\forall h$, $k\neq i_h$, so we are never in the first two cases of Definition \ref{defn_signed_mut}, and as $a_ha_j=0$ we are never in the third case.  So 

\begin{equation}\label{eq1}
b_{i_h,i_{h+1}}' = b_{i_h,i_{h+1}} \quad \text{for all } 1\leq h\leq \ell -m+1.
\end{equation}

By assumption, $B^{(m+1)}$ contains a dangerous oriented $(\ell -m+1)$-cycle $C=(i_1,\ldots, i_{\ell -m+1})$, and therefore $\mu_{k}(B^{(m+1)})=\mu_{k}^2(B^{(m)})$ contains the same dangerous cycle by \eqref{eq1}.  But then, by Lemma \ref{lem:mutate_twice}, $B^{(m)}$ contains a dangerous oriented $(\ell -m+1)$-cycle, so by Lemma \ref{lem_non_spec_cycle} the matrix
   $$
   \mu_{i_{\ell-m-1}}\ldots \mu_{i_1}(B^{(m)})=\mu_{i_{\ell-m-1}}\ldots \mu_{i_1}\mu_{k_m}\ldots\mu_{k_1}(B)
   $$
   is not pure.  But this  mutation sequence has length $\ell -1$, contradicting the minimality of $\ell$.
   Thus we can assume at least two of the $a_h$'s are nonzero.

\item
Now suppose for contradiction that there exist $h_1<h_2<h_3$ in $C$ such that $a_{h_1},a_{h_2},a_{h_3}\neq 0$ and $a_j=0$ for all $j$ such that $h_1<j<h_2$ and $h_2<j<h_3$.  Then at least one of the chordless cycles $(i_{h_1},\ldots, i_{h_2},k)$ or $(i_{h_2},\ldots, i_{h_3},k)$ is not oriented, as illustrated in the following picture: 
   $$
   \xymatrix@C=0.1em@R=0.7em{ & & 2\ar[drr] & & \\
   1\ar[urr] & & k\ar[ll] & & 3\ar[dl]\ar[ll] \\
   & 5\ar[ul] & & 4\ar[ll]\ar@{-}[ul] & } \hspace{0.8cm} \text{or} \hspace{0.8cm}    \xymatrix@C=0.1em@R=0.7em{ & & 2\ar[drr] & & \\
   1\ar[urr] & & k\ar@{-}[ll]\ar[rr] & & 3\ar[dl] \\
   & 5\ar[ul] & & 4\ar[ll]\ar[ul] & }
   $$
   This contradicts Lemma \ref{lem_oriented_cycles_values_3_cycles}, and therefore cannot happen. 

\item
   The only case left to exclude is when there is some $h_1<h_2$ with $h_2-h_1>1$ such that $a_{h_1},a_{h_2}\neq 0$ and $a_j=0$ for all $j\neq h_1,h_2$. This case is similar to the one examined above: as shown in the figure below, at least one of the cycles $(i_{h_1},i_{h_1 +1},\ldots, i_{h_2},k)$ or $(i_{h_2},i_{h_2 +1},\ldots, i_{h_3},k)$ is non oriented in $B^{(m+1)}$, and therefore this case can never happen.
   $$
   \xymatrix@C=0.1em@R=0.7em{ & & 2\ar[drr] & & \\
   1\ar[urr] & & k\ar[ll] & & 3\ar[dl]\ar[ll] \\
   & 5\ar[ul] & & 4\ar[ll] & } \hspace{0.8cm} \text{or} \hspace{0.8cm}    \xymatrix@C=0.1em@R=0.7em{ & & 2\ar[drr] & & \\
   1\ar[urr]\ar[rr] & & k\ar[rr] & & 3\ar[dl] \\
   & 5\ar[ul] & & 4\ar[ll] & }
   $$ 
\end{itemize}

So we have shown that $a_j,a_{j+1}\neq0$, and $a_h=0$ for $h\neq j,j+1$.

Therefore, by Lemma \ref{lem_oriented_cycles_values_3_cycles}(ii), we have that the valued quiver associated to the cycle $(i_j,i_{j+1},k)$ in $B^{(m+1)}(1)$ is one of the following:
   $$
   \xymatrix@=0.7em{& k\ar[dr]^{(1,1)} & \\
   i_{j}\ar[ur]^{(1,1)} & & i_{j+1}\ar[ll]^{(1,1)}} \hspace{0.9cm} \xymatrix@=0.7em{& k\ar[dr]^{(1,2)} & \\
   i_{j}\ar[ur]^{(1,1)} & & i_{j+1}\ar[ll]^{(2,1)}} \hspace{0.9cm} \xymatrix@=0.7em{& k\ar[dr]^{(1,1)} & \\
   i_{j}\ar[ur]^{(2,1)} & & i_{j+1}\ar[ll]^{(1,2)}} \hspace{0.9cm} \xymatrix@=0.7em{& k\ar[dr]^{(2,1)} & \\
   i_{j}\ar[ur]^{(1,2)} & & i_{j+1}\ar[ll]^{(1,1)}}
   $$
   $$
   \xymatrix@=0.7em{& k\ar[dl]_{(1,1)} & \\
   i_{j}\ar[rr]_{(1,1)} & & i_{j+1}\ar[ul]_{(1,1)}} \hspace{0.9cm} \xymatrix@=0.7em{& k\ar[dl]_{(1,2)} & \\
   i_{j}\ar[rr]_{(2,1)} & & i_{j+1}\ar[ul]_{(1,1)}} \hspace{0.9cm} \xymatrix@=0.7em{& k\ar[dl]_{(1,1)} & \\
   i_{j}\ar[rr]_{(1,2)} & & i_{j+1}\ar[ul]_{(2,1)}} \hspace{0.9cm} \xymatrix@=0.7em{& k\ar[dl]_{(2,1)} & \\
   i_{j}\ar[rr]_{(1,1)} & & i_{j+1}\ar[ul]_{(1,2)}}
   $$

   We can assume that $b_{i_j,i_{j+1}}b_{i_j,k}b_{k,i_{j+1}}\in \mathbb{Z}$, since otherwise we could use Lemma \ref{lem_non_spec_cycle} to contradict the minimality of $\ell$. Furthermore, the two rightmost cycles $(i_j,i_{j+1},k)$ in the picture cannot appear in $B^{(m+1)}$, since otherwise $b_{i_j,i_{j+1}}'=t b_{i_j,i_{j+1}}$, and the chordless cycle $(i_1,\ldots, i_{\ell-m+1})$ in $\mu_k(B^{(m+1)})$ would not be oriented, contradicting Lemma \ref{lem_oriented_cycles_values_3_cycles}(i). The other valued quivers yield $b_{i_j,i_{j+1}}'=0$, and therefore $(i_1,\ldots, i_j,k,i_{j+1},\ldots, i_{\ell -m+1})$ is a chordless $(\ell -m+2)$-cycle in $\mu_k(B^{(m+1)})$, and thus in $B^{(m)}$. 
   
Now we have found our cycle in $B^{(m)}$.  It just remains to show that the product of its signs is $(-1)^{\ell-m}$.

First note that, by the usual minimality argument, the 3-cycle $(i_j,k,i_{j+1})$ in $B^{(m+1)}$ must be oriented.  So we have $b_{i_j,k}b_{k,i_{j+1}}b_{i_{j+1},i_j}\in\Z$.
By assumption, we have
\[\prod_{j=1}^{\ell-m+1} b_{i_j,i_{j+1}}\in t^{\ell-m+1}\mathbb{Z}\]
so together we get
\[  (\prod_{h=1}^{j-1}b_{i_h,i_{h+1}})b_{i_j,k}b_{k,i_{j+1}}(\prod_{h=j+1}^{\ell-m-1}b_{i_h,i_{h+1}})\in t^{\ell-m+1}\mathbb{Z}.
\]

Second, as $a_h=0$ for $h\neq j,j+1$ we have $b_{i_h,i_{h+1}}'=b_{i_h,i_{h+1}}$ when $h\neq j$, and Definition \ref{defn_signed_mut} gives $b_{i_j,k}'=-tb_{i_j,k}$ and $b_{k,i_{j+1}}'=-b_{k,i_{j+1}}$.

Therefore
\[   (\prod_{h=1}^{j-1}b_{i_h,i_{h+1}}')b_{i_j,k}'b_{k,i_{j+1}}'(\prod_{h=j+1}^{\ell-m-1}b_{i_h,i_{h+1}}')= t(\prod_{h=1}^{j-1}b_{i_h,i_{h+1}})b_{i_j,k}b_{k,i_{j+1}}(\prod_{h=j+1}^{\ell-m-1}b_{i_h,i_{h+1}})\in t^{\ell-m+1}\mathbb{Z}
\]
so our chordless $(\ell-m+2)$-cycle in $B^{(m)}$ is dangerous and our downwards induction is complete.
\end{proof}

\begin{theorem}\label{thm:mutD-is-pure}
Every mutation Dynkin gss matrix is pure.
\end{theorem}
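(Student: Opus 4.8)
The plan is to reduce the statement directly to Proposition \ref{prop:mutat-eq-Dynkin-no-dang}. Let $B$ be a mutation Dynkin gss matrix; by definition it is mutation equivalent to some Dynkin gss matrix $B_0$. It therefore suffices to check that $B_0$ satisfies the two hypotheses of that proposition: that its specialization $B_0(1)$ is (classically) mutation Dynkin, and that $B_0$ contains no dangerous cycles. The proposition will then yield that every gss matrix mutation equivalent to $B_0$ is pure, and in particular $B$ is pure.

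Both verifications rest on the elementary observation that every Dynkin diagram of type $A_n$, $B_n$, $C_n$, $D_n$, $E_n$, $F_4$, or $G_2$ is a tree. For the no-dangerous-cycles hypothesis, recall that $B_0$ is Dynkin, so the unsigned diagram of its associated signed valued quiver $(Q_{B_0},v_{B_0})$ is a Dynkin diagram; hence the underlying graph of $Q_{B_0}$ is a tree, which is acyclic. Thus $B_0$ has no chordless cycles, and in particular no dangerous cycles.

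For the remaining hypothesis, I would note that $B_0(1)$ is skew-symmetrizable by the remark following Definition \ref{defn_spec_matr}. Its underlying graph coincides with the tree underlying $Q_{B_0}$, and for each edge one computes $b_{ij}(1)b_{ji}(1)=-v_1(\alpha)v_2(\alpha)$, whose absolute value is exactly the edge weight recorded in the unsigned diagram by Definition \ref{def:unsigned-diagram}. Hence the diagram associated to the skew-symmetrizable matrix $B_0(1)$ is the Dynkin diagram $\tilde\Gamma(Q_{B_0},v_{B_0})$, so $B_0(1)$ is itself Dynkin and \emph{a fortiori} mutation Dynkin. With both hypotheses in hand, Proposition \ref{prop:mutat-eq-Dynkin-no-dang} completes the argument.

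I do not expect a serious obstacle at this stage: the substantive work has already been carried out in Proposition \ref{prop:mutat-eq-Dynkin-no-dang}, and this final step only packages that result by observing that a Dynkin gss matrix, being supported on a tree, trivially has no dangerous cycles and specializes to a classically Dynkin matrix. The only point meriting a moment's care is confirming that the specialization lands in the classical mutation-Dynkin class, which follows at once from the edge-weight computation above.
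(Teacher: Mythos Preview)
Your proposal is correct and follows essentially the same route as the paper's proof: reduce to Proposition \ref{prop:mutat-eq-Dynkin-no-dang} by noting that a Dynkin gss matrix is supported on a tree and hence has no dangerous cycles. The only difference is that you spell out the verification that $B_0(1)$ is classically Dynkin via the edge-weight computation, whereas the paper simply asserts this as part of unpacking the definition.
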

\begin{proof}
By definition, a mutation Dynkin gss matrix is mutation equivalent to a pure gss matrix $B$ such that $B(1)$ is Dynkin.  As every Dynkin diagram is a tree, $B$ has no dangerous cycles, so the result follows by Proposition \ref{prop:mutat-eq-Dynkin-no-dang}.
\end{proof}

\begin{theorem}\label{thm:mutD-fromspec}
The mutation class of Dynkin type $\Delta$ consists of all gss matrices $B$ such that $B(1)$ is of (unsigned) mutation type $\Delta$ and $B$ contains no dangerous cycles.
\end{theorem}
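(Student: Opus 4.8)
The plan is to prove the statement as a characterization and verify both inclusions. Write $\mathcal{M}_\Delta$ for the mutation class of Dynkin type $\Delta$, and write $\mathcal{C}_\Delta$ for the set of pure gss matrices $B$ such that $B(1)$ is of unsigned mutation type $\Delta$ and $B$ has no dangerous cycles. (Note that ``$B$ has no dangerous cycles'' presupposes $B$ is pure, so purity is built into membership of $\mathcal{C}_\Delta$.)

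For the inclusion $\mathcal{M}_\Delta\subseteq\mathcal{C}_\Delta$, suppose $B$ is mutation Dynkin of type $\Delta$, so $B$ is mutation equivalent to a Dynkin gss matrix $B_0$ with $B_0(1)$ Dynkin of type $\Delta$. First, Proposition \ref{prop_mutat_agree} shows that specialization commutes with mutation, so the index sequence relating $B_0$ and $B$ also relates $B_0(1)$ and $B(1)$ classically; hence $B(1)$ is of unsigned mutation type $\Delta$. For the absence of dangerous cycles I would argue by contradiction: since mutation is invertible (Proposition \ref{prop_order4}), every matrix mutation equivalent to $B$ is again mutation Dynkin, and so is pure by Theorem \ref{thm:mutD-is-pure}; but if $B$ had a dangerous cycle, then Remark \ref{rmk:dangerous} would produce a non-pure matrix in the mutation class of $B$, a contradiction.

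For the reverse inclusion $\mathcal{C}_\Delta\subseteq\mathcal{M}_\Delta$, suppose $B$ is pure, $B(1)$ is of unsigned mutation type $\Delta$, and $B$ has no dangerous cycles. Since $B(1)$ is mutation Dynkin, Proposition \ref{prop:mutat-eq-Dynkin-no-dang} guarantees that every gss matrix mutation equivalent to $B$ is pure. Now choose a classical Fomin--Zelevinsky mutation sequence $(k_1,\ldots,k_r)$ taking $B(1)$ to an honest Dynkin matrix $\tilde C$ of type $\Delta$, and lift it by setting $B'=\mu_{k_r}\cdots\mu_{k_1}B$. Then $B'$ is pure by the previous sentence, and $B'(1)=\tilde C$ by Proposition \ref{prop_mutat_agree}. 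It remains to see that $B'$ is itself Dynkin, i.e., that its unsigned diagram is the Dynkin diagram $\Delta$.

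The step I expect to carry the real content is the observation that the unsigned diagram of a pure gss matrix depends only on its specialization. Concretely, writing $b'_{ij}$ for the entries of $B'$, one checks for each arrow $\alpha:i\to j$ of the associated signed valued quiver that $\abs{v_1(\alpha)}=\abs{b'_{ij}(1)}$ and $\abs{v_2(\alpha)}=\abs{b'_{ji}(1)}$, because evaluating a pure entry at $t=-1$ and at $t=1$ gives the same absolute value. Consequently the edge count $v_1(\alpha)v_2(\alpha)=\abs{b'_{ij}(1)\,b'_{ji}(1)}$ and the ordering, governed by $\abs{v_1(\alpha)}$ versus $\abs{v_2(\alpha)}$, coincide with the classical diagram of the skew-symmetrizable matrix $B'(1)$. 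Since $B'(1)=\tilde C$ has diagram equal to $\Delta$, the unsigned diagram of $B'$ is $\Delta$, so $B'$ is a Dynkin gss matrix and $B$ is mutation equivalent to it, giving $B\in\mathcal{M}_\Delta$. The main obstacle is precisely this last comparison: everything else is an assembly of the preceding results, but one must check carefully that passing through $t=-1$ (in the associated signed valued quiver) and through $t=1$ (in the specialization) yield the same underlying unsigned data for a pure matrix, and that the edge-orderings match the classical convention. Once this is settled, both inclusions close.
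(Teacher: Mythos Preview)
Your proof is correct and follows essentially the same route as the paper's own argument: both directions are handled by combining Proposition~\ref{prop_mutat_agree}, Proposition~\ref{prop:mutat-eq-Dynkin-no-dang}, Theorem~\ref{thm:mutD-is-pure}, and the order-$4$ invertibility of mutation. The only differences are cosmetic: you cite Remark~\ref{rmk:dangerous} where the paper uses Lemma~\ref{lem_oriented_cycles_values_3_cycles} plus Lemma~\ref{lem_non_spec_cycle}, and you spell out explicitly why the unsigned diagram of a pure gss matrix depends only on its specialization, a step the paper compresses into the phrase ``so it is mutation Dynkin''.
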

\begin{proof}
If $B$ is mutation Dynkin then $B(1)$ must be of (unsigned) mutation type $\Delta$ by Proposition \ref{prop_mutat_agree}.  Then $B$ cannot contain a dangerous cycle, as the cycle would be oriented by Lemma \ref{lem_oriented_cycles_values_3_cycles}, and Lemma \ref{lem_non_spec_cycle} would say that $B$ is mutation equivalent to a non-pure matrix, contradicting Theorem \ref{thm:mutD-is-pure}.

Now suppose that $B$ is a gss matrix with no dangerous cycles and that $\tilde B=B(1)$ is mutation Dynkin.  
Then there exists a sequence $i_1,\ldots,i_n$ such that $\tilde B'=\mu^{FZ}_{i_k}\ldots\mu^{FZ}_{i_1}\tilde B$ is Dynkin.  
Define the gss matrix $B'=\mu_{i_k}\ldots\mu_{i_1} B$, so $B'(1)=\tilde B'$.  Then $B'$ is pure by Proposition \ref{prop:mutat-eq-Dynkin-no-dang}, so it is mutation Dynkin, and therefore so is $\mu_{i_1}^3\ldots\mu_{i_k}^3B'$.  But, by Proposition \ref{prop_order4}, this is $B$.
\end{proof}

% ================================================
% ================================================
% ================================================

\section{Roots}

\subsection{The Cartan counterpart}

Given a skew-symmetrizable matrix $\tilde B$, its \emph{Cartan counterpart} is the $n\times n$ matrix $\tilde C(\tilde{B})=(\tilde c_{ij})$ with $\tilde c_{ii}=2$ and $\tilde c_{ij}=-%\abs{\tilde b_{ij}}$: 
|\tilde b_{ij}|$: 
see \cite[Section 1.3]{fz2}, following \cite[Remark 4.6]{fz1}.  

We revise some definitions from the introduction of \cite{bgz}.
Let $\tilde C$ denote a symmetrizable matrix (so $D\tilde C=(D\tilde C)^T$ for $D$ diagonal) and $\tilde B$ a skew-symmetrizable matrix.  We say $\tilde C$ is:
\begin{itemize}
\item \emph{quasi-Cartan} if its diagonal entries $\tilde c_{ii}$ are all equal to $2$;
\item \emph{positive} if the symmetrized matrix $D\tilde C$ is positive definite;
\item a \emph{quasi-Cartan companion} of $\tilde B$ if it is quasi-Cartan and $\abs{\tilde c_{ij}}=|\tilde b_{ij}|$ %\abs{\tilde b_{ij}}$ 
 for all $i\neq j$.
\end{itemize}
The Cartan counterpart of a skew-symmetrizable matrix $\tilde B$ 
is an example of a quasi-Cartan companion of $\tilde B$, but it will not be positive in general, even if $\tilde B$ is mutation Dynkin: for example, if we start with a linearly oriented $A_3$ quiver and mutate to get a 3-cycle, the Cartan counterpart has determinant $0$ and so cannot be positive.

In this section we give a signed variant of the Cartan counterpart.  
\begin{definition}\label{def:cc}
Given a gss matrix $B=(b_{ij})$, its \textbf{Cartan counterpart} is the $n\times n$ integer matrix $C(B)=(c_{ij})$ where:
\[
c_{ij}=\begin{cases} 2 & \text{if } i=j; \\
\abs a- \abs b & \text{if } i\neq j \text{ and }b_{ij}=a+bt.
\end{cases}
\]
\end{definition}
For $C=C(B)$, note that $DB=-(BD)^T$ implies $DC=(CD)^T$.

The following two results are immediate from the definitions.
\begin{lemma}\label{lem:qCartan}
If $B$ is a pure gss matrix then $C(B)$ is a quasi-Cartan companion of $B(1)$.
\end{lemma}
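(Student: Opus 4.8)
The plan is to unwind the two definitions and verify the three conditions in the definition of a quasi-Cartan companion directly. Recall that $C = C(B)$ is a quasi-Cartan companion of $\tilde B = B(1)$ provided that (i) $C$ is symmetrizable, (ii) its diagonal entries are all $2$, and (iii) $\abs{c_{ij}} = \abs{\tilde b_{ij}}$ for all $i \neq j$. The diagonal condition (ii) is immediate, since $c_{ii} = 2$ by Definition \ref{def:cc}. For the symmetrizability condition (i), I would invoke the remark following Definition \ref{def:cc}, which records that $DB = -(BD)^T$ implies $DC = (CD)^T$; since $B$ is gss with skew-symmetrizing matrix $D$, this gives symmetrizability of $C$ with the same $D$.

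The substance is in condition (iii). First I would use purity: since $B$ is a pure gss matrix, each off-diagonal entry $b_{ij}$ lies in $\Z$ or in $t\Z$, so writing $b_{ij} = a + bt$ we have either $b = 0$ (the $\Z$ case) or $a = 0$ (the $t\Z$ case). In the first case $c_{ij} = \abs a - \abs b = \abs a$ and $b_{ij}(1) = a$, so $\tilde b_{ij} = a$ and $\abs{\tilde b_{ij}} = \abs a = \abs{c_{ij}}$. In the second case $c_{ij} = \abs a - \abs b = -\abs b$, while $b_{ij}(1) = bt$ evaluated at $t = 1$ gives $\tilde b_{ij} = b$, so $\abs{\tilde b_{ij}} = \abs b = \abs{c_{ij}}$. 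Either way $\abs{c_{ij}} = \abs{\tilde b_{ij}}$, establishing (iii).

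I do not expect any serious obstacle here: the result is flagged in the text as immediate, and indeed everything reduces to a short case split on whether $b_{ij}$ lies in $\Z$ or $t\Z$, which is exactly what purity guarantees. The one point that deserves a word of care is making sure the specialization $\tilde b_{ij} = b_{ij}(1)$ is computed correctly in the $t\Z$ case, where the coefficient $b$ rather than $a$ controls the absolute value; this is precisely the place where the definition $c_{ij} = \abs a - \abs b$ is engineered so that $\abs{c_{ij}}$ matches $\abs{\tilde b_{ij}}$ regardless of which of the two cases occurs. Since purity forbids mixed entries $a + bt$ with both $a, b \neq 0$, no further subtleties arise.
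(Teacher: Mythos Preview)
Your proof is correct and takes essentially the same approach as the paper, which simply declares the result ``immediate from the definitions'' without further elaboration. Your case split on whether $b_{ij}\in\Z$ or $b_{ij}\in t\Z$ is exactly the unwinding those definitions call for, and your handling of symmetrizability, the diagonal entries, and the off-diagonal absolute values is sound.
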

\begin{proposition}\label{prop:cc-btilde}
If $\tilde B$ is a skew-symmetrizable matrix then %its Cartan counterpart is 
$\tilde C(\tilde B)=C(t\tilde B)$.
\end{proposition}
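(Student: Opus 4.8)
The plan is to compute $C(t\tilde B)$ directly from Definition \ref{def:cc} and match it entry-by-entry with the definition of $\tilde C(\tilde B)$ recalled at the start of this section. Before doing so, I would first confirm that $t\tilde B$ is a genuine gss matrix, so that its Cartan counterpart is even defined: its entries $t\tilde b_{ij}$ lie in $t\Z \subseteq \Z[t]/(t^2-1)$, and multiplying the skew-symmetrizing identities $d_i\tilde b_{ij} = -d_j\tilde b_{ji}$ through by $t$ shows that $t\tilde B$ is skew-symmetrizable with the same skew-symmetrizing matrix $D = \diag(d_1,\ldots,d_n)$. Thus $t\tilde B$ qualifies as a gss matrix and $C(t\tilde B)$ is well defined.

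The main step is then a routine comparison. For $i \neq j$, I would write the $(i,j)$-entry of $t\tilde B$ in the form $a + bt$ required by Definition \ref{def:cc}; since $t\tilde b_{ij}$ is a pure multiple of $t$, this forces $a = 0$ and $b = \tilde b_{ij}$. Definition \ref{def:cc} then gives the off-diagonal entry $\abs a - \abs b = \abs 0 - \abs{\tilde b_{ij}} = -\abs{\tilde b_{ij}}$, while the diagonal entries are $2$ by fiat. This is exactly the prescription defining $\tilde C(\tilde B)$, namely $\tilde c_{ii} = 2$ and $\tilde c_{ij} = -\abs{\tilde b_{ij}}$, so the two matrices agree.

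I do not expect any genuine obstacle: the entire content lies in unwinding the two definitions, and the single point meriting a moment's care is the preliminary check that $t\tilde B$ is a valid gss matrix. Conceptually, the statement records that the map $\tilde B \mapsto t\tilde B$ embeds the classical skew-symmetrizable setting into the signed one as the ``all negative-sign'' matrices, and that under this embedding the signed Cartan counterpart of Definition \ref{def:cc} specializes to the classical Cartan counterpart.
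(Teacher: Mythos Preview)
Your proposal is correct and matches the paper's approach: the paper states that this result is ``immediate from the definitions'' without giving further detail, and your argument is precisely the definition-unwinding that justifies that claim. Your extra care in verifying that $t\tilde B$ is a gss matrix is a reasonable addition but not strictly necessary, since the definition of the Cartan counterpart in Definition~\ref{def:cc} only uses the entries of the matrix.
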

\begin{remark}\label{rmk:embed}
We interpret Proposition \ref{prop:cc-btilde} as saying that the canonical way to embed valued quivers into signed valued quivers is to give all arrows negative sign.
\end{remark}

If $(Q,v)$ is a signed valued quiver, write $C(Q,v)=C(B(Q,v))$.
The following result is immediate from Definitions \ref{def:associated} and \ref{def:cc}.
\begin{lemma}\label{lem:CQv}
If $(Q,v)$ is a signed valued quiver then $C(Q,v)=(c_{ij})$ where:
\[ c_{ij} =
\begin{cases}
2 & \text{if } i=j, \\
v_1(\alpha) &\text{if }\exists\alpha:i\to j;\\
v_2(\alpha) &\text{if }\exists\alpha:j\to i;\\
0 &\text{if there is no arrow between $i$ and $j$.}
\end{cases}
\]
\end{lemma}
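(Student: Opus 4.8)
The plan is simply to unfold the two definitions that the statement refers to. Since by definition $C(Q,v)=C(B(Q,v))$, I first write down the associated gss matrix $B=B(Q,v)=(b_{ij})$ using the five cases of Definition \ref{def:associated}, and then apply the rule $c_{ij}=\abs a-\abs b$ (for $b_{ij}=a+bt$ with $i\neq j$) from Definition \ref{def:cc} to each case. The diagonal case $c_{ii}=2$ and the empty case (no arrow between $i$ and $j$, so $b_{ij}=0$ and hence $c_{ij}=0$) are immediate, so the content lies in the four cases where an arrow exists.

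For the arrow cases, the one fact I would keep in mind is the sign convention built into the definition of a signed valued quiver: $\sgn(v_1(\alpha))=\sgn(v_2(\alpha))\neq0$, so a positive arrow has $v_1(\alpha),v_2(\alpha)>0$ and a negative arrow has $v_1(\alpha),v_2(\alpha)<0$. With this in hand each case is a one-line check. For a positive $\alpha:i\to j$ we have $b_{ij}=v_1(\alpha)\in\Z$, so $a=v_1(\alpha)$, $b=0$ and $c_{ij}=\abs{v_1(\alpha)}=v_1(\alpha)$; for a positive $\alpha:j\to i$ we have $b_{ij}=-v_2(\alpha)\in\Z$, giving $c_{ij}=\abs{v_2(\alpha)}=v_2(\alpha)$. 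For a negative $\alpha:i\to j$ we have $b_{ij}=-tv_1(\alpha)\in t\Z$, so $a=0$, $b=-v_1(\alpha)$ and $c_{ij}=-\abs{v_1(\alpha)}=v_1(\alpha)$, the last equality because $v_1(\alpha)<0$; for a negative $\alpha:j\to i$ we have $b_{ij}=tv_2(\alpha)\in t\Z$, giving $c_{ij}=-\abs{v_2(\alpha)}=v_2(\alpha)$. In every case the outcome matches the claimed formula.

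The only point worth highlighting is conceptual rather than an obstacle: the operation $a+bt\mapsto\abs a-\abs b$ is precisely what strips away the $t$-marking that Definition \ref{def:associated} uses to record the sign of an arrow, while restoring the correctly signed value. An integer entry contributes its absolute value directly, whereas a $t$-multiple entry contributes via $-\abs b$, and the extra negation is absorbed by the fact that negative arrows carry negative values. There is no induction or auxiliary construction required; the routine sign bookkeeping described above is the only thing to be careful about.
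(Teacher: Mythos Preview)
Your proof is correct and is exactly what the paper intends: the paper simply states that the result is immediate from Definitions \ref{def:associated} and \ref{def:cc}, and your case-by-case unfolding of those definitions supplies precisely the routine verification implied by that remark.
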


\begin{example}
Consider the following signed valued quiver of mutation type $A_3$:
$$
\xymatrix{ & 2\ar[dr]^{(-1,-1)} & \\
1\ar[ur]^{(1,1)} & & 3\ar[ll]^{(-1,-1)}}
$$

Its associated gss matrix and Cartan counterpart are as follows. 
$$
B=\left(\begin{matrix}
    0 & 1 & -t \\
    -1 & 0 & t \\
    t & -t & 0
\end{matrix}\right) \hspace{1cm} C(B)=\left(\begin{matrix}
    2 & 1 & -1 \\
    1 & 2 & -1 \\
    -1 & -1 & 2
\end{matrix}\right)
$$
\end{example}

We give a signed variant of Definition \ref{def:unsigned-diagram}.
\begin{definition}\label{def:signed-diagram}
The \textbf{signed diagram} $\Gamma=\Gamma(Q,v)$ of a signed valued quiver $(Q,v)$ is the unsigned diagram $\tilde\Gamma(Q,v)$ together with a sign $\e_{ij}=\e_{ji}$ attached to each unordered pair $\{i,j\}$ of vertices which are joined by at least one edge.  If $Q$ has an arrow $\alpha:i\to j$ then the sign of $\{i,j\}$ is $\e_{ij}=\sgn(v_1(\alpha))$.
\end{definition}
For a signed valued quiver of mutation type Dynkin, the next result shows that its Cartan counterpart and the associated signed diagram capture the same information.  Note that a similar definition of the diagram of a quasi-Cartan matrix appears in \cite[Definition 2.2]{bgz}, without recording the ordering on multiple edges.

\begin{lemma}
Suppose $B$ is a mutation Dynkin gss matrix with associated signed valued quiver $(Q,v)$.  Let $\Gamma=\Gamma(Q,v)$ and $C=C(Q,v)$.  Then, for $i\neq j$, the number of edges between $i$ and $j$ in $\Gamma$ is $c_{ij}c_{ji}$, with $i>j$ if and only if $\abs{c_{ij}}>\abs{c_{ji}}$.  Moreover, if $\e_{ij}$ denotes the sign of an edge between $i$ and $j$ in $\Gamma$, then
\[ c_{ij}=\begin{cases}
0 &\text{if there are no edges between $i$ and $j$};\\
\e_{ij} &\text{if there is one edge between $i$ and $j$};\\
\e_{ij} &\text{if there are $d>1$ edges between $i$ and $j$, and $i<j$ in $\Gamma$};\\
\e_{ij}d &\text{if there are $d>1$ edges between $i$ and $j$, and $i>j$ in $\Gamma$}.
\end{cases}\]
\end{lemma}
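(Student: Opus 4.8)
The plan is to unwind the definitions of $C=C(Q,v)$ and $\Gamma=\Gamma(Q,v)$ against the value of the (at most one) arrow joining $i$ and $j$, and then to reduce every assertion to a single arithmetic fact about mutation Dynkin quivers: that some component of each value has absolute value $1$.

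First I would treat the case where $i$ and $j$ are not adjacent, where Lemma \ref{lem:CQv} gives $c_{ij}=c_{ji}=0$ and $\Gamma$ has no edge between them (the edges of $\Gamma$ being those of $\tilde\Gamma$ by Definition \ref{def:signed-diagram}), matching the first line of the formula. Otherwise, as $Q$ is simple, there is a single arrow; all three assertions are symmetric under $i\leftrightarrow j$, and the two nondiagonal clauses of Lemma \ref{lem:CQv} are interchanged by this swap, so I may assume the arrow is $\alpha\colon i\to j$ with $v(\alpha)=(v_1,v_2)$, giving $c_{ij}=v_1$ and $c_{ji}=v_2$. Since $(Q,v)$ is a signed valued quiver, $\sgn(v_1)=\sgn(v_2)\neq0$, so $v_1v_2=\abs{v_1}\abs{v_2}>0$; as Definition \ref{def:unsigned-diagram} records the number of edges as the weight $v_1v_2$, this number is exactly $c_{ij}c_{ji}$. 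The ordering claim is then immediate from the same definition: $i>j$ iff $\abs{v_1}>\abs{v_2}$, i.e.\ iff $\abs{c_{ij}}>\abs{c_{ji}}$.

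For the displayed formula I would split on the number of edges $d=v_1v_2$, recalling from Definition \ref{def:signed-diagram} that $\e_{ij}=\sgn(v_1)$. When $d=1$ we have $\abs{v_1}=\abs{v_2}=1$, so $c_{ij}=v_1=\sgn(v_1)=\e_{ij}$. When $d>1$ the input I need is that exactly one of $\abs{v_1},\abs{v_2}$ equals $1$ and the other equals $d$; granting this, $i<j$ forces $\abs{v_1}=1$ and hence $c_{ij}=\e_{ij}$, while $i>j$ forces $\abs{v_1}=d$ and hence $c_{ij}=\e_{ij}d$, covering the remaining three cases.

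The only nonformal step, and so the main obstacle, is establishing that $\abs{v_1}$ or $\abs{v_2}$ is $1$ for every arrow. I would obtain this from the weight bound $v_1v_2\leq3$: by Lemma \ref{lem:qCartan} the pair $(\abs{v_1},\abs{v_2})$ equals $(\abs{\tilde b_{ij}},\abs{\tilde b_{ji}})$ for $\tilde B=B(1)$, which is mutation Dynkin skew-symmetrizable by Proposition \ref{prop_mutat_agree}, and the classification of finite type \cite{fz2} bounds the number of edges between two vertices by $3$, so $\abs{v_1}\abs{v_2}\leq3$. Since these are positive integers and $2\cdot2>3$, they cannot both exceed $1$, so one equals $1$; this is exactly why a weight-$4$ arrow of value $(2,2)$ cannot occur in mutation Dynkin type. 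With this fact in hand, all parts of the statement follow.
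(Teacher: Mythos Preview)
Your proposal is correct and follows essentially the same approach as the paper: a case split on the presence and direction of the arrow, using Lemma~\ref{lem:CQv} to read off $c_{ij}$ and $c_{ji}$ from the value $v(\alpha)$, together with the key 2-finiteness input (weight $\leq 3$) for mutation Dynkin quivers from \cite{fz2} via Proposition~\ref{prop_mutat_agree}. The paper's own proof is a two-sentence sketch pointing to exactly these ingredients; you have simply made the routine verifications explicit.
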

\begin{proof}
This is a case-by-case check according to whether we have arrows $\alpha:i\to j$, $\alpha:j\to i$, or no such arrows. 

The idea is to use Lemma \ref{lem:CQv} together with the fact that the (unsigned) valued quiver associated to $B(1)$ is ``2-finite'' (see the comment after Theorem 8.6 in \cite{fz2}): by Proposition \ref{prop_mutat_agree} any arrow in $(Q,v)$ has weight $\leq 3$. This property implies that, for $i,j$ vertices of $\Gamma(Q,v)$, the data given in Definition \ref{def:signed-diagram} uniquely determines the $(i,j)$-th and $(j,i)$-th entries of $C(Q,v)$, and viceversa.
\end{proof}

We now describe how Cartan counterparts change under mutation.
\begin{lemma}\label{lem:mutateCartan}
Let $(Q,v)$ be mutation Dynkin and let $(Q',v')=\mu_k(Q,v)$.  
Let $C=(c_{ij})$ and $C'=(c'_{ij})$ be their respective Cartan counterparts.  
Then:
    \[c'_{ij}
=\begin{cases}
    c_{ij} &\text{ if neither or both of $i,j\to k$ in $Q$};\\
    c_{ij}-c_{ik}c_{kj} &\text{ if exactly one of $i,j\to k$ in $Q$}.
\end{cases}
    \]
\end{lemma}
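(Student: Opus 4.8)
The plan is to transport the statement to the associated gss matrices $B=B(Q,v)$ and $B'=B(Q',v')=\mu_kB$ (Proposition~\ref{prop_mut_signed_val_quivs}), which is again pure since $(Q',v')$ is mutation Dynkin (Theorem~\ref{thm:mutD-is-pure}), and to use a dictionary between a pure entry and its Cartan counterpart entry. For a pure $b_{ij}=p+qt$ (so $pq=0$) Definition~\ref{def:cc} gives $c_{ij}=\abs p-\abs q$, and I would record four elementary facts: $\sgn(b_{ij})=\sgn(b_{ij}(1))$ (Definition~\ref{defn_sign}); $b_{ij}(-1)=\sgn(b_{ij})\,c_{ij}$; $\abs{b_{ij}(1)}=\abs{b_{ij}(-1)}=\abs{c_{ij}}$; and, as a consequence, $c_{ij}=b_{ij}(-1)$ if and only if $\sgn(b_{ij})\geq0$. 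All of these apply equally to $C'$ because $B'$ is pure.

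First I would dispose of the cases $i=k$ or $j=k$ directly from the first two branches of Definition~\ref{defn_signed_mut} and the dictionary: a short computation gives $c'_{kj}=c_{kj}$ when $k\to j$ in $Q$ and $c'_{kj}=-c_{kj}$ when $j\to k$, which matches the claimed formula on reading $c_{kk}=2$. For $i,j\neq k$ write $s_i=\sgn(b_{ik})$ and $s_j=\sgn(b_{jk})$, so that $\sgn(b_{ik}b_{kj})=-s_is_j$ by Lemma~\ref{lem_sign}. The only branch of $\mu_k$ that alters $b_{ij}$ is the third one, which triggers exactly when $s_is_j=-1$, i.e.\ when there is a directed path $i\to k\to j$ or $j\to k\to i$; this is a sub-case of ``exactly one of $i,j\to k$''. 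In every other configuration $b'_{ij}=b_{ij}$, hence $c'_{ij}=c_{ij}$: when ``neither or both'' holds we have $s_is_j\in\{0,1\}$ (never $-1$), while in the remaining sub-cases of ``exactly one'' one of $b_{ik},b_{kj}$ vanishes, so $c_{ik}c_{kj}=0$ and $c_{ij}-c_{ik}c_{kj}=c_{ij}$ agrees.

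This leaves the substantive case of a path through $k$; take the orientation $i\to k\to j$ (the reverse is analogous). Here $b'_{ij}=t(b_{ij}+b_{ik}b_{kj})$, so $b'_{ij}(-1)=-(b_{ij}(-1)+b_{ik}(-1)b_{kj}(-1))$. Using the dictionary together with $s_i=+1$ and $\sgn(b_{kj})=+1$ gives $b_{ik}(-1)=c_{ik}$, $b_{kj}(-1)=c_{kj}$, and $b_{ij}(-1)=\sgn(b_{ij})c_{ij}$. If there is no arrow between $i$ and $j$ then $c_{ij}=0$ and $b'_{ij}(-1)=-c_{ik}c_{kj}=c_{ij}-c_{ik}c_{kj}$; if there is one, the $3$-cycle on $\{i,j,k\}$ (necessarily chordless) is oriented by Lemma~\ref{lem_oriented_cycles_values_3_cycles}(1), which forces the arrow to be $j\to i$ and hence $\sgn(b_{ij})=-1$, again yielding $b'_{ij}(-1)=c_{ij}-c_{ik}c_{kj}$. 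By the dictionary it now suffices to prove $\sgn(b'_{ij})\geq0$, that is $b'_{ij}(1)=\abs{c_{ik}c_{kj}}-\abs{c_{ij}}\geq0$.

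This last inequality is where the mutation Dynkin hypothesis is essential, and I expect it to be the main obstacle. With no arrow between $i$ and $j$ it is trivial. With an arrow present, the oriented $3$-cycle has unsigned values restricted to $\{(1,1),(1,1),(1,1)\}$ or $\{(1,1),(1,2),(2,1)\}$ by Lemma~\ref{lem_oriented_cycles_values_3_cycles}(2); since $\abs{c_{ik}},\abs{c_{kj}},\abs{c_{ij}}$ are exactly the components of these value pairs occurring along the cycle, a finite check over the possible cyclic assignments gives $\abs{c_{ik}c_{kj}}\geq\abs{c_{ij}}$ in all cases, with equality precisely when $b'_{ij}=0$ (consistent with $c_{ij}-c_{ik}c_{kj}=0$). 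Hence $\sgn(b'_{ij})\geq0$, and combined with $b'_{ij}(-1)=c_{ij}-c_{ik}c_{kj}$ the dictionary yields $c'_{ij}=c_{ij}-c_{ik}c_{kj}$. I would emphasise that without the Dynkin restriction the mutated entry could lie in $\Z_{<0}$ or $t\Z_{<0}$, so that the dictionary would introduce a spurious sign and the clean formula would fail.
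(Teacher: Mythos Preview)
Your proof is correct and follows essentially the same route as the paper: reduce to the pure gss matrices, run through the branches of Definition~\ref{defn_signed_mut}, and invoke Lemma~\ref{lem_oriented_cycles_values_3_cycles} at the crucial step. The one noteworthy difference is how the inequality $\abs{c_{ik}c_{kj}}\geq\abs{c_{ij}}$ is extracted: the paper observes that $(Q',v')$ is again mutation Dynkin, so any $3$-cycle on $\{i,j,k\}$ in $B'$ is oriented by Lemma~\ref{lem_oriented_cycles_values_3_cycles}(1), which forces $\sgn(b'_{ij})\geq0$ directly; you instead appeal to the explicit classification of $3$-cycle values in Lemma~\ref{lem_oriented_cycles_values_3_cycles}(2) and do a finite check. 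Both arguments are valid; the paper's is slightly slicker (no case enumeration), while yours is packaged more cleanly through the dictionary $c_{ij}=b_{ij}(-1)\iff\sgn(b_{ij})\geq0$, which lets you avoid the paper's separate treatment of the $\Z$ versus $t\Z$ sub-cases.
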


\begin{proof}
Write $B=B(Q,v)$ and $B'=B(Q',v')$.  
By Definition \ref{defn_signed_mut} of signed mutation, if neither or both $i,j\to k$, then $b_{ij}'=b_{ij}$ if $i,j\neq k$ or $b_{ij}'=-b_{ij}$ if $i\rra{} k=j$ or $k=i\arr{} j$, and therefore in this case the statement follows trivially.

If $i\to k=j$, then $b_{ik}'=-tb_{ik}$, so $c'_{ij}=-c_{ij}=c_{ik}-c_{ik}c_{kj}$ as $c_{kj}=c_{kk}=2$.  A similar argument works for $j\to k=i$.

The only case left to examine is if there is exactly one arrow $i,j\to k$ and $k\neq i,j$. Without loss of generality, suppose $i\to k$. 
\begin{itemize}
\item If $b_{kj}=0$, then we have $b_{ij}'=b_{ij}$. Therefore:
$$
c_{ij}'=c_{ij}-c_{ik}c_{kj},
$$
since $c_{kj}=0$.
\item If $b_{kj}\neq 0$, then we have $i\arr{} k\arr{} j$. The signed mutation rule yields $b_{ij}'=t(b_{ij}+b_{ik}b_{kj})$. 
\begin{itemize}
\item If $b_{ij}=0$ then $c_{ij}=0$ and $b_{ij}'=tb_{ik}b_{kj}$.  So, as $B$ is pure, one checks that in each case we have $c_{ij}'=-c_{ik}c_{kj}=c_{ij}-c_{ik}c_{kj}.$
\item If $b_{ij}\neq 0$ then, by Lemma \ref{lem_oriented_cycles_values_3_cycles}, $\sgn (b_{ij})=-1$ since all 3-cycles in $B$ are oriented.  
Now we use that $B$ is pure and satisfies the positive 3-cycle condition at $k$.
If $b_{ij},b_{ik}b_{kj}\in\mathbb{Z}$ then $c_{ij}=-b_{ij}$ and $c_{ik}c_{kj}=b_{ik}b_{kj}$.  If $(i,j,k)$ is a 3-cycle in $B'$, then it is oriented by Lemma \ref{lem_oriented_cycles_values_3_cycles}. Therefore $b_{ij}'=t(b_{ij}+b_{ik}b_{kj})$ implies $b_{ij}+b_{ik}b_{kj}\geq 0$.  So 
\[ c_{ij}'=-|b_{ij}+b_{ik}b_{kj}|=-(b_{ij}+b_{ik}b_{kj})=c_{ij}-c_{ik}c_{kj}.\] 
The case $b_{ij},b_{ik}b_{kj}\in t\mathbb{Z}$ can be treated similarly.
\end{itemize}
\end{itemize}

Therefore we get the statement.
\end{proof}

%-----------
\subsection{Root systems}

Given an $n\times n$ gss matrix $B$, let $S_B=\{\alpha_1^B, \ldots, \alpha_n^B\}$, where $\alpha_i^{B}$ are formal simple roots and
let $V_B$ be the real vector space with basis $S_B$.  
We sometimes drop the super/subscript $B$ when it is clear from context.

\begin{definition}\label{def:reflections}
Let $C=C(B)=(c_{ij})$ be the Cartan counterpart of $B$.  Define linear functions $s_i=s_{\alpha_i^B}:V_B\to V_B$ called \textbf{simple reflections} by letting
\[ s_i(\alpha_j)=\alpha_j - c_{ij}\alpha_i \]
and extending linearly to $V_B$.  Let $W_B$ denote the subgroup of $\GL(V_B)$ generated by $s_i$, $1\leq i\leq n$.
\end{definition}
\begin{definition}\label{def:groot}
Suppose $B$ is mutation Dynkin.  The \textbf{root system} of $B$ is the orbit $\Phi_B=W_B\cdot S_B$ of $S_B$ under $W_B$.
\end{definition}
Note that if $B$ is Dynkin then this is exactly the classical root system, by Theorem 2 of \cite[Section V.10]{serre}.  In particular, if $B$ is Dynkin then $W_B$ is finite.  We will show that the name root system is justified in the mutation Dynkin case in Theorem \ref{thm:root-sys-iso} below.

Recall that we have a skew-symmetrizing matrix $D$ such that $DB$ is skew-symmetric. If $C=C(B)$ then $DC$ is symmetric, and so $DC=(DC)^T=C^TD$.  

\begin{definition}\label{defn_inner_product}
Let $M=DC$ and define an inner product (i.e., a symmetric, positive-definite bilinear form) on $V_B$ by $(u,\, v)=u^TMv$.
\end{definition}

For $\beta\in \Phi_B$, define $\beta^{\vee}=2\beta/(\beta,\beta)$. Then \[ (\alpha_i,\alpha_j^\vee)=\frac{2(\alpha_i,\alpha_j)}{(\alpha_j,\alpha_j)}
=\frac{2m_{ij}}{m_{jj}}=\frac{2c_{ji}d_j}{c_{jj}d_j}=c_{ji}\]
and the length of $\alpha_i$ is $|\alpha_i|=(\alpha_i,\alpha_i)=m_{ii}=2d_i$.

\begin{proposition}
The inner product $(-,-)$ is $W_B$-invariant.
\end{proposition}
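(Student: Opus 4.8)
The plan is to reduce the statement to the generators of $W_B$ and then recognise each generator as an orthogonal reflection. Since $W_B=\langle s_1,\ldots,s_n\rangle$ and each $s_i$ lies in $\GL(V_B)$, it suffices to prove $(s_iu,\,s_iv)=(u,\,v)$ for every $i$ and all $u,v\in V_B$. The case of a general $w\in W_B$ then follows by writing $w$ as a product of the $s_i$ and applying invariance one factor at a time.

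The key step is to show that the combinatorially defined reflection $s_i$ agrees with the geometric reflection in $\alpha_i$ with respect to $(-,-)$, namely that $s_i(u)=u-(u,\,\alpha_i^\vee)\alpha_i$ with $\alpha_i^\vee=2\alpha_i/(\alpha_i,\alpha_i)$. By linearity it is enough to check this on the basis $S_B$, so I would verify $s_i(\alpha_j)=\alpha_j-(\alpha_j,\,\alpha_i^\vee)\alpha_i$. Comparing with the defining formula $s_i(\alpha_j)=\alpha_j-c_{ij}\alpha_i$, this reduces to the identity $(\alpha_j,\,\alpha_i^\vee)=c_{ij}$. This is precisely the computation displayed just before the proposition, with the roles of $i$ and $j$ exchanged: writing $M=DC$ one has $(\alpha_j,\alpha_i)=m_{ji}=d_jc_{ji}$ and $(\alpha_i,\alpha_i)=m_{ii}=2d_i$, so that $(\alpha_j,\alpha_i^\vee)=d_jc_{ji}/d_i$, which equals $c_{ij}$ exactly because $DC$ is symmetric, i.e.\ $d_ic_{ij}=d_jc_{ji}$.

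With $s_i$ written in this geometric form, invariance is the classical one-line computation for reflections. Expanding $(s_iu,\,s_iv)$ by bilinearity produces, besides $(u,v)$, two linear correction terms and one quadratic term; using $(\alpha_i,\alpha_i^\vee)=2$ together with the symmetry of $(-,-)$, the quadratic term cancels the two linear terms exactly, leaving $(s_iu,\,s_iv)=(u,v)$. I expect the main (and essentially only) obstacle to be the identification in the second step, since that is the unique place where symmetrizability of $C$ is genuinely used; once $s_i$ is expressed as an honest reflection, the conclusion is purely formal.
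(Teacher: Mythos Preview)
Your proposal is correct and is essentially the same argument as the paper's, which expands $(s_i(\alpha_j),\,s_i(\alpha_k))$ bilinearly and cancels using $d_ic_{ij}=d_jc_{ji}$ and $c_{ii}=2$. The only difference is packaging: you first identify $s_i$ with the orthogonal reflection $u\mapsto u-(u,\alpha_i^\vee)\alpha_i$ via the identity $(\alpha_j,\alpha_i^\vee)=c_{ij}$ and then appeal to the standard cancellation for reflections, whereas the paper carries out that same cancellation directly on basis vectors without naming it.
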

\begin{proof}
Since $M=C^T D=DC$, then $m_{ij}=c_{ji}d_j=d_ic_{ij}$ for $i,j=1,\ldots, n$. Using this and $c_{ii}=2$, we get:
\begin{align*}
    (s_i(\alpha_j),\,s_i(\alpha_k)) &= (\alpha_j-c_{ij}\alpha_i,\,\alpha_k-c_{ik}\alpha_i) \\
    &= m_{jk}-c_{ik}m_{ji}-c_{ij}m_{ik}+c_{ij}c_{ik}m_{ii} \\
    &= m_{jk}-c_{ik}c_{ij}d_i-c_{ij}c_{ki}d_k+2c_{ij}c_{ik}d_i \\
    &= m_{jk}+c_{ij}c_{ik}d_i-c_{ij}c_{ki}d_k \\
    &= m_{jk}+c_{ij}(c_{ik}d_i-c_{ki}d_k) \\
    &= m_{jk}\\
    &=(\alpha_j,\,\alpha_k)
\end{align*}
\end{proof}

\begin{remark}\label{rmk:diff_conv}
Note that our convention in Definition \ref{def:reflections} differs from some other texts, such as \cite{hum_la}, where $s_i(\alpha_j)$ is defined using $c_{ji}$ and not $c_{ij}$.  Later, in Definition \ref{def:r1234}, we will define a Lie algebra relation (R4) which specialises to $(\ad e_i)^{1-c_{ij}}(e_j)=0$ for a classical Cartan matrix: we use this convention in order to match certain other results in the literature \cite{bkl,br,pr}.  Then, if we want the root space decomposition of our Lie algebra to be compatible with Definition \ref{def:groot}, this forces us to use the conventions in Definition \ref{def:reflections}.

For convenience, the following table collects in one place the construction of classical Lie-theoretic objects from the Cartan matrix $C$ when using our conventions.
\begin{align*}
&C \leftrightarrow \g& & [h_i,e_j]=c_{ij}e_j\text{ and }(\ad e_i)^{1-c_{ij}}(e_j)=0\\
&C \leftrightarrow \Gamma& & \abs{c_{ij}}>\abs{c_{ji}}\iff i>j\text{ in }\Gamma\\ % \alpha_i\text{ is longer than }\alpha_j
&C \leftrightarrow (-,-)& & c_{ij}=(\alpha_j,\,\alpha_i^\vee)\\
&C \leftrightarrow W&& s_i(\alpha_j)=\alpha_j - c_{ij}\alpha_i
\end{align*}
\end{remark}

The following definition was given in \cite[Definition 4.1]{par} for the simply-laced case, and extended to the skew-symmetrizable case in \cite[Section 6]{bm}.  Let $\Phi_\Delta$ be a root system of Dynkin type $\Delta$ with rank $n$ and let $\tilde B$ be a skew-symmetrizable matrix of mutation class $\Delta$.  A subset $\{\gamma_i\}_{i=1}^n$ of $\Phi_\Delta$ is called a \emph{companion basis for $\tilde B$} if:
\begin{enumerate}
\item[(i)] $\{\gamma_i\}_{i=1}^n$ is a $\Z$-basis of $\Z\Phi_\Delta$, and
\item[(ii)] the matrix $A$ with $a_{ij}=(\gamma_j,\gamma_i^\vee)$ is a positive quasi-Cartan companion of $\tilde B$.
\end{enumerate}
Note that \cite{bm} asks for $a_{ij}=(\gamma_i,\gamma_j^\vee)$ in the above definition, but we have adapted the condition to match our convention as outlined in Remark \ref{rmk:diff_conv}.

\begin{definition}
Let $B$ be a mutation Dynkin gss matrix of Dynkin type $\Delta$.
We say that $\{\gamma_i\}_{i=1}^n\subset\Phi_\Delta$ is a \textbf{signed companion basis for $B$} if:
\begin{enumerate}
\item[(i)] $\{\gamma_i\}_{i=1}^n$ is a $\Z$-basis of $\Z\Phi_\Delta$, and
\item[(ii)] the matrix $C$ with $c_{ij}=(\gamma_j,\gamma_i^\vee)$ is equal to $C(B)$.
\end{enumerate}
\end{definition}
\begin{lemma}\label{lem:signedCBisCB}
If $\{\gamma_i\}_{i=1}^n\subset\Phi_\Delta$ is a signed companion basis for $B$, then it is a companion basis for $B(1)$.
\end{lemma}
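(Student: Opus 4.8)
The plan is to verify the two defining conditions of a companion basis for $B(1)$, reusing the earlier infrastructure wherever possible. Condition (i) — that $\{\gamma_i\}_{i=1}^n$ is a $\Z$-basis of $\Z\Phi_\Delta$ — is word-for-word the same in both definitions, so there is nothing to prove there. All the work is in condition (ii): writing $A=C$ for the common matrix with entries $a_{ij}=c_{ij}=(\gamma_j,\gamma_i^\vee)$, and using the hypothesis $C=C(B)$, I must show that $C(B)$ is a \emph{positive} quasi-Cartan companion of $B(1)$.

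First I would dispose of the quasi-Cartan companion part for free. Since $B$ is mutation Dynkin, Theorem \ref{thm:mutD-is-pure} tells us that $B$ is pure, so Lemma \ref{lem:qCartan} immediately gives that $C(B)$ is a quasi-Cartan companion of $B(1)$: its diagonal entries are $2$, and $\abs{c_{ij}}=\abs{B(1)_{ij}}$ for $i\neq j$. The only remaining point is positivity.

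For positivity, the idea is to recognise the symmetrized matrix $DC(B)$ as the Gram matrix of the $\gamma_i$. Let $G$ be the $n\times n$ matrix with $G_{ij}=(\gamma_i,\gamma_j)$, the Gram matrix of $\{\gamma_i\}$ for the positive-definite inner product on $\Phi_\Delta$. By condition (i) the $\gamma_i$ form a $\Z$-basis of the rank-$n$ lattice $\Z\Phi_\Delta$, hence an $\R$-basis of the ambient space, so $G$ is positive definite. Each $\gamma_i\neq0$ gives $(\gamma_i,\gamma_i)>0$, so $\gamma_i^\vee$ is defined and the numbers $d_i=(\gamma_i,\gamma_i)/2$ are positive. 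Expanding $c_{ij}=(\gamma_j,\gamma_i^\vee)=2(\gamma_j,\gamma_i)/(\gamma_i,\gamma_i)$ yields, with $D=\diag(d_1,\ldots,d_n)$, the identity $(DC)_{ij}=d_ic_{ij}=(\gamma_j,\gamma_i)=G_{ij}$, so $DC(B)=G$ is positive definite.

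It remains to reconcile this $D$ with the symmetrizing matrix in the definition of ``positive''. I would observe that the relation $DC=(CD)^T$ recorded after Definition \ref{def:cc} shows that the symmetrizing matrix of $B$ (equal to that of $B(1)$ by the Remark after Definition \ref{defn_spec_matr}) also symmetrizes $C(B)$. Since $c_{ij}=0$ exactly when $c_{ji}=0$, symmetry forces $d_i/d_j=(\gamma_i,\gamma_i)/(\gamma_j,\gamma_j)$ whenever $c_{ij}\neq0$; as $\Delta$ is a connected Dynkin diagram, this pins down any symmetrizer of $C(B)$ up to a single global positive scalar $\lambda$, so $DC(B)=\lambda G$ is positive definite for every valid choice. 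The main (and only non-bookkeeping) step is the computation $DC(B)=G$; once the symmetrized Cartan counterpart is identified with the Gram matrix of the companion basis, positivity follows automatically from positive-definiteness of the ambient inner product.
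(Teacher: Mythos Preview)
Your proof is correct and follows essentially the same route as the paper: you and the paper both invoke Theorem \ref{thm:mutD-is-pure} and Lemma \ref{lem:qCartan} for the quasi-Cartan companion part, and for positivity both arguments amount to observing that $DC(B)$ is the Gram matrix of the basis $\{\gamma_i\}$ in the positive-definite inner-product space spanned by $\Phi_\Delta$ (the paper phrases this tersely as ``related to the Cartan matrix of $\Delta$ by a change of basis''). Your extra paragraph reconciling different choices of symmetrizer is more careful than the paper but not strictly needed, since positivity does not depend on which positive symmetrizer is used.
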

\begin{proof}
By Theorem \ref{thm:mutD-is-pure} and Lemma \ref{lem:qCartan}, $C$ is a quasi-Cartan companion of $B(1)$.  As $\{\gamma_i\}_{i=1}^n\subset\Phi_\Delta$ is a $\Z$-basis for $\Z\Phi$, it is related to the Cartan matrix of $\Delta$ by a change of basis, so positivity of the Cartan matrix implies positivity of $C$.
\end{proof}

The following lemma is useful.  For a proof, see for example \cite[III.9.2]{hum_la}.
\begin{lemma}\label{lem:conj-refl}
If $f:V_B\to V_B$ is an isomorphism such that $f(\Phi)=\Phi$ then $s_{f(\delta)}f=fs_\delta$ for all $\delta\in\Phi$.
\end{lemma}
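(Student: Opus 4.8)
The plan is to prove the equivalent statement $f s_\delta f^{-1}=s_{f(\delta)}$, using the standard characterisation of a reflection inside a \emph{finite} root system. The key auxiliary fact I would isolate first is a uniqueness lemma: if $\sigma\in\GL(V_B)$ permutes $\Phi$, fixes some hyperplane $P$ pointwise, and satisfies $\sigma(\gamma)=-\gamma$ for some $\gamma\in\Phi$, then $\sigma=s_\gamma$ (and in fact $P=\gamma^\perp$). Granting this, the lemma falls out by checking that $\tau:=f s_\delta f^{-1}$ meets all three hypotheses with $\gamma=f(\delta)$, which is legitimate since $f(\delta)\in\Phi$ because $\delta\in\Phi$ and $f(\Phi)=\Phi$.

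The three verifications are formal. Because $f(\Phi)=\Phi$ and $\Phi$ is finite, $f$ restricts to a bijection of $\Phi$, so $f^{-1}(\Phi)=\Phi$; combined with $s_\delta(\Phi)=\Phi$ (a basic property of reflections in the finite root system $\Phi_B$) this gives $\tau(\Phi)=\Phi$. Next, $\tau(f(\delta))=f s_\delta(\delta)=f(-\delta)=-f(\delta)$. Finally, since $s_\delta$ fixes its reflecting hyperplane $\delta^\perp$ pointwise, $\tau$ fixes $P:=f(\delta^\perp)$ pointwise, and $P$ is genuinely a hyperplane as $f$ is a linear isomorphism. The uniqueness lemma then yields $\tau=s_{f(\delta)}$, i.e.\ $s_{f(\delta)}f=f s_\delta$, as required.

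The substance is the uniqueness lemma, and this is the only place where finiteness of $\Phi$ is essential — available here because $B$ is mutation Dynkin, so $W_B$ is finite and $\Phi=\Phi_B$ is a finite $W_B$-orbit. I would prove it by examining $g:=\sigma s_\gamma$. Since $\sigma(\gamma)=-\gamma\neq\gamma$ we have $\gamma\notin P$, so $V_B=P\oplus\R\gamma$; both $\sigma$ (which fixes $P$ and negates $\gamma$) and $s_\gamma$ act as the identity on the quotient $V_B/\R\gamma$, while $g(\gamma)=\sigma(-\gamma)=\gamma$, so $g$ is unipotent. On the other hand $g$ permutes the finite spanning set $\Phi$ and hence has finite order; in characteristic zero a unipotent element of finite order is the identity, forcing $g=1$, i.e.\ $\sigma=s_\gamma$, and then $P=\gamma^\perp$ since a reflection fixes a unique hyperplane. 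I expect this unipotent-of-finite-order step to be the crux: it is precisely what rules out the a priori danger that $\tau$ is a reflection through the ``wrong'' hyperplane $f(\delta^\perp)$ instead of $f(\delta)^\perp$ (equivalently, it encodes that an automorphism of a finite root system automatically respects the orthogonality needed here), whereas everything preceding it is routine.
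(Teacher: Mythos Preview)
Your proof is correct and is exactly what the paper intends: the paper gives no argument of its own but simply cites \cite[III.9.2]{hum_la}, and you have reproduced precisely that proof --- the uniqueness lemma via the unipotent-of-finite-order trick, followed by the routine verification that $\tau=f s_\delta f^{-1}$ satisfies its hypotheses with $\gamma=f(\delta)$. Your identification of the finiteness of $\Phi$ as the one essential ingredient is accurate and matches Humphreys's setup.
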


Now let $B=B(Q,v)$, $B'=\mu_kB$, $\Phi=\Phi_B=\{\alpha_i\}$, and $\Phi'=\Phi_{B'}=\{\alpha'_i\}$.
The following formulas send our base roots to the inward mutation of roots as defined by Parsons \cite[Theorem 6.1]{par}.  Note that $\rho_k(\Phi')\in \Phi$ follows by Lemma \ref{lem:conj-refl} and Definition \ref{def:groot}.
\begin{definition}\label{def:mutate-roots}
The \textbf{mutation of roots} at $k$ is the function $\rho_k:\Phi'\to \Phi$ defined by extending the following rule linearly:
\[
\rho_k: \;\; \alpha'_i \mapsto 
\begin{cases}
s_k(\alpha_i) & \text{ if } i\to k \text{ in } Q;\\
\alpha_i & \text{ otherwise.}
\end{cases}
\]
\end{definition}
Notice that the direction of the morphism $\Phi'\to \Phi$ is opposite to what one might at first expect.

The formula in the next result comes from the outward mutation of roots \cite[Theorem 6.1]{par}.
\begin{lemma}\label{def:mutate-roots-inverse}
Mutation of roots is a bijection with inverse $\rho_k^{-1}:\Phi\to\Phi'$ given by
\[
\rho_k^{-1}: \;\; \alpha_j \mapsto 
\begin{cases}
s'_k(\alpha_j') & \text{ if } k\to j \text{ in } Q';\\
\alpha'_j & \text{ otherwise.}
\end{cases}
\]
\end{lemma}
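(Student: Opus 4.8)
The plan is to introduce the linear map $\sigma_k\colon V_B\to V_{B'}$ defined by the stated rule on the basis $\{\alpha_j\}$ and extended linearly, and then to prove that $\sigma_k$ and $\rho_k$ are mutually inverse linear isomorphisms by checking the two compositions on simple roots; the bijection of root systems will follow. First I would record the two ingredients that drive every case. The first is purely combinatorial: by Step 2 of Definition \ref{def:mut_signed_val_quivs}, mutation at $k$ reverses every arrow incident to $k$, so there is an arrow $i\to k$ in $Q$ if and only if there is an arrow $k\to i$ in $Q'$. This is exactly what makes the case distinction defining $\rho_k$ agree with the one in the proposed formula for $\rho_k^{-1}$. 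The second is the behaviour of the Cartan counterpart: applying Lemma \ref{lem:mutateCartan} to the entry $c'_{ki}$, so that one of the two indices is $k$, and using $c_{kk}=2$, gives $c'_{ki}=-c_{ki}$ precisely when $i\to k$ in $Q$, and $c'_{ki}=c_{ki}$ otherwise.

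With these in hand the verification is a short computation. Using $s_k(\alpha_i)=\alpha_i-c_{ki}\alpha_k$ from Definition \ref{def:reflections}, when $i\to k$ in $Q$ I compute
\[ \sigma_k\rho_k(\alpha'_i)=\sigma_k\bigl(\alpha_i-c_{ki}\alpha_k\bigr)=\bigl(\alpha'_i-c'_{ki}\alpha'_k\bigr)-c_{ki}\alpha'_k=\alpha'_i-(c'_{ki}+c_{ki})\alpha'_k=\alpha'_i, \]
where I used $k\to i$ in $Q'$ to evaluate $\sigma_k(\alpha_i)=s'_k(\alpha'_i)$, the absence of a loop at $k$ to evaluate $\sigma_k(\alpha_k)=\alpha'_k$, and the cancellation $c'_{ki}+c_{ki}=0$. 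When instead $i\not\to k$ in $Q$, the arrow correspondence gives $k\not\to i$ in $Q'$, so $\rho_k(\alpha'_i)=\alpha_i$ and $\sigma_k(\alpha_i)=\alpha'_i$. Hence $\sigma_k\rho_k$ fixes every basis vector, and the symmetric computation, now invoking $c'_{kj}=-c_{kj}$ when $j\to k$, shows $\rho_k\sigma_k$ does too. Thus $\rho_k$ and $\sigma_k$ are mutually inverse linear isomorphisms $V_{B'}\leftrightarrow V_B$.

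Finally I would upgrade this to a bijection of root systems. The observation preceding Definition \ref{def:mutate-roots} already gives $\rho_k(\Phi')\subseteq\Phi$; the same argument, via Lemma \ref{lem:conj-refl} and the orbit description in Definition \ref{def:groot}, applied to $\sigma_k$, whose defining rule carries each simple root of $V_B$ into $\Phi'$, gives $\sigma_k(\Phi)\subseteq\Phi'$. Since $\sigma_k$ is the two-sided linear inverse of $\rho_k$, these two containments force $\rho_k\colon\Phi'\to\Phi$ to be a bijection with set-theoretic inverse $\sigma_k$, which is exactly the claimed formula. The main obstacle is not conceptual but careful bookkeeping: one must track arrow directions correctly through Step 2 and specialise Lemma \ref{lem:mutateCartan} to the right indices, and one must secure both root-set containments rather than settling for a linear bijection, since mutation here is not an involution and so $\sigma_k(\Phi)\subseteq\Phi'$ cannot simply be read off by inverting $\rho_k$.
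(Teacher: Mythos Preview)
Your proof is correct and follows essentially the same approach as the paper: both verify on simple roots that the two linear maps are mutually inverse, using the arrow-reversal $i\to k$ in $Q\iff k\to i$ in $Q'$ and the identity $c'_{ki}=-c_{ki}$ from Lemma~\ref{lem:mutateCartan}. You are slightly more explicit than the paper in separating the linear bijection from the root-set bijection and in noting why $\sigma_k(\Phi)\subseteq\Phi'$ must be argued independently, but the substance is the same.
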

\begin{proof}
We check that the maps from Definitions \ref{def:mutate-roots} and \ref{def:mutate-roots-inverse} are inverse.  Note that we have an arrow $i\to k$ in $Q$ if and only if we have an arrow $k\to i$ in $Q'$.  
We just check the composition $\Phi'\to\Phi\to\Phi'$; the other composition is similar.  If we do not have an arrow $i\to k$ in $Q$, then $\alpha_i'\mapsto \alpha_i\mapsto\alpha_i'$.  If we do have $i\to k$ in $Q$, then $\alpha_i'\mapsto s_k(\alpha_i)=\alpha_i-c_{ki}\alpha_k\mapsto (\alpha_i'-c'_{ki}\alpha'_k)-c_{ki}\alpha'_k$.  But $c'_{ki}=-c_{ki}$ by Lemma \ref{lem:mutateCartan}, so this is $\alpha_i'$.
\end{proof}

\begin{lemma}\label{lem:mutateroots}Write $\gamma_i=\alpha'_i$.  Then:
    \[ (\rho_k(\gamma_i), \, \rho_k(\gamma_j)^\vee)
=\begin{cases}
    (\alpha_i,\,\alpha_j^\vee) &\text{ if neither or both of $i,j\to k$ in $Q_B$};\\
    (\alpha_i,\,\alpha_j^\vee)-(\alpha_i,\,\alpha_k^\vee)(\alpha_k,\,\alpha_j^\vee) &\text{ if exactly one of $i,j\to k$ in $Q_B$}.
\end{cases}
    \]
\end{lemma}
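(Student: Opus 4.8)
The plan is to compute the left-hand side directly, splitting into cases according to whether the arrows $i\to k$ and $j\to k$ are present in $Q$, since by Definition \ref{def:mutate-roots} these exactly determine whether $\rho_k$ applies $s_k$ to $\alpha_i$ and to $\alpha_j$. Throughout I would lean on three facts established above: the $W_B$-invariance of the form $(-,-)$; the identity $(\alpha_i,\alpha_j^\vee)=c_{ji}$; and the symmetrizability relation $m_{ij}=d_ic_{ij}=d_jc_{ji}$ coming from $M=DC=C^TD$. As orientation, note that combining $(\alpha_i,\alpha_j^\vee)=c_{ji}$ with Lemma \ref{lem:mutateCartan} shows the claimed right-hand side is precisely $c'_{ji}$, the $(j,i)$-entry of $C(B')$; so the content of the lemma is that $\rho_k$ carries the simple pairing on $V_{B'}$ to the pairing of the mutated roots in $V_B$.

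The two cases in which $\rho_k$ treats $\gamma_i$ and $\gamma_j$ alike are immediate. If neither $i\to k$ nor $j\to k$, then $\rho_k(\gamma_i)=\alpha_i$ and $\rho_k(\gamma_j)=\alpha_j$, and there is nothing to do. If both $i\to k$ and $j\to k$, then $\rho_k(\gamma_i)=s_k(\alpha_i)$ and $\rho_k(\gamma_j)=s_k(\alpha_j)$, and since $s_k\in W_B$ preserves both the form and the lengths, $(s_k\alpha_i,\,(s_k\alpha_j)^\vee)=(\alpha_i,\alpha_j^\vee)$, giving the first branch.

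The substantive case is when exactly one arrow is present, and here I would expand $s_k(\alpha_\ell)=\alpha_\ell-c_{k\ell}\alpha_k$ (Definition \ref{def:reflections}) and use bilinearity. If $i\to k$ but not $j\to k$, the dualized root $\alpha_j$ is unreflected, the normalization is unchanged, and
\[(s_k(\alpha_i),\,\alpha_j^\vee)=(\alpha_i,\alpha_j^\vee)-c_{ki}(\alpha_k,\alpha_j^\vee)=(\alpha_i,\alpha_j^\vee)-(\alpha_i,\alpha_k^\vee)(\alpha_k,\alpha_j^\vee),\]
using $(\alpha_i,\alpha_k^\vee)=c_{ki}$. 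If instead $j\to k$ but not $i\to k$, the dualized root is the reflected one; $W_B$-invariance still gives $(\rho_k(\gamma_j),\rho_k(\gamma_j))=(\alpha_j,\alpha_j)=2d_j$, so the denominator is again unchanged, but the cross term becomes $c_{kj}\cdot 2(\alpha_i,\alpha_k)/(\alpha_j,\alpha_j)=c_{kj}m_{ik}/d_j=c_{kj}c_{ki}d_k/d_j$, which the symmetrizability relation $d_kc_{kj}=d_jc_{jk}$ rewrites as $c_{ki}c_{jk}=(\alpha_i,\alpha_k^\vee)(\alpha_k,\alpha_j^\vee)$, matching the second branch.

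The only place needing care will be this last subcase: because the pairing $(-,-^\vee)$ is not symmetric, when the dualized argument is the one reflected by $s_k$ one cannot simply read off $(\alpha_i,\alpha_k^\vee)$ but must track the length rescaling, and it is exactly the symmetrizability of $C$ that reconciles the two subcases. Everything else is routine bilinear algebra, and one checks without difficulty that the formula persists when $i$, $j$, or both coincide with $k$, since then the arrow $k\to k$ is absent and the computations above specialize correctly.
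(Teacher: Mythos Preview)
Your proof is correct and follows essentially the same approach as the paper: the same four-way case split, triviality in the ``neither'' case, $W_B$-invariance of the form for the ``both'' case, and linear expansion of $s_k$ in the mixed cases. The only small variation is in the subcase $j\to k$: the paper notes that $W_B$-invariance of $(-,-)$ gives $W_B$-invariance of $(-,(-)^\vee)$, so $(\alpha_i,(s_k\alpha_j)^\vee)=(s_k\alpha_i,\alpha_j^\vee)$ reduces immediately to the $i\to k$ computation, whereas you do the symmetrizer bookkeeping $d_kc_{kj}=d_jc_{jk}$ by hand---both routes are equally short and valid.
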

\begin{proof}
The check for the case where neither of $i,j$ has an arrow to $k$ is trivial.
For the remaining checks, we first note that the $W_{B'}$-invariance of $(-,-)$ implies $W_{B'}$-invariance of $(-,(-)^\vee)$.  

If both $i,j\to k$ then 
$ (\rho_k(\gamma_i), \, \rho_k(\gamma_j)^\vee)=(s_{\alpha_k}\alpha_i,\, (s_{\alpha_k}\alpha_j)^\vee) =(\alpha_i,\,\alpha_j^\vee)$ as required.

The check for the case where only $i\to k$ is an easy computation, using that $(-,(-)^\vee)$ is linear in the first component:
\[ (\rho_k(\gamma_i), \, \rho_k(\gamma_j)^\vee)
= (s_k\alpha_i,\,\alpha_j^\vee) 
= (\alpha_i-c_{ki}\alpha_k,\,\alpha_j^\vee) 
= (\alpha_i,\,\alpha_j^\vee) - (\alpha_i,\,\alpha_k^\vee)(\alpha_k,\,\alpha_j^\vee).
\]
The check for $j\to k$ follows from the previous check and invariance under reflections:
\[ (\rho_k(\gamma_i), \, \rho_k(\gamma_j)^\vee)
= (\alpha_i,\,(s_k\alpha_j)^\vee)
= (s_k\alpha_i,\,\alpha_j^\vee)
= (\alpha_i,\,\alpha_j^\vee) - (\alpha_i,\,\alpha_k^\vee)(\alpha_k,\,\alpha_j^\vee).
\]
\end{proof}

Recall the definition of a (reduced) root system $R$ in a real vector space $V$ with inner product (see \cite[V.2]{serre} or \cite[III.9.2]{hum_la}):
\begin{enumerate}
\item $R$ is finite, spans $V$, and $0\notin R$;
\item for all $\alpha\in R$ we have $s_\alpha(R)=R$;
\item for all $\alpha,\beta\in R$ we have $(\alpha,\beta^\vee)\in\Z$;
\item for all $\alpha\in R$ we have $R\cap \R\alpha=\{\pm\alpha\}$.
\end{enumerate}
A vector space isomorphism $f:V\to V'$ is an isomorphism $R\arr\sim R'$ of root systems if $f(R)=R'$ and for all $\alpha,\beta\in\R$ we have $(f(\alpha),f(\beta)^\vee)_{V'}=(\alpha,\beta^\vee)_{V}$.
\begin{theorem}\label{thm:root-sys-iso}
If $B$ is mutation Dynkin then $\Phi_B$ is a root system, and $\rho_k:\Phi_{B'}'\to\Phi_B$ extends to an isomorphism of root systems.
\end{theorem}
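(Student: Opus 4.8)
The plan is to prove both assertions simultaneously by induction on the length of a mutation sequence connecting $B$ to a Dynkin gss matrix, using the map $\rho_k$ as the device that transports the root-system structure from a matrix to its mutation. For the base case, when $B$ is Dynkin, the remark following Definition \ref{def:groot} identifies $\Phi_B$ with the classical root system of type $\Delta$, so the four axioms hold; positive-definiteness of the form in Definition \ref{defn_inner_product} is then automatic, and can be seen directly from the fact that a Dynkin diagram is a tree, so a $2$-colouring of its vertices yields signs $\e_i$ with $\diag(\e_i)\,C(B)\,\diag(\e_i)$ a classical Cartan matrix, exhibiting $M=DC(B)$ as congruent to a positive-definite matrix. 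The inductive step is precisely the second assertion: assuming $\Phi_B$ is a root system, I would show that for $B'=\mu_k B$ the set $\Phi_{B'}$ is a root system and $\rho_k\colon\Phi_{B'}\to\Phi_B$ extends to an isomorphism. Since ``mutation Dynkin'' means mutation equivalent to Dynkin, iterating this step along a mutation path reaches every mutation Dynkin $B$.

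For the inductive step I would first record that $\rho_k$ extends to a linear isomorphism $V_{B'}\to V_B$: by Definition \ref{def:mutate-roots} each $\alpha'_i$ maps to $\alpha_i$ or to $s_k(\alpha_i)=\alpha_i-c_{ki}\alpha_k$, while $\alpha'_k\mapsto\alpha_k$, so the transition matrix is unitriangular and hence invertible (its inverse being the map of Lemma \ref{def:mutate-roots-inverse}). Next I would show that $\rho_k$ is an \emph{isometry}, i.e.\ $(\rho_k u,\rho_k w)_B=(u,w)_{B'}$. On norms this is immediate, since $\rho_k(\alpha'_j)$ equals $\alpha_j$ or $s_k(\alpha_j)$ and $s_k$ preserves $(-,-)_B$, giving $(\rho_k\alpha'_j,\rho_k\alpha'_j)_B=2d_j=(\alpha'_j,\alpha'_j)_{B'}$. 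Combining this with Lemmas \ref{lem:mutateroots} and \ref{lem:mutateCartan}, which together yield $(\rho_k\alpha'_i,(\rho_k\alpha'_j)^\vee)_B=c'_{ji}=(\alpha'_i,(\alpha'_j)^\vee)_{B'}$, recovers the full bilinear form on the basis and hence everywhere. As $\rho_k$ is a linear isomorphism pulling back the positive-definite form $(-,-)_B$, the form $(-,-)_{B'}$ is positive-definite, so the coroots $(\alpha')^\vee$ and all the axioms make sense for $B'$.

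With the isometry in hand I would pass to the Weyl groups. The identity $(\alpha'_j,(\alpha'_i)^\vee)_{B'}=c'_{ij}$ shows that the reflection $s'_i$ of Definition \ref{def:reflections} coincides with the orthogonal reflection in $\alpha'_i$ for $(-,-)_{B'}$; since $\rho_k$ is an isometry, $\rho_k s'_i\rho_k^{-1}=s_{\rho_k(\alpha'_i)}$. Because $\rho_k(\alpha'_k)=\alpha_k$, the reflection $s_k$ is among the $s_{\rho_k(\alpha'_i)}$; as each $s_{\rho_k(\alpha'_i)}$ equals $s_i$ or $s_k s_i s_k$, every $s_i$ lies in $\langle s_{\rho_k(\alpha'_j)}\rangle$, whence $\rho_k W_{B'}\rho_k^{-1}=W_B$. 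Since each $\rho_k(\alpha'_i)\in\{s_k\alpha_i,\alpha_i\}\subseteq\Phi$ and $\Phi$ is $W_B$-stable, we get $\rho_k(\Phi')=\rho_k(W_{B'}S_{B'})=W_B\,\rho_k(S_{B'})\subseteq\Phi$; applying the symmetric argument to the inverse map of Lemma \ref{def:mutate-roots-inverse} gives the reverse inclusion, so $\rho_k(\Phi')=\Phi$. Finally I would transfer the four root-system axioms for $\Phi$ across the isometry $\rho_k$: finiteness, spanning, and $0\notin\Phi'$ are formal; stability under $s'_\alpha=\rho_k^{-1}s_{\rho_k\alpha}\rho_k$ and integrality of $(\alpha,\beta^\vee)_{B'}=(\rho_k\alpha,(\rho_k\beta)^\vee)_B$ follow from the corresponding facts for $\Phi$; and $\Phi'\cap\R\alpha=\rho_k^{-1}(\Phi\cap\R\rho_k\alpha)=\{\pm\alpha\}$. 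This shows $\Phi_{B'}$ is a root system and $\rho_k$ an isomorphism, completing the induction.

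I expect the main obstacle to be the identity $\rho_k(\Phi')=\Phi$, and in particular the clean identification $\rho_k s'_i\rho_k^{-1}=s_{\rho_k(\alpha'_i)}$ together with the verification that these conjugated reflections generate all of $W_B$; this is where the asymmetry of $\rho_k$ (it applies $s_k$ only to arrows into $k$, while fixing $\alpha_k$) must be handled with care, and where Lemmas \ref{lem:conj-refl} and \ref{def:mutate-roots-inverse} do the real work. A secondary point requiring attention is positive-definiteness of $(-,-)_{B'}$: it is not assumed a priori but is deduced from the isometry, so the logical order --- establish the isometry first, and only then argue about coroots, reflections, and the axioms --- must be respected throughout.
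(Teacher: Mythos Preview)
Your proposal is correct and follows essentially the same inductive strategy as the paper: transport the root-system structure along $\rho_k$ using Lemmas \ref{lem:mutateCartan}, \ref{lem:mutateroots}, \ref{lem:conj-refl}, and \ref{def:mutate-roots-inverse}. The paper's proof is much terser---it simply assigns one lemma to each of the four axioms and then invokes Lemmas \ref{lem:mutateCartan} and \ref{lem:mutateroots} for the pairing---whereas you unpack the Weyl-group equality $\rho_k W_{B'}\rho_k^{-1}=W_B$ and the set equality $\rho_k(\Phi')=\Phi$ explicitly, and you are more careful about positive-definiteness of $(-,-)_{B'}$ (which the paper asserts in Definition \ref{defn_inner_product} but does not justify at that point). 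Your induction runs in the opposite direction to the paper's (you go from $B$ to $B'=\mu_kB$, the paper from $B'$ back to $B$), but this is immaterial given the explicit inverse in Lemma \ref{def:mutate-roots-inverse}.
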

\begin{proof}
By induction, we just need to check that $\Phi_B$ is a root system when $B'=\mu_k B$ is Dynkin.  Property 1 holds by Lemma \ref{def:mutate-roots-inverse}, property 2 holds by Lemma \ref{lem:conj-refl}, property 3 holds by Lemma \ref{lem:mutateroots}, and property 4 holds by linearity of $\rho_k$.  Then Lemmas \ref{lem:mutateCartan} and \ref{lem:mutateroots} show that $\rho_k$ preserves the inner products, and so is an isomorphism of root systems.
\end{proof}

Recall $S_B=\{\alpha_1,\ldots, \alpha_n\}$ is a basis for $V_B$.

\begin{theorem}\label{thm:signedCB}
Suppose $B$ is mutation Dynkin, so $B=\mu_{k_\ell}\ldots \mu_{k_1} B_\Delta$ where $\tilde B_\Delta$ is a skew-symmetric matrix of Dynkin type and $B_\Delta=t\tilde B_\Delta$.  Write $\Phi=\Phi_B$ and $\Phi_\Delta=\Phi_{B_\Delta}$.
Let $\rho=\rho_{k_1}\ldots\rho_{k_\ell}:\Phi\to \Phi_{\Delta}$.  Then $\rho(S_B)$ is a signed companion basis for $B$.
\end{theorem}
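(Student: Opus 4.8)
The plan is to read the statement as a direct consequence of Theorem~\ref{thm:root-sys-iso}. First I would introduce the intermediate matrices $B^{(0)}=B_\Delta$ and $B^{(j)}=\mu_{k_j}B^{(j-1)}$, so that $B^{(\ell)}=B$ and each $B^{(j)}$, being mutation equivalent to the Dynkin matrix $B_\Delta$, is mutation Dynkin. By Theorem~\ref{thm:root-sys-iso} each $\rho_{k_j}\colon\Phi_{B^{(j)}}\to\Phi_{B^{(j-1)}}$ extends to an isomorphism of root systems, and a composite of isomorphisms of root systems is again one, so $\rho=\rho_{k_1}\cdots\rho_{k_\ell}\colon\Phi_B\to\Phi_\Delta$ is an isomorphism of root systems. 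In particular $\rho$ is a linear bijection with $\rho(\Phi_B)=\Phi_\Delta$ and $(\rho(\alpha),\,\rho(\beta)^\vee)_\Delta=(\alpha,\,\beta^\vee)_B$ for all $\alpha,\beta\in\Phi_B$. Writing $\gamma_i=\rho(\alpha_i)$, we then have $\rho(S_B)=\{\gamma_1,\ldots,\gamma_n\}\subset\Phi_\Delta$, and it remains to verify the two conditions defining a signed companion basis.

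For condition (i) I would first observe that $S_B$ is a $\Z$-basis of $\Z\Phi_B$: every simple reflection sends $\alpha_j\mapsto\alpha_j-c_{ij}\alpha_i$ with $c_{ij}\in\Z$, so $\Phi_B=W_B\cdot S_B\subseteq\Z S_B$, whence $\Z\Phi_B=\Z S_B$, and $S_B$ is linearly independent as a basis of $V_B$. Since $\rho$ is linear with $\rho(\Phi_B)=\Phi_\Delta$, it restricts to a lattice isomorphism $\Z\Phi_B\arr\sim\Z\Phi_\Delta$, so $\rho(S_B)$ is a $\Z$-basis of $\Z\Phi_\Delta$.

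For condition (ii) I would combine the pairing-preservation of $\rho$ with the identity $(\alpha_i,\alpha_j^\vee)_B=c_{ji}$ recorded after Definition~\ref{defn_inner_product}, where $C(B)=(c_{ij})$. This gives
\[ (\gamma_j,\,\gamma_i^\vee)_\Delta=(\rho(\alpha_j),\,\rho(\alpha_i)^\vee)_\Delta=(\alpha_j,\,\alpha_i^\vee)_B=c_{ij}, \]
so the matrix with $(i,j)$-entry $(\gamma_j,\gamma_i^\vee)$ equals $C(B)$, as required.

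Since the real work sits inside Theorem~\ref{thm:root-sys-iso} (which itself rests on Lemmas~\ref{lem:mutateCartan} and~\ref{lem:mutateroots}), I do not expect a serious obstacle. The two points needing care are both bookkeeping: that the composite $\rho_{k_1}\cdots\rho_{k_\ell}$ runs from $\Phi_B$ to $\Phi_\Delta$ (and not the reverse), which fixes the order of indices; and the index transpose in condition (ii), where the pairing is evaluated in the target $\Phi_\Delta$ while $c_{ij}$ is read off $C(B)$, which is exactly accommodated by the flip in $(\alpha_i,\alpha_j^\vee)_B=c_{ji}$ matching the index order in the definition of a signed companion basis.
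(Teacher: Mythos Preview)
Your argument is correct. It differs from the paper's proof mainly in packaging: the paper runs an explicit induction on the length $\ell$ of the mutation sequence, at each step invoking the parallel formulas of Lemmas~\ref{lem:mutateCartan} and~\ref{lem:mutateroots} to show that the matrix of pairings $(\rho(\alpha_j),\rho(\alpha_i)^\vee)$ updates exactly as $C(B)$ does under mutation, and separately notes that each $\rho_{k_j}$ is given by a unimodular integer matrix so preserves $\Z$-bases. You instead absorb that induction into a single appeal to Theorem~\ref{thm:root-sys-iso}, using that the composite of root-system isomorphisms preserves the pairing $(-,(-)^\vee)$ to handle condition~(ii) in one line, and using $\rho(\Phi_B)=\Phi_\Delta$ together with linearity to get the lattice isomorphism for condition~(i). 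The content is the same (Theorem~\ref{thm:root-sys-iso} itself rests on Lemmas~\ref{lem:mutateCartan} and~\ref{lem:mutateroots}), but your route is slightly more conceptual and avoids redoing the step-by-step comparison; the paper's route is more self-contained in that it does not need the abstract notion of root-system isomorphism or the observation that such isomorphisms compose. Your lattice argument in~(i) is valid as written, though you could make it even more explicit by noting, as the paper implicitly does, that each $\rho_{k_j}$ sends the simple basis $S_{B^{(j)}}$ into $\Z S_{B^{(j-1)}}$ via the formulas in Definition~\ref{def:mutate-roots}, and likewise for the inverse.
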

\begin{proof}
We use induction on $\ell$.  The base case $\ell=0$ is clear.  So suppose the statement is true for mutation sequences of length $\ell$ and let $B'=\mu_{k_{\ell+1}}B$.  The mutation in Definition \ref{def:mutate-roots} clearly preserves $\Z$-bases of $\Z\Phi_\Delta$.  By induction we know that $C(B)_{ij}=(\rho(\alpha_i),\rho(\alpha_j)^\vee)$, then comparing Lemmas \ref{lem:mutateCartan} and \ref{lem:mutateroots} shows that $C(B')_{ij}=(\rho_{k_{\ell+1}}\rho(\alpha_i),\rho_{k_{\ell+1}}\rho(\alpha_j)^\vee)$, so we are done.
\end{proof}

\begin{corollary}\label{cor:pos-qCcomp}
If $(Q,v)$ is mutation Dynkin then $C(Q,v)$ is a positive quasi-Cartan companion of $B(1)$.
\end{corollary}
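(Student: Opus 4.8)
The plan is to assemble this corollary from results already established, since the genuine content has been dealt with earlier. Write $B=B(Q,v)$, so that $C(Q,v)=C(B)$ by definition. Two things need checking: that $C(B)$ is a quasi-Cartan companion of $B(1)$, and that it is positive.

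For the quasi-Cartan companion claim, I would first invoke Theorem \ref{thm:mutD-is-pure} to conclude that the mutation Dynkin matrix $B$ is pure. Lemma \ref{lem:qCartan} then immediately gives that $C(B)$ is a quasi-Cartan companion of $B(1)$: its diagonal entries are $2$ and it satisfies $\abs{c_{ij}}=\abs{b_{ij}(1)}$ off the diagonal.

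For positivity, I would appeal to Theorem \ref{thm:signedCB}: writing $B$ as an iterated mutation $B=\mu_{k_\ell}\ldots\mu_{k_1}B_\Delta$ of a Dynkin matrix $B_\Delta=t\tilde B_\Delta$, and setting $\rho=\rho_{k_1}\ldots\rho_{k_\ell}$, the set $\rho(S_B)$ is a signed companion basis $\{\gamma_i\}$ for $B$. By condition (ii) in the definition of signed companion basis, the matrix with entries $(\gamma_j,\gamma_i^\vee)$ is exactly $C(B)=C(Q,v)$. Lemma \ref{lem:signedCBisCB} then tells us that this signed companion basis is a genuine companion basis for $B(1)$, and condition (ii) in the definition of companion basis asserts precisely that the matrix $(\gamma_j,\gamma_i^\vee)$ is a positive quasi-Cartan companion of $B(1)$. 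Hence $C(Q,v)$ is positive, and combining the two parts gives the corollary.

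I do not expect a real obstacle here: the substantive work was carried out in proving the existence of the signed companion basis (Theorem \ref{thm:signedCB}) and in transferring positivity from the Dynkin Cartan matrix by a change of basis inside the proof of Lemma \ref{lem:signedCBisCB}. The only point needing a moment's care is verifying that the three matrices in play — the signed Cartan counterpart $C(Q,v)$, the Gram-type matrix $\bigl((\gamma_j,\gamma_i^\vee)\bigr)$ coming from the companion basis, and the object asserted to be positive — are literally the same matrix; this is immediate from the definitions, provided the conventions recorded in Remark \ref{rmk:diff_conv} (in particular the use of $c_{ij}=(\alpha_j,\alpha_i^\vee)$ rather than its transpose) are kept straight throughout.
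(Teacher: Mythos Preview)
Your proof is correct and follows essentially the same approach as the paper, which simply says to combine Theorem \ref{thm:signedCB} and Lemma \ref{lem:signedCBisCB}. Your first paragraph (invoking Theorem \ref{thm:mutD-is-pure} and Lemma \ref{lem:qCartan} separately) is slightly redundant, since the quasi-Cartan companion claim is already established inside the proof of Lemma \ref{lem:signedCBisCB}; but there is no harm in making it explicit.
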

\begin{proof}
Combine Theorem \ref{thm:signedCB} and Lemma \ref{lem:signedCBisCB}.
\end{proof}
Given Corollary \ref{cor:pos-qCcomp}, we could apply results of \cite{br,pr} to get presentations of simple Lie algebras from mutation Dynkin signed valued quivers.  We want to understand homomorphisms between these presentations, so we conduct a more in-depth analysis in Section \ref{s:Lie}.

\subsection{Mutation type $A$}

In this subsection we give a recursive description of the root system associated to a gss matrix of mutation Dynkin type $A$.

Recall the classification of (unsigned) quivers of mutation type $A$: see \cite[Section 2]{bv}.  Roughly, all such quivers look like the following diagram:
\[\xymatrix@C=0.6em{& & & \circ\ar[dr] & & & & & & \circ\ar[dr] & & \circ\ar[dr] & & & & \circ\ar[dr] & & &  \\
\circ\ar@{-}[rr] & & \circ\ar[ur]  & & \circ \ar[ll]  \ar@{.}[rr] & & \circ\ar@{-}[rr] & & \circ\ar[ur]  & & \circ \ar[ll] \ar[ur]  & & \circ \ar[ll]\ar@{.}[rr] & & \circ\ar[ur]  & & \circ \ar[ll]\ar@{-}[rr] & & \circ}\]

Let $\mathcal{M}_{n}^A$ denote the set of signed valued quivers of mutation type $A_n$ and let $(Q,v)\in\mathcal{M}_{n}^A$.  By Theorem \ref{thm:mutD-fromspec}, we know that $(Q,v)$ is one of the shapes given in the following diagram, with signs of certain arrows labelled in red by $\gamma, \delta,\e\in\{\pm1\}$.
%Figure \ref{fig_shapes_mut_typeA}.
%       \begin{figure}[ht]
%    \centering
\[
    \begin{tikzpicture}[xscale=2,baseline=(bb.base),
  quivarrow/.style={black, -latex}] 
  
    \node(a1) at (-2,0) {$(a)$};
  
    \node (a) at (0,0) {$\circ$};
    \node [above] at (a) {\scriptsize {$n-1$}};
    
    \node (b) at (0.5,0) {$\circ$};
    \node [above] at (b) {\scriptsize {$n$}};

    \node (p1) at (0.1,0) {};
    \node (p2) at (-0.7,0.3) {};
    \node (p3) at (-0.7,-0.3) {};

    \node (Q) at (-1.2,0) {$Q=$};
    
    \node (MA) at (-0.5,0) {$\mathcal{M}_{n-1}^A$};
    \begin{pgfonlayer}{background}
      \path[expand bubble]plot [smooth cycle,tension=1] coordinates {(p1) (p2) (p3)};
    \end{pgfonlayer}

    \draw [shorten <=-1pt, shorten >=-1pt] (a) -- (b) node[midway, above]{\scriptsize \color{red}{$\varepsilon$}};

    \begin{scope}[yshift=-2cm, xshift=0.3cm]
        \node (b1) at (-2.3,0){$(b)$};
  
    \node (a) at (0,0) {$\circ$};
    \node [left] at (a) {\scriptsize {$n-2$}};
    
    \node (c) at (0.25,0.5) {$\circ$};
    \node [above] at (c) {\scriptsize {$n-1$}};
    
    \node (b) at (0.5,0) {$\circ$};
    \node [right] at (b) {\scriptsize {$n$}};

    \node (p1) at (0.1,0) {};
    \node (p2) at (-0.9,0.4) {};
    \node (p3) at (-0.9,-0.4) {};

    \node (Q) at (-1.5,0) {$Q=$};
    
    \node (MA) at (-0.7,0) {$\mathcal{M}_{n-2}^A$};
    \begin{pgfonlayer}{background}
      \path[expand bubble]plot [smooth cycle,tension=1] coordinates {(p1) (p2) (p3)};
    \end{pgfonlayer}

    \draw [shorten <=-2pt, shorten >=-2pt] (a) -- (c) node[midway, above left]{\scriptsize \color{red}{$\gamma$}};
    \draw [shorten <=-2pt, shorten >=-2pt] (c) -- (b) node[midway, above right]{\scriptsize \color{red}{$\varepsilon$}};
    \draw [shorten <=-1pt, shorten >=-1pt] (a) -- (b) node[midway, below]{\scriptsize \color{red}{$\delta$}};
    \end{scope}
    \end{tikzpicture}
%    
%       \caption{Possible shapes of a quiver of mutation type $A_n$}
%       \label{fig_shapes_mut_typeA}
%    \end{figure}
\]
By Theorem \ref{thm:mutD-fromspec}, the product of the signs of the arrows in the 3-cycle in (b) is $\gamma\delta\varepsilon=1$.

Let $\Phi=\Phi{(Q,v)}$ with simple roots $\{\alpha_1,\ldots,\alpha_n\}$.  If $\beta=\sum_{i=1}^n\lambda_i\alpha_i\in\Phi$, let $m_i:\Phi\to\Z$ pick out the coefficient of $\alpha_i$, so $m_i(\beta)=\lambda_i$.

For a full subquiver $Q'$ of $Q$ on vertices $\{i_1,\ldots, i_k\}$, let $\Phi_{\{i_1,\ldots, i_k\}}$ denote the root system associated to $(Q',v|_{Q'})$.
\begin{proposition}
Let $(Q,v)\in\mathcal{M}_{n}^A$, and $\e$ be the sign of the arrow between $n-1$ and $n$ in $Q$. 
The root system $\Phi_{(Q,v)}$ is as follows:
\begin{itemize}
    \item[(a)] If $(Q,v)$ is of type (a), then 
    \begin{equation}\label{eq_root_typeAa}
    \Phi_{(Q,v)}=\Phi_{\{1,\ldots, n-1\}}\cup \{\beta-\varepsilon m_{n-1}(\beta)\alpha_n \st \beta%=\sum_{i=1}^{n-1}m_i(\beta)\alpha_i
    \in \Phi_{\{1,\ldots, n-1\}}\}\cup\{\pm \alpha_n\}.
    \end{equation}
    \item[(b)] If $(Q,v)$ is of type (b), then
    \begin{equation}\label{eq_root_typeAb}
    \Phi_{(Q,v)}=\Phi_{\{1,\ldots, n-1\}}\cup\Phi_{\{1,\ldots, n-2,n\}}\cup \{\pm (\alpha_{n-1}-\varepsilon \alpha_n)\}.
    \end{equation}
\end{itemize}
\end{proposition}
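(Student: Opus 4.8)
The plan is to prove each of \eqref{eq_root_typeAa} and \eqref{eq_root_typeAb} by establishing the two inclusions separately. The inclusion $\supseteq$ will be a direct computation, and for $\subseteq$ I would argue by cardinality: by Theorem \ref{thm:root-sys-iso} the root system $\Phi_{(Q,v)}$ is isomorphic to the classical root system of type $A_n$, so $|\Phi_{(Q,v)}|=n(n+1)$, and it is then enough to show that the right-hand side is contained in $\Phi_{(Q,v)}$ and has exactly $n(n+1)$ elements.

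For $\supseteq$, I would first record that the edge between $n-1$ and $n$ has sign $\varepsilon$ and, being of type $A$, value $(\pm1,\pm1)$; by Lemma \ref{lem:CQv} this gives $c_{n-1,n}=c_{n,n-1}=\varepsilon$, and in case (a) also $c_{in}=c_{ni}=0$ for $i\leq n-2$. Hence in case (a), for $\beta\in\Phi_{\{1,\ldots,n-1\}}$ one has $s_n(\beta)=\beta-\varepsilon m_{n-1}(\beta)\alpha_n$, so the middle set of \eqref{eq_root_typeAa} is precisely $s_n(\Phi_{\{1,\ldots,n-1\}})$; together with $\Phi_{\{1,\ldots,n-1\}}\subseteq\Phi_{(Q,v)}$ and $\{\pm\alpha_n\}\subseteq\Phi_{(Q,v)}$ this yields $\supseteq$. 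In case (b), $\Phi_{\{1,\ldots,n-1\}}$ and $\Phi_{\{1,\ldots,n-2,n\}}$ are sub-root-systems of $\Phi_{(Q,v)}$, and $\alpha_{n-1}-\varepsilon\alpha_n=s_n(\alpha_{n-1})$ is a root, giving $\supseteq$.

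For $\subseteq$ the key fact I would establish is: whenever a full subquiver $Q_J$ (for $J\subseteq\{1,\ldots,n\}$) is connected of type $A$, then $\Phi_{(Q,v)}\cap V_J=\Phi_J$, where $V_J$ denotes the span of $\{\alpha_i:i\in J\}$. Indeed $\Phi_{(Q,v)}\cap V_J$ is a root system of rank $|J|$ containing $\Phi_J\cong A_{|J|}$; transported along the isomorphism of Theorem \ref{thm:root-sys-iso} it is a rank-$|J|$ subsystem of a type $A_n$ root system, hence a product of type-$A$ systems, and so has at most $|J|(|J|+1)$ roots, forcing $\Phi_{(Q,v)}\cap V_J=\Phi_J$. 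Using this, in case (a) the sets $\Phi_{\{1,\ldots,n-1\}}$ and $s_n(\Phi_{\{1,\ldots,n-1\}})$ overlap exactly in $\{\beta\in\Phi_{\{1,\ldots,n-1\}}:m_{n-1}(\beta)=0\}=\Phi_{(Q,v)}\cap V_{\{1,\ldots,n-2\}}=\Phi_{\{1,\ldots,n-2\}}$, while $\{\pm\alpha_n\}$ is disjoint from both, and a count gives $2(n-1)n-(n-2)(n-1)+2=n(n+1)$. In case (b) the two copies of $A_{n-1}$ overlap in $\Phi_{(Q,v)}\cap V_{\{1,\ldots,n-2\}}=\Phi_{\{1,\ldots,n-2\}}$ and $\pm(\alpha_{n-1}-\varepsilon\alpha_n)$ lies in neither, giving the same total $n(n+1)$.

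The main obstacle will be the overlap bookkeeping, and specifically verifying the hypotheses of the key fact. I must ensure that the relevant subquivers are connected of type $A$; this is clear for $\{1,\ldots,n-1\}$, $\{1,\ldots,n-2,n\}$ and (in case (b)) the bubble $\{1,\ldots,n-2\}$, but in case (a) it requires that removing the attaching vertex $n-1$ does not disconnect $\{1,\ldots,n-2\}$, which I would deduce from the classification of mutation type $A$ quivers in \cite{bv}. The second delicate point is the subsystem cardinality bound, namely that a rank-$r$ subsystem of a type $A_n$ root system has at most $r(r+1)$ roots with equality only for $A_r$; this follows from the classification of such subsystems together with the convexity estimate $\sum_i n_i(n_i+1)\leq(\sum_i n_i)(\sum_i n_i+1)$.
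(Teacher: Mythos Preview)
Your proof is correct and takes a genuinely different route from the paper's.  Both arguments handle the inclusion $\supseteq$ in the same way, but the paper establishes $\subseteq$ by a direct inductive verification that the right-hand side $\Theta_{(Q,v)}$ is stable under every simple reflection $s_1,\ldots,s_n$; since $S\subseteq\Theta_{(Q,v)}$, this forces $\Phi_{(Q,v)}=W\cdot S\subseteq\Theta_{(Q,v)}$.  This involves a careful case analysis (especially for $s_{n-1}$ in case~(a), where one must invoke the inductive hypothesis on $Q_{\{1,\ldots,n-1\}}$ to control the coefficient $m_{n-1}(\beta)$).  Your cardinality argument is considerably shorter: it trades that computation for the already-proved Theorem~\ref{thm:root-sys-iso} together with the external fact that root subsystems of $A_n$ are products of type~$A$ and hence have at most $r(r+1)$ roots in rank~$r$.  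What you gain is brevity and a cleaner overlap calculation; what the paper's approach gains is self-containment (no appeal to the classification of subsystems of $A_n$) and, implicitly, finer structural information about which roots occur.

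One small point worth sharpening: your key fact and the ensuing count in case~(a) require $Q_{\{1,\ldots,n-2\}}$ to be connected of mutation type $A$.  This does follow from the description in \cite{bv} (a leaf in a mutation type~$A$ quiver must sit at one end of the ``snake'', so removing the leaf and its unique neighbour leaves the remaining snake connected), but it deserves a sentence of justification rather than a bare citation.  The paper's proof needs an equivalent structural input---namely that vertex $n-1$ occupies the terminal position in $Q_{\{1,\ldots,n-1\}}$ so that the inductive hypothesis applies with that labeling---so this is not a defect particular to your approach.
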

\begin{proof}
Let $\Theta_{(Q,v)}$ denote the right hand side of \eqref{eq_root_typeAa} if $Q$ is of type (a) or the right hand side of \eqref{eq_root_typeAb} if $Q$ is of type (b).

We proceed by induction on $n$.  If $n=1$ then $(Q,v)$ is of type (a) and $\Phi_{(Q,v)}=%\emptyset\cup\emptyset\cup
\{\pm \alpha_1\}$, so the claim is true.

For $n>1$, we start by showing that $\Theta_{(Q,v)}\subseteq \Phi_{(Q,v)}$.
\begin{itemize}
    \item[(a)] Suppose $Q$ is of type (a). Then $\Phi_{\{1,\ldots, n-1\}}\subseteq \Phi_{(Q,v)}$ by definition of root system. Furthermore, for
$\beta\in\Phi_{\{1,\ldots, n-1\}}$ we have
\[ s_n(\beta)=\beta-\sum_{i=1}^nm_i(\beta) c_{ni}\alpha_n=\beta-\varepsilon m_{n-1}(\beta)\alpha_n. \] 
    Hence, since $\pm\alpha_n\in \Phi_{(Q,v)}$ by definition, we get $\Theta_{(Q,v)}\subseteq \Phi_{(Q,v)}$.
    \item[(b)] Suppose $Q$ is of type (b). Once again, by definition of root system, we have
    $\Phi_{\{1,\ldots, n-1\}}, \Phi_{\{1,\ldots, n-2,n\}}\subseteq \Phi_{(Q,v)}$. Furthermore
    $$
    s_n(\pm \alpha_{n-1})=\pm (\alpha_{n-1}-\varepsilon \alpha_n),
    $$
    and therefore $\Theta_{(Q,v)}\subseteq\Phi_{(Q,v)}$.
\end{itemize}

To prove the inverse inclusion, it is enough to show that $\Theta_{(Q,v)}$ is closed under reflections.
\begin{itemize}
    \item[(a)] 
    Let $\beta\in \Phi_{\{1,\ldots, n-1\}}$. 
    First, we need to show that $s_i(\beta)\in \Theta_{(Q,v)}$ for $1\leq i \leq n$.
%    Let $\beta=\sum_{i=1}^{n-1}m_i(\beta)\alpha_i\in \Phi_{\{1,\ldots, n-1\}}$. 
%    
    But $s_i(\beta)\in\Phi_{\{1,\ldots, n-1\}}\subseteq \Theta_{(Q,v)}$ for $i=1,\ldots, n-1$ by definition, and $s_n(\beta)=\beta-\varepsilon m_{n-1}(\beta)\alpha_n\in \Theta_{(Q,v)}$. 

Second, consider $\beta-\e m_{n-1}(\beta)\alpha_n$.  If $i\neq n-1$ then $m_{n-1}(s_i(\beta))=m_{n-1}(\beta)$, so
\[ s_i(\beta-\varepsilon m_{n-1}(\beta)\alpha_n)=s_i(\beta)-\varepsilon m_{n-1}(s_i(\beta))\alpha_n\in \Theta_{(Q,v)}. \]
For $i=n-1$ we split into two cases.
    \begin{itemize}
        \item If $\beta=\alpha_{n-1}$, then
        $$
        s_{n-1}(\beta-\varepsilon m_{n-1}(\beta)\alpha_n)= s_{n-1}(\alpha_{n-1}-\varepsilon\alpha_n)=-\alpha_{n-1}-\varepsilon (\alpha_n -\varepsilon\alpha_{n-1})=-\varepsilon\alpha_n\in\Theta_{(Q,v)}.
        $$
        The check for $\beta=-\alpha_{n-1}$ is analogous.
        \item 
Now consider $\beta\in \Phi_{\{1,\ldots, n-1\}}$ with $\beta\neq\pm\alpha_{n-1}$.  If $m_{n-1}(\beta)=0$ then we have already checked that $W\cdot( \beta-\varepsilon m_{n-1}(\beta)\alpha_n)=W\cdot\beta\subseteq \Theta_{(Q,v)}$, so we can assume that $m_{n-1}(\beta)\neq0$.

We claim that there is exactly one neighbour $j\neq n$ of $n-1$ such that $m_j(\beta)\neq0$.  To see this we use the inductive hypothesis.  If $Q_{\{1,\ldots, n-1\}}$ is of type (a) then the only neighbour $j\neq n$ of $n-1$ is $j=n-2$, and as $\beta\neq\pm\alpha_{n-1}$ the only possibility given the inductive hypothesis is that $m_{n-2}(\beta)\neq0$.  We write $\eta$ for the sign of the arrow between $n-2$ and $n-1$.
               \begin{center}
    \begin{tikzpicture}[xscale=2,baseline=(bb.base),
  quivarrow/.style={black, -latex}] 
  
    \node (a) at (-0.5,0) {$\circ$};
    \node [below] at (a) {\scriptsize {$n-2$}};
    
    \node (b) at (0,0) {$\circ$};
    \node [below] at (b) {\scriptsize {$n-1$}};

    \node (c) at (0.5,0) {$\circ$};
    \node [below] at (c) {\scriptsize {$n$}};

    \draw [shorten <=-1pt, shorten >=-1pt] (a) -- (b) node[midway, above]{\scriptsize \color{red}{$\eta$}};
    \draw [shorten <=-1pt, shorten >=-1pt] (b) -- (c) node[midway, above]{\scriptsize \color{red}{$\varepsilon$}};
    \end{tikzpicture}
    \end{center}
If $Q_{\{1,\ldots, n-1\}}$ is of type (b) then we have two neighbours $j\neq n$ of $n-1$: these are $j=n-2$ and $j=n-3$.  
               \begin{center}
    \begin{tikzpicture}[xscale=2,baseline=(bb.base),
  quivarrow/.style={black, -latex}]

   % \begin{scope} %[yshift=-2cm, xshift=0.3cm]
  
    \node (a) at (0,0) {$\circ$};
    \node [below] at (a) {\scriptsize {$n-3$}};
    
    \node (c) at (0.25,0.5) {$\circ$};
    \node [above] at (c) {\scriptsize {$n-2$}};
    
    \node (b) at (0.5,0) {$\circ$};
    \node [below] at (b) {\scriptsize {$n-1$}};

   \node (d) at (1,0) {$\circ$};
    \node [below] at (d) {\scriptsize {$\phantom{3}n\phantom{3}$}};

    \draw [shorten <=-2pt, shorten >=-2pt] (a) -- (c); %node[midway, above left]{\scriptsize \color{red}{$\gamma$}};
    \draw [shorten <=-2pt, shorten >=-2pt] (c) -- (b) node[midway, above right]{\scriptsize \color{red}{$\zeta$}};
    \draw [shorten <=-1pt, shorten >=-1pt] (a) -- (b); %node[midway, below]{\scriptsize \color{red}{$\delta$}};
    \draw [shorten <=-1pt, shorten >=-1pt] (b) -- (d) node[midway, above]{\scriptsize \color{red}{$\e$}};
   % \end{scope}
    \end{tikzpicture}
    \end{center}
By the inductive hypothesis, $\beta$ belongs to the union
\[\Phi_{\{1,\ldots, n-2\}}\cup\Phi_{\{1,\ldots, n-3,n-1\}}\cup \{\pm (\alpha_{n-2}-\zeta \alpha_{n-1})\}.\]
We have $m_{n-1}(\beta)\neq0$, so $\beta$ cannot belong to the first of these sets.  If it belongs to the second, we have $m_{n-2}(\beta)=0$ and we set $j=n-3$ and let $\eta$ denote the sign of the arrow between $n-3$ and $n-1$.  If it belongs to the third, we have $m_{n-3}(\beta)=0$; we set $j=n-2$ and $\eta=\zeta$.

So we have the diagram
               \begin{center}
    \begin{tikzpicture}[xscale=2,baseline=(bb.base),
  quivarrow/.style={black, -latex}] 
  
    \node (a) at (-0.5,0) {$\circ$};
    \node [below] at (a) {\scriptsize {$j$}};
    
    \node (b) at (0,0) {$\circ$};
    \node [below] at (b) {\scriptsize {$n-1$}};

    \node (c) at (0.5,0) {$\circ$};
    \node [below] at (c) {\scriptsize {$\phantom{3}n\phantom{3}$}};

    \draw [shorten <=-1pt, shorten >=-1pt] (a) -- (b) node[midway, above]{\scriptsize \color{red}{$\eta$}};
    \draw [shorten <=-1pt, shorten >=-1pt] (b) -- (c) node[midway, above]{\scriptsize \color{red}{$\varepsilon$}};
    \end{tikzpicture}
    \end{center}
    with $m_j(\beta)\neq 0$. Furthermore, by checking each of the above cases, we know that $m_{n-1}(\beta)=-\eta m_j(\beta)$. Therefore
    \begin{align*}
        s_{n-1}(\beta) & = \sum_{i\neq j,n-1} m_i(\beta)\alpha_i +m_j(\beta)s_{n-1}(\alpha_j)+m_{n-1}(\beta)s_{n-1}(\alpha_{n-1})\\
& = \sum_{i\neq j,n-1} m_i(\beta)\alpha_i +m_j(\beta)\left( \alpha_j-\eta\alpha_{n-1} \right)-m_{n-1}(\beta)\alpha_{n-1}\\
&=\beta-m_{n-1}(\beta)\alpha_{n-1}
\end{align*}
so
\begin{align*}
        s_{n-1}(\beta -\varepsilon m_{n-1}(\beta)\alpha_n) & = \beta-m_{n-1}(\beta)\alpha_{n-1} - \varepsilon m_{n-1}(\beta)\left(\alpha_n-\e \alpha_{n-1}\right)\\
& = \beta - \varepsilon m_{n-1}(\beta)\alpha_n\in \Theta_{(Q,v)}.
\end{align*}
\end{itemize}

Third, consider $\pm\alpha_n$.  As $\Theta_{(Q,v)}=-\Theta_{(Q,v)}$, it is enough to consider $\alpha_n$.  If $i\neq n-1$ then $s_i(\alpha_n)\in\{\pm\alpha_n\}$.  For $i=n-1$ we have $s_{n-1}(\alpha_n)=\alpha_n-\e\alpha_{n-1}$.  Let $\beta = -\e\alpha_{n-1}\in\Phi_{\{1,\ldots, n-1\}}$.  Then
\[ \beta - \e m_{n-1}(\beta)\alpha_n = -\e\alpha_{n-1}+\alpha_n=s_{n-1}(\alpha_n) \]
so $s_i(\alpha_n)\in\Theta_{(Q,v)}$.

\item[(b)] Let $\beta\in \Phi_{\{1,\ldots, n-1\}}$. 
For $i=1,\ldots,n-1$ we have $s_i(\beta)\in \Phi_{\{1,\ldots, n-1\}}\subseteq \Theta_{(Q,v)}$ by definition. 

Furthermore, if $m_{n-1}(\beta)=0$, then $\beta\in \Phi_{\{1,\ldots, n-2\}}$ and therefore $s_n(\beta)\in\Phi_{\{1,\ldots, n-2,n\}}\subseteq \Theta_{(Q,v)}$. 

Therefore, we can assume $m_{n-1}(\beta)\neq 0$. If $\beta=\pm \alpha_{n-1}$, then $s_n(\beta)=\pm (\alpha_{n-1}-\varepsilon\alpha_n)\in\Theta_{(Q,v)}$. Otherwise, using the same argument as for case (a), we have $m_{n-1}(\beta)=-\gamma m_{n-2}(\beta)$. Thus
$$
s_n(\beta)=\beta -\delta m_{n-2}(\beta) \alpha_n -\varepsilon m_{n-1}(\beta) \alpha_n =\beta - (\delta -\varepsilon\gamma)m_{n-2}(\beta)\alpha_n=\beta\in \Theta_{(Q,v)},
$$
where the last equality follows since $\gamma\delta\varepsilon=1$.

One proves similarly that $W\cdot (\Phi_{\{1,\ldots, n-2,n\}})\subseteq \Theta_{(Q,v)}$. 

Finally, one checks that $s_i$ fixes $\pm (\alpha_{n-1}-\varepsilon\alpha_n)$ for $i=1,\ldots,n-2$, and that $s_{n-1} (\alpha_{n-1}-\varepsilon\alpha_n)=-\e\alpha_n$ and $s_{n} (\alpha_{n-1}-\varepsilon\alpha_n)=\alpha_{n-1}$.  So $s_i(\pm (\alpha_{n-1}-\varepsilon\alpha_n))\in\Theta_{(Q,v)}$ for all $i=1,\ldots, n.$

Hence $\Phi_{(Q,v)}\subseteq\Theta_{(Q,v)}$, and we are done.
\end{itemize}
\end{proof}

% =====================
%
%     ----------------------------
%
% =====================

\section{Presentations of Lie algebras}\label{s:Lie}

\subsection{Presentations}

In this section we work with Lie algebras over the complex numbers.
We use iterated bracket notation:
\[ [x_1,x_2,\ldots, x_n]:= [x_1,[x_2,[\ldots, x_n]]].\]

We first consider four types of relations:
\begin{definition}\label{def:r1234}
Let $C=(c_{ij})$ be a quasi-Cartan matrix.  Define the Lie algebra $\g_4(C)$ with generators $\{h_i,e_{\e i} \st \e\in\{\pm 1\}, 1\leq i\leq n\}$ subject to the following relations, for $\e,\delta\in\{\pm1\}$:
\begin{itemize}
    \item[(R1)] $[h_i,h_j]=0$;
    \item[(R2)] $[e_i,e_{-i}]=h_i$;
    \item[(R3)] $[h_i,e_{\e j}]=\e c_{ij}e_{\e j}$;
    \item[(R4)] $(\ad e_{\e i})^{-M+1}(e_{\delta j})=0$, where $M=\min\{0, \e\delta c_{ij}\}$.
\end{itemize}
\end{definition}

Note that if $C$ is a Cartan matrix of Dynkin type $\Delta$, then Definition \ref{def:r1234} reduces to the classical Serre presentation of the Lie algebra $\g(\Delta)$ \cite{serre}.

We record two useful calculational results which always hold in $\g_4(C)$.  We will use both often.
\begin{lemma}\label{lem:h-prod}
%In $\g_4(C)$ we have
$[h_i,e_{\e_1 j_1}, \ldots e_{\e_\ell j_\ell}]=\sum_{k=1}^\ell \e_kc_{ik}[e_{\e_1 j_1}, \ldots e_{\e_\ell j_\ell}]$.
\end{lemma}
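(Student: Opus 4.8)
The plan is to prove the identity by induction on the length $\ell$ of the bracket, using only the Jacobi identity — so that $\ad h_i=[h_i,-]$ acts as a derivation with respect to the Lie bracket — together with relation (R3). None of (R1), (R2), (R4) should be needed. The base case $\ell=1$ is exactly (R3): $[h_i,e_{\e_1 j_1}]=\e_1 c_{ij_1}e_{\e_1 j_1}$, so the coefficient attached to the $k$-th generator $e_{\e_k j_k}$ is $\e_k c_{ij_k}$.

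For the inductive step I would set $Y=[e_{\e_2 j_2},\ldots,e_{\e_\ell j_\ell}]$, so that the iterated-bracket convention rewrites the left-hand side as $[h_i,[e_{\e_1 j_1},Y]]$. Applying the Jacobi identity in derivation form gives
\[
[h_i,[e_{\e_1 j_1},Y]]=[[h_i,e_{\e_1 j_1}],Y]+[e_{\e_1 j_1},[h_i,Y]].
\]
The first summand equals $\e_1 c_{ij_1}[e_{\e_1 j_1},Y]$ by (R3). For the second, the inductive hypothesis applied to $[h_i,Y]$ produces the scalar $\sum_{k=2}^\ell \e_k c_{ij_k}$ multiplying $Y$, hence the summand equals $\bigl(\sum_{k=2}^\ell \e_k c_{ij_k}\bigr)[e_{\e_1 j_1},Y]$. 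Adding the two contributions and recognising $[e_{\e_1 j_1},Y]=[e_{\e_1 j_1},\ldots,e_{\e_\ell j_\ell}]$ yields the claimed total coefficient $\sum_{k=1}^\ell \e_k c_{ij_k}$.

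I expect no genuine obstacle: this is a routine consequence of the derivation property of $\ad h_i$ and (R3), and the only care needed is the bookkeeping in peeling off the outermost generator so that the length-$(\ell-1)$ bracket feeds correctly into the inductive hypothesis. A cleaner repackaging, which I might prefer for exposition, is the observation that (R3) makes each $e_{\e_k j_k}$ an eigenvector of $\ad h_i$ with eigenvalue $\e_k c_{ij_k}$; since a bracket of eigenvectors of a derivation is again an eigenvector with eigenvalue the sum, the displayed formula is simply additivity of these weights, and the induction above is the precise justification of that additivity.
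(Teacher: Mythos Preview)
Your proof is correct and is exactly the argument the paper has in mind: its proof reads in full ``Use (R3), induction, and the Jacobi identity,'' and you have simply written out that induction carefully. Your aside that this is just additivity of $\ad h_i$-weights is a nice way to see it.
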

\begin{proof}
Use (R3), induction, and the Jacobi identity.
\end{proof}
\begin{lemma}\label{lem:commute-es}
If $\delta=\sgn(c_{ij})$ then $[e_i,e_{\delta j}]=[e_{-i},e_{-\delta j}]=0$.
\end{lemma}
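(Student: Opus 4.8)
The plan is to obtain both identities directly from relation (R4) by choosing the signs so that its exponent collapses to $1$. Recall (R4) says $(\ad e_{\e i})^{-M+1}(e_{\delta j})=0$ where $M=\min\{0,\e\delta c_{ij}\}$, and observe that the exponent equals $1$ exactly when $M=0$, i.e.\ when $\e\delta c_{ij}\geq0$. Since the hypothesis $\delta=\sgn(c_{ij})$ presupposes $c_{ij}\neq0$, we have $\delta\in\{\pm1\}$ throughout, so both $e_{\delta j}$ and $e_{-\delta j}$ are genuine generators.

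For the first bracket I would apply (R4) with first index $i$ (so $\e=1$) and second index $\delta j$, where $\delta=\sgn(c_{ij})$. Then $\e\delta c_{ij}=\sgn(c_{ij})\,c_{ij}=\abs{c_{ij}}\geq0$, so $M=0$ and (R4) reads $(\ad e_i)^{1}(e_{\delta j})=[e_i,e_{\delta j}]=0$. For the second bracket I would instead take first index $-i$ (so the role of $\e$ in (R4) is played by $-1$) and second index $-\delta j$; then the quantity $\e\delta c_{ij}$ appearing in (R4) becomes $(-1)(-\delta)c_{ij}=\sgn(c_{ij})\,c_{ij}=\abs{c_{ij}}\geq0$, so again $M=0$ and (R4) gives $(\ad e_{-i})^{1}(e_{-\delta j})=[e_{-i},e_{-\delta j}]=0$.

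There is no real obstacle here: the content is entirely in the sign bookkeeping, and the point is simply that the assumption $\delta=\sgn(c_{ij})$ is precisely what makes $\e\delta c_{ij}=\abs{c_{ij}}$ non-negative in each of the two cases, forcing the Serre exponent down to $1$. (When $i=j$ one has $c_{ii}=2$ and $\delta=1$, and the identity also follows trivially from antisymmetry of the bracket, consistently with the computation above.)
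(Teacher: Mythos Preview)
Your proof is correct and is essentially identical to the paper's: both argue directly from (R4), choosing the sign pair $(\e,\delta)$ so that $\e\delta c_{ij}=\abs{c_{ij}}\geq0$, which forces $M=0$ and hence exponent $1$. Your additional remarks about $c_{ij}\neq0$ and the $i=j$ case are harmless clarifications, but the substance is the same.
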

\begin{proof}
This follows from (R4).  To show the first term is zero, set $\e=1$ and $\delta=\sgn(c_{ij})$, so $\delta c_{ij}\geq0$ and thus $M=0$.   For the second, set $\e=-1$ and $\delta=-\sgn(c_{ij})$.
\end{proof}

The next set of relations depends on chordless cycles of $C$.  If $C$ is mutation Dynkin then, by \cite[Proposition 2.6]{bgz}, there are only four possibilities for the unsigned diagram of the cycle.  Either every edge has weight 1, or we are in one of the following three cases:

\[ \xymatrix @=10pt{
 & i_1 & &&& & i_1 & &&&  i_1\ar@{=}[rr] |{\SelectTips{cm}{}\object@{>}}  && i_2 \\
&\\
i_3\ar@{=}[uur]  |{\SelectTips{cm}{}\object@{>}}
\ar@{-}[rr] && i_2\ar@{=}[uul] |{\SelectTips{cm}{}\object@{>}}
&&&i_3\ar@{=}[uur] |{\SelectTips{cm}{}\object@{<}} && i_2\ar@{=}[uul]  |{\SelectTips{cm}{}\object@{<}} \ar@{-}[ll]   &&& i_4\ar@{=}[rr] |{\SelectTips{cm}{}\object@{>}}  \ar@{-}[uu] && i_3\ar@{-}[uu]\\
&\text{$B_3$-cycle}&&&&&\text{$C_3$-cycle}&&&&&\text{$F_4$-cycle}
} \]
These cycles occur in mutation types $B_n$, $C_n$, and $F_4$, respectively.  Note that the $B_3$-cycle and the $C_3$-cycle also appear in mutation type $F_4$.

In cycles with double edges, the vertices correspond to roots which are either long or short: $\xymatrix{i\ar@{=}[r] |{\SelectTips{cm}{}\object@{>}} &j}$ tells us that $i$ is long and $j$ is short.  So in the $B_3$-cycle displayed above, the vertex $i_1$ is short and the vertices $i_2$ and $i_3$ are long.  
\begin{definition}\label{def:r5}
Define the Lie algebra $\g_5(C)$ as $\g_4(C)$ subject to the following extra relations.  To each chordless cycle $i_1,i_2,\ldots,i_t$ where every edge has weight 1 we impose the relations:
\[ \text{(R5)} \;\; [e_{\e_1 i_1}, \; e_{\e_2 i_2}, \; \ldots, \;  e_{\e_t i_t}]=0 \;\;\text{ where }\;\;\e_{q+1}=-\sgn(c_{i_q,i_{q+1}})\e_{q}\]
where $\e_1\in\{\pm 1\}$. 

Notice that for each chordless cycle as above we impose $4t$ relations: $2t$ going clockwise (two for every starting point $i_j$) and $2t$ going anticlockwise.

To each chordless cycle with an edge of higher weight, we only impose the subset of these relations where $i_1$ and $i_t$ are not both long.
\end{definition}

We unpack definition \ref{def:r5} for the three cycles with double edges, using the three diagrams above.  

For the $B_3$-cycle we impose only 3-cycle relations which do not go ``long, short, long''.  Explicitly, these are:
\[ [e_{\e_1 i_{\sigma(1)}}, \, e_{\e_2 i_{\sigma(2)}}, \;  e_{\e_3 i_{\sigma(3)}}] \;\;\text{ where }\;\;\e_{q+1}=-\sgn(c_{i_{\sigma(q)},i_{\sigma(q+1)}})\e_{q}\]
for $ \e_1\in\{\pm 1\}$ and $\sigma\in\ \Dih(3)$ such that $\{\sigma(1),\sigma(3)\}\neq\{2,3\}$, where $\Dih(p)$ denotes the dihedral group of order $2p$.

For the $C_3$-cycle there is no path of length 3 which starts and ends at a long root, so we impose all twelve 3-cycle relations.  Explicitly, these are:
\[ [e_{\e_1 i_{\sigma(1)}}, \, e_{\e_2 i_{\sigma(2)}}, \;  e_{\e_3 i_{\sigma(3)}}] \;\;\text{ where }\;\;\e_{q+1}=
-\sgn(c_{i_{\sigma(q)},i_{\sigma(q+1)}})
\e_{q}\]
for $\sigma\in \Dih(3)$.

For the $F_4$-cycle we impose only 4-cycle relations which do not go ``long, short, short, long''.  Explicitly, these are:
\[ [e_{\e_1 i_{\sigma(1)}}, \, e_{\e_2 i_{\sigma(2)}}, \;  e_{\e_3 i_{\sigma(3)}}, \;  e_{\e_4 i_{\sigma(4)}}] \;\;\text{ where }\;\;\e_{q+1}=-\sgn(c_{i_{\sigma(q)},i_{\sigma(q+1)}})\e_{q}\]
for $\sigma\in\ \Dih(4)$ such that $\{\sigma(1),\sigma(4)\}\neq\{1,4\}$.

In the introduction of \cite{pr}, following \cite{br}, a Lie algebra which we denote $\widehat{\g}_4(C)$ is defined.  We write its generators as $\hat h_i$ and $\hat e_{\e i}$.  The relations are similar to our relations in Definition \ref{def:r1234}, except that the second and third relations are swapped and their relation $[\hat h_i,\hat e_{\e j}]=-\e c_{ij}\hat e_{\e j}$ has a minus sign.
\begin{lemma}\label{lem:iso_PR_GM}
The map 
\begin{align*}
    \hat{e}_{\e i} & \mapsto \e e_{\e i} \\
    \hat{h}_i & \mapsto -h_i
\end{align*}
extends to an isomorphism $\widehat{\g}_4(C)\arr\sim \g_4(C)$.
\end{lemma}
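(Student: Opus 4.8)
\textbf{The plan} is to invoke the universal property of a presentation by generators and relations. To show that the assignment $\hat e_{\e i}\mapsto \e e_{\e i}$, $\hat h_i\mapsto -h_i$ extends to a homomorphism $\Phi\colon\widehat\g_4(C)\to\g_4(C)$, it suffices to check that the images in $\g_4(C)$ of the generators of $\widehat\g_4(C)$ satisfy the four defining relations of $\widehat\g_4(C)$. Recall that these relations are $[\hat h_i,\hat h_j]=0$, the signed relation $[\hat h_i,\hat e_{\e j}]=-\e c_{ij}\hat e_{\e j}$ (the analogue of (R3) carrying a minus sign), the swapped relation $[\hat e_i,\hat e_{-i}]=\hat h_i$, and the Serre relation $(\ad\hat e_{\e i})^{-M+1}(\hat e_{\delta j})=0$ with $M=\min\{0,\e\delta c_{ij}\}$. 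Each verification is a direct substitution using the relations (R1)--(R4) of $\g_4(C)$, so this step reduces to careful sign bookkeeping.

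The key computations are where the two sources of signs interact. For the signed relation, $\Phi([\hat h_i,\hat e_{\e j}])=[-h_i,\e e_{\e j}]=-\e\cdot\e c_{ij}e_{\e j}=-c_{ij}e_{\e j}$ by (R3), while $\Phi(-\e c_{ij}\hat e_{\e j})=-\e c_{ij}\cdot\e e_{\e j}=-c_{ij}e_{\e j}$, so the two appearances of $\e$ square to $1$ and the minus sign in the relation matches the one in $\hat h_i\mapsto -h_i$. For the swapped relation, using that $\hat e_i$ carries $\e=+1$ and $\hat e_{-i}$ carries $\e=-1$, I get $\Phi([\hat e_i,\hat e_{-i}])=[e_i,-e_{-i}]=-[e_i,e_{-i}]=-h_i=\Phi(\hat h_i)$ by (R2), so the sign again works out. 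For (R4), linearity of $\ad$ gives $\Phi((\ad\hat e_{\e i})^{-M+1}(\hat e_{\delta j}))=\e^{-M+1}\delta\,(\ad e_{\e i})^{-M+1}(e_{\delta j})$, and since $M$ depends only on $C$ and the signs it agrees with the $M$ in the corresponding relation of $\g_4(C)$, so this vanishes by (R4). The relation $[\hat h_i,\hat h_j]=0$ is immediate.

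Next I would construct the candidate inverse $\Psi\colon\g_4(C)\to\widehat\g_4(C)$ by the symmetric assignment $e_{\e i}\mapsto \e\hat e_{\e i}$, $h_i\mapsto -\hat h_i$, and verify it respects (R1)--(R4); these checks are identical in form to the ones above. Finally, I would confirm that $\Psi\circ\Phi$ and $\Phi\circ\Psi$ are the identity on generators: $\hat e_{\e i}\mapsto\e e_{\e i}\mapsto \e^2\hat e_{\e i}=\hat e_{\e i}$ since $\e^2=1$, and $\hat h_i\mapsto -h_i\mapsto \hat h_i$, with the analogous computation for the other composite. Hence $\Phi$ and $\Psi$ are mutually inverse and $\Phi$ is an isomorphism. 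I do not expect a genuine obstacle here: the only thing to watch is the interplay of the three sign sources (the factor $\e$ on $e_{\e i}$, the minus on $h_i$, and the minus in the signed relation), and the whole argument rests on the identity $\e^2=1$ together with linearity of the bracket.
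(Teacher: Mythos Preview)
Your proposal is correct and follows essentially the same approach as the paper: verify that the defining relations are preserved, then establish bijectivity. The paper's proof is simply the one-line assertion that the checks of (R1)--(R4) are straightforward and that the map is ``clearly bijective''; you have spelled out the sign bookkeeping and supplied the explicit inverse, which is a more careful version of the same argument.
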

\begin{proof}
This consists of straightforward checks that the relations (R1) to (R4) are preserved by the map defined above, since the map is clearly bijective.
\end{proof}

\begin{remark}
To state their fifth relation in non-simply laced types and define their Lie algebra $\widehat{\g}_5(C)$, P\'erez-Rivera use a 3-cycle denoted $\mathcal{C}_\mathbb{C}(i_1,i_2,i_3)$, but the diagram which defines it seems to contain a typo as only directed edges should be weighted.  If we replace their diagram by
\[
\xymatrix{ & i_1 \ar@{-}[dl]\ar[dr]^2& \\
i_2\ar[rr]_2 & & i_3
}
\]
and interpret their statements correctly, our relations for the $B_3$- and $C_3$-cycles are consistent with theirs, and the isomorphism of Lemma \ref{lem:iso_PR_GM} extends to an isomorphism $\widehat{\g}_5(C)\arr\sim \g_5(C)$ for all positive quasi-Cartan matrices $C$ which are not of mutation type $F_4$.

However, P\'erez-Rivera include all potential relations in $F_4$-cycles, while we only include 12 of the 16 possible relations.  We will show in Lemma \ref{lem:f4zero} below that imposing all 16 relations would give the zero Lie algebra.  We believe there is an error in \cite[Lemma 5.3]{pr}: the final paragraph of their proof states that $[e_{-\e i}, e_{-\e s}, \tilde{e}_{\e r}, e_{\e j}]=0$ but we think this should be nonzero.
\end{remark}

\subsection{Relations}

In this section we first explain why we excluded certain relations in Definition \ref{def:r5}, and then we show that we could have imposed fewer relations in Definition \ref{def:r5} to get the same Lie algebra.

%---
Here we explain why we do not include relations starting and ending at a long root.  
To simplify notation, we replace the vertices $i_j$ by $j$ in our diagrams.

We start by considering the $B_3$-cycle and its Cartan counterpart
\[ \begin{gathered} \xymatrix{
 & 1 & \\
3\ar@{=}[ur]^{\delta_{13}}  |{\SelectTips{cm}{}\object@{>}}
\ar@{-}[rr]_{\delta_{23}} && 2\ar@{=}[ul]_{\delta_{12}} |{\SelectTips{cm}{}\object@{>}}
}
\end{gathered}
\hspace{1cm} C=\begin{pmatrix}
    2 & \delta_{12} & \delta_{13} \\
    2\delta_{12} & 2 & \delta_{23} \\
    2\delta_{13} & \delta_{23} & 2
\end{pmatrix} \]
where the signs $\delta_{12},\delta_{23},\delta_{13}\in\{\pm 1\}$ are such that $\delta_{12}\delta_{23}\delta_{13}=1$.  The relations we did not include in Definition \ref{def:r5} are 
\begin{align}
[e_{-\e\delta_{12}\delta_{13}2},e_{\e \delta_{13}1},e_{-\e 3}]=0\label{eq_1_B_3} \\
[e_{-\e\delta_{13}\delta_{12}3},e_{\e \delta_{12}1},e_{-\e 2}]=0\label{eq_2_B_3}
\end{align}

\begin{lemma}
If $C$ is a quasi-Cartan matrix coming from a $B_3$-cycle, then the quotient of $\g_5(C)$ by the ideal generated by any of the relation \eqref{eq_1_B_3} or \eqref{eq_2_B_3} is the zero Lie algebra.
\end{lemma}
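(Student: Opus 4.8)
The plan is to reduce the statement to the simplicity of $\g_5(C)$ and then to a single nonvanishing check.

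First I would record that $\g_5(C)$ is simple. A $B_3$-cycle is a chordless cycle occurring inside mutation types $B_n$, $C_n$, or $F_4$, so its quasi-Cartan matrix $C$ arises as $C(Q,v)$ for a rank-$3$ mutation Dynkin signed valued quiver and is therefore a positive quasi-Cartan companion of $B(1)$ by Corollary \ref{cor:pos-qCcomp}; moreover $C$ is $3\times 3$ and so is not of mutation type $F_4$. Hence, by Lemma \ref{lem:iso_PR_GM}, the preceding Remark, and the results of Barot--Rivera and P\'erez-Rivera \cite{br,pr}, we have $\g_5(C)\isom\widehat\g_5(C)$, the simple complex Lie algebra of the relevant rank-$3$ type. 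In particular $\g_5(C)$ is finite dimensional and simple, and it has a root space decomposition with respect to $\h=\langle h_1,h_2,h_3\rangle$ in which each $e_{\e i}$ is a nonzero root vector of weight $\e\alpha_i$, where $\{\alpha_1,\alpha_2,\alpha_3\}$ is the signed companion basis of Theorem \ref{thm:signedCB}.

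Next, since $\g_5(C)$ is simple, the ideal generated by an element $x$ is $0$ when $x=0$ and is all of $\g_5(C)$ otherwise. Thus the quotient by the ideal generated by the left-hand side $x$ of \eqref{eq_1_B_3} is the zero Lie algebra if and only if $x\neq0$, and it remains to prove $x\neq0$. By Lemma \ref{lem:h-prod}, $x=[e_{-\e\delta_{12}\delta_{13}2},e_{\e\delta_{13}1},e_{-\e3}]$ lies in the weight space $\g^{\mu}$ for $\mu=\e(\delta_{13}\alpha_1-\delta_{23}\alpha_2-\alpha_3)$, where we used $\delta_{12}\delta_{13}=\delta_{23}$. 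I would establish $x\neq0$ by assembling it from root vectors: the inner bracket $[e_{\e\delta_{13}1},e_{-\e3}]$ spans $\g^{\e(\delta_{13}\alpha_1-\alpha_3)}$ (note this bracket is not among the vanishing ones of Lemma \ref{lem:commute-es}), and bracketing with $e_{-\e\delta_{12}\delta_{13}2}$ then spans $\g^{\mu}$. Here one uses the standard fact that, in a semisimple Lie algebra, $[\g^{\sigma},\g^{\tau}]=\g^{\sigma+\tau}\neq0$ whenever $\sigma$, $\tau$, and $\sigma+\tau$ are all roots.

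The main obstacle is the finite check underpinning the previous paragraph: that the intermediate weight $\e(\delta_{13}\alpha_1-\alpha_3)$ and the total weight $\mu$ are genuinely roots, which is precisely the point at which the exclusion of \eqref{eq_1_B_3} from Definition \ref{def:r5} is used. I would settle it by fixing an explicit realization of the companion basis $\alpha_1,\alpha_2,\alpha_3$ as roots of the rank-$3$ system compatible with $C$; for the normalisation with all $\delta_{ij}=1$ one finds that $\e(\delta_{13}\alpha_1-\alpha_3)$ and $\mu$ are roots (indeed short ones), so $x\neq0$. The remaining sign patterns of the $\delta_{ij}$ subject to $\delta_{12}\delta_{23}\delta_{13}=1$, together with the two values of $\e$, follow from the symmetries negating a subset of the $\alpha_i$ and the Chevalley involution $e_{\e i}\mapsto e_{-\e i}$, so a single case suffices; relation \eqref{eq_2_B_3} is handled identically after interchanging the vertices $2$ and $3$.
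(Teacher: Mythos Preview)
Your argument is correct and takes a genuinely different route from the paper. The paper proceeds by a direct elementary computation entirely inside the relations (R1)--(R5): it applies $\ad(e_{\e\delta_{12}\delta_{13}2})$ to the imposed relation, uses (R2), (R3), Lemma~\ref{lem:h-prod} and an (R5) relation to deduce $[e_{\e\delta_{13}1},e_{-\e 3}]=0$, and then brackets once more to kill $e_{\pm 3}$ (and by symmetry all generators). Your approach instead imports the structural fact that $\g_5(C)$ is already the simple Lie algebra of type $B_3$ (via positivity of $C$ and \cite{br,pr}, which is legitimate here since the $B_3$-cycle is not of mutation type $F_4$), and then reduces everything to the single nonvanishing check that the excluded bracket is a nonzero root vector.

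Each approach has its merits. The paper's computation is self-contained, uses nothing beyond the defining relations, and makes the collapse completely explicit; it also does not lean on \cite{pr}, whose (R5) relations the paper has just flagged as problematic in the $F_4$ case. Your argument is shorter and more conceptual, and the same template (simple plus nonzero weight vector) transparently handles the $F_4$-cycle lemma as well. The only point where you should be a bit more careful is the ``finite check'': you assert that $\e(\delta_{13}\alpha_1-\alpha_3)$ and $\mu=\e(\delta_{13}\alpha_1-\delta_{23}\alpha_2-\alpha_3)$ are roots, but it is cleaner to verify this directly in the root system $\Phi_B$ of Definition~\ref{def:groot} (for instance, with all $\delta_{ij}=1$ one has $\alpha_1-\alpha_3=-s_1(\alpha_3)$ and $\alpha_1-\alpha_2-\alpha_3=s_2(\alpha_1-\alpha_3)$) rather than by appeal to an unspecified explicit realisation; the reduction of the remaining sign patterns to this case via $J_r$ and the Chevalley involution is fine.
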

\begin{proof}
We show that if we impose a relation from \eqref{eq_1_B_3} to $\g_5(C)$ we get the zero Lie algebra, as the other check is similar.

As $\ad(e_{\e\delta_{12}\delta_{13}2})$ is a derivation, we have
\begin{align*}
    0 & = [e_{\e\delta_{12}\delta_{13}2},e_{-\e\delta_{12}\delta_{13}2},e_{\e \delta_{13}1},e_{-\e 3}] \\
   & =  \e\delta_{12}\delta_{13}[h_2,e_{\e \delta_{13}1},e_{-\e 3}]+[e_{-\e\delta_{12}\delta_{13}2},e_{\e\delta_{12}\delta_{13}2},e_{\e \delta_{13}1},e_{-\e 3}] \\
   & =  \e\delta_{12}\delta_{13}(2\e\delta_{12}\delta_{13}-\e\delta_{23})[e_{\e \delta_{13}1},e_{-\e 3}]-[e_{-\e\delta_{12}\delta_{13}2},e_{\e\delta_{12}\delta_{13}2},e_{-\e 3},e_{\e \delta_{13}1}].
\end{align*}
Notice that $[e_{\e\delta_{12}\delta_{13}2},e_{-\e 3},e_{\e \delta_{13}1}]=0$ by (R5), since $-\sgn (c_{23})\e\delta_{12}\delta_{13}=-\e\delta_{12}\delta_{23}\delta_{13}=-\e$ and $-\sgn(c_{31})(-\e)=\e\delta_{13}$. Therefore, as $\e\delta_{12}\delta_{13}(2\e\delta_{12}\delta_{13}-\e\delta_{23})\neq 0$, we get $[e_{\e \delta_{13}1},e_{-\e 3}]=0$. As a consequence, Lemma \ref{lem:h-prod} yields
\[
0=[e_{-\e\delta_{13}1},e_{\e\delta_{13}1},e_{-\e 3}]=-\e\delta_{13}[h_1,e_{-\e 3}]=e_{-\e3}.
\]
Hence we get $e_{3}=e_{-3}=0$. Similar computations yield $e_1=e_{-1}=e_2=e_{-2}=0$, and thus $h_i=0$ by (R2). Hence $\g_5(C)$ with an additional relation \eqref{eq_1_B_3} is the zero Lie algebra.
\end{proof}

Now consider the $F_4$-cycle and associated Cartan counterpart
\[
       \begin{gathered}  \xymatrix{1\ar@{-}[d]_{\delta_{14}} \ar@{=}[rr]^(0.6){\delta_{12}}  |{\SelectTips{cm}{}\object@{>}} & & 2\ar@{-}[d]^{\delta_{23}} \\
4\ar@{=}[rr]_(0.6){\delta_{34}}  |{\SelectTips{cm}{}\object@{>}} & & 3}  \end{gathered}
\hspace{1cm}C=\begin{pmatrix}
    2 & 2\delta_{12} & 0 & \delta_{14} \\
    \delta_{12} & 2 & \delta_{23} & 0 \\
    0 & \delta_{23} & 2 & \delta_{34} \\
    \delta_{14} & 0 & 2\delta_{34} & 2
\end{pmatrix}\]
where the signs $\delta_{12},\delta_{14},\delta_{23},\delta_{34}\in\{\pm 1\}$ are such that $\delta_{12}\delta_{23}\delta_{34}\delta_{14}=-1$. The relations we did not include in Definition \ref{def:r5} are 
\begin{align}
[e_{-\e\delta_{12}\delta_{23}\delta_{34} 4},e_{\e\delta_{12}\delta_{23} 3},e_{-\e \delta_{12} 2},e_{\e 1}]=0\label{eq_1_F_4} \\
[e_{-\e\delta_{12}\delta_{23}\delta_{34} 1},e_{\e\delta_{23}\delta_{34} 2},e_{-\e \delta_{34} 3},e_{\e 4}]=0\label{eq_2_F_4}
\end{align}

\begin{lemma}\label{lem:f4zero}
If $C$ is a quasi-Cartan matrix coming from a $F_4$-cycle, then
the quotient of $\g_5(C)$ by the ideal generated by any of the relation \eqref{eq_1_F_4} or \eqref{eq_2_F_4} is the zero Lie algebra.
\end{lemma}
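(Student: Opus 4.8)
The plan is to mirror the proof of the preceding $B_3$-cycle lemma. I fix a single relation, say \eqref{eq_1_F_4} for one choice of sign $\e$, adjoin it to $\g_5(C)$, and show the quotient vanishes; relation \eqref{eq_2_F_4} (and the opposite sign) is handled symmetrically. Abbreviate the imposed relation as $[e_{-a4},e_{b3},e_{c2},e_{\e1}]=0$, where $a=\e\delta_{12}\delta_{23}\delta_{34}$, $b=\e\delta_{12}\delta_{23}$, and $c=-\e\delta_{12}$ are the signs read off from \eqref{eq_1_F_4}. The strategy is to peel off one generator at a time: at each stage I bracket the current vanishing relation with the sign-opposite of its outer generator, so that (R2) produces some $h_i$ and Lemma \ref{lem:h-prod} turns the resulting $[h_i,\dots]$ term into a nonzero scalar multiple of a bracket one factor shorter. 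In this way I aim to deduce successively that $[e_{b3},e_{c2},e_{\e1}]=0$, then $[e_{c2},e_{\e1}]=0$, and then $e_{\e1}=0$.

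For the first step I would bracket on the left with $e_{a4}$. By the derivation rule this equals $a\,[h_4,e_{b3},e_{c2},e_{\e1}]+[e_{-a4},I]$ with $I=[e_{a4},e_{b3},e_{c2},e_{\e1}]$. Here the sign bookkeeping, governed by $\delta_{12}\delta_{23}\delta_{34}\delta_{14}=-1$, is what makes things work: since $c_{24}=0$ and since the long/short signs force $b=a\delta_{34}$, Lemma \ref{lem:commute-es} shows $e_{a4}$ commutes with both $e_{c2}$ and $e_{b3}$. Hence $I$ collapses to $[e_{b3},e_{c2},[e_{a4},e_{\e1}]]$, which up to sign is exactly the $F_4$-cycle relation (R5) for the Hamiltonian path $3,2,1,4$ — an allowed path, since its endpoints $\{3,4\}$ are not both long. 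Thus $I=0$, the remainder drops out, and evaluating the $h_4$-term with Lemma \ref{lem:h-prod} (the scalar is nonzero) yields $[e_{b3},e_{c2},e_{\e1}]=0$.

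The next two reductions are routine because the generator I bracket with now commutes with the entire tail. Bracketing $[e_{b3},e_{c2},e_{\e1}]$ with $e_{-b3}$, the identities $c=-b\delta_{23}$ and $c_{31}=0$ give $[e_{-b3},e_{c2}]=[e_{-b3},e_{\e1}]=0$, so the remainder vanishes and the $h_3$-term forces $[e_{c2},e_{\e1}]=0$. Bracketing with $e_{-c2}$, the relation $[e_{-c2},e_{\e1}]=0$ kills the remainder and the $h_2$-term forces $e_{\e1}=0$. Finally I collapse the algebra: $e_{\e1}=0$ gives $h_1=0$ by (R2); then (R3) forces $e_{\pm2}=e_{\pm4}=0$ because $c_{12},c_{14}\neq0$; iterating (R2) and (R3) around the connected $F_4$-diagram gives $h_2=h_4=0$, hence $e_{\pm1}=e_{\pm3}=0$ and $h_3=0$. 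Every generator is therefore zero, so the quotient is the zero Lie algebra.

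The main obstacle is the first reduction. In the $B_3$ case a single transposition converted the forbidden $3$-cycle bracket into an allowed (R5) relation, but the forbidden $4$-cycle relation here is genuinely not of (R5) type and no single swap reaches one. The resolution is the observation that, after bracketing with $e_{a4}$, two of the three surviving factors commute with $e_{a4}$ — precisely because $2,4$ and $1,3$ are the non-adjacent pairs and because of the long/short sign constraint — so the inner bracket reorganizes into the allowed path $3,2,1,4$. Checking these commutations and the matching of signs against an imposed (R5) relation is the delicate point; the later steps are then purely mechanical.
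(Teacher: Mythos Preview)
Your proof is correct and follows essentially the same approach as the paper: bracket the imposed relation with $e_{a4}$, use the commutations $[e_{a4},e_{b3}]=[e_{a4},e_{c2}]=0$ to push $e_{a4}$ inward so that the remainder term becomes the allowed (R5) relation on the path $3,2,1,4$, extract $[e_{b3},e_{c2},e_{\e1}]=0$ from the nonzero $h_4$-coefficient, and then peel off $e_{-b3}$ and $e_{-c2}$ in turn to reach $e_{\e1}=0$. Your explicit collapse via (R2)/(R3) around the connected diagram is a slightly cleaner way to finish than the paper's terse ``similar computations yield $e_{\e i}=0$ for all $i$'', but the core argument is the same.
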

\begin{proof}
Once again, we show that if we impose one of the relations from \eqref{eq_1_F_4} to $\g_5(C)$ we get the zero Lie algebra, as the other check is similar.

We have
\begin{align*}
0 &=  [e_{\e\delta_{12}\delta_{23}\delta_{34}4},e_{-\e\delta_{12}\delta_{23}\delta_{34} 4},e_{\e\delta_{12}\delta_{23} 3},e_{-\e \delta_{12} 2},e_{\e 1}] \\
&=  \e\delta_{12}\delta_{23}\delta_{34}[h_4,e_{\e\delta_{12}\delta_{23} 3},e_{-\e \delta_{12} 2},e_{\e 1}]+[e_{-\e\delta_{12}\delta_{23}\delta_{34} 4},e_{\e\delta_{12}\delta_{23}\delta_{34} 4},e_{\e\delta_{12}\delta_{23} 3},e_{-\e \delta_{12} 2},e_{\e 1}] 
\end{align*}
We will show the second summand is zero.  By (R4) we have $[e_{\e\delta_{12}\delta_{23}\delta_{34} 4},e_{\e\delta_{12}\delta_{23} 3}]=0$ and $[e_{\e\delta_{12}\delta_{23}\delta_{34} 4},e_{-\e \delta_{12} 2}]=0$.
Therefore
\begin{align*}
 [e_{\e\delta_{12}\delta_{23}\delta_{34} 4},e_{\e\delta_{12}\delta_{23} 3},e_{-\e \delta_{12} 2},e_{\e 1}]
 &=  [e_{\e\delta_{12}\delta_{23} 3},e_{\e\delta_{12}\delta_{23}\delta_{34} 4},e_{-\e \delta_{12} 2},e_{\e 1}] \\
 &=  [e_{\e\delta_{12}\delta_{23} 3},e_{-\e \delta_{12} 2},e_{\e\delta_{12}\delta_{23}\delta_{34} 4},e_{\e 1}]    \\
 &=  -[e_{\e\delta_{12}\delta_{23} 3},e_{-\e \delta_{12} 2},e_{\e 1},e_{\e\delta_{12}\delta_{23}\delta_{34} 4}]    \\
 & =0
\end{align*}
where the final equality is by (R5).  Therefore
\begin{align*}
0 &=  \e\delta_{12}\delta_{23}\delta_{34}[h_4,e_{\e\delta_{12}\delta_{23} 3},e_{-\e \delta_{12} 2},e_{\e 1}]\\
 & = \e\delta_{12}\delta_{23}\delta_{34}(2\e\delta_{12}\delta_{23}\delta_{34}+\e\delta_{14})[e_{\e\delta_{12}\delta_{23} 3},e_{-\e \delta_{12} 2},e_{\e 1}]
\end{align*}
so as $\e\delta_{12}\delta_{23}\delta_{34}(2\e\delta_{12}\delta_{23}\delta_{34}+\e\delta_{14})\neq 0$ we get $[e_{\e\delta_{12}\delta_{23} 3},e_{-\e \delta_{12} 2},e_{\e 1}]=0$. By applying $[e_{-\e\delta_{12}\delta_{23} 3},-]$ and using a similar strategy to above we get $[e_{-\e \delta_{12} 2},e_{\e 1}]=0$. Finally, applying $[e_{\e \delta_{12} 2},-]$ yields $e_{\e 1}=0$. Similar computations yield $e_{\e i}=0$ for all $i$, and therefore also $h_i=0$ by (R2).  Hence $\g_5(C)$ with an additional relation \eqref{eq_1_F_4} is the zero Lie algebra.
\end{proof}

%---

Now we want to show that, for any chordless cycle in a signed valued quiver of mutation Dynkin type, most of the (R5) relations are redundant.  If our cycle is of type $B_3$ or $F_4$ we need to take at most two of the (R5) relations; in other cases just one (R5) relation is enough.

First we present a lemma that holds for arbitrary Lie algebras, and will be useful in the following (see \cite[Lemma 4.2]{br}).
\begin{lemma}\label{lem_flip_jac_id}
Let $\mathfrak{g}$ be a Lie algebra, and $x_1,\ldots, x_t\in \mathfrak{g}$ such that $[x_i,x_j]=0$ for all $|i-j|\neq 1$. Then
\[
[x_t,\ldots, x_1]=(-1)^{t+1}[x_1,\ldots, x_t].
\]
\end{lemma}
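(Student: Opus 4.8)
The plan is to prove the identity by induction on $t$, with the Jacobi identity (in the form that each $\ad x$ is a derivation) and the commuting hypothesis as the only tools. The base cases are immediate: for $t=1$ both sides equal $x_1$ and $(-1)^{1+1}=1$, and for $t=2$ the claim $[x_2,x_1]=-[x_1,x_2]$ is just antisymmetry of the bracket. So I would assume the statement for strings of length $t-1$ and prove it for length $t\geq 3$.

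For the inductive step I would first peel off the outermost bracket, $[x_t,\ldots,x_1]=[x_t,[x_{t-1},\ldots,x_1]]$, and apply the inductive hypothesis to the inner bracket on the $t-1$ elements $x_1,\ldots,x_{t-1}$. This flips the inner string at the cost of a sign,
\[ [x_{t-1},\ldots,x_1]=(-1)^{(t-1)+1}[x_1,\ldots,x_{t-1}]=(-1)^{t}[x_1,\ldots,x_{t-1}], \]
so that $[x_t,\ldots,x_1]=(-1)^{t}[x_t,[x_1,\ldots,x_{t-1}]]$. The whole argument then reduces to the single identity
\[ [x_t,[x_1,\ldots,x_{t-1}]]=-[x_1,\ldots,x_t], \]
since substituting it gives $[x_t,\ldots,x_1]=(-1)^{t}(-1)[x_1,\ldots,x_t]=(-1)^{t+1}[x_1,\ldots,x_t]$, as required.

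To establish this key identity I would commute $x_t$ inward through the nested bracket. Because $[x_t,x_j]=0$ whenever $\abs{t-j}\neq 1$, and in particular for every $j\leq t-2$, the derivation property of $\ad x_t$ lets me move $x_t$ past $x_1,x_2,\ldots,x_{t-2}$ one index at a time, each step killing a term $[[x_t,x_j],\,\cdot\,]=0$. Concretely, I would run a short secondary induction proving, for $0\leq r\leq t-2$, that
\[ [x_t,[x_1,\ldots,x_{t-1}]]=[x_1,\ldots,x_r,[x_t,[x_{r+1},\ldots,x_{t-1}]]], \]
where the passage from $r$ to $r+1$ uses $[x_t,x_{r+1}]=0$ (valid since $r+1\leq t-2$ forces $\abs{t-(r+1)}\geq 2$). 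Specialising to $r=t-2$ lands $x_t$ directly in front of $x_{t-1}$, and antisymmetry $[x_t,x_{t-1}]=-[x_{t-1},x_t]$ at the innermost position produces the sign $-1$ and leaves exactly $-[x_1,\ldots,x_t]$.

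The main obstacle is the bookkeeping in this inward commutation: one must verify at each stage that the relevant commutator $[x_t,x_{r+1}]$ genuinely vanishes — which is precisely where the hypothesis $\abs{i-j}\neq 1$ is invoked for the non-adjacent pairs — so that no stray terms survive and the only nonzero contribution comes from the final antisymmetry step. Packaging this as the secondary induction above keeps the sign tracking and the vanishing conditions transparent and avoids having to manipulate a long nested bracket all at once.
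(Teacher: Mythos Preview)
Your proof is correct. The paper does not supply its own argument for this lemma but simply refers the reader to \cite[Lemma 4.2]{br}; your induction on $t$, using the derivation form of the Jacobi identity to slide $x_t$ past the non-adjacent $x_j$'s and picking up the final sign from antisymmetry at $[x_t,x_{t-1}]$, is exactly the standard proof one would expect and fills in the omitted details cleanly.
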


We start by analysing the simply laced case.
\begin{lemma}\label{lem_typeD_cycle_2rels}
Let $(Q,v)$ be a signed quiver of mutation Dynkin type with a chordless cycle $i_1\to i_2\to \ldots i_t\to i_1$ where every edge has weight 1 and $i_q\to i_{q+1}$ has sign $\delta_{q,q+1}$.
Let $\e_1\in\{\pm 1\}$ and quotient $\g_4(C)$ by the relation $[e_{\e_1 i_1},e_{\e_2 i_2},\ldots , e_{\e_t i_t}]=0$, where $\e_{q+1}=-\delta_{q,q+1}\e_q$.  Then
\[
[e_{\e_2 i_2},\ldots , e_{\e_t i_t},e_{\e_{t+1} i_1}]=0
\]
where $\e_{t+1}=-\delta_{t1}\e_{t}$.
\end{lemma}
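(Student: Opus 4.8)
The plan is to set $Y=[e_{\e_2 i_2},\ldots,e_{\e_t i_t}]$, so that the hypothesis reads $[e_{\e_1 i_1},Y]=0$ and the target is $Z:=[e_{\e_2 i_2},\ldots,e_{\e_t i_t},e_{\e_{t+1}i_1}]=0$. Since the cycle has weight-$1$ edges, Lemma \ref{lem:CQv} gives $c_{i_q i_{q+1}}=c_{i_{q+1}i_q}=\delta_{q,q+1}\in\{\pm1\}$, while chordlessness gives $c_{i_1 i_q}=0$ for $q\neq 2,t$. I would first do the sign bookkeeping: unwinding $\e_{q+1}=-\delta_{q,q+1}\e_q$ yields $\e_{t+1}=(-1)^t\big(\prod_q\delta_{q,q+1}\big)\e_1$, and since $(Q,v)$ is mutation Dynkin its chordless cycles are not dangerous (Theorem \ref{thm:mutD-fromspec}), so the number of positive arrows is odd and $\prod_q\delta_{q,q+1}=(-1)^{t+1}$. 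Hence $\e_{t+1}=-\e_1$, so the generator appearing in $Z$ is $e_{-\e_1 i_1}$.

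The second step is to rewrite $Z$ as a single adjoint action of $e_{-\e_1 i_1}$ on $Y$. Using (R4) for the non-adjacent pairs and Lemma \ref{lem:commute-es} for the pair $\{i_1,i_2\}$ (here $\e_2=-\delta_{12}\e_1$ is exactly what forces the bracket to vanish), one checks $e_{-\e_1 i_1}$ commutes with each of $e_{\e_2 i_2},\ldots,e_{\e_{t-1}i_{t-1}}$. Passing $\ad(e_{-\e_1 i_1})$ through these generators via the Jacobi identity, it lands on the innermost generator $e_{\e_t i_t}$, giving $[e_{-\e_1 i_1},Y]=-Z$; so it suffices to prove $[e_{-\e_1 i_1},Y]=0$.

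I would then exploit the $\mathfrak{sl}_2$-triple $E=e_{\e_1 i_1}$, $F=e_{-\e_1 i_1}$, $H=\e_1 h_{i_1}$, which satisfies $[E,F]=H$, $[H,E]=2E$, $[H,F]=-2F$ by (R2) and (R3). The key computation is $[H,Y]=0$: by Lemma \ref{lem:h-prod} the $h_{i_1}$-weight of $Y$ is $\e_2 c_{i_1 i_2}+\e_t c_{i_1 i_t}=-\e_1+\e_1=0$, the two terms cancelling precisely because $\e_{t+1}=-\e_1$. Together with the hypothesis $EY=0$, this says $Y$ is a weight-$0$ highest-weight vector for this $\mathfrak{sl}_2$. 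Finally, invoking that $\ad F$ is locally nilpotent in $\g_4(C)$ — the standard consequence of (R4), since every generator is $\ad F$-nilpotent and such elements form a subalgebra, and this property descends to the quotient — there is a minimal $N\geq 1$ with $(\ad F)^N Y=0$. The identity $[E,F^N]=N F^{N-1}(H-N+1)$ then gives $0=E F^N Y=N(1-N)F^{N-1}Y$, and minimality of $N$ forces $N=1$, i.e.\ $FY=0$. Thus $Z=-FY=0$.

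The main obstacle is this last step: a purely formal manipulation only yields $[E,Z]=0$, which does not give $Z=0$ by itself (a Verma-type module would violate it). The genuine input is the local nilpotency of $\ad F$ coming from (R4); once that is available, the elementary $\mathfrak{sl}_2$ string argument above closes the gap. A secondary point to get right is the parity computation $\e_{t+1}=-\e_1$, which is exactly where the mutation Dynkin (non-dangerous cycle) hypothesis enters and which makes both the reduction $Z=-FY$ and the vanishing $[H,Y]=0$ work.
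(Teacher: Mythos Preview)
Your proof is correct and runs parallel to the paper's for most of its length: the sign computation $\e_{t+1}=-\e_1$ via the non-dangerous-cycle condition, the commutation of $e_{-\e_1 i_1}$ past $e_{\e_2 i_2},\ldots,e_{\e_{t-1}i_{t-1}}$ to identify $Z=-[F,Y]$, and the weight computation $[H,Y]=0$ all appear in the paper essentially as you wrote them. The divergence is only in the last step. The paper does not invoke local nilpotency; instead it uses the weight-$1$ hypothesis once more to observe directly that $(\ad F)^2 e_{\e_t i_t}=0$ by (R4), hence $(\ad F)^2 Y=0$ after moving $F$ inward, and then carries out your $\mathfrak{sl}_2$ string argument explicitly for $N=2$: applying $\ad F$ to the already-established relation $[E,FY]=0$ and expanding via Jacobi gives $2\,FY=0$. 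Your route replaces this concrete bound by the general fact that $\ad F$ is locally nilpotent (which does follow from (R4) as you say) and then runs the string argument for the minimal $N$. Both are valid; the paper's version is slightly more economical here since the weight-$1$ assumption already hands you $N\leq 2$, while your version would survive unchanged in situations where no such explicit bound is available.
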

%\begin{figure}
%    \centering
\[
\begin{tikzpicture}[scale=1,
  quivarrow/.style={black, -latex}] 
\begin{scope}[xscale=2]

\node (it) at (0,0) {$\circ$};
\node [left] at (it) {\small {$i_t$}}; 

\node (i2) at (1,0) {$\circ$};
\node [right] at (i2) {\small {$i_2$}};

\node (i1) at (0.5,0.6) {$\circ$};
\node[above] at (i1) {\small {$i_1$}};

\node (it1) at (0,-1) {$\circ$};
\node [left] at (it1) {\small {$i_{t-1}$}};

\node (j4) at (0.33,-1.6) {$\circ$};

\node (i3) at (1,-1) {$\circ$};
\node [right] at (i3) {\small {$i_3$}};

\node (jt2) at (0.67,-1.6) {$\circ$};

%\node (C) at (0.5,-0.3) {\small $C$};

\draw [quivarrow, shorten <=-1pt, shorten >=-1pt] (it) -- (i1) node[midway, above left]{\scriptsize \color{red}{$\delta_{t1}$}};
\draw [quivarrow, shorten <=-1pt, shorten >=-1pt] (i1) --(i2) node[midway, above right]{\scriptsize \color{red}{$\delta_{12}$}};
\draw [quivarrow, shorten <=-1pt, shorten >=-1pt] (i2) -- (i3) node[midway, right]{\scriptsize \color{red}{$\delta_{23}$}};
\draw [quivarrow, shorten <=-1pt, shorten >=-1pt] (i3) -- (jt2); 
\draw [quivarrow, shorten <=-1pt, shorten >=-1pt] (j4) -- (it1);
\draw [quivarrow, shorten <=-1pt, shorten >=-1pt] (it1) -- (it) node[midway, left]{\scriptsize \color{red}{$\delta_{t-1,t}$}};
\draw [dashed, shorten <=-1pt, shorten >=-1pt] (jt2) -- (j4);

\end{scope}

\end{tikzpicture}
%\end{figure}
\]
\begin{proof}
To simplify the notation we let $i_q=q$.  Let $C=C(Q,v)$ be the Cartan counterpart of $(Q,v)$ and note that, as every edge has weight $1$, we have $\delta_{q,q+1}=c_{q,q+1}$.

Suppose $[e_{\e_1 1},e_{\e_2 2},\ldots , e_{\e_t t}]=0$ and apply $[e_{-\e_1 1},-]$.  Using the Jacobi identity, (R2), and Lemma \ref{lem:h-prod}, we get
\begin{align*}
0 &=  [e_{-\e_1 1},e_{\e_1 1},e_{\e_2 2},\ldots , e_{\e_t t}] \\
 &=  -\e_1[h_1,e_{\e_2 2},\ldots , e_{\e_t t}]+ [e_{\e_1 1},e_{-\e_1 1},e_{\e_2 2},\ldots , e_{\e_t t}] \\
 &=  -\e_1\lambda[e_{\e_2 2},\ldots , e_{\e_t t}] + [e_{\e_1 1},e_{-\e_1 1},e_{\e_2 2},\ldots , e_{\e_t t}]
\end{align*}
where $\lambda=\sum_{j=2}^t \e_jc_{1j}$.  As our cycle is chordless, $c_{1j}=0$ for $j=3,\ldots, t-1$, and the weight 1 assumption implies $c_{12}=\delta_{12}$ and $c_{1t}=\delta_{t1}$, so
\[ \lambda = \e_2c_{12} + \e_tc_{1t} = \e_1(-\delta_{12}^2+(-1)^{t-1}\delta_{12}\delta_{23}\cdots\delta_{t-1,t}\delta_{t1})
\]
Now, since $Q$ is mutation Dynkin, the cycle $C$ is not dangerous, and therefore $\delta_{12}\ldots\delta_{t-1,t}\delta_{1t}=(-1)^{t+1}$, which implies $\lambda=0$.
So $[e_{\e_1 1},e_{-\e_1 1},e_{\e_2 2},\ldots , e_{\e_t t}] =0$.

We have $[e_{-\e_1 1},e_{\e_2 2}]=[e_{-\e_1 1},e_{-\e_1\delta_{12} 2}]$, so by Lemma \ref{lem:commute-es} this is zero.  Also, as our cycle is chordless, we have $[e_{-\e_11},e_{\e_jj}]=0$ for $j=3,\ldots, t-1$.  Therefore, repeatedly using the Jacobi identity gives
\[ [e_{\e_1 1},\ldots , e_{\e_{t-1} t-1},e_{-\e_1 1},e_{\e_t t}] =0. \]

Applying $[e_{-\e_1 1},-]$ once again, we get
\begin{align*}
0 &= [e_{-\e_1 1},e_{\e_1 1},\ldots , e_{\e_{t-1} t-1},e_{-\e_1 1},e_{\e_t t}]  \\
&= -\e_1[h_1,e_{\e_2 2},\ldots , e_{\e_{t-1} t-1},e_{-\e_1 1},e_{\e_t t}] + [e_{\e_1 1},e_{-\e_1 1},\ldots , e_{\e_{t-1} t-1},e_{-\e_1 1},e_{\e_t t}]  \\
&= -\e_1\mu [e_{\e_2 2},\ldots , e_{\e_{t-1} t-1},e_{-\e_1 1},e_{\e_t t}] + [e_{\e_1 1},\ldots , e_{\e_{t-1} t-1},e_{-\e_1 1},e_{-\e_1 1},e_{\e_t t}]  
\end{align*}
where $\mu=\e_2c_{12}-\e_1c_{11}+\e_tc_{1t}=\lambda-2\e_1=-2\e_1\neq0$.  
 Also, the second summand of the above equation is zero, since $[e_{-\e_1 1},e_{-\e_1 1},e_{\e_t t}]=0$ by (R4). As a consequence, by skew-symmetry we have
\[
[e_{\e_2 2},e_{\e_3 3},\ldots,e_{\e_{t-1} t-1}, e_{\e_t t},e_{-\e_1 1}]=0
\]
which proves the claim since 
$\e_{t+1}=(-1)^{t}\delta_{12}\delta_{23}\cdots\delta_{t-1,t}\delta_{t1}\e_1=-\e_1$.
\end{proof}

\begin{corollary}\label{cor_typeD_cycle_2rels}
Under the assumptions of Lemma \ref{lem_typeD_cycle_2rels}, all the relations of type (R5) associated to the cycle $i_1\to i_2\to \ldots i_t\to i_1$ are satisfied.
\end{corollary}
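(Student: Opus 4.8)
The plan is to generate all $4t$ relations of type (R5) for the cycle from the single relation $[e_{\e_1 i_1},\ldots,e_{\e_t i_t}]=0$ assumed in Lemma \ref{lem_typeD_cycle_2rels}, using two moves: rotating the base vertex (via Lemma \ref{lem_typeD_cycle_2rels} itself) to reach all clockwise relations, and reversing the order of the bracket (via Lemma \ref{lem_flip_jac_id}) to pass from clockwise to anticlockwise relations.

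First I would treat the clockwise relations. Every (R5) relation for this cycle carries the cyclic sign pattern $\e_{q+1}=-\delta_{q,q+1}\e_q$, so the conclusion of Lemma \ref{lem_typeD_cycle_2rels}, namely $[e_{\e_2 i_2},\ldots,e_{\e_t i_t},e_{\e_{t+1} i_1}]=0$, is exactly the clockwise relation with its base vertex shifted from $i_1$ to $i_2$. Iterating the lemma therefore rotates the relation one vertex at a time around the cycle. After $t$ steps the base returns to $i_1$, but with $\e_1$ replaced by $\e_{t+1}=(-1)^t(\prod_q \delta_{q,q+1})\e_1=-\e_1$, the last equality using the non-dangerous identity $\prod_q \delta_{q,q+1}=(-1)^{t+1}$. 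A further $t$ steps return to the original relation. Hence the single imposed relation forces all $2t$ clockwise relations: every base vertex and both choices of $\e_1$.

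Next I would pass to the anticlockwise relations by applying Lemma \ref{lem_flip_jac_id} to a clockwise bracket $[x_1,\ldots,x_t]$ with $x_q=e_{\e_q i_q}$. Its hypotheses require $[x_p,x_q]=0$ whenever $|p-q|\neq 1$. For interior non-adjacent pairs this holds because chordlessness gives $c_{i_p i_q}=0$, hence $[x_p,x_q]=0$ by (R4). The one delicate pair is $(x_1,x_t)$, since $i_1$ and $i_t$ are adjacent in the cycle; here the sign pattern saves us. Combining $\e_t=(-1)^{t-1}\delta_{12}\cdots\delta_{t-1,t}\e_1$ with the non-dangerous identity yields $\e_t=\delta_{t1}\e_1=\sgn(c_{i_1 i_t})\e_1$, so $[x_1,x_t]=0$ by Lemma \ref{lem:commute-es}. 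Lemma \ref{lem_flip_jac_id} then gives $[x_t,\ldots,x_1]=(-1)^{t+1}[x_1,\ldots,x_t]=0$, and checking the recurrence shows that the reversed sign sequence $\e_t,\e_{t-1},\ldots,\e_1$ is precisely the anticlockwise (R5) pattern based at $i_t$. Running over all $2t$ clockwise relations, reversal produces all $2t$ anticlockwise relations, so all $4t$ relations of Definition \ref{def:r5} hold.

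The main obstacle I expect is the sign bookkeeping, which appears twice and both times rests on the non-dangerous identity $\prod_q \delta_{q,q+1}=(-1)^{t+1}$: first to see that one full rotation flips $\e_1$ (so that both signs are reached), and second to see that the clockwise endpoint signs satisfy $\e_t=\sgn(c_{i_1 i_t})\e_1$, which is exactly what makes the first and last generators commute and thereby unlocks Lemma \ref{lem_flip_jac_id}. I would make this identity explicit at the outset and reuse it in both places.
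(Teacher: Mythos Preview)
Your proposal is correct and follows essentially the same approach as the paper's proof: rotate with Lemma \ref{lem_typeD_cycle_2rels} to obtain all $2t$ clockwise relations (using the non-dangerous identity $\prod_q \delta_{q,q+1}=(-1)^{t+1}$ to see that one full turn flips $\e_1$), then reverse via Lemma \ref{lem_flip_jac_id}, the key check being $\e_t=\delta_{t1}\e_1$ so that $e_{\e_1 i_1}$ and $e_{\e_t i_t}$ commute by Lemma \ref{lem:commute-es}. Your write-up is in fact slightly more explicit than the paper's in verifying the commutation hypotheses for Lemma \ref{lem_flip_jac_id} and in confirming that the reversed sign sequence matches the anticlockwise (R5) pattern.
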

\begin{proof}
Notice that $\e_{t+1}=-\delta_{t1}\e_t=-(-1)^{t+1}\delta_{12}\ldots\delta_{t-1,t}\delta_{t1}\e_1=-\e_1$, and therefore applying  Lemma \ref{lem_typeD_cycle_2rels} $2t$ times we get all the possible clockwise assignments of signs that satisfy the conditions of (R5).

The above computation also shows that ${\e_t}=\delta_{t1}{\e_1}$. Therefore, using (R4) and Lemma \ref{lem:commute-es} one sees that $e_{\e_1 i_1}$ commutes with $e_{\e_j i_j}$ for all $|i-j|\neq 1$. Thus Lemma \ref{lem_flip_jac_id} implies that
\[
[e_{\e_t i_t},\ldots , e_{\e_1 i_1}]=(-1)^{t+1}[e_{\e_1 i_1},\ldots, e_{\e_t i_t}]=0.
\]
Hence, using the same argument as above we also get all possible anticlockwise assignments of signs that satisfy the conditions of (R5).
\end{proof}

We now give the statement for $B_3$, $C_3$ and $F_4$-cycles. We consider signs on such cycles as follows.
\[ \xymatrix @=10pt{
 & i_1 & &&& & i_1 & &&&  i_1\ar@{=}[rr]^(0.55){\color{red}\delta_{12}} |{\SelectTips{cm}{}\object@{>}}  && i_2 \\
&\\
i_3\ar@{=}[uur]^{\color{red}\delta_{31}}  |{\SelectTips{cm}{}\object@{>}}
\ar@{-}[rr]_{\color{red}\delta_{23}} && i_2\ar@{=}[uul]_{\color{red}\delta_{12}} |{\SelectTips{cm}{}\object@{>}}
&&&i_3\ar@{=}[uur]^{\color{red}\delta_{31}} |{\SelectTips{cm}{}\object@{<}} && i_2\ar@{=}[uul]_{\color{red}\delta_{12}}  |{\SelectTips{cm}{}\object@{<}} \ar@{-}[ll]^{\color{red}\delta_{23}}   &&& i_4\ar@{=}[rr]_(0.55){\color{red}\delta_{34}} |{\SelectTips{cm}{}\object@{>}}  \ar@{-}[uu]^{\color{red}\delta_{41}} && i_3\ar@{-}[uu]_{\color{red}\delta_{23}}\\
&\text{$B_3$-cycle}&&&&&\text{$C_3$-cycle}&&&&&\text{$F_4$-cycle}
} \]

\begin{lemma}
Let $(Q,v)$ be a signed valued quiver of mutation Dynkin type. 
\begin{enumerate}
    \item Suppose $Q$ contains a $B_3$-cycle. Then the following two relations imply all the other (R5) relations
    \[
    [e_{\e i_1},e_{-\delta_{12}\e i_2},e_{\delta_{12}\delta_{23}\e i_3}]= 0 
    \]
    for all $\e\in\{\pm 1\}$.
    \item Suppose $(Q,v)$ contains a $C_3$-cycle. 
    Then any one (R5) relation for this cycle implies all other (R5) relations for this cycle.
    \item Suppose $Q$ contains an $F_4$-cycle.  Then the following two relations imply all the other (R5) relations
    \[
    [e_{\e i_2},e_{-\delta_{23}\e i_3},e_{\delta_{23}\delta_{34}\e i_4},e_{-\delta_{23}\delta_{34}\delta_{41}\e i_1}]=0
    \]
    for all $\e\in\{\pm 1\}$.
\end{enumerate}
\end{lemma}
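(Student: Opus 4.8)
The plan is to prove the three parts uniformly by pushing the cyclic-shift argument of Lemma~\ref{lem_typeD_cycle_2rels} and Corollary~\ref{cor_typeD_cycle_2rels} through the double edges. The basic move sends a relation $[e_{\e_1 i_1},\ldots,e_{\e_t i_t}]=0$ to the rotated relation $[e_{\e_2 i_2},\ldots,e_{\e_t i_t},e_{-\e_1 i_1}]=0$ by applying $\ad(e_{-\e_1 i_1})$ twice. The first application produces, through Lemma~\ref{lem:h-prod}, a multiple of $[e_{\e_2 i_2},\ldots,e_{\e_t i_t}]$ with coefficient $\lambda=\e_2 c_{i_1 i_2}+\e_t c_{i_1 i_t}$; using $\sgn(c_{ij})=\sgn(c_{ji})$ together with the product-of-signs condition of a non-dangerous cycle ($\delta_{12}\delta_{23}\delta_{13}=1$ for $B_3$ and $C_3$, and $\delta_{12}\delta_{23}\delta_{34}\delta_{14}=-1$ for $F_4$) one finds $\lambda=(\abs{c_{i_1 i_t}}-\abs{c_{i_1 i_2}})\e_1$. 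The second application produces $(\ad e_{-\e_1 i_1})^2(e_{\e_t i_t})$. So the rotation succeeds cleanly provided (A) $\abs{c_{i_1 i_2}}=\abs{c_{i_1 i_t}}$, forcing $\lambda=0$, and (B) $\abs{c_{i_1 i_t}}\le 1$, forcing the second term to vanish by (R4).

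A short vertex has only unit entries in its own row of $C$, so it satisfies both (A) and (B) and can always be peeled; this already gives clean rotations at every short vertex of each cycle. A long vertex satisfies (A) if and only if its two cycle-edges are of the same type, and then also (B) only if both are single. Consequently the two long vertices of a $B_3$-cycle and of an $F_4$-cycle, each carrying one double and one single edge, fail (A) and cannot be peeled, whereas the long vertex of a $C_3$-cycle, carrying two double edges, satisfies (A) but fails (B). The complementary ingredient is reversal: for each allowed relation the two endpoints commute---this is exactly where forbidding paths between the two long vertices is used, via Lemma~\ref{lem:commute-es} and the product-of-signs condition---so Lemma~\ref{lem_flip_jac_id} turns a relation into its reverse. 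I would then finish with a combinatorial bookkeeping of signs: in $B_3$ and $F_4$ the only moves are short-vertex rotations and reversals, which reach exactly half of the allowed relations from a single input, so two inputs (one for each $\e=\pm 1$) are genuinely needed; in $C_3$ the long vertex can additionally be peeled, and this extra rotation flips the sign it carries and connects the two halves, so one relation propagates to all twelve.

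The main obstacle is precisely this extra peel at the long vertex $i_1$ of a $C_3$-cycle, where (B) fails because $\abs{c_{i_1 i_t}}=2$ and so $(\ad e_{-\e_1 i_1})^2(e_{\e_t i_t})$ is not killed outright by (R4). A failure of (A) is fatal, but a failure of (B) is rescuable, and the plan is to do so inside the $\mathfrak{sl}_2$-triple $\langle e_{\e_1 i_1},h_{i_1},e_{-\e_1 i_1}\rangle$. The neighbouring generator $e_{-\e_2 i_2}$ commutes with $e_{-\e_1 i_1}$ by Lemma~\ref{lem:commute-es}, so $\ad(e_{-\e_1 i_1})$ and $\ad(e_{-\e_2 i_2})$ commute and the offending term rearranges into $(\ad e_{-\e_1 i_1})^2$ applied to the bracket $[e_{-\e_2 i_2},e_{\e_t i_t}]$, which a weight count with Lemma~\ref{lem:h-prod} shows has $h_{i_1}$-weight $0$. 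Two further lowerings would then land in $h_{i_1}$-weight $-4$; my plan is to exclude this by combining the $\alpha_{i_1}$-string bounds coming from (R4) with the same-parity (R5) relations already produced by the short-vertex rotations, rather than invoking a finite-dimensionality that is not yet available at this stage. Making this last vanishing precise is the delicate point, and I would isolate it as a short auxiliary computation before assembling the three cases.
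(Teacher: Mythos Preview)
Your overall framework is correct and coincides with the paper's: carry the cyclic-shift argument of Lemma~\ref{lem_typeD_cycle_2rels} across double edges, identify at which vertices the peel fails, and use Lemma~\ref{lem_flip_jac_id} for reversal. Your conditions (A) and (B) are exactly the obstruction the paper finds, and your treatment of the $B_3$- and $F_4$-cycles (only the short-vertex rotations go through, reversal halves the count, so two seed relations suffice) is essentially the paper's argument.

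The genuine gap is in your $C_3$ rescue. After the second application of $\ad(e_{-\e_1 i_1})$ you are left with
\[
2[e_{\e_2 i_2},e_{-\e_1 i_1},e_{\e_3 i_3}] + [e_{\e_1 i_1},e_{\e_2 i_2},e_{-\e_1 i_1},e_{-\e_1 i_1},e_{\e_3 i_3}]=0,
\]
and your plan to kill the second summand by a weight/string argument is not pinned down. There is also a sign slip: it is $e_{\e_2 i_2}$, not $e_{-\e_2 i_2}$, that commutes with $e_{-\e_1 i_1}$ (Lemma~\ref{lem:commute-es} with $\e_2=-\delta_{12}\e_1$), so your rearranged expression still has the outer $\ad(e_{\e_1 i_1})$ and does not reduce to $(\ad e_{-\e_1 i_1})^2$ acting on a weight-zero bracket. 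The paper bypasses all of this with a direct trick: apply $\ad(e_{-\e_1 i_1})$ a \emph{third} time. Now $(\ad e_{-\e_1 i_1})^3(e_{\e_3 i_3})=0$ by (R4) since $\abs{c_{i_1 i_3}}=2$, and the resulting identity yields
\[
[e_{\e_2 i_2},e_{-\e_1 i_1},e_{-\e_1 i_1},e_{\e_3 i_3}]=0.
\]
One then applies $\ad(e_{\e_1 i_1})$ once, commutes past $e_{\e_2 i_2}$, and uses $[h_{i_1},-]$ twice to obtain $[e_{-\e_1 i_1},e_{\e_2 i_2},e_{\e_3 i_3}]=0$, which is exactly the rotated relation $[e_{\e_2 i_2},e_{\e_3 i_3},e_{-\e_1 i_1}]=0$. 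No string-length or weight-space vanishing beyond (R4) is needed. Replacing your last paragraph by this three-down-one-up computation closes the argument.
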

\begin{proof}
Again, let $i_j=j$.  
For the $C_3$-cycle, the same argument as in Lemma \ref{lem_typeD_cycle_2rels} goes through in all cases except one: that $[e_{\e_1 1},e_{\e_2 2},e_{\e_3 3}]=0$ implies $[e_{\e_2 2},e_{\e_3 3},e_{\e_4 1}]=0$.

Therefore, suppose $[e_{\e_1 1},e_{\e_2 2},e_{\e_3 3}]=0$.  As in the proof of Lemma \ref{lem_typeD_cycle_2rels}, we get that $[e_{\e_1 1},e_{\e_2 2},e_{-\e_1 1},e_{\e_3 3}]=0$.  
Our Cartan matrix is
\[ C=\begin{pmatrix}
    2 & 2\delta_{12} & 2\delta_{31} \\
    \delta_{12} & 2 & \delta_{23} \\
    \delta_{31} & \delta_{23} & 2
\end{pmatrix}\] so,
applying $[e_{-\e_1 1},-]$ once more, we get
\begin{align*}
0 & = [e_{-\e_1 1},e_{\e_1 1},e_{\e_2 2},e_{-\e_1 1},e_{\e_3 3}] \\
0 & = -\e_1[h_1,e_{\e_2 2},e_{-\e_1 1},e_{\e_3 3}] + [e_{\e_1 1},e_{-\e_1 1},e_{\e_2 2},e_{-\e_1 1},e_{\e_3 3}] \\
0 & = -\e_1\mu[e_{\e_2 2},e_{-\e_1 1},e_{\e_3 3}] + [e_{\e_1 1},e_{\e_2 2},e_{-\e_1 1},e_{-\e_1 1},e_{\e_3 3}] 
\end{align*}
where $\mu=2\delta_{12}(-\delta_{12}\e)-2\e_1+2\delta_{13}\delta_{23}\delta_{12}\e_1=-2\e_1$. Notice that, in contrast to the simply laced case, relation (R4) does not imply that the second summand is 0. In any case, we have
\[ 2[e_{\e_2 2},e_{-\e_1 1},e_{\e_3 3}] + [e_{\e_1 1},e_{\e_2 2},e_{-\e_1 1},e_{-\e_1 1},e_{\e_3 3}] =0
\]
Applying $[e_{-\e_1 1},-]$ again, we get
\begin{align*}
0 & = 2[e_{-\e_1 1},e_{\e_2 2},e_{-\e_1 1},e_{\e_3 3}] + [e_{-\e_1 1},e_{\e_1 1},e_{\e_2 2},e_{-\e_1 1},e_{-\e_1 1},e_{\e_3 3}] \\
 & = 2[e_{\e_2 2},e_{-\e_1 1},e_{-\e_1 1},e_{\e_3 3}] -\e_1 [h_1,e_{\e_2 2},e_{-\e_1 1},e_{-\e_1 1},e_{\e_3 3}] + [e_{\e_1 1},e_{-\e_1 1},e_{\e_2 2},e_{-\e_1 1},e_{-\e_1 1},e_{\e_3 3}] \\
 & = (2-\e_1\nu)[e_{\e_2 2},e_{-\e_1 1},e_{-\e_1 1},e_{\e_3 3}] + [e_{\e_1 1},e_{\e_2 2},e_{-\e_1 1},e_{-\e_1 1},e_{-\e_1 1},e_{\e_3 3}] \\
 & = (2-\e_1\nu)[e_{\e_2 2},e_{-\e_1 1},e_{-\e_1 1},e_{\e_3 3}] 
\end{align*}
where the last identity follows from (R4), since $[e_{-\e_1 1},e_{-\e_1 1},e_{-\e_1 1},e_{\e_3 3}]=0$. Here 
\[\nu=\e_2c_{12}-2\e_1c_{11}+\e_3c_{13}=2\e_2\delta_{12}-4\e_1+2\e_3\delta_{31}
=-2\e_1-4\e_1+2\e_1\delta_{12}\delta_{23}\delta_{31}=-4\e_1
\]
so $2-\e_1\nu=6\neq0$, and therefore we get 
\[
[e_{\e_2 2},e_{-\e_1 1},e_{-\e_1 1},e_{\e_3 3}] =0.
\]
Finally, if we apply $[e_{\e_1 1},-]$ we get
\begin{align*}
0 &= [e_{\e_1 1},e_{\e_2 2},e_{-\e_1 1},e_{-\e_1 1},e_{\e_3 3}] \\
&= [e_{\e_1 1},e_{-\e_1 1},e_{-\e_1 1},e_{\e_2 2},e_{\e_3 3}] \\
&= \e_1[h_1,e_{-\e_1 1},e_{\e_2 2},e_{\e_3 3}] + [e_{-\e_1 1},e_{\e_1 1},e_{-\e_1 1},e_{\e_2 2},e_{\e_3 3}] \\
&= \e_1\lambda_1[e_{-\e_1 1},e_{\e_2 2},e_{\e_3 3}] + \e_1[e_{-\e_1 1},h_1,e_{\e_2 2},e_{\e_3 3}] + [e_{-\e_1 1},e_{-\e_1 1},e_{\e_1 1},e_{\e_2 2},e_{\e_3 3}] \\
&= \e_1\lambda_1[e_{-\e_1 1},e_{\e_2 2},e_{\e_3 3}] + \e_1\lambda_2[e_{-\e_1 1},e_{\e_2 2},e_{\e_3 3}] + [e_{-\e_1 1},e_{-\e_1 1},e_{\e_2 2},e_{\e_1 1},e_{\e_3 3}].
\end{align*}
Now, $e_{\e_1 1}$ and $e_{\e_3 3}$ commute by (R4), and therefore the last term is zero. Furthermore, direct computation shows that $\lambda_1=-\e_1c_{11}+\e_2c_{12}+\e_3c_{13}=-2\e_1(\delta_{12}\delta_{23}-\delta_{12}-1)\neq0$ and 
$\lambda_2=\e_2c_{12}+\e_3c_{13}=2\e_2(\delta_{12}-\delta_{31}\delta_{23})=0$.  Therefore
\[ 0=[e_{-\e_1 1},e_{\e_2 2},e_{\e_3 3}] = [e_{\e_2 2},e_{-\e_1 1},e_{\e_3 3}] = -[e_{\e_2 2},e_{\e_3 3},e_{-\e_1 1}]\]
and, as $\e_4=-\delta_{31}\e_3=-\e_1$, we have the claim.

In particular, for type $C_3$-cycles, a result that is analogous to Corollary \ref{cor_typeD_cycle_2rels} follows, and therefore we just need one relation from (R5) to get all the others.

For $B_3$-cycles, Definition \ref{def:r5} specifies that we should introduce 8 relations. However, the argument of Lemma \ref{lem_typeD_cycle_2rels} works in all cases but for 
\[
[e_{\e i_2},e_{-\delta_{23}\e i_3},e_{\delta_{23}\delta_{31}\e i_1}] \implies [e_{-\delta_{23}\e i_3},e_{\delta_{23}\delta_{31}\e i_1},e_{-\e i_2}].\] 
Thus we can reduce (R4) to 4 relations. But Lemma \ref{lem_flip_jac_id} allows us to get the anticlockwise relations in term of the clockwise ones, and therefore we just need the two relations from the statement.

For $F_4$-cycles, Definition \ref{def:r5} specifies that we should introduce 12 relations. However, the argument of Lemma \ref{lem_typeD_cycle_2rels} works in all cases but for 
\[[e_{\e i_4},e_{-\delta_{41}\e i_1},e_{\delta_{41}\delta_{12}\e i_2},e_{\delta_{41}\delta_{12}\delta_{13}\e i_3}] \implies [e_{-\delta_{41}\e i_1},e_{\delta_{41}\delta_{12}\e i_2},e_{\delta_{41}\delta_{12}\delta_{13}\e i_3},e_{-\e i_4}].\] 
Thus we can reduce (R4) to 4 relations. But Lemma \ref{lem_flip_jac_id} allows us to get the anticlockwise relations in term of the clockwise ones, and therefore we just need the two relations from the statement.
\end{proof}

\begin{remark}
Computer calculations with GAP \cite{gap} suggest that even in types $B$ and $F$, in the Lie algebra $\g_4(C)$
any one of our relations from (R5) implies all the others. 
\end{remark}

\subsection{Homomorphisms}

\begin{definition}
Let $(Q,v)$ be a signed valued quiver of mutation Dynkin type.  We define $\g(Q,v)=\g_5(C)$, where $C=C(Q,v)$ is the Cartan counterpart of $(Q,v)$.
\end{definition}
Fix $k\in Q_0$, and let $C'=C(\mu_k(Q,v))$ and $\g(\mu_k(Q,v))=\g_5(C')$, with generators of $\g_5(C')$ written as $h'_i$ and $e'_{\e i}$.  

In the following formulas we write $i\arr{\delta} k$, where $\delta\in\{\pm1\}$, to denote the sign of an arrow $i\to k$.  Write $C=(c_{ij})$.
\begin{theorem}\label{thm:isoLie}
There is an isomorphism of Lie algebras $\varphi_k:\g(\mu_k(Q,v))\arr\sim \g(Q,v)$ given by the formula
\begin{align*}
    e'_{\varepsilon i} & \mapsto \begin{cases}
        \dfrac{(-\e)^{c_{ki}}}{\abs{c_{ki}}!}\ad^{\abs{c_{ki}}}(e_{-\delta\varepsilon k})(e_{\varepsilon i}) & \text{if } i\arr{\delta} k \text{ in }Q; \\
        e_{\varepsilon i} & \text{otherwise;}
    \end{cases} \\
    h'_i & \mapsto \begin{cases}
        h_i - c_{ik}h_k & \text{if }i\arr{} k \text{ in }Q; \\
        h_i \quad \quad  \quad  \quad \quad \quad \quad \quad \quad \;\;
         & \text{otherwise}.
    \end{cases}
\end{align*}
Its inverse $\psi_k:\g(Q,v)\arr\sim \g(\mu_k(Q,v))$ is given by
\begin{align*}
    e_{\varepsilon i} & \mapsto \begin{cases}
        \dfrac{(-\e)^{c_{ki}}}{\abs{c_{ki}}!}\ad^{\abs{c_{ki}}}(e'_{\delta\varepsilon k})(e'_{\varepsilon i}) & \text{if } i\arr{\delta} k \text{ in }Q; \\
        e_{\varepsilon i} & \text{otherwise;}
    \end{cases} \\
    h_i & \mapsto \begin{cases}
        h'_i + c_{ik}h'_k & \text{if }i\arr{} k \text{ in }Q; \\
        h'_i \quad \quad  \quad  \quad \quad \quad \quad \quad  \;\;\;
         & \text{otherwise}.
    \end{cases}
\end{align*}
\end{theorem}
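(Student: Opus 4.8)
The plan is to show that each of $\varphi_k$ and $\psi_k$ extends to a homomorphism of Lie algebras by checking that the prescribed images of the generators satisfy the defining relations (R1)--(R5) of the source algebra, and then to verify that the two composites are the identity on generators. Since a homomorphism out of a presented Lie algebra is determined by its effect on generators, these verifications together prove that $\varphi_k$ is an isomorphism with inverse $\psi_k$. Throughout I write $\delta$ for the sign of the relevant arrow $i\arr{}k$, and I use the mutation rule for Cartan entries from Lemma \ref{lem:mutateCartan}, in particular the identity $c'_{ki}=-c_{ki}$.

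The relations (R1)--(R3) are the easiest. Relation (R1) is immediate, because every $\varphi_k(h'_i)$ lies in the abelian subalgebra spanned by the $h_j$. For (R3) I would compute the $h_i$-weight of the iterated bracket $\ad^{\abs{c_{ki}}}(e_{-\delta\e k})(e_{\e i})$ using Lemma \ref{lem:h-prod}; the resulting eigenvalue is exactly $\e c'_{ij}$ once one substitutes the mutated Cartan entry from Lemma \ref{lem:mutateCartan}, which is precisely (R3). Relation (R2) is the first genuinely computational check: for $i\arr{}k$ the bracket $[\varphi_k(e'_i),\varphi_k(e'_{-i})]$ is computed inside the rank-two subalgebra generated by $e_{\pm k}$ and $e_{\pm i}$, and a short $\mathfrak{sl}_2$-triple calculation returns $h_i-c_{ik}h_k=\varphi_k(h'_i)$.

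The substantive work is in (R4) and (R5). For the Serre relations (R4) I would localise to the subalgebra generated by the root vectors indexed by $\{i,j,k\}$; since the images of the generators are the root vectors produced by the reflection $s_k$ (compare Definition \ref{def:mutate-roots} and Theorem \ref{thm:root-sys-iso}), the vanishing $(\ad\varphi_k(e'_{\e i}))^{-M'+1}(\varphi_k(e'_{\delta j}))=0$ follows from $\mathfrak{sl}_2$-representation theory, with the shifted exponent $-M'+1$ matching the reflected weight. For the cycle relations (R5), I would first invoke the reduction results of the previous subsection (Corollary \ref{cor_typeD_cycle_2rels} and the lemma treating the $B_3$-, $C_3$- and $F_4$-cycles) so that only one or two relations per chordless cycle need to be checked, and then track how the chordless cycles of $Q'$ arise from cycles or paths of $Q$ under mutation, using the classification of $3$-cycles in Lemma \ref{lem_oriented_cycles_values_3_cycles} together with Lemma \ref{lem:mutateCartan}.

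Finally, to see that $\varphi_k$ and $\psi_k$ are mutually inverse, I would evaluate the composites on generators. On the $h$-generators this is a direct linear calculation using $c'_{ik}=-c_{ik}$. On the $e$-generators the composite $\psi_k\varphi_k(e'_{\e i})$ becomes, for $i\arr{}k$, a scalar multiple of $\ad^{\abs{c_{ki}}}(e'_{-\delta\e k})\ad^{\abs{c_{ki}}}(e'_{\delta\e k})(e'_{\e i})$, and the $\mathfrak{sl}_2$ identity governing $\ad^m(f)\ad^m(e)$ on an extremal weight vector collapses this to $e'_{\e i}$ once the scalars $(-\e)^{2c_{ki}}/(\abs{c_{ki}}!)^2$ are accounted for; the other composite is analogous, as is the verification that $\psi_k$ respects the relations (the mirror of the argument for $\varphi_k$, with the roles of $Q$ and $Q'$ exchanged and the arrow $i\to k$ replaced by $k\to i$). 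The main obstacle is the pair (R4)--(R5): the Serre relations are the most delicate computationally because the generators map to nested adjoint brackets, while the non-simply-laced cycle relations require careful bookkeeping of signs and of the way cycles transform under $\mu_k$, especially keeping consistent with the fact, proved in Lemma \ref{lem:f4zero}, that only $12$ of the $16$ possible $F_4$-cycle relations may be imposed.
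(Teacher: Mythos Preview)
Your approach is genuinely different from the paper's. The paper does \emph{not} verify (R1)--(R5) directly for $\varphi_k$. Instead it factors $\varphi_k$ as a composition of elementary isomorphisms already established in \cite{br,pr}: for each vertex $r$ with an arrow $r\to k$ there is an operation $U_{kr}$ on quasi-Cartan matrices (built from the $T^{\sigma}_{sr}$ and $J_r$ of P\'erez--Rivera), and Propositions \ref{prop:pr-J}, \ref{prop:pr-T}, \ref{prop:pr-U} and Lemma \ref{lem:g2} give an isomorphism $\g_5(U_{kr}(C))\to\g_5(C)$ with the single-vertex version of your formula. Lemma \ref{lem:Cartan-agree} shows $C(\mu_k(Q,v))=\bigl(\prod_{i\to k}U_{ki}\bigr)C(Q,v)$, so composing these ready-made isomorphisms yields $\varphi_k$; no relation check is needed. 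The inverse is then verified on generators via the explicit identity of Lemma \ref{lem_semisimpl_kk} (essentially the $\mathfrak{sl}_2$ computation you allude to), case by case on $|c_{ki}|=1,2,3$. What this buys is that the hard work on (R4) and (R5) is outsourced to \cite{br,pr}; your approach would essentially redo that work.

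Your plan is in principle viable, but the sketch of (R4) has a gap as written. You justify the Serre relations by saying the images are ``the root vectors produced by the reflection $s_k$'' and then invoke $\mathfrak{sl}_2$-theory, citing Theorem \ref{thm:root-sys-iso}. But Theorem \ref{thm:root-sys-iso} concerns the abstract root system $\Phi_B$, not root spaces in $\g(Q,v)$; at this point in the paper we do not yet know that $\g(Q,v)$ is simple or has a root space decomposition indexed by $\Phi_B$ --- that is Corollary \ref{cor:presDynk} and Theorem \ref{thm:rootspacedecomp}, both of which are deduced \emph{from} Theorem \ref{thm:isoLie}. So the appeal to ``root vectors'' is circular. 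To make your route honest you would need to localise to the subalgebra generated by $e_{\pm i},e_{\pm j},e_{\pm k}$, identify it explicitly as (a quotient of) a small-rank $\g_5$, and compute the iterated brackets by hand; this is exactly the sort of computation carried out in \cite{br,pr} for $T_{sr}$ and $J_r$, which is why the paper simply cites those results instead.
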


The rest of this section is devoted to proving Theorem \ref{thm:isoLie}.  Our proof will rely on the isomorphisms of Lie algebras given in \cite{br} (for simply laced types) and \cite{pr} (for non-symmetric Cartan matrices).  

We recall some notation from \cite{pr}.  We have elementary matrices 
\[ E^\sigma_{sr}=I+\sigma e_se_r^T\] 
where $I$ is the $n\times n$ identity matrix, $\sigma\in\Z$, $e_r$ is the $r$th basis vector of the space of column vectors $\R^n$, and $e_r^T$ is its transpose.  Write $I_r=E_{rr}^{-2}$, so that conjugating by $I_r$ negates the $r$th row and column.  If $C$ is a quasi-Cartan matrix with symmetrizer $D$ and $s\neq r$, let 
\[ T^\sigma_{sr}(C)=D^{-1}E^\sigma_{rs}DCE^\sigma_{sr} \;\; \text{ and } \;\; J_r(C)=D^{-1}I_rDCI_r.\]  
Note that $T^0_{sr}(C)=C$ and $J_r$ is self-inverse.

We now state the fundamental results of Barot-Rivera and P\'erez-Rivera in terms of our notation and conventions.
\begin{proposition}[Barot-Rivera, P\'erez-Rivera]\label{prop:pr-J}
For any positive quasi-Cartan matrix $C$ and $1\leq r\leq n$, there is an isomorphism of Lie algebras $\g_5(J_r(C))\to \g_5(C)$ sending
\[ e'_{\e i}\mapsto\begin{cases}
e_{\e i} &\text{if }i\neq r; \\
-e_{-\e r} &\text{if }i=r;
\end{cases}
\;\;\;\;
h'_i\mapsto\begin{cases}
h_i &\text{if }i\neq r; \\
-h_r &\text{if }i=r.
\end{cases}
\]
\end{proposition}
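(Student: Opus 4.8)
The plan is to recognize first that $J_r(C)$ simplifies dramatically: since $D$ and $I_r$ are both diagonal they commute, so $J_r(C)=D^{-1}I_rDCI_r=I_rCI_r$, which is simply $C$ with its $r$th row and column negated off the diagonal (the entry $c_{rr}=2$ is unchanged). As $I_r$ is an involution and $I_r(DC)I_r$ is congruent to the symmetric positive-definite matrix $DC$, the matrix $J_r(C)$ is again a positive quasi-Cartan matrix with the same symmetrizer $D$, so $\g_5(J_r(C))$ is defined and the map makes sense. Writing $\varphi$ for the proposed map, I would prove it is a homomorphism by checking that the image of each defining relation of $\g_5(J_r(C))$ holds in $\g_5(C)$; since $J_r$ is self-inverse, the same recipe (with $C$ and $J_r(C)$ interchanged) provides a two-sided inverse that is a homomorphism by the identical argument, so well-definedness in one direction upgrades to an isomorphism.

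Writing $c'_{ij}$ for the entries of $J_r(C)$, the only sign changes are $c'_{rj}=-c_{rj}$ and $c'_{ir}=-c_{ir}$ for $i,j\neq r$. Relations (R1)--(R3) are direct sign bookkeeping: for instance in (R3) the four cases (whether $i,j$ equal $r$) each reconcile because the factor $c'_{ij}$ flips exactly when an odd number of the indices equals $r$, matching the sign picked up by replacing $e'_{\e r}$ with $-e_{-\e r}$ and $h'_r$ with $-h_r$. Relation (R2) is similarly immediate, with the double sign in $[-e_{-r},-e_r]=-h_r$ matching $\varphi(h'_r)$.

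The conceptual heart is (R4). The image of $(\ad e'_{\e i})^{-M+1}(e'_{\delta j})=0$ becomes, up to a harmless overall sign, $(\ad e_{\e' i})^{-M+1}(e_{\delta' j})=0$, where the index signs $\e',\delta'$ are flipped precisely at the $r$-coordinate. I would observe that both the flip of the index sign at $r$ and the flip $c\to c'$ on the $r$-row and $r$-column negate the quantity $\e\delta c_{ij}$ in lockstep, so the integer $M=\min\{0,\e'\delta' c_{ij}\}$ computed in $\g_5(C)$ agrees with the $M$ defining the $\g_5(J_r(C))$-relation. Hence the image is literally an instance of (R4) in $\g_5(C)$ and vanishes.

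The main obstacle is (R5), and here the bookkeeping must be done carefully. For a chordless cycle $i_1,\ldots,i_t$, if $r=i_p$ lies on the cycle then applying $\varphi$ turns the relation into $-[e_{\e_1 i_1},\ldots,e_{-\e_p r},\ldots,e_{\e_t i_t}]$. I would set $\eta_q=\e_q$ for $q\neq p$ and $\eta_p=-\e_p$ and check that $\eta_{q+1}=-\sgn(c_{i_q,i_{q+1}})\eta_q$ holds for every $q$: away from $p$ this is immediate, while at the two edges incident to $r$ the flip $\sgn(c')=-\sgn(c)$ is exactly cancelled by the flip $\e_p\to\eta_p$, so the image is a genuine (R5) relation of $\g_5(C)$. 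Because the sign change at $r$ preserves all weights $\abs{c_{ij}}$, the unsigned diagram---and hence the long/short data and the list of admissible permutations $\sigma$ excluded in the $B_3$-, $C_3$-, and $F_4$-cycles---is untouched, so the same index-relabelling argument shows each imposed higher-weight relation maps to an imposed relation of the same type. This weight-invariance is what lets the delicate subset of relations in Definition \ref{def:r5} be matched up on the two sides, and it is the step I would be most careful to write out in full.
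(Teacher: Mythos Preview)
Your proof is correct and takes a genuinely different route from the paper's. The paper does not verify the relations at all: it simply quotes the corresponding isomorphism $\widehat{\g}_5(J_r(C))\to\widehat{\g}_5(C)$ from \cite[Proposition 2.6]{br} and \cite[Proposition 2.12]{pr}, and then conjugates by the translation isomorphism $\widehat{\g}_5\cong\g_5$ of Lemma~\ref{lem:iso_PR_GM} to obtain the stated formulas. Your argument, by contrast, is self-contained: you compute $J_r(C)=I_rCI_r$ directly, and then check (R1)--(R5) by hand, the key observation being that every sign flip introduced by $\varphi$ on the generators is exactly matched by a sign flip in the corresponding entry of $C$ versus $C'$. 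This buys independence from the cited literature and in particular sidesteps the issue, flagged in the paper's own remark after Lemma~\ref{lem:iso_PR_GM}, that the $F_4$-cycle relations in \cite{pr} are not quite right; your direct check of (R5) works uniformly because the weights (hence the long/short pattern and the list of excluded relations) are preserved by $J_r$. The paper's route is shorter on the page but outsources the actual work; yours is the more elementary and more robust argument.
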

\begin{proof} 
By \cite[Proposition 2.12]{pr}, following \cite[Proposition 2.6]{br} in simply-laced types, there is an isomorphism $\widehat \g_5(J_r(C))\to \widehat \g_5(C)$ sending $\hat e'_{\e r}$ to $\hat e_{-\e r}$, $\hat h'_{r}$ to $-\hat h_{r}$, and for $i\neq r$, $\hat e'_{\e i}\mapsto \hat e_{\e i}$ and $\hat h'_{i}\mapsto \hat h_{i}$.  We compose this with isomorphisms from Lemma \ref{lem:iso_PR_GM} to get the isomorphisms 
$\g_5(J_r(C))\to \widehat \g_5(J_r(C))\to \widehat \g_5(C)\to \g_5(C)$
in the given formula.
\end{proof}

\begin{proposition}[Barot-Rivera, P\'erez-Rivera]\label{prop:pr-T}
For any positive quasi-Cartan matrix $C$ 
and any $1\leq r,s\leq n$ with $r\neq s$, if $c_{sr}=-1$ or $-2$ then there is an isomorphism of Lie algebras $\g_5(T_{sr}^{-c_{sr}}(C))\to \g_5(C)$ sending
\[ e'_{\e i}\mapsto\begin{cases}
e_{\e i} &\text{if }i\neq r; \\
\dfrac{(-\e)^{\abs{c_{sr}}}}{\abs{c_{sr}}} \ad^{\abs{c_{sr}}}( e_{\varepsilon s})( e_{\varepsilon r}) &\text{if }i=r;
\end{cases}
\;\;\;\;
h'_i\mapsto\begin{cases}
h_i &\text{if }i\neq r; \\
h_r- c_{rs} h_s &\text{if }i=r.
\end{cases}
\]
\end{proposition}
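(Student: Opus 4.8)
The plan is to follow the proof of Proposition~\ref{prop:pr-J} essentially verbatim: I would quote the matching isomorphism of Barot--Rivera and P\'erez-Rivera, stated for their Lie algebras $\widehat\g_5$, and then conjugate it by the isomorphisms of Lemma~\ref{lem:iso_PR_GM} to express it in our conventions. The transformation $T_{sr}^{-c_{sr}}$ (for $c_{sr}\in\{-1,-2\}$) is the non-symmetric analogue of a single matrix mutation, and \cite[Proposition 2.12]{pr}, following \cite[Proposition 2.6]{br} in simply-laced types, supplies an isomorphism $\widehat\g_5(T_{sr}^{-c_{sr}}(C))\arr\sim\widehat\g_5(C)$ which fixes $\hat e_{\e i}$ and $\hat h_i$ for $i\neq r$, sends $\hat h'_r$ to $\hat h_r-c_{rs}\hat h_s$, and sends $\hat e'_{\e r}$ to a scalar multiple of $\ad^{\abs{c_{sr}}}(\hat e_{\e s})(\hat e_{\e r})$. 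I would then form the composite
\[ \g_5(T_{sr}^{-c_{sr}}(C))\to\widehat\g_5(T_{sr}^{-c_{sr}}(C))\to\widehat\g_5(C)\to\g_5(C), \]
in which the first arrow is the inverse of the isomorphism of Lemma~\ref{lem:iso_PR_GM} for the source matrix and the last is that isomorphism for $C$.

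The remaining work is purely bookkeeping of signs through this composite, and the exact scalars in the P\'erez-Rivera map can be pinned down (as in Proposition~\ref{prop:pr-J}) simply by demanding that the composite reproduce the claimed formula. For $i\neq r$ the two factors $\e$ from $\hat e_{\e i}\mapsto\e e_{\e i}$ cancel and the two reversals $\hat h_i\mapsto -h_i$ cancel, so $e'_{\e i}\mapsto e_{\e i}$ and $h'_i\mapsto h_i$. On $h'_r$, the source reversal, the formula $\hat h'_r\mapsto\hat h_r-c_{rs}\hat h_s$, and the target reversals combine to give $h'_r\mapsto h_r-c_{rs}h_s$. For $e'_{\e r}$ the one point to watch is that, since Lemma~\ref{lem:iso_PR_GM} sends $\hat e_{\e i}\mapsto\e e_{\e i}$, an iterated bracket with $m=\abs{c_{sr}}$ entries of $\hat e_{\e s}$ and one of $\hat e_{\e r}$ scales by $\e^{m+1}$, so $\ad^{m}(\hat e_{\e s})(\hat e_{\e r})\mapsto\e^{m+1}\ad^{m}(e_{\e s})(e_{\e r})$; together with the single factor $\e$ from the source copy of the Lemma, and using $\e^2=1$, this turns the $\e$-independent scalar $(-1)^{\abs{c_{sr}}}/\abs{c_{sr}}$ of \cite{pr} into the stated coefficient $(-\e)^{\abs{c_{sr}}}/\abs{c_{sr}}$ (here $\abs{c_{sr}}!=\abs{c_{sr}}$ since $\abs{c_{sr}}\le 2$, matching the factorial normalisation of Theorem~\ref{thm:isoLie}).

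The step that will require genuine care, as opposed to routine sign-chasing, is the $F_4$ case. As noted in the Remark after Definition~\ref{def:r5}, the isomorphism of Lemma~\ref{lem:iso_PR_GM} extends to $\widehat\g_5(C)\arr\sim\g_5(C)$ only for positive quasi-Cartan matrices not of mutation type $F_4$, because P\'erez-Rivera impose all sixteen relations on an $F_4$-cycle, which by Lemma~\ref{lem:f4zero} collapses their algebra, whereas our $\g_5$ imposes only twelve. Thus when $C$ or $T_{sr}^{-c_{sr}}(C)$ contains an $F_4$-cycle I cannot cite \cite{pr} as a black box; instead I would either use the corrected version of their result (with the twelve-relation (R5), as we advocate in that Remark) or verify directly that the displayed map respects (R1)--(R5). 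The hard part of any such direct verification is checking (R4) and (R5) on the image of $e'_{\e r}$, where the adjoint power $\ad^{\abs{c_{sr}}}(e_{\e s})(e_{\e r})$ meets the higher-weight cycle relations; this is exactly the place where I expect to rely on our reduction of the (R5) relations together with Lemmas~\ref{lem:commute-es} and~\ref{lem:h-prod}.
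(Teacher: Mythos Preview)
Your approach is essentially the paper's: quote the Barot--Rivera/P\'erez--Rivera isomorphism for $\widehat\g_5$ and conjugate by Lemma~\ref{lem:iso_PR_GM}, then chase the signs. Two small corrections. First, you cite \cite[Proposition~2.12]{pr} and \cite[Proposition~2.6]{br}, but those are the references for the $J_r$ case (Proposition~\ref{prop:pr-J}); for $T_{sr}^{-c_{sr}}$ the paper cites \cite[Proposition~2.14]{pr} and \cite[Proposition~2.8]{br}, with the explicit formulas taken from equations~(4.2) and~(4.3) of \cite{pr}. Second, the paper also records an intermediate step you skip: it observes that when $c_{sr}=-2$ one necessarily has $c_{rs}=-1$ (by the Dynkin classification together with Lemma~\ref{lem_oriented_cycles_values_3_cycles}), and uses this to unify the two cases of the \cite{pr} formula into a single expression before conjugating.

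Your extended discussion of the $F_4$ case is a valid concern, but the paper's own proof simply cites \cite{pr} as a black box here and does not treat $F_4$ separately; it is implicitly relying on the corrected (R5) relations, as indicated in the Remark after Lemma~\ref{lem:iso_PR_GM}. So your caution is well-founded but goes beyond what the paper's proof actually does.
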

\begin{proof} 
By \cite[Proposition 2.14]{pr}, following \cite[Proposition 2.8]{br} in simply-laced types, there is an isomorphism $\widehat \g_5(J_r(C))\to \widehat \g_5(C)$.  Note the typo in the statement of \cite[Proposition 2.14]{pr}: the superscript on $T$ should have a minus sign.  According to equations (4.2) and (4.3) of \cite{pr}, this isomorphism sends $\hat e'_{\e i}\mapsto \hat e_{\e i}$ and $\hat h'_i\mapsto \hat h_i$ for $i\neq r$, and for $i=r$ it sends:
\[ \hat e'_{\e i}\mapsto\begin{cases}
[\hat e_{\e r},\hat e_{\e s}] &\text{if }c_{sr}=-1; \\
\frac12 [\hat e_{\e s},\hat e_{\e s},\hat e_{\e r}]  &\text{if }c_{sr}=-2;
\end{cases}
\;\;\;\;
\hat h'_i\mapsto\begin{cases}
\hat h_r- c_{rs}\hat h_s, &\text{if }c_{sr}=-1; \\
\hat h_r+\hat h_s  &\text{if }c_{sr}=-2.
\end{cases}
\]
By the usual Dynkin classification, together with Lemma \ref{lem_oriented_cycles_values_3_cycles}, we have that $c_{sr}=-2$ implies $c_{rs}=-1$.  So we can unify the formulas for $i=r$ as follows:
\[ \hat e'_{\e r} \mapsto 
\frac{(-1)^{c_{sr}}}{\abs{c_{sr}}} \ad^{\abs{c_{sr}}}(\hat e_{\varepsilon s})(\hat e_{\varepsilon r});
\;\;\;\;
\hat h'_r\mapsto
\hat h_r- c_{rs}\hat h_s.\]
Now conjugate by the isomorphisms from Lemma \ref{lem:iso_PR_GM}.  For $e'_{\e i}$ we get $e'_{\e i}\mapsto \e \hat e'_{\e i}$, giving one $\e$, then from $\ad^{\abs{c_{sr}}}(\hat e_{\varepsilon s})(\hat e_{\varepsilon i})$ we pick up another $\e^{1+c_{sr}}$. 
This gives the formula in the statement.
\end{proof}

\begin{lemma}\label{lem:g2}
For any positive quasi-Cartan matrix $C$ 
and any $1\leq r,s\leq n$ with $r\neq s$, if $c_{sr}=-3$ then there is an isomorphism of Lie algebras $\g_5(T_{sr}^{3}(C))\to \g_5(C)$ sending
\[ e'_{\e i}\mapsto\begin{cases}
e_{\e i} &\text{if }i\neq r; \\
\dfrac{-\e}6 \ad^{3}( e_{\varepsilon s})( e_{\varepsilon r}) &\text{if }i=r;
\end{cases}
\;\;\;\;
h'_i\mapsto\begin{cases}
h_i &\text{if }i\neq r; \\
h_r+h_s &\text{if }i=r.
\end{cases}
\]
\end{lemma}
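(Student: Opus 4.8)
The plan is to follow the method behind Proposition \ref{prop:pr-T}, extending the Barot--Rivera and P\'erez-Rivera verification from $c_{sr}\in\{-1,-2\}$ to $c_{sr}=-3$. First I would reduce to the essential case. Symmetrizability of $C$ gives $d_sc_{sr}=d_rc_{rs}$, so $c_{rs}<0$, and positivity of the $\{s,r\}$-block $\left(\begin{smallmatrix}2&c_{sr}\\c_{rs}&2\end{smallmatrix}\right)$ then forces $c_{rs}=-1$ (the values $c_{rs}=-2,-3$ violate positive definiteness). Since a triple bond occurs only in type $G_2$, which has rank $2$ and contains no chordless cycle, the generators $e_{\pm i},h_i$ with $i\neq s,r$ are fixed by the formula and commute with $e_{\pm s},e_{\pm r}$ in the relevant relations (as $c_{i s}=c_{i r}=0$); so it suffices to treat $n=2$ with $C=\left(\begin{smallmatrix}2&-3\\-1&2\end{smallmatrix}\right)$. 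For this $C$ there are no cycles, so $\g_5(C)=\g_4(C)$, which by the remark after Definition \ref{def:r1234} is the classical Serre presentation of $\mathfrak{g}(G_2)$.

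Next I would record the target matrix. A direct computation from $T^\sigma_{sr}(C)=D^{-1}E^\sigma_{rs}DCE^\sigma_{sr}$ with $D=\diag(1,3)$ gives $C'=T^3_{sr}(C)=\left(\begin{smallmatrix}2&3\\1&2\end{smallmatrix}\right)$, and for this matrix one checks that $C'=J_s(C)$. Hence Proposition \ref{prop:pr-J} supplies an isomorphism $\g_5(C')=\g_5(J_s(C))\arr\sim\g_5(C)\isom\mathfrak{g}(G_2)$, so in particular $\g_5(C')$ is simple of dimension $14$; this is what I will use to conclude bijectivity without constructing an explicit inverse. The bulk of the work is then to show that the stated assignment respects the defining relations of $\g_5(C')$, so that it extends to a homomorphism $\varphi\colon\g_5(C')\to\g_5(C)$. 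Writing $f_\e:=\ad^{3}(e_{\e s})(e_{\e r})$, so that $\varphi(e'_{\e r})=\tfrac{-\e}{6}f_\e$ and $\varphi(e'_{\e s})=e_{\e s}$, the relations (R1) and (R3) are routine: via Lemma \ref{lem:h-prod} each (R3) check reduces to $[h_s,f_\e]=3\e f_\e$ and $[h_r,f_\e]=-\e f_\e$, which combine with $\varphi(h'_r)=h_r+h_s$ to match the entries of $C'$ exactly.

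The substantive checks are (R2) and (R4). Among the (R4) relations, the equal-sign pair relation is immediate: $[\varphi(e'_{\e s}),\varphi(e'_{\e r})]=\tfrac{-\e}{6}\ad^{4}(e_{\e s})(e_{\e r})=0$ is precisely the $G_2$ Serre relation $(\ad e_{\e s})^{4}(e_{\e r})=0$ in $\g_5(C)$. The genuinely nontrivial identities are $(\ad e_{\e s})^{4}(f_{-\e})=0$ and $\ad^{2}(f_\e)(e_{-\e s})=0$ coming from the opposite-sign (R4) relations, together with the normalization $[f_+,f_-]=-36(h_s+h_r)$ forced by (R2). I expect these $\mathfrak{g}(G_2)$ bracket computations to be the main obstacle, exactly as the analogous $\ad$- and $\ad^{2}$-computations were the crux of the $c_{sr}=-1,-2$ cases in \cite{br,pr}; I would settle them by running along the $\alpha_s$-root string, repeatedly using Lemma \ref{lem:commute-es} and Lemma \ref{lem:h-prod} to collapse terms. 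The constant $\tfrac16=\tfrac{1}{3!}$ and the coroot identity $h_\beta=h_s+h_r$ for the new simple root $\beta=3\alpha_s+\alpha_r=s_s(\alpha_r)$ are precisely what make (R2) hold. Finally, granting that $\varphi$ is a homomorphism, it is nonzero (for instance $\varphi(h'_s)=h_s\neq0$), so its kernel is a proper ideal of the simple algebra $\g_5(C')$ and hence is $0$; since $\dim\g_5(C')=\dim\g_5(C)=14$, $\varphi$ is an isomorphism. This uniformly extends Proposition \ref{prop:pr-T} to all $c_{sr}\in\{-1,-2,-3\}$.
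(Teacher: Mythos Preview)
Your proposal is correct and follows essentially the same route as the paper: reduce to the rank-two $G_2$ case (so $\g_5=\g_4$ is the classical Serre presentation), compute $C'=T^3_{sr}(C)=\left(\begin{smallmatrix}2&3\\1&2\end{smallmatrix}\right)$, and verify (R1)--(R4), with the only substantive computation being the (R2) relation $[f_+,f_-]=-36(h_s+h_r)$. The one minor difference is in concluding bijectivity: the paper simply writes down the inverse, whereas you observe that $C'=J_s(C)$ and invoke Proposition~\ref{prop:pr-J} together with simplicity and a dimension count---both arguments are short and valid.
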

\begin{proof}
By Lemma \ref{lem_oriented_cycles_values_3_cycles}, the weight $3$ cannot occur in a chordless cycle, so the only possibility is classical type $G_2$.  Up to relabelling, this means we must have
\[ C=\begin{pmatrix}2 &-3\\-1 &2\end{pmatrix} \;\; \text{ and } \;\;
T^3_{12}C=\begin{pmatrix}2 &3\\1 &2\end{pmatrix}.\]
Then our candidate isomorphism acts as follows:
\[ h_1'\mapsto h_1, \;\; h_2'\mapsto h_1+h_2,  \;\; e_1'\mapsto e_1,  \;\; e_2'\mapsto \frac{-1}6[e_1,e_1,e_1,e_2], \;\; e_{-1}'\mapsto e_{-1},  \;\; e_{-2}'\mapsto \frac{-1}6[e_{-1},e_{-1},e_{-1},e_{-2}].\]
The checks that this preserves relations (R1)-(R4) are straightforward, except for the relation $[e'_1,e'_{-1}]=h'_1$, but a careful calculation shows that
\[ [[e_1,e_1,e_1,e_2],[e_{-1},e_{-1},e_{-1},e_{-2}]]=3\left(c_{11}+c_{12}\right)\left(c_{11}+2c_{12}\right)
\left(3c_{21}h_1+c_{12}h_2\right) = -36(h_1+h_2). \]
Then the inverse is easy to write down directly.
\end{proof}

We will consider cases where $c_{sr}$ is positive, as well as negative, so we define:
\[ U_{sr}(C)=\begin{cases}
T_{sr}^{-c_{sr}}(C) &\text{if }c_{sr}\leq 0;\\
J_rT_{sr}^{c_{sr}}J_r(C) &\text{if }c_{sr}>0.
\end{cases} \]
In Lemma \ref{lem_form_U} we will see that we can interpret $U_{sr}$ in a more straightforward way just in terms of the transformation $T_{sr}^{-c_{sr}}$, regardless of whether $c_{sr}$ is positive or negative. For the moment we stick to the definition above because the setting of the proof of the next result requires $c_{sr}<0$.

\begin{proposition}\label{prop:pr-U}
For any positive quasi-Cartan matrix $C$ and any $1\leq r,s\leq n$ with $r\neq s$, there is an isomorphism of Lie algebras $\g_5(U_{sr}(C))\to \g_5(C)$ sending
\[ e'_{\e i}\mapsto\begin{cases}
e_{\e i} &\text{if }i\neq r; \\
\dfrac{(-\e)^{\abs{c_{sr}}}}{\abs{c_{sr}}!} \ad^{\abs{c_{sr}}}( e_{-\delta\varepsilon s})( e_{\varepsilon r}) &\text{if }i=r;
\end{cases}
\;\;\;\;
h'_i\mapsto\begin{cases}
h_{\e i} &\text{if }i\neq r; \\
h_r- c_{rs} h_s &\text{if }i=r,
\end{cases}
\]
where $\delta=\sgn(c_{sr})$.
\end{proposition}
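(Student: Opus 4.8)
The plan is to split into two cases according to the sign $\delta=\sgn(c_{sr})$, and in each case to build the required isomorphism by composing the isomorphisms already established in Proposition \ref{prop:pr-J}, Proposition \ref{prop:pr-T}, and Lemma \ref{lem:g2}. Throughout I would use that positivity of $C$ forces $\abs{c_{sr}}\leq 3$: the principal $2\times 2$ minor of the symmetrized matrix $DC$ is $d_sd_r(4-c_{sr}c_{rs})>0$, so $c_{sr}c_{rs}\in\{0,1,2,3\}$. Hence exactly one of Proposition \ref{prop:pr-T} (for $\abs{c_{sr}}\in\{1,2\}$) or Lemma \ref{lem:g2} (for $\abs{c_{sr}}=3$) applies, while $c_{sr}=0$ gives $U_{sr}(C)=C$ and the identity map (the formula degenerating to $e'_{\e i}\mapsto e_{\e i}$ since $\ad^0$ omits the $s$-generator).

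First suppose $c_{sr}\leq 0$, so $\delta=-1$ and $U_{sr}(C)=T_{sr}^{-c_{sr}}(C)$ by definition. In this case the statement is essentially a repackaging of Proposition \ref{prop:pr-T} and Lemma \ref{lem:g2}: the denominators $1,2,6$ there are exactly $\abs{c_{sr}}!$, and since $\delta=-1$ we have $e_{-\delta\e s}=e_{\e s}$, so the formulas of those results match the claimed one verbatim. I would simply record this identification, noting that the scalar $(-\e)^{\abs{c_{sr}}}$ and the $h$-part $h_r-c_{rs}h_s$ are already in the stated form.

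The substantive case is $c_{sr}>0$, where $\delta=+1$ and $U_{sr}(C)=J_rT_{sr}^{c_{sr}}J_r(C)$. Writing $C_1=J_r(C)$ and $C_2=T_{sr}^{c_{sr}}(C_1)$, I would first observe that conjugation by $J_r$ negates the $r$th row and column, so $(C_1)_{sr}=-c_{sr}<0$ and $(C_1)_{rs}=-c_{rs}$; thus Proposition \ref{prop:pr-T} (or Lemma \ref{lem:g2}) applies to $C_1$ and yields $\Theta\colon\g_5(C_2)\to\g_5(C_1)$. Composing with the two instances of the $J_r$-isomorphism from Proposition \ref{prop:pr-J},
\[ \g_5(U_{sr}(C))=\g_5(J_r(C_2))\xrightarrow{\Xi}\g_5(C_2)\xrightarrow{\Theta}\g_5(C_1)\xrightarrow{\Psi}\g_5(C), \]
gives the candidate isomorphism $\Psi\circ\Theta\circ\Xi$, and it remains to check it agrees with the stated formula on generators. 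For $i\neq r$ all three maps fix $e_{\e i}$, and the $h$-part reduces to linear bookkeeping using $(C_1)_{rs}=-c_{rs}$. For $i=r$ one traces $e'_{\e r}$: the map $\Xi$ sends it to $-e^{(2)}_{-\e r}$, then $\Theta$ produces $\ad^{\abs{c_{sr}}}$ of the $(-\e)$-generators, and finally $\Psi$ flips the single index-$r$ generator (both its coefficient and its subscript sign) while leaving the index-$s$ generators alone. The surviving sign $-\e$ on the $s$-generator is exactly the $-\delta\e$ of the statement, so the image has the shape $\ad^{\abs{c_{sr}}}(e_{-\e s})(e_{\e r})$ as claimed.

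The hard part will be precisely this sign and scalar bookkeeping in the positive case. Because each $J_r$ negates both the coefficient and the subscript-sign of the index-$r$ generators but does nothing to the index-$s$ generators, the net scalar assembled from $\Xi$ (a factor $-1$), from the $(-\e')^{\abs{c_{sr}}}$ in $\Theta$ applied at sign $\e'=-\e$, and from $\Psi$ (flipping the one $r$-generator) depends delicately on the parity of $\abs{c_{sr}}$. The even case $\abs{c_{sr}}=2$ is insensitive to this, but the odd cases $\abs{c_{sr}}=1,3$ are where I would be most careful to confirm that the collected factors really produce $(-\e)^{\abs{c_{sr}}}$ rather than $\e^{\abs{c_{sr}}}$; making the scalar come out exactly as stated is the crux of the verification, and I would treat the three surviving values $\abs{c_{sr}}\in\{1,2,3\}$ individually to be safe.
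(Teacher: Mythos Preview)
Your proposal is correct and follows essentially the same approach as the paper: split on the sign of $c_{sr}$, invoke Proposition~\ref{prop:pr-T}/Lemma~\ref{lem:g2} directly when $c_{sr}\leq 0$, and for $c_{sr}>0$ realise the map as the composite $\Psi\circ\Theta\circ\Xi$ of the two $J_r$-isomorphisms with the $T$-isomorphism applied to $C_1=J_r(C)$, then trace generators through. Your extra remark that positivity forces $\abs{c_{sr}}\leq 3$ (so that one of the cited results always applies) is a useful explicit justification that the paper leaves implicit, and your plan to verify the scalar separately for $\abs{c_{sr}}\in\{1,2,3\}$ is a sensible way to handle exactly the delicate sign bookkeeping you identify.
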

\begin{proof}
When $c_{sr}=0$ we have $U_{sr}(C)=C$ and we interpret our homomorphism as the identity map.
When $c_{sr}<0$ we have $\delta=-1$ and the formulas reduce to those in Proposition \ref{prop:pr-T} (or Lemma \ref{lem:g2}); this also works for $c_{sr}=0$.  When $c_{sr}>0$ we compose the isomorphisms from Propositions \ref{prop:pr-J}, \ref {prop:pr-T} and Lemma \ref{lem:g2}:
\[\g(U_{sr}(C))=
\g(J_rT_{sr}^{c_{sr}}J_r(C))\arrr{\ref{prop:pr-J}}
\g(T_{sr}^{c_{sr}}J_r(C))\arrr{\ref{prop:pr-T}}
\g(J_r(C))\arrr{\ref{prop:pr-J}}
\g(C)\]
Write $C^{(3)}=J_rT_{sr}^{c_{sr}}J_r(C)$, $C^{(2)}=T_{sr}^{c_{sr}}J_r(C)$, etc., using the same superscripts for the generators of the Lie algebras.  Then the isomorphism sends $h'_i=h^{(3)}_i\mapsto h_i$ for $i\neq r$, and 
\[h'_r=h^{(3)}_r\mapsto -h^{(2)}_r\mapsto -h^{(1)}_r+c^{(1)}_{rs}h^{(1)}_s\mapsto h_r+c^{(1)}_{rs}h_s=h_r-c_{rs}h_s\]
where $C^{(1)}=J_r(C)$ gives $c^{(1)}_{rs}=-c_{rs}$.  So the result for $h_i$ does not depend on $\delta$.

The isomorphism
 sends $e^{(3)}_{\e i}\mapsto e_{\e i}$ for $i\neq r$, and 
\[e'_{\e r}\mapsto -e^{(2)}_{-\e r}
\mapsto 
\dfrac{(-\e)^{|c^{(1)}_{sr}+1|}}{|c^{(1)}_{sr}|!} \ad^{|c^{(1)}_{sr}|}( e^{(1)}_{-\varepsilon s})( e^{(1)}_{-\varepsilon r})
\mapsto 
\dfrac{(-\e)^{\abs{c_{sr}}}}{\abs{c_{sr}}!} \ad^{\abs{c_{sr}}}( e_{-\varepsilon s})( e_{\varepsilon r})
\]
as $|c^{(1)}_{sr}|=\abs{c_{sr}}$.  So when $\delta=1$ we have an  $\ad^{\abs{c_{sr}}}( e_{-\varepsilon s})$; when $\delta=-1$ we have an  $\ad^{\abs{c_{sr}}}( e_{\varepsilon s})$.  The $-\delta$ in our formula covers both cases.
\end{proof}

\begin{lemma}\label{lem_form_U}
$U_{sr}(C)=T_{sr}^{-{c_{sr}}}(C)$.
\end{lemma}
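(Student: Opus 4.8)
The plan is to dispose of the two cases in the definition of $U_{sr}$ separately. When $c_{sr}\leq 0$ there is nothing to prove: the definition already reads $U_{sr}(C)=T_{sr}^{-c_{sr}}(C)$. So the entire content of the lemma lies in the case $c_{sr}>0$, where I must show that the threefold composite $J_rT_{sr}^{c_{sr}}J_r(C)$ collapses to the single transformation $T_{sr}^{-c_{sr}}(C)$.

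The first simplification I would make is to observe that $I_r$ is diagonal and hence commutes with the diagonal symmetrizer $D$. This lets the conjugating factors of $D$ in the definition of $J_r$ cancel, so that $J_r(C)=D^{-1}I_rDCI_r=I_rCI_r$; that is, $J_r$ is just honest conjugation by the involution $I_r$. The second, and key, observation is that conjugation by $I_r$ reverses the sign of the parameter in the relevant elementary matrices. Writing $\sigma=c_{sr}$ and using $I_re_s=e_s$ (since $s\neq r$), $I_re_r=-e_r$, $e_r^TI_r=-e_r^T$, and $I_r^2=I$, a one-line computation gives
\[ I_rE^\sigma_{sr}I_r=E^{-\sigma}_{sr} \quad\text{and}\quad I_rE^\sigma_{rs}I_r=E^{-\sigma}_{rs}. \]

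With these two facts in hand the proof becomes a bookkeeping exercise. I would substitute $J_r(C)=I_rCI_r$ to get $U_{sr}(C)=J_r\bigl(T_{sr}^{\sigma}(I_rCI_r)\bigr)=I_r\,D^{-1}E^\sigma_{rs}D\,I_rCI_r\,E^\sigma_{sr}\,I_r$, then slide the outer factors of $I_r$ inward past each copy of $D^{\pm1}$ (legitimate since they commute) and insert factors $I_r^2=I$ so as to assemble the combinations $I_rE^\sigma_{rs}I_r$ and $I_rE^\sigma_{sr}I_r$. Each such combination flips the sign of $\sigma$ by the displayed identity, while the intervening $I_rDI_r=D$ passes through untouched, leaving exactly $D^{-1}E^{-\sigma}_{rs}DCE^{-\sigma}_{sr}=T_{sr}^{-\sigma}(C)$, as required. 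The only real obstacle is keeping the conjugations in the correct order and verifying that the sign flips and the cancellations of $D$ land in the right places; beyond this careful tracking there is no conceptual difficulty.
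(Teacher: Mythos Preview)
Your proof is correct and is essentially the same as the paper's. Both arguments hinge on the identity $I_rE^\sigma_{sr}I_r=E^{-\sigma}_{sr}$ (equivalently $E^\sigma_{sr}I_r=I_rE^{-\sigma}_{sr}$), together with the fact that the diagonal matrix $I_r$ commutes with $D$; the paper packages this as the operator identity $T^\sigma_{sr}J_r=J_rT^{-\sigma}_{sr}$ and cites \cite[Lemma~2.4]{br}, whereas you unfold the same computation explicitly.
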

\begin{proof}
As noted in the proof of \cite[Lemma 2.4]{br}, it is easily verified that $E_{rs}^\sigma I_i=I_iE_{rs}^{-\sigma} \text{if }i\in\{r,s\}$, and $E_{rs}^\sigma I_i=I_iE_{rs}^{\sigma}$ otherwise.  From this, it follows from the definitions that 
\[
T_{rs}^\sigma J_i=\begin{cases}
J_iT_{rs}^{-\sigma} &\text{if }i\in\{r,s\};\\
J_iT_{rs}^{\sigma} &\text{if }i\notin\{r,s\}
\end{cases}
\]
and so we get $J_rT_{sr}^{-\sigma}J_r=T_{sr}^{\sigma}$, and the statement follows. 
\end{proof}

We get the next result by using the definition of $U_{sr}$ in terms of $E_{sr}^{\sigma}$.
\begin{lemma}
$U_{sr_1}U_{sr_2}=U_{sr_2}U_{sr_1}$.
\end{lemma}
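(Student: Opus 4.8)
The plan is to reduce everything to the uniform description $U_{sr}(C) = T^{-c_{sr}}_{sr}(C)$ provided by Lemma \ref{lem_form_U}, and then to exploit commutation relations among the elementary matrices $E^\sigma_{sr}$. I may assume $r_1 \neq r_2$, since otherwise the two operators coincide; note also that the definition of $T$ forces $s \neq r_1$ and $s \neq r_2$, so all three indices in play are distinct.

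The one genuine subtlety is that $U_{sr}$ is \emph{not} a linear operator on matrices: the exponent $-c_{sr}$ depends on the $(s,r)$-entry of the input. So before commuting the elementary matrices I would first check that the relevant exponents are left unchanged by the other transformation. Concretely, an entrywise computation of $T^\sigma_{sr_2}(C) = D^{-1}E^\sigma_{r_2 s}DCE^\sigma_{sr_2}$ shows that left multiplication by $E^\sigma_{r_2 s}$ alters only row $r_2$ and right multiplication by $E^\sigma_{sr_2}$ alters only column $r_2$. Since $s \neq r_2$ and $r_1 \neq r_2$, neither touches the $(s, r_1)$-entry, and the diagonal factors satisfy $(D^{-1})_{ss}(DC)_{sr_1}=c_{sr_1}$; hence $(U_{sr_2}(C))_{sr_1} = c_{sr_1}$. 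By symmetry $(U_{sr_1}(C))_{sr_2} = c_{sr_2}$, so the exponents appearing in $U_{sr_1}U_{sr_2}(C)$ and $U_{sr_2}U_{sr_1}(C)$ are the \emph{same} scalars $-c_{sr_1}$ and $-c_{sr_2}$ in both orders.

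With the exponents pinned down, it remains to prove the operator identity $T^{\sigma_1}_{sr_1}T^{\sigma_2}_{sr_2} = T^{\sigma_2}_{sr_2}T^{\sigma_1}_{sr_1}$ for fixed scalars $\sigma_1, \sigma_2$. Expanding each side and cancelling the interior $DD^{-1}$ pair gives
\[ T^{\sigma_1}_{sr_1}T^{\sigma_2}_{sr_2}(C) = D^{-1}E^{\sigma_1}_{r_1 s}E^{\sigma_2}_{r_2 s}\,DC\,E^{\sigma_2}_{sr_2}E^{\sigma_1}_{sr_1}, \]
and similarly for the other order. I would then verify the two commutations $E^{\sigma_1}_{r_1 s}E^{\sigma_2}_{r_2 s} = E^{\sigma_2}_{r_2 s}E^{\sigma_1}_{r_1 s}$ and $E^{\sigma_2}_{sr_2}E^{\sigma_1}_{sr_1} = E^{\sigma_1}_{sr_1}E^{\sigma_2}_{sr_2}$. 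Each follows from the identity $(I + \sigma ab^T)(I + \tau cd^T) = I + \sigma ab^T + \tau cd^T + \sigma\tau (b^T c)\, ad^T$ together with $e_s^T e_{r_2} = e_{r_2}^T e_s = 0$, which kills the cross term because the shared index $s$ differs from both $r_1$ and $r_2$. Substituting these into the displayed expression makes the two orders equal.

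The main obstacle I anticipate is not the matrix algebra, which is routine, but the bookkeeping in the second paragraph: one must take seriously that $U$ is nonlinear, so the commutation of the $T$'s alone does \emph{not} immediately give the result. It is precisely the invariance of the $(s, r_1)$- and $(s, r_2)$-entries, guaranteed by the common first index $s$, that makes the two exponents agree and allows the elementary-matrix commutation to close the argument.
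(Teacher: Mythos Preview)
Your proof is correct and follows the same approach the paper sketches: reduce to $T^{-c_{sr}}_{sr}$ via Lemma~\ref{lem_form_U} and then use commutation of the elementary matrices $E^\sigma_{sr}$. The paper's own argument is a one-line pointer to this method, whereas you have filled in the details; in particular, your observation that $U$ is nonlinear and that one must first verify $(U_{sr_2}(C))_{sr_1}=c_{sr_1}$ before commuting the $T$'s is a genuine point that the paper leaves implicit.
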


Now we use our signed valued quiver $(Q,v)$.  Given the previous lemma, we can unambiguously write the product in the following lemma.
\begin{lemma}\label{lem:Cartan-agree}
$C(\mu_k(Q,v))=\left( \prod_{i\to k} U_{ki}\right)C(Q,v).$
\end{lemma}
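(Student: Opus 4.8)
The plan is to compare the effect of the composite transformation $\prod_{i\to k}U_{ki}$ on $C=C(Q,v)$ entry-by-entry with the explicit description of $C(\mu_k(Q,v))$ given by Lemma \ref{lem:mutateCartan}. First I would record the effect of a single $U_{ki}$. By Lemma \ref{lem_form_U} we have $U_{ki}(C)=T^{-c_{ki}}_{ki}(C)=D^{-1}E^{-c_{ki}}_{ik}DCE^{-c_{ki}}_{ki}$, and since $DC$ is symmetric this transformation preserves symmetrizability with the same $D$ (because $D\,T^{\sigma}_{ki}(C)=E^{\sigma}_{ik}DCE^{\sigma}_{ki}$ is symmetric); so at every stage of the computation I may use $d_p c_{pq}=d_q c_{qp}$ for the current matrix. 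A direct multiplication of the three elementary matrices then shows that $U_{ki}$ alters only the entries in row $i$ and column $i$, sending
\[ c_{iq}\longmapsto c_{iq}-c_{ik}c_{kq}\quad(q\neq i),\qquad c_{pi}\longmapsto c_{pi}-c_{pk}c_{ki}\quad(p\neq i), \]
where the coefficient $c_{ik}$ in the first formula comes from $\tfrac{c_{ki}d_k}{d_i}=c_{ik}$; the diagonal entry $c_{ii}$ is fixed because the choice of exponent $-c_{ki}$ makes the quadratic term cancel.

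Next I would assemble the composite. Writing $N=\{i:i\to k\}$, the preceding commutativity lemma guarantees the product is independent of the order, so I may process the factors $U_{ki}$, $i\in N$, in any fixed order, and I would split the entries into cases matching Lemma \ref{lem:mutateCartan}. If $p,q\notin N$ then $(p,q)$ lies in no modified row or column (recall $k\notin N$, as there are no loops), so it is unchanged; this accounts for all entries involving $k$ except those at positions $(i,k),(k,i)$ for $i\in N$, which $U_{ki}$ sends to $-c_{ik},-c_{ki}$, exactly as the ``exactly one'' case predicts using $c_{kk}=2$. If $i\in N$ and $q\notin N\cup\{k\}$, only $U_{ki}$ touches $(i,q)$, and since $c_{kq}$ and $c_{ik}$ are untouched by the other factors the entry becomes $c_{iq}-c_{ik}c_{kq}$; symmetrically for $(p,i)$ with $p\notin N$. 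In all these cases the output agrees with Lemma \ref{lem:mutateCartan}.

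The crux is the case $i,j\in N$ with $i\neq j$, where Lemma \ref{lem:mutateCartan} predicts $c_{ij}$ is \emph{unchanged} even though both the row operation of $U_{ki}$ and the column operation of $U_{kj}$ act on position $(i,j)$. Here I would track the entry through both factors: applying $U_{ki}$ first replaces $c_{ij}$ by $c_{ij}-c_{ik}c_{kj}$ and simultaneously negates the entry $c_{ik}$, after which the column operation of $U_{kj}$, using this \emph{already negated} value of $c_{ik}$, adds $c_{ik}c_{kj}$ back, returning the entry to $c_{ij}$; the same bookkeeping in the opposite order yields the same value, reconfirming commutativity on this entry. The main obstacle is precisely this interplay: one must account carefully for which of $c_{ik}$, $c_{kj}$, and the entry itself have already been modified at each step, since the cancellation depends on using the post-$U_{ki}$ value of $c_{ik}$. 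Once every case is verified, the composite reproduces the formula of Lemma \ref{lem:mutateCartan} exactly, giving $C(\mu_k(Q,v))=\left(\prod_{i\to k}U_{ki}\right)C(Q,v)$.
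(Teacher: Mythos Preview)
Your proof is correct and takes essentially the same approach as the paper: both compute the effect of a single $U_{ki}$ on matrix entries, then compose over all $i\in N=\{i:i\to k\}$ and compare case-by-case with Lemma \ref{lem:mutateCartan}, the only nontrivial case being $i,j\in N$ where the negation of $c_{ik}$ by $U_{ki}$ exactly cancels the subtraction performed by $U_{kj}$. The only real difference is cosmetic: you derive the single-step formula $c_{iq}\mapsto c_{iq}-c_{ik}c_{kq}$, $c_{pi}\mapsto c_{pi}-c_{pk}c_{ki}$ directly from the elementary-matrix definition and the symmetrizer identity $d_kc_{ki}=d_ic_{ik}$, whereas the paper quotes (a unified form of) this formula from \cite[Equation (4.1)]{pr}.
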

\begin{proof}
We understand $C(\mu_k(Q,v))$ by Lemma \ref{lem:mutateCartan}, so we just need to understand $\left( \prod_{i\to k} U_{kk}\right)C(Q,v)$.  If $c_{sr}<0$ then a useful formula for $C'=U_{sr}(C)=T_{sr}^{-c_{sr}}(C)$, when $c_{sr}<0$, is given in \cite[Equation (4.1)]{pr}.  First note that their three-case equation can be expressed as the following two-case equation:
\[ c'_{xy} = \begin{cases}
c_{xy} &\text{if }(x= r\text{ and }y= r)\text{ or }(x\neq r\text{ and }y\neq r); \\
c_{xy}-c_{xs}c_{sy} &\text{if }(x=r\text{ and }y\neq r)\text{ or }(x\neq r\text{ and }y=r). \\
\end{cases}\]
A similar computation shows that this formula also holds when $c_{sr}\geq0$.

Suppose that the vertices $i$ with arrows $i\to k$ are precisely $i=r_1,\ldots,r_\ell$.  Let $C^{(0)}=C$ and $C^{(q+1)}=U_{kr_q}C^{(q)}$, and write $C'=C^{(\ell)}$.  We use the above formula to calculate $c'_{xy}$ in four cases, depending on whether or not $x,y\in \{r_1,\ldots,r_\ell\}$.
\begin{itemize}
\item
 If $x,y\notin \{r_1,\ldots,r_\ell\}$ then $c^{(q+1)}_{xy}=c^{(q)}_{xy}$ at each stage, so $c'_{xy}=c_{xy}$. 
\item
If $x=r_u$ and $y\notin \{r_1,\ldots,r_\ell\}$ then $c'_{xy}=c^{(u+1)}_{xy}=c^{(u)}_{xy}-c^{(u)}_{xs}c^{(u)}_{sy}=c_{xy}-c_{xs}c_{sy}$.
\item
The case $x \notin \{r_1,\ldots,r_\ell\}$ and $y=r_u$ is exactly the same.
\item
Suppose $x=r_u$ and $y=r_v$ with $u<v$; the case $v<u$ is similar.  Then 
 \[c^{(u+1)}_{xy}=c^{(u)}_{xy}-c^{(u)}_{xs}c^{(u)}_{sy}\;\; \text{ and } \;\;
c^{(v+1)}_{xy}=c^{(v)}_{xy}-c^{(v)}_{xs}c^{(v)}_{sy}\]
with $c^{(p+1)}_{xy}=c^{(p)}_{xy}\text{ for }p\neq u,v$. 
So, using $c_{ss}=2$, we have
\begin{align*} 
c'_{xy} &=c^{(v)}_{xy}-c^{(v)}_{xs}c^{(v)}_{sy}=c^{(u+1)}_{xy}-c^{(u+1)}_{xs}c^{(u+1)}_{sy} \\
  &=\left( c^{(u)}_{xy}-c^{(u)}_{xs}c^{(u)}_{sy} \right) - \left( c^{(u)}_{xs}-c^{(u)}_{xs}c^{(u)}_{ss} \right)c^{(u)}_{sy} \\
  &= c^{(u)}_{xy}-c^{(u)}_{xs}c^{(u)}_{sy}+c^{(u)}_{xs}c^{(u)}_{sy}=c_{xy}
\end{align*}
\end{itemize}
and our answer agrees with Lemma \ref{lem:mutateCartan} in all cases.
\end{proof}

\begin{lemma}\label{lem_semisimpl_kk}
Let $k,i\in Q_0$, and $c_{ki}=\delta \lambda$, where $\delta\in\{\pm 1\}$. Then
\[
[e_{\delta\e k},e_{\delta\e k},e_{-\delta\e k},e_{-\delta\e k},e_{\e i}]=2\lambda(\lambda-1)e_{\e i}.
\]
\end{lemma}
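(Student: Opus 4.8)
The plan is to recognise the elements $e_{\delta\e k}$, $e_{-\delta\e k}$, and a scalar multiple of $h_k$ as a standard $\mathfrak{sl}_2$-triple, reduce the statement to a fact about a single weight vector, and then finish with a two-line Jacobi computation. Concretely, I would set $E=e_{\delta\e k}$, $F=e_{-\delta\e k}$, and $H=\delta\e\,h_k$, and first verify the relations $[E,F]=H$, $[H,E]=2E$, and $[H,F]=-2F$. These are immediate from (R2), (R3), and $c_{kk}=2$, the only subtlety being the sign: from (R2) one gets $[e_{\sigma k},e_{-\sigma k}]=\sigma h_k$ for $\sigma\in\{\pm1\}$, so that $[E,F]=\delta\e\,h_k=H$, while (R3) gives $[H,E]=(\delta\e)^2c_{kk}E=2E$ and similarly $[H,F]=-2F$.

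Next, writing $x=e_{\e i}$, relation (R3) yields $[H,x]=\delta\e\cdot\e c_{ki}\,x=\delta c_{ki}\,x=\lambda x$ (using $c_{ki}=\delta\lambda$), so $x$ is a weight vector of weight $\lambda$. The key input is that $[E,x]=[e_{\delta\e k},e_{\e i}]=0$. Reading the hypothesis as $\lambda=\abs{c_{ki}}\ge0$ and $\delta=\sgn(c_{ki})$, this follows from (R4): the relevant exponent is $-M+1=1$ since $M=\min\{0,\delta c_{ki}\}=\min\{0,\lambda\}=0$; alternatively one can quote Lemma \ref{lem:commute-es} after checking the four sign combinations of $\e$ and $\delta$.

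With $[E,x]=0$ and $[H,x]=\lambda x$ in hand, the bracket in question is exactly $\ad(E)^2\ad(F)^2(x)$, which I would evaluate from the inside out using that $\ad E$ is a derivation, equivalently $[\ad E,\ad F]=\ad H$. First, $[E,[F,x]]=[H,x]+[F,[E,x]]=\lambda x$; second, since $[F,x]$ has weight $\lambda-2$ we get $[H,[F,x]]=(\lambda-2)[F,x]$, and hence $[E,[F,[F,x]]]=[H,[F,x]]+[F,[E,[F,x]]]=(\lambda-2)[F,x]+\lambda[F,x]=(2\lambda-2)[F,x]$; finally, applying $\ad E$ once more and reusing $[E,[F,x]]=\lambda x$ gives $(2\lambda-2)\lambda x=2\lambda(\lambda-1)x$, as required. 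The only real obstacle is the sign bookkeeping needed to set up the $\mathfrak{sl}_2$-triple and to confirm $[E,x]=0$ in every case; once these are settled the computation is routine and reduces, as above, to the representation theory of $\mathfrak{sl}_2$.
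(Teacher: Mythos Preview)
Your proof is correct and uses the same ingredients as the paper's: (R2), (R3), the Jacobi identity, and crucially the vanishing $[e_{\delta\e k},e_{\e i}]=0$ from (R4)/Lemma~\ref{lem:commute-es}. The paper performs the direct symbolic calculation, expanding $[e_{\delta\e k},e_{\delta\e k},e_{-\delta\e k},e_{-\delta\e k},e_{\e i}]$ step by step via Jacobi and Lemma~\ref{lem:h-prod} without naming the $\mathfrak{sl}_2$-triple; your version packages the same computation as the standard identity $\ad(E)^2\ad(F)^2(x)=2\lambda(\lambda-1)x$ for a highest-weight vector $x$ of weight $\lambda$. The conceptual framing is cleaner and makes the sign bookkeeping more transparent (in particular, your explicit observation that the hypothesis should be read as $\lambda=\abs{c_{ki}}$ and $\delta=\sgn(c_{ki})$ is exactly what the paper's proof also needs when it silently drops the term $[e_{-\delta\e k},[e_{\delta\e k},e_{\e i}]]$), but the underlying argument is the same.
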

\begin{proof}
Using the Jacobi identity, (R2), (R3) and (R4) we have 
\begin{align*}
[e_{\delta\e k},e_{\delta\e k},e_{-\delta\e k},e_{-\delta\e k},e_{\e i}] & = \delta\e [e_{\delta\e k},h_k,e_{-\delta\e k},e_{\e i}] + [e_{\delta\e k},e_{-\delta\e k},e_{\delta\e k},e_{-\delta\e k},e_{\e i}] \\
& = \delta\e (-2\delta\e +\lambda\delta\e)[e_{\delta\e k},e_{-\delta\e k},e_{\e i}]+\delta\e [e_{\delta\e k},e_{-\delta\e k},h_k,e_{\e i}] \\
& = (-2+\lambda)[e_{\delta\e k},e_{-\delta\e k},e_{\e i}] +\lambda[e_{\delta\e k},e_{-\delta\e k},e_{\e i}] \\
& = (-2+\lambda+\lambda)\delta\e [h_k,e_{\e i}] \\
& = 2\lambda(\lambda-1) e_{\e i}.
\end{align*}
\end{proof}

\begin{proof}[Proof of Theorem \ref{thm:isoLie}]
If $(Q,v)$ has arrows $i\to k$ precisely for vertices $i=r_1,\ldots,r_\ell$,
 we get the Lie algebra homomorphism $\g(\mu_k(Q,v))\to \g(Q,v)$ by composing the morphisms from Proposition \ref{prop:pr-U},
 using Lemma \ref{lem:Cartan-agree}, as follows:
\[ \varphi_k:\g(\mu_k(Q,v))=\g(U_{sr_\ell}U_{sr_{\ell-1}}\cdots U_{sr_1}C)\to \g(U_{sr_{\ell-1}}\cdots U_{sr_1}C)\to\cdots\to \g(U_{sr_1}C)\to \g(C)=\g(Q,v).\]
This gives the formula for $\varphi_k$ in the statement of the Theorem.

Now we prove that $\varphi_k$ and $\psi_k$ are inverse.
For simplicity we omit the index $k$. We will prove that $\varphi\circ \psi=\text{id}$ on the generators. Notice that for generators at vertices $i$, where $i$ does not have an arrow going into $k$, then the identity follows by definition of $\psi$ and $\varphi$. Therefore, we can assume there is an arrow $\xymatrix@1{i\to k}$ in $Q$. 

We have 
\[
\varphi\circ\psi(h_i)=\varphi (h_i'+c_{ik}h_k')=h_i-c_{ik}h_k+c_{ik}h_k=h_i.
\]

To show that $\varphi\circ\psi(e_{\e i})=e_{\e i}$ we divide into cases according to the value of $c_{ki}$. 
\begin{itemize}
    \item If $c_{ki}=\delta\in\{\pm 1\}$ then 
    \[
    \varphi\circ\psi (e_{\e i})= -\e\varphi ([e_{\delta\e k}',e_{\e i}'])= -\e\cdot -\e [e_{\delta\e k},e_{-\delta\e k},e_{\e i}] = \delta\e [h_k,e_{\e i}]= \delta\e\delta\e e_{\e i}= e_{\e i}
    \]
    where we used $[e_{-\delta\e k},e_{\e i}]=0$ from Lemma \ref{lem:commute-es}.
    \item If $c_{ki}=2\delta\in\{\pm 2\}$ then
    \begin{align*}
    \varphi\circ\psi (e_{\e i})= \frac{1}{2}\varphi ([e_{\delta\e k}',e_{\delta\e k}',e_{\e i}']) = \frac{1}{4}[e_{\delta\e k},e_{\delta\e k},e_{-\delta\e k},e_{-\delta\e k},e_{\e i}]= e_{\e i},
    \end{align*}
    where the last equality follows from Lemma \ref{lem_semisimpl_kk}.
    \item If $c_{ki}=3\delta\in\{\pm 3\}$ then
    \begin{align*}
    \varphi\circ\psi (e_{\e i}) & = -\frac{1}{6}\e\varphi ([e_{\delta\e k}',e_{\delta\e k}',e_{\delta\e k}',e_{\e i}']) = \frac{1}{36}[e_{\delta\e k},e_{\delta\e k},e_{\delta\e k},e_{-\delta\e k},e_{-\delta\e k},e_{-\delta\e k},e_{\e i}] \\
    & = \frac{1}{36} (\delta\e [e_{\delta\e k},e_{\delta\e k},h_k,e_{-\delta\e k},e_{-\delta\e k},e_{\e i}]+[e_{\delta\e k},e_{\delta\e k},e_{-\delta\e k},e_{\delta\e k},e_{-\delta\e k},e_{-\delta\e k},e_{\e i}]) \\
    & = \frac{1}{36} (\delta\e (-2\delta\e -2\delta\e +3\delta\e)[e_{\delta\e k},e_{\delta\e k},e_{-\delta\e k},e_{-\delta\e k},e_{\e i}] \\
    & \hspace{0.5cm}+\delta\e [e_{\delta\e k},e_{\delta\e k},e_{-\delta\e k},h_k,e_{-\delta\e k},e_{\e i}]+[e_{\delta\e k},e_{\delta\e k},e_{-\delta\e k},e_{-\delta\e k},e_{\delta\e k},e_{-\delta\e k},e_{\e i}]) \\
    & = \frac{1}{36}(-[e_{\delta\e k},e_{\delta\e k},e_{-\delta\e k},e_{-\delta\e k},e_{\e i}]+\delta\e (-2\delta\e +3\delta\e)[e_{\delta\e k},e_{\delta\e k},e_{-\delta\e k},e_{-\delta\e k},e_{\e i}] \\
    & \hspace{0.5cm} +\delta\e [e_{\delta\e k},e_{\delta\e k},e_{-\delta\e k},e_{-\delta\e k},h_k,e_{\e i}]) \\
    & = \frac{1}{36}(\delta\e(3\delta\e)[e_{\delta\e k},e_{\delta\e k},e_{-\delta\e k},e_{-\delta\e k},e_{\e i}]) \\
    & = \frac{1}{36}(3[e_{\delta\e k},e_{\delta\e k},e_{-\delta\e k},e_{-\delta\e k},e_{\e i}])= e_{\e i},
    \end{align*}
where the last equality follows from Lemma \ref{lem_semisimpl_kk}.
\end{itemize} 

One checks $\psi\circ\varphi=\text{id}$ similarly.
\end{proof}

\begin{corollary}\label{cor:presDynk}
If the signed valued quiver $(Q,v)$ is mutation Dynkin of type $\Delta$ then $\g(Q,v)\cong\g(\Delta)$.
\end{corollary}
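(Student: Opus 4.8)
The plan is to reduce to a single explicit representative of the mutation class of $(Q,v)$ --- the Dynkin signed valued quiver in which every arrow is negative --- whose Lie algebra is visibly the classical one, and then to transport this identification back along a mutation sequence using Theorem~\ref{thm:isoLie}.

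First I would unpack the hypothesis: since $(Q,v)$ is mutation Dynkin of type $\Delta$, its associated gss matrix is mutation equivalent to a Dynkin gss matrix, that is, one whose associated signed valued quiver $(Q_\Delta,v_\Delta)$ has unsigned diagram equal to the Dynkin diagram of type $\Delta$. As every Dynkin diagram is a tree, Proposition~\ref{prop_tree_assign_values} lets me reverse and negate arrows freely within the mutation class; in particular I can pass from $(Q_\Delta,v_\Delta)$ to the Dynkin signed valued quiver $(Q_\Delta^-,v_\Delta^-)$ all of whose arrows are negative. By transitivity of mutation equivalence, $(Q,v)$ is then mutation equivalent to $(Q_\Delta^-,v_\Delta^-)$ via some sequence of single mutations $\mu_{k_1},\dots,\mu_{k_r}$, and every quiver appearing along this sequence is again mutation Dynkin, so Theorem~\ref{thm:isoLie} applies at each step.

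Next I would apply Theorem~\ref{thm:isoLie} once for each mutation in the sequence and compose the resulting isomorphisms to obtain $\g(Q,v)\cong\g(Q_\Delta^-,v_\Delta^-)$. It then remains to identify the target. Negating an arrow preserves its orientation, so by Lemma~\ref{lem:CQv} the quiver $(Q_\Delta^-,v_\Delta^-)$ still has the Dynkin diagram of type $\Delta$ underlying it and every off-diagonal entry of $C(Q_\Delta^-,v_\Delta^-)$ is non-positive of the correct magnitude; equivalently, by Proposition~\ref{prop:cc-btilde} interpreted as in Remark~\ref{rmk:embed}, this Cartan counterpart is exactly the classical Cartan matrix $\tilde C$ of type $\Delta$. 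Since a Dynkin diagram is a tree it has no chordless cycles, so no relation of type (R5) from Definition~\ref{def:r5} is imposed and $\g(Q_\Delta^-,v_\Delta^-)=\g_5(\tilde C)=\g_4(\tilde C)$; by the remark following Definition~\ref{def:r1234}, $\g_4(\tilde C)$ is precisely the classical Serre presentation of $\g(\Delta)$. Chaining the two isomorphisms gives $\g(Q,v)\cong\g(\Delta)$.

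Given Theorem~\ref{thm:isoLie}, the argument is essentially bookkeeping, and I do not expect a genuine obstacle. The only points that deserve care are verifying that each intermediate quiver in the chosen mutation sequence really is mutation Dynkin, so that Theorem~\ref{thm:isoLie} legitimately applies at every step, and confirming the base case: that the all-negative Dynkin quiver has Cartan counterpart equal to the Cartan matrix of type $\Delta$ and that its tree shape removes all cycle relations. Both are immediate from the results already established.
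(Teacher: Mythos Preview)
Your proposal is correct and follows exactly the line the paper intends: the corollary is stated without proof, immediately after Theorem~\ref{thm:isoLie}, and your argument is precisely the natural way to deduce it --- chain the isomorphisms of Theorem~\ref{thm:isoLie} along a mutation sequence to an all-negative Dynkin quiver (using Proposition~\ref{prop_tree_assign_values} to normalize signs), then observe that its Cartan counterpart is the classical Cartan matrix and that the absence of cycles makes $\g_5=\g_4$ the Serre presentation. You have simply made explicit what the paper leaves to the reader.
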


\begin{remark}
By considering mutation theories from higher cluster categories, one expects to find other presentations of semisimple Lie algebras, in analogy with \cite{m}.
\end{remark}

\subsection{Root space decompositions}

Let $(Q,v)$ be a signed valued quiver of mutation Dynkin type, and $(Q',v')$ its mutation at $k$.  
In this section we give a compatibility result between root space decompositions of the Lie algebras $\g=\g(Q,v)$ and $\g'=\g'(Q',v')$. 

For basics about root space decomposition of simple Lie algebras we refer to \cite[Section II.8]{hum_la}.

Let $\h,\h'$ denote the Cartan subalgebras of $\g,\g'$ generated by $h_i$ and $h_i'$, respectively.
Denote by $\Phi$ (resp. $\Phi'$) the root system associated to $(Q,v)$ (resp. $(Q',v')$) according to Definition \ref{def:groot}.
We define a map $\Phi\times\h\to\mathbb{C}$ by $\alpha_i(h_j)=c_{ji}$ \cite[Section VI.4]{serre}.

By Corollary \ref{cor:presDynk} we know that $\g$ and $\g'$ are simple complex Lie algebras.
By Theorem \ref{thm:root-sys-iso} their root systems are $\Phi$ and $\Phi'$.  So we have root space decompositions
\begin{align*}
\g & =\h\oplus\bigoplus_{\beta\in\Phi}\g^{\beta} \\
\g' &= \h'\oplus\bigoplus_{\beta'\in\Phi'}\g'^{\beta'}
\end{align*}
where
\[g^{\beta}=\{x\in\g\st [h,x]=\beta(h)x\text{ for all } h\in\h\}.\]
By (R4) we have $[h_i,e_j]=c_{ij}e_j=\alpha_j(h_i)e_j$, so $e_j\in\g^{\alpha_j}$.

Recall the isomorphism of root systems $\rho_k:\Phi'\to\Phi$ from Definition \ref{def:mutate-roots}.

\begin{theorem}\label{thm:rootspacedecomp}
$ 
\varphi_k(\g'^{\beta'})=\g^{\rho_k(\beta')}.
$
\end{theorem}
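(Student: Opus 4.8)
The plan is to reduce the statement to a single compatibility identity between the pairing of roots with the Cartan subalgebra on the two sides, and then read off the root space containment directly. First I would record that $\varphi_k$ carries $\h'$ isomorphically onto $\h$: from the explicit formula in Theorem \ref{thm:isoLie} each $h'_i$ maps to $h_i$ or to $h_i-c_{ik}h_k$, and expressing these in the basis $\{h_i\}$ gives an invertible change of basis (unitriangular after placing the index $k$ last), so $\varphi_k(\h')=\h$. The heart of the proof is then the claim that, for every $\beta'\in\Phi'$ and every $h'\in\h'$,
\[ \beta'(h') = \rho_k(\beta')\bigl(\varphi_k(h')\bigr). \]
Granting this, take $x\in\g'^{\beta'}$, so $[h',x]=\beta'(h')x$ for all $h'\in\h'$. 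Applying the Lie algebra isomorphism $\varphi_k$ gives $[\varphi_k(h'),\varphi_k(x)] = \beta'(h')\varphi_k(x) = \rho_k(\beta')(\varphi_k(h'))\varphi_k(x)$; since $\varphi_k(h')$ ranges over all of $\h$, this says exactly that $\varphi_k(x)\in\g^{\rho_k(\beta')}$. Hence $\varphi_k(\g'^{\beta'})\subseteq\g^{\rho_k(\beta')}$, and as $\g,\g'$ are simple (Corollary \ref{cor:presDynk}) both root spaces are one-dimensional while $\varphi_k$ is injective, forcing equality.

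To prove the identity it suffices, by linearity, to check it on $h'=h'_j$. Using $\alpha_i(h_m)=c_{mi}=(\alpha_i,\alpha_m^\vee)$ and its primed analogue, both sides become inner products: the left side is $\beta'(h'_j)=(\beta',\alpha_j'^\vee)_{V_{B'}}$, while the right side is $\rho_k(\beta')(\varphi_k(h'_j))=(\rho_k(\beta'),w_j)_{V_B}$, where $w_j$ is the coroot corresponding to $\varphi_k(h'_j)$, that is $\alpha_j^\vee$ if there is no arrow $j\to k$ and $\alpha_j^\vee-c_{jk}\alpha_k^\vee$ if $j\to k$. Since Theorem \ref{thm:root-sys-iso} says $\rho_k$ preserves inner products, $(\rho_k(\beta'),\rho_k(\alpha_j')^\vee)_{V_B}=(\beta',\alpha_j'^\vee)_{V_{B'}}$, so the identity reduces to the purely combinatorial statement $w_j = \rho_k(\alpha_j')^\vee$.

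This last equality is the real content, and I expect it to be the main technical step. When there is no arrow $j\to k$ it is immediate, since $\rho_k(\alpha_j')=\alpha_j$. When $j\to k$ we have $\rho_k(\alpha_j')=s_k(\alpha_j)=\alpha_j-c_{kj}\alpha_k$; as $s_k\in W_B$ preserves $(-,-)$ it commutes with the coroot operation, so $\rho_k(\alpha_j')^\vee = s_k(\alpha_j^\vee) = \alpha_j^\vee - c_{kj}\frac{(\alpha_k,\alpha_k)}{(\alpha_j,\alpha_j)}\alpha_k^\vee$. Using $(\alpha_i,\alpha_i)=2d_i$ together with the symmetrizer relation $d_k c_{kj}=d_j c_{jk}$ (equivalently, symmetry of $DC$) collapses the coefficient to $c_{jk}$, giving $\rho_k(\alpha_j')^\vee = \alpha_j^\vee - c_{jk}\alpha_k^\vee = w_j$, as required. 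The subtlety to watch is precisely this interplay between the reflection acting on coroots and the symmetrizer: identifying $s_k(\alpha_j)^\vee$ with $\alpha_j^\vee - c_{jk}\alpha_k^\vee$ without tracking the root lengths $d_i$ would only match the $h$-formula for $\varphi_k$ after this rescaling, so the computation must keep the factor $d_k/d_j$ visible until the final simplification.
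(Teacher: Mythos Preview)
Your proof is correct and follows the same overall architecture as the paper's: both reduce to the identity $\beta'(h')=\rho_k(\beta')(\varphi_k(h'))$, derive the containment $\varphi_k(\g'^{\beta'})\subseteq\g^{\rho_k(\beta')}$ from it, and then conclude by one-dimensionality of root spaces (Corollary~\ref{cor:presDynk}).

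The difference lies only in how that key identity is verified. The paper checks it directly on the generators: it writes out $\varphi_k(h'_i)$ and $\rho_k(\alpha'_j)$, evaluates $\rho_k(\alpha'_j)\bigl(\varphi_k(h'_i)\bigr)$ in four cases depending on whether $i\to k$ and $j\to k$, and recognises the result as $c'_{ij}$ via Lemma~\ref{lem:mutateCartan}. You instead recast both sides as pairings $(\,\cdot\,,\,(\,\cdot\,)^\vee)$, invoke Theorem~\ref{thm:root-sys-iso} (that $\rho_k$ is an isomorphism of root systems) to transport the primed pairing to the unprimed one, and then reduce everything to the coroot identity $\rho_k(\alpha'_j)^\vee=\alpha_j^\vee-c_{jk}\alpha_k^\vee$ when $j\to k$, which you extract from $W$-invariance and the symmetrizer relation $d_kc_{kj}=d_jc_{jk}$. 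Since Theorem~\ref{thm:root-sys-iso} was itself proved from Lemmas~\ref{lem:mutateCartan} and~\ref{lem:mutateroots}, the two arguments ultimately rest on the same ingredients; yours packages the case analysis into the earlier root-system result and trades it for the coroot/length bookkeeping you flag at the end, while the paper's direct four-case check avoids that bookkeeping at the cost of a small repeat computation.
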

\begin{proof}
Suppose $x'\in\g'^{\beta'}$, so that $[h',x']=\beta'(h')x'$ for all $h'\in \h'$. Therefore $\varphi_k(x')\in \g^{\rho_k(\beta')}$ if and only if
\[
[h,\varphi_k (x')]=\rho_k (\beta')(h)\varphi_k (x')
\]
for all $h\in\h$. 

Now, fix $h\in\h$. Notice that $\varphi_k$ induces an isomorphism ${\varphi_k}_{|\h'}:\h'\stackrel{\sim}{\to}\h$ by definition, and therefore we can write $h=\varphi_k (h')$ for some $h'\in\h'$. In particular, since $\varphi_k$ is a Lie algebra homomorphism, then we can rewrite the left hand side of the above equation as
\[
[h,\varphi_k (x')]= \varphi_k ([h',x'])=\varphi_k (\beta'(h')x')=\beta'(h')\varphi_k(x').
\]
Hence the result follows if we prove that
\begin{equation}\label{eq_beta'h'}
\beta'(h')=\rho_k (\beta')(h),
\end{equation}
where $h=\varphi_k(h')$.

Notice that it is enough to prove \eqref{eq_beta'h'} for $h=h_i'$ and  $\beta'=\alpha_j'$, where $h_i'$ is a generator of $\h'$ and $\alpha_j'$ is a simple root; the result will then follow by linearity.

Let $C=C(Q,v)=(c_{ij})$ (resp. $C'=C(Q',v')=c'_{ij}$) denote the Cartan counterpart of $(Q,v)$ (resp. $(Q',v')$). Then Definition \ref{def:mutate-roots} and Theorem \ref{thm:isoLie} imply that
\[
\varphi_k (h_i')=\begin{cases}
    h_i-c_{ik}h_k, & \text{ if } i\to k \\
    h_i, & \text{ otherwise},
\end{cases} \hspace{1cm}
\rho_k (\alpha_j')= \begin{cases}
    \alpha_j -c_{kj}\alpha_k, & \text{ if } j\to k \\
    \alpha_j, & \text{ otherwise}.
\end{cases}
\]
Therefore, using $\alpha_{\ell}(h_m)=c_{m\ell}$ and $c_{kk}=2$ we get
\[
\rho_k (\alpha_j')\varphi_k (h_i')= \begin{cases}
c_{ij}-c_{kj}c_{ik}-c_{ik}c_{kj}+c_{ik}c_{kj}c_{kk}=c_{ij}, & \text{ if } i,j\to k \\
c_{ij}-c_{ik}c_{kj}, & \text{ if } i\to k \text{ and } j\not\to k \\
c_{ij}-c_{kj}c_{ik}, & \text{ if } i\not\to k \text{ and } j\to k \\
c_{ij}, & \text{ if }i,j\not\to k.
\end{cases}
\]
Therefore, by Lemma \ref{lem:mutateCartan} we have $\rho_k (\alpha_j')\varphi_k (h_i')=c_{ij}'=\alpha_j'(h_i')$. 

Thus we have the inclusion $\varphi_k(\g'^{\beta'})\subseteq \g^{\rho_k(\beta')}$. The other inclusion follows from the classical result that root spaces are 1-dimensional (see e.g. Theorem 2(b), Chapter VI in \cite{serre}), together with the fact that the map $\varphi_k$ is an isomorphism by Theorem \ref{thm:isoLie}.
\end{proof}

% ---~~~~----~~~~----
% ----~~~~----~~~~----
% ----~~~~----~~~~----
\subsection{Example}

\begin{example}[Type $A_3$]
We start with the signed quiver
\[ Q = 1 \stackrel{-}{\leftarrow} 2\stackrel{-}{\leftarrow} 3\]
which has associated gss matrix and Cartan counterpart
\[ B = \begin{pmatrix}
0 &-t &0\\
t &0 &-t\\
0 &t &0
\end{pmatrix}
\;\;\text{ and }\;\;
C = \begin{pmatrix}
2 &-1 &0\\
-1 &2 &-1\\
0 &-1 &2
\end{pmatrix}.
\]
As $B$ is skew-symmetric, $D$ is the identity matrix.
Its simple roots are $S=\{\alpha_1,\alpha_2,\alpha_3\}$ and the group $W$ is generated by $\{s_1,s_2,s_3\}$.   Its root system is
\[ \Phi = W\cdot S = \pm\{\alpha_1, \; \alpha_2, \; \alpha_3, \; \alpha_1+\alpha_2, \; \alpha_2+\alpha_3, \; \alpha_1+\alpha_2+\alpha_3\}.\]
The Lie algebra $\g$ is generated by $\{e_i,f_i,h_i\}_{i=1,2,3}$ where we write $f_i=e_{-i}$.  
We have the usual (R1), (R2), and (R3) relations, and our (R4) relations are:
\begin{align*}
 \e=1,\,\delta=1: \;\;\;\; & [e_1,e_1,e_2]=[e_1,e_3]=[e_2,e_2,e_1]=[e_2,e_2,e_3]=[e_3,e_3,e_2]=0 \\
% \e=-1,\,\delta=-1: \;\;\;\; & [f_1,f_1,f_2]=[f_1,f_3]=[f_2,f_2,f_1]=[f_2,f_2,f_3]=[f_3,f_3,f_2]=0 \\
 \e=1,\,\delta=-1: \;\;\;\; & [e_1,f_2]=[e_1,f_3]%=[e_2,f_1]
 =[e_2,f_3]=0%=[e_3,f_1]=[e_3,f_2]=0 
\end{align*}
together with the relations obtained by interchanging $e$'s and $f$'s and some redundant relations.  There are no (R5) relations.
So $\g$ has dimension 15 with basis:
\[ h_1, \, h_2, \, h_3, \, 
e_1, \, e_2, \, e_3, \, [e_1,e_2], \, [e_2,e_3], \, [e_1,e_2,e_3], \, 
f_1, \, f_2, \, f_3, \, [f_1,f_2], \, [f_2,f_3], \, [f_1,f_2,f_3].
\]
Classically, it is isomorphic to the Lie algebra $\mathfrak{sl}_4(\mathbb{C})$ of traceless $4\times4$ matrices by sending the above ordered basis to:
\[ x_{11}-x_{22}, \, x_{22}-x_{33}, \, x_{33}-x_{44}, \, 
x_{12}, \, x_{23}, \, x_{34}, \, x_{13}, \, x_{24}, \, x_{14}, \, 
x_{21}, \, x_{32}, \, x_{43}, \, 
-x_{31}, \, -x_{42}, \, x_{43}
\]
where $x_{ij}$ denotes a matrix with $1$ in the $(i,j)$ entry and $0$ everywhere else.
We define an action of $\alpha_j$ on $h_i$ by $\alpha_j(h_i)=c_{ij}$.  Consider weight spaces $\g^\beta$ for $\beta\in\Phi$.  %Clearly, we have $h_i\in\g^0$.  
By construction we have $[h_i,e_j]=c_{ij}e_j$, so $e_{\e j}\in \g^{\e \alpha_j}$, and Lemma \ref{lem:h-prod} gives the other weight spaces, such as $[e_1,e_2]\in\g^{\alpha_1+\alpha_2}$.

Now we perform signed mutation of $Q$ at the vertex $2$ to get
\[ Q' = \xymatrix @=10pt{
 & 2\ar[dr]^{+} &  \\
1\ar[ur]^{-} && 3\ar[ll]^{-}  
} \]
which has associated gss matrix and Cartan counterpart
\[ B' = \begin{pmatrix}
0 &t &-t\\
-t &0 &1\\
t &-1 &0
\end{pmatrix}
\;\;\text{ and }\;\;
C' = \begin{pmatrix}
2 &-1 &-1\\
-1 &2 &+1\\
-1 &+1 &2
\end{pmatrix}.
\]
Its simple roots are $S'=\{\alpha'_1,\alpha'_2,\alpha'_3\}$ and the group $W'$ is generated by $\{s'_1,s'_2,s'_3\}$.  We get roots
\[s'_1(\alpha'_2)=\alpha'_1+\alpha'_2, \;\; s'_1(\alpha'_3)=\alpha'_1+\alpha'_3, \;\; s'_2(\alpha'_3)=-\alpha'_2+\alpha'_3\]
so we get
\[ \Phi' = W'\cdot S' = \pm\{\alpha'_1, \; \alpha'_2, \; \alpha'_3, \; \alpha'_1+\alpha'_2,  \; \alpha'_1+\alpha'_3, \; -\alpha'_2+\alpha'_3\}.\]

The mutation of roots function acts as follows:
\begin{align*}
\rho_2: \Phi' \to \;&\Phi \\
\alpha_1' \mapsto \;& \alpha_1\\
\alpha_2' \mapsto \;& \alpha_2\\
\alpha_3' \mapsto \;& \alpha_2+\alpha_3\\
\alpha_1'+\alpha'_2 \mapsto \;& \alpha_1+\alpha_2\\
\alpha_1'+\alpha'_3 \mapsto \;& \alpha_1+\alpha_2+\alpha_3\\
-\alpha_2'+\alpha'_3 \mapsto \;& \alpha_3
\end{align*}

Our Lie algebra $\g$ is generated by $\{e'_i,f'_i,h'_i\}_{i=1,2,3}$ where we write $f'_i=e'_{-i}$.  Our (R4) relations are:
\begin{align*}
 \e=1,\,\delta=1: \;\;\;\; & [e'_1,e'_1,e'_2]=[e'_1,e'_1,e'_3]=[e'_2,e'_2,e'_1]=[e'_2,e'_3]=[e'_3,e'_3,e'_2]=0 \\
% \e=-1,\,\delta=-1: \;\;\;\; & [f'_1,f'_1,f'_2]=[f'_1,f'_1,f'_3]=[f'_2,f'_2,f'_1]=[f'_2,f'_3]=[f'_3,f'_3,f'_2]=0 \\
 \e=1,\,\delta=-1: \;\;\;\; & [e'_1,f'_2]=[e'_1,f'_3]=[e'_2,e_2',f'_3]=[e'_3,e'_3,f'_2]=0 
\end{align*}
together with the relations obtained by interchanging $e$'s and $f$'s and some redundant relations.
Our (R5) relations are:
\[ [e'_1,e'_2,f'_3] = [e'_2, f'_3, f'_1] = [e'_3, e'_1, e'_2 ] = 0 \]
together with the relations obtained by interchanging $e$'s and $f$'s and the (redundant) relations going the other way round the cycle.
So $\g$ has dimension 15 with basis:
\[ h'_1, \, h'_2, \, h'_3, \, 
e'_1, \, e'_2, \, e'_3, \, [e'_1,e'_2], \, [e'_1,e'_3], \, [e'_2,f'_3], \, 
f_1, \, f_2, \, f_3, \, [f'_1,f'_2], \, [f'_1,f'_3], \, [f'_2,e'_3].
\]
Lemma \ref{lem:h-prod} gives the weight spaces: for example, $[e'_2,f'_3]\in\g'^{\alpha_2'-\alpha'_3}$.

The isomorphism $\varphi_2: \g' \to \g$ of Lie algebras acts as follows:
\begin{align*}
%\varphi_2: \g' \to \;&\g \\
h'_1 \mapsto \;& h_1  &
e'_1 \mapsto \;& e_1  &
f_1 \mapsto \;& f_1\\
h'_2 \mapsto \;& h_2  &
e'_2 \mapsto \;& e_2  &
f_2 \mapsto \;& f_2\\
h'_3 \mapsto \;& h_2+h_3  &
e'_3 \mapsto \;& -[e_2,e_3]  &
f_3 \mapsto \;& [f_2,f_3].
\end{align*}
Note that it preserves weight spaces, as predicted by Theorem \ref{thm:rootspacedecomp}.
For example, we have:
\[ \xymatrix{
\Phi\ar[d] & \Phi' \ar[l]_{\rho_2}
\ar[d] && {\phantom{XX}}-\alpha_3\ar@{|->}@<3ex>[d] & 
\alpha_2'-\alpha'_3 {\phantom{XXXXX}}\ar@{|->}[l]\ar@{|->}@<-4ex>[d] \\
\{\g^\beta\st \beta\in\Phi\} & \{\g'^{\beta'}\st \beta'\in\Phi'\} \ar[l]_{\phi_2} && -f_3\in\g^{-\alpha_3} & \g'^{\alpha_2'-\alpha'_3}\ni[e'_2,f'_3] \ar@{|->}[l]
}\]
Composing this isomorphism with the classical realization of $\g$ as $\mathfrak{sl}_4(\mathbb{C})$, we get
\[ e'_1\mapsto x_{12}, \;\;\;\; e'_2 \mapsto x_{23}, \;\;\;\;
e'_3 \mapsto  -x_{24}.\]
\end{example}

% =====================
%
%     ----------------------------
%
% =====================

\end{document}